\newtheorem{teo}{Theorem}[section]
\newtheorem{lem}[teo]{Lemma} % use the same couter of theorem
\newtheorem{cor}[teo]{Corollary}
\newtheorem{prop}[teo]{Proposition}
\newtheorem{defn}[teo]{Definition}
\newtheorem{fat}[teo]{Fact}
\newtheorem*{claim*}{Claim}
\newtheorem{op}[teo]{Question}
\newtheorem{rem}[teo]{Remark}
\begin{document}

\title{The Galois group of a Special Group}

\author{Kaique Matias de Andrade Roberto\thanks{Institute of Mathematics and Statistics, University of São Paulo, Brazil.  Emails:   kaique.roberto@usp.br, hugomar@ime.usp.br} \ Hugo L. Mariano\thanks{The authors want to express their gratitude to Coordena\c c\~ao de Aperfei\c coamento de Pessoal de N\' ivel Superior (Capes -Brazil) by the financial support to develop this work: project Capes Math-Amsud number 88881.694471/2022-01.}
% \ (Maximo Dickmann)
}

\date{}
\maketitle

\begin{abstract}
In this ongoing work, we extend to a class of well-behaved pre-special hyperfields  the work of  J. Min\'a\v c and Spira (\cite{minac1996witt}) that describes a (pro-2)-group of a field extension that encodes the quadratic form theory of a given  field $F$: in \cite{adem1999cohomology} it is shown that its associated cohomology ring contains a copy of the cohomology ring of the field $F$. Our construction, a contravariant functor into the category of "pointed" pro-2-groups, is essentially given by  generators and relations of profinite-2-groups. We prove that such profinite groups $\mbox{Gal}(F)$ encode the space of orders of the special group canonically  associated to the hyperfield $F$  and provide a criterion to detect when $F$ is formally real or not. %Moreover, we calculate the Galois groups associated to some of the main constructions of special groups like quotients, finite products, directed colimits and extensions.
\end{abstract}

\section{Introduction}

% \linenumbers                % mostra as linhas no pdf

The Igr's functors $W_*, k_*$ were extended by M. Dickmann and F.Miraglia from the category of fields of characteristic $\neq 2$ to the category of special groups (equivalently, the category of special hyperfields). Another relevant Igr functor, the graded cohomology ring, $H^*(Gal(F^s|F), \{\pm1\})$ remains defined only on the field setting. This chapter constitutes an attempt to provide an Igr functor associated to a (Galois) cohomology theory for  special groups, based on the work of J. Minac and M. Spira \cite{minac1996witt}. We will  define - by "generator and relations", $\mbox{Gal}(G)$, the {\em Galois Group of a special group $G$}, and provide some  properties of this construction, as the encoding of the orderings on $G$. However, since deeper results will depend of a description of $\mbox{Gal}(G)$ "from below", and it still unavailable a complete theory of algebraic extension of (super)hyperfields, we will not pursue a more complete development of this cohomology theory in this work, reserving it for a future research.

 We will work in the category of pro-2-groups, and take, as usual, the conventions: "subgroup" means "closed subgroup"; "subgroup generated by a subset" means "the closure of the abstract group generated by the subset"; "morphism" means "continuous homomorphism".

 The relationship between Galois groups of fields with orderings and
quadratic forms, established by the works of Artin-Schreier (1920's) and Witt (late
1930's) are reinforced by a seminal paper of John Milnor (\cite{milnor1970algebraick}, 1971) through the definition
of a (mod 2) k-theory graded ring that "interpolates" the graded Witt ring and the
cohomology ring of fields: the three graded rings constructions determine functors
from the category of fields where 2 is invertible that, almost tree decades later, are
proved to be naturally isomorphic by the work of Voevodsky with co-authors.

Since the 1980's, have appeared many abstract approaches to the algebraic theory
of quadratic forms over fields that are essentially equivalent (or dually equivalent):
between them we emphasize the theory of Special Groups developed by
Dickmann-Miraglia: a first-order axiomatization of the algebraic theory of quadratic forms (see \cite{dickmann2000special}, \cite{dickmann1998quadratic}, \cite{dickmann2003lam}), in this setting the quadratic form behavior of a non 2 characteristic field $F$ is faithfully encoded by an structure $G(F)$ consisting of an (multiplicative) exponent  two group $F^{\times}/F^{\times 2}$ with a distinguished element $-1/F^{\times 2}$ and a quaternary relation that describes the isometry relation between two diagonal quadratic forms of dimension 2. The notions of (graded) Witt rings and k-theory are extended to the category of Special groups with remarkable pay-offs on questions on quadratic forms over fields.   In \cite{dickmann1998quadratic} they get a proof of Marshall's signature conjecture for Pythagorean by the relation between three functors associated with the quadratic form theory of a field $F$ as described by Milnor (\cite{milnor1970algebraick}): the mod 2 k-theory functor of the field ($k_{\ast}(F)$) is a source of two canonical arrows one for the graduated  Witt Ring of the field ($(s_{n}(F) : k_{n}(F) \rightarrow (I^{n}(F)/I^{n+1}(F)))_{n \in \mathbb N}$) and the other one to the cohomology ring of the absolute Galois group of $F$ with coefficients $\mathbb Z_d$ ($h_{n}(F) : k_{n}(F) \rightarrow H^{n}(gal(F^{s}|F),\mathbb Z_d))_{n \in \mathbb N}$). In \cite{dickmann2000special} and \cite{dickmann2003lam} they presented constructions that generalizes, to the wider context of special groups, the first two constructions and obtained a similar natural connection between them:  ($(s_{n}(G) : k_{n}(G) \rightarrow (I^{n}(F)/I^{n+1}(G)))_{n \in \mathbb N}$), whenever $G$ is a special group.

 In the work of Minac-Spira \cite{minac1996witt}, for each field $F$ with $char(F) \neq 2$ they prescribe  an Galois extension
$F^{(3)}|F$ that is a low subfield in the genetic way to build the quadratic closure $F^{q}$ of $F$ \footnote{Let $F^{(2)} = F(\sqrt{a}, a \in F)$ = compositum in $F^q$ of quadratic extensions of $F$ then $F^{(3)} = F^{(2)}(\sqrt{y},$ such that $F^{(2)}(\sqrt{y}|F)$ is Galois)} encodes much of the quadratic forms information about $F$.
They showed that the Galois group of that extensions can be described by "generators and relations" given by the dimension of the $\mathbb Z_d$-vector space
${F^\times}/{F^\times}^2$ :
that is the starting point of the present paper. In \cite{adem1999cohomology}
%( [{\bf AKM}] {AKM} A. Adem, D. B. Karagueuzian, J. Min\'a\v{c}, {\em On the Cohomology of Galois Groups Determined by Witt Rings}, Advances in Mathematics {\bf 148} (1999), 105-160.)
it is proved that the cohomology ring of the Galois group of the extension $F^{(3)}|F$ with coefficients $\mathbb Z_d$ contains the cohomology ring of the absolute Galois group of $F$ with coefficients $\mathbb Z_d$  and is the target of a canonical morphism of graduated rings from the Milnor's mod 2 reduced $K$-theory ring ($h_{n}(F) : k_{n}(F) \rightarrow H^{n}(gal(F^{s}|F),\mathbb Z_d))_{n \in \mathbb N}$).

A (commutative, unital) multiring is essentialy a (commutative, unital) ring with multivalued sum  and satisfying a weak distributivity law (\cite{marshall2006real}); multirings that satisfy a full  distributivity law are known as hyperrings. In \cite{ribeiro2016functorial} and \cite{roberto2021quadratic} are described encodings (i.e., equivalences of categories) between the category of special groups (resp. pre-special groups; reduced special groups) and the category of special hyperfields (resp. pre-special hyperfields, real reduced hyperfields).
Elements of a extension theory of hyperfields are developed in \cite{roberto2021ACmultifields1}, \cite{roberto2022algebraic} and \cite{roberto2021hauptsatz}.

In \cite{roberto2021ktheory} is developed a k-theory for a category of hyperfields that contains the category of pre-special hyperfields, this generalizes simultaneously Milnor's k-theory for fields (\cite{milnor1970algebraick}) and Dickmann-Miraglia's k-theory for special groups (\cite{dickmann2006algebraic}). In \cite{roberto2022graded} is provided a detailed analysis of some categories of inductive graded ring (IGR) - a concept introduced in \cite{dickmann1998quadratic} in order to provide a solution of Marshall's signature conjecture for Pythagorean fields; moreover the functor of k-theory of a pre-special hyperfields is a (free) IGR.

In the present work we extend  to a class of (well-behaved) pre-special hyperfields  the work of  J. Min\'a\v c and Spira (\cite{minac1996witt}) that describes a (pro-2)-group of a field extension that encodes the quadratic form theory of a given  field $F$: in \cite{adem1999cohomology} it is shown that its associated cohomology ring is contains  a copy of the cohomology ring of the field $F$.
Our construction, a contravariant functor into the category of "pointed" pro-2-groups, is essentially given by  generators and relations of profinite-2-groups. We prove that such profinite groups $gal(F)$ encode the space of orders of the special group canonically  associated to the hyperfield $F$  and provide a criteria to detect when $F$ is formally real or not. Moreover, we calculate the Galois groups associated to some of the main constructions of special groups  like quotients, finite products, directed colimits and extensions.

 A next step is to develop and  understand the associated cohomology ring, relating it with k-theory functor of the special groups, and apply such machinery to questions on Special Groups theory. Our final goal is to develop cohomological methods in SG-Theory, and apply such machinery to questions on Special Groups theory. In particular, can we describe "cohomological obstructions" for: (i) a reduced SG to satisfy Marshall's signature conjecture; (ii) a formally real SG to satisfy Lam's conjecture?  It should be emphasized that is a SG does not satisfies any of the above properties, then it cannot be isomorphic to a special group associated to a field, a major question in the SG-theory.

% {\bf Outline of  the work}: ...

\section{The motivation: W-groups}

The context that we will keep in mind is essentially that of the results developed in Sections 1 and 2 of \cite{minac1996witt}. In this Section we will reproduce (and expand the details of) part of these results.

Consider a field $F$. In \cite{minac1996witt}, J. Minac and M. Spira define a special Galois extension of the base field $F$, and determine its structure and its Galois group through the behavior of quaternions algebras over $F$. As they developed in \cite{minac1996witt}, this extension contain essentially all the information need to understand the behavior of quadratic forms over $F$.

Recall that the \textbf{quadratic closure} of $F$, denoted by $F_q$, is the smallest extension of $F$ which is closed under taking of square roots (or more explicit, the compositum of all 2-towers over $F$ inside a fixed algebraic closure of $F$). The group $\mbox{Gal}_F(F_q)$ will be
denoted by $G^q_F$.

Let $\{a_i:i\in I\}$ be a basis of $\dot F/\dot F^2$ (as Minac and Spira did in their paper, we will assume that $1,2,..,n\in I$ with $1<2<...<n$ in order to easy our presentation). We define $F^{(2)}=F(\sqrt{a}:a\in\dot F)$ (note that $F^{(2)}=F(\sqrt{a_i}:i\in I$), $\mathcal E=\{y\in F^{(2)}:F^{(2)}(\sqrt y)|F\mbox{ is Galois}\}$ and $F^{(3)}=F^{(2)}(\sqrt{y}:y\in\mathcal E)$ if $F$; if $F$ is quadratically closed we set $F^{(2)}=F^{(3)}=F$.

Minac and Spira built a strong connection between $F^{(3)}$ and the Witt ring of $F$. They named $F^{(3)}$ as \textbf{Witt closure} of $F$. The group $G_F:=\mbox{Gal}_F(F^{3}|F)$ is called the \textbf{W-group} of $F$. Our goal here is to describe a way to factor $G_F$ as $G_F\cong\mathcal W(I)/\mathcal V(I)$, with $\mathcal W(I)$ and $\mathcal V(I)$ interesting profinite groups. This procedure will reveal how to generalize $G_F$ in the context of abstract theories of quadratic forms, and in particular, describe what would be a Galois group associated to a special group. The first step is to describe $\mathcal W(I)$.

For an arbitrary group $G$, define $\hat G=G^4[G^2,G]$, i.e, the (closed) subgroup generated by fourth powers and by commutators of the form $[g^2,h]$ for $g,h\in G$. Let $t^4[g^2,h]\in\hat G$. Then, for each  $z\in G$:
\begin{align*}
 z^{-1}(t^4[g^2,h])z&=z^{-1}t^4[g^2,h]z=(z^{-1}t^4z)(z^{-1}[g^2,h]z)
\end{align*}
with $z^{-1}t^4z=(z^{-1}tz)^4\in G^4$ and $z^{-1}[g^2,h]z\in [G^2,G]$, because
\begin{align*}
 z^{-1}[g^2,h]z&=z^{-1}g^{-2}h^{-1}g^2hz \\
 &=(z^{-1}g^{-2}z)(z^{-1}h^{-1}z)(z^{-1}g^2z)(z^{-1}hz) \\
 &=(z^{-1}gz)^{-2}(z^{-1}hz)^{-1}(z^{-1}gz)^2(z^{-1}hz) \\
 &=[(z^{-1}gz)^2,(z^{-1}hz)].
\end{align*}
Hence $\hat G$ is a normal subgroup of $G$, and we define $\overline G=G/\hat G$. Let $\mathcal C$ denote the class of profinite 2-groups $G$ such that $\hat G=\{1\}$.

 The main example (and the motivation to consider this full subcategory of pro-2-groups) is the following fact:  If $char(F) \neq 2$ then $G=Gal(F^{(3)}|F)$ satisfies this condition $\hat G=\{1\}$, since, by Proposition 2.1 in \cite{minac1996witt}, $G \cong G_q/{G_q}^4.[{G_q}^2,{G_q}]$, where $G_q=Gal(F_q|F)$ and $F_q$ is a quadratic closure of $F$ .

A pro-$\mathcal C$-group will be called just $\mathcal C$-group, and $\mathcal C$-group on $I$ if it has a minimal set of generators of cardinality $|I|$.

Let $I$ be a well-ordered set. The next step is to describe a canonical way to represent the elements of $\overline S$ where $S$ is the free pro-2-group on a nonempty set $I$. Let
$$\mathcal W(I):=\prod_{i\in I}\mathbb Z_2\times\prod_{\substack{i,j\in I \\ i<j}}\mathbb Z_2\times\prod_{i\in I}\mathbb
Z/2\mathbb Z.$$
Here we are considering $\mathbb Z_2$ {\em multiplicatively}, i.e, $\mathbb Z_2\cong\{1,-1\}$. A typical element of $\mathcal W_I$ will be written as $(t_i^{\alpha_i})(t_{ij}^{\beta_{ij}})(x_i^{\gamma_i})$, where $\alpha_i,\beta_{ij},\gamma_i\in\{0,1\}$ and $t_i=t_{ij}=x_i=-1$ for all $i,j\in I$.

Let $g,h\in\mathcal W(I)$
\begin{align*}
 g&=(t_i^{\alpha_i})(t_{ij}^{\beta_{ij}})(x_i^{\gamma_i}) \\
 h&=(t_i^{\alpha'_i})(t_{ij}^{\beta'_{ij}})(x_i^{\gamma'_i}).
\end{align*}
We define
$$gh=(t_i^{\alpha_i+\alpha'_i+\gamma_i\gamma'_i})(t_{ij}^{\beta_{ij}+\beta'_{ij}+\gamma'_i\gamma_j})(x_i^{\gamma_i+\gamma'_i}),$$
where the exponents are taken modulo 2.

\begin{lem}
 With the notation described above, $\mathcal W(I)$ is a group.
\end{lem}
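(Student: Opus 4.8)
The plan is to verify the group axioms directly for the operation defined above. Closure is immediate, since all exponents in the product are reduced modulo $2$, so $gh \in \mathcal{W}(I)$ whenever $g,h \in \mathcal{W}(I)$. I would take the identity to be the element $e$ with $\alpha_i = \beta_{ij} = \gamma_i = 0$ for all $i,j$: substituting $e$ into either slot of the product law makes every cross term $\gamma_i\gamma'_i$ and $\gamma'_i\gamma_j$ vanish (one factor is $0$), so $eg = ge = g$. For inverses, given $g = (t_i^{\alpha_i})(t_{ij}^{\beta_{ij}})(x_i^{\gamma_i})$ I would solve the three systems $\gamma_i + \gamma'_i \equiv 0$, $\alpha_i + \alpha'_i + \gamma_i\gamma'_i \equiv 0$ and $\beta_{ij} + \beta'_{ij} + \gamma'_i\gamma_j \equiv 0 \pmod 2$; using $\gamma_i^2 = \gamma_i$ this forces $\gamma'_i = \gamma_i$, $\alpha'_i = \alpha_i + \gamma_i$ and $\beta'_{ij} = \beta_{ij} + \gamma_i\gamma_j$, and a short substitution confirms that this element is a two-sided inverse.

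The main obstacle is associativity, which is where I expect to spend the real effort. I would fix $g,h,k$ with respective exponents $(\alpha,\beta,\gamma)$, $(\alpha',\beta',\gamma')$, $(\alpha'',\beta'',\gamma'')$, expand both $(gh)k$ and $g(hk)$, and compare componentwise. The $x$-component is plainly $\gamma_i + \gamma'_i + \gamma''_i$ on both sides. For the $t_i$-component both products reduce to $\alpha_i + \alpha'_i + \alpha''_i + \gamma_i\gamma'_i + \gamma_i\gamma''_i + \gamma'_i\gamma''_i$, and for the $t_{ij}$-component both reduce to $\beta_{ij} + \beta'_{ij} + \beta''_{ij} + \gamma'_i\gamma_j + \gamma''_i\gamma_j + \gamma''_i\gamma'_j$. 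The agreement is not a coincidence: it comes precisely from the fact that the cross terms are bilinear in the $\gamma$-coordinates, so the ambiguity introduced by grouping the first two versus the last two factors cancels modulo $2$.

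Conceptually, the cleanest way to organize the whole argument is to read $\mathcal{W}(I)$ as a central extension. Set $A = \prod_{i\in I}\mathbb{Z}_2 \times \prod_{i<j}\mathbb{Z}_2$, consisting of the $t_i$- and $t_{ij}$-coordinates (which are central), and $Q = \prod_{i\in I}\mathbb{Z}/2\mathbb{Z}$, the $x$-coordinates; then the product law takes the shape $(a,q)(a',q') = (a + a' + c(q,q'),\, q+q')$, where $c\colon Q\times Q \to A$ is the map recording the two families of cross terms. From this point of view the identity and inverse computations above express that $c$ is normalized, while associativity is exactly the assertion that $c$ is a $2$-cocycle. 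I would carry out the direct verification but flag this interpretation, both because it explains why bilinearity of $c$ is the decisive property and because it anticipates the role of $\mathcal{W}(I)$ as a profinite group built by generators and relations later in the paper.
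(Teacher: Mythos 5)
Your proof is correct and follows essentially the same route as the paper: direct verification of the identity, the inverse (solving the same mod-2 system, yielding $\gamma'_i=\gamma_i$, $\alpha'_i=\alpha_i+\gamma_i$, $\beta'_{ij}=\beta_{ij}+\gamma_i\gamma_j$), and associativity by componentwise expansion, with both sides reducing to exactly the expressions the paper obtains. The central-extension/2-cocycle interpretation is a nice supplementary observation that the paper does not make, but it does not change the substance of the argument.
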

\begin{proof}
 Let
 \begin{align*}
 g&=(t_i^{\alpha_i})(t_{ij}^{\beta_{ij}})(x_i^{\gamma_i}) \\
 h&=(t_i^{\alpha'_i})(t_{ij}^{\beta'_{ij}})(x_i^{\gamma'_i}) \\
 k&=(t_i^{\alpha''_i})(t_{ij}^{\beta''_{ij}})(x_i^{\gamma''_i}) \\
 1&=(t_i^0)(t_{ij}^0)(x_i^0).
\end{align*}
First of all,
\begin{align*}
 g\cdot1&=(t_i^{\alpha_i+0+\gamma_i\cdot0})(t_{ij}^{\beta_{ij}+0+0\cdot\gamma_j})(x_i^{\gamma_i+0})=g,\\
 1\cdot g&=(t_i^{0+\alpha_i+0\cdot\gamma_i})(t_{ij}^{0+\beta_{ij}+\gamma_i\cdot0})(x_i^{0+\gamma_i})=g,
\end{align*}
hence $1$ is in fact the neutral element of $\mathcal W(I)$. In order to find $g^{-1}$, we need to solve a system of modulo 2 congruences.
\begin{align*}
 gh:=1\Rightarrow
\begin{cases}\alpha_i+\alpha'_i+\gamma_i\gamma'_i\equiv_20 \\ \beta_{ij}+\beta'_{ij}+\gamma'_i\gamma_j\equiv_20 \\
\gamma_i+\gamma'_i\equiv_20 \end{cases}\Rightarrow
\begin{cases}\gamma'_i\equiv_2\gamma_i \\ \beta'_{ij}\equiv_2\beta_{ij}+\gamma_i\gamma_j \\ \alpha'_i\equiv_2\alpha_i+\gamma_i \end{cases}
\end{align*}
Then, taking
$$g^{-1}=(t_i^{\alpha_i+\gamma_i})(t_{ij}^{\beta_{ij}+\gamma_i\gamma_j})(x_i^{\gamma_i})$$
we obtain $g\cdot g^{-1}=g^{-1}\cdot g=1$.

Finally, for associativity, we have
\begin{align*}
 (g\cdot h)\cdot k&=[(t_i^{\alpha_i+\alpha'_i+\gamma_i\gamma'_i})(t_{ij}^{\beta_{ij}+\beta'_{ij}+\gamma'_i\gamma_j})(x_i^{\gamma_i+\gamma'_i})]
\cdot k \\
&=(t_i^{\alpha_i+\alpha'_i+\alpha''_i+\gamma_i\gamma'_i+(\gamma_i+\gamma'_i)\gamma''_i})
(t_{ij}^{\beta_{ij}+\beta'_{ij}+\beta''_{ij}+\gamma'_i\gamma_j+\gamma''_i(\gamma_j+\gamma'_j)})
(x_i^{\gamma_i+\gamma'_i+\gamma''_i}) \\
&=(t_i^{\alpha_i+\alpha'_i+\alpha''_i+\gamma_i\gamma'_i+\gamma_i\gamma''_i+\gamma'_i\gamma''_i})
(t_{ij}^{\beta_{ij}+\beta'_{ij}+\beta''_{ij}+\gamma'_i\gamma_j+\gamma''_i\gamma_j+\gamma''_i\gamma'_j})
(x_i^{\gamma_i+\gamma'_i+\gamma''_i})
\end{align*}
and
\begin{align*}
 g\cdot (h\cdot k)&=g\cdot
[(t_i^{\alpha_i'+\alpha''_i+\gamma'_i\gamma''_i})(t_{ij}^{\beta'_{ij}+\beta''_{ij}+\gamma''_i\gamma'_j})(x_i^{\gamma'_i+\gamma''_i})] \\
&=(t_i^{\alpha_i+\alpha'_i+\alpha''_i+\gamma'_i\gamma''_i+\gamma_i(\gamma'_i+\gamma''_i)})
(t_{ij}^{\beta_{ij}+\beta'_{ij}+\beta''_{ij}+\gamma''_i\gamma'_j+(\gamma'_i+\gamma''_i)\gamma_j})
(x_i^{\gamma_i+\gamma'_i+\gamma''_i}) \\
&=(t_i^{\alpha_i+\alpha'_i+\alpha''_i+\gamma'_i\gamma''_i+\gamma_i\gamma'_i+\gamma_i\gamma''_i})
(t_{ij}^{\beta_{ij}+\beta'_{ij}+\beta''_{ij}+\gamma''_i\gamma'_j+\gamma'_i\gamma_j+\gamma''_i\gamma_j})
(x_i^{\gamma_i+\gamma'_i+\gamma''_i}).
\end{align*}
Thus $(g\cdot h)\cdot k=g\cdot (h\cdot k)$ completing the proof.
\end{proof}

\begin{rem}
$ $
\begin{enumerate}[a -]
    \item Note that, if $|I|=n$, then
$$|\mathcal W(I)|=\left|\prod_{i\in I}\mathbb Z_2\times\prod_{\substack{i,j\in I \\ i\le j}}\mathbb Z_2\times\prod_{i\in
I}\mathbb Z_2 \right|=2^{(n^2+3n)/2}.$$

\item The example above is the free $\mathcal C$ in $n$-generators. In fact: for each $n \in \mathbb{N}$, let $F(n)$ be the free group in $n$-generators, then  $\mathcal W(n)\cong F(n)/\hat{F}(n)$.

\item It follows that the category of finite (discrete) groups $G$ with $\hat{G} = \{1\}$ is a category of $\mathbb Z_2$-modules that is closed by homomorphic images, subgroups and finite products. In particular, $\mathbb{Z}_2 = \mathbb Z/2\mathbb Z, \mathbb{Z}_4 = \mathbb Z/4\mathbb Z$ and $\mathbb{D}_4 \cong \mathbb{Z}_2 \ltimes \mathbb{Z}_4$ (the 8-element dihedral group), are finite $\mathcal C$-groups.
\end{enumerate}
\end{rem}

Lets denote, for $k,l\in I$,
\begin{align*}
 t_k&:=(t_i^{\delta_{ik}})(1_{ij})(1_i) \\
 t_{kl}&:=(1_i)(t_{ij}^{\delta_{(ij)(kl)}})(1_i) \\
 x_k&:=(1_i)(1_{ij})(x_i^{\delta_{ik}})
\end{align*}
where for all $i,j,k,l\in I$, $\delta_{ik}=1$ if $i=k$ and $\delta_{ik}=0$ otherwise; and $\delta_{(ij)(kl)}=1$ if $i=k$ and $j=l$, and $\delta_{(ij)(kl)}=0$ otherwise. After some straightforward calculations we obtain the following results.

\begin{lem}\label{fixms1}
 Consider $t_k,t_{kl},x_k$ as above. Then for all $g,h\in\mathcal W(I)$, with $g=(t_i^{\alpha_i})(t_{ij}^{\beta_{ij}})(x_i^{\gamma_i})$, $h=(t_i^{\alpha'_i})(t_{ij}^{\beta'_{ij}})(x_i^{\gamma'_i})$ and $ z=(t_i^{\alpha''_i})(t_{ij}^{\beta''_{ij}})(x_i^{\gamma''_i})$, we have the following:
\begin{enumerate}[i -]
 \item $t_k\cdot t_k=1$.
 \item $x_k\cdot x_k=t_k$.
 \item If $k<l$ then $[x_k,x_l]=t_{kl}$.
 \item $g^{-1}=(t_i^{\alpha_i+\gamma_i})(t_{ij}^{\beta_{ij}+\gamma_i\gamma_j})(x_i^{\gamma_i})$.
 \item $g^2=(t_i^{\gamma_i})(t_{ij}^{\gamma_i\gamma_j})(1_i)$.
 \item $h^g=ghg^{-1}=(t_i^{\alpha'_i})(t_{ij}^{\beta'_{ij}+\gamma_i\gamma'_j+\gamma'_i\gamma_j})(x_i^{\gamma'_i})$.
 \item $[g,h]=(1_i)(t_{ij}^{\gamma_i\gamma'_j+\gamma'_i\gamma_j})(1_j)$.
 \item $g^4=[g^2,h]=1$.
 \item $[[z,w],h]=1$.
\end{enumerate}
\end{lem}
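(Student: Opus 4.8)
The plan is to reduce every item to a direct application of the multiplication law
$$gh=(t_i^{\alpha_i+\alpha'_i+\gamma_i\gamma'_i})(t_{ij}^{\beta_{ij}+\beta'_{ij}+\gamma'_i\gamma_j})(x_i^{\gamma_i+\gamma'_i}),$$
organizing the bookkeeping around one structural observation: the projection onto the $x$-coordinates, $\pi\colon\mathcal W(I)\to\bigoplus_{i\in I}\mathbb Z/2\mathbb Z$, $g\mapsto(\gamma_i)_{i\in I}$, is a surjective homomorphism whose kernel $T$ (the elements with all $\gamma_i=0$) is central and of exponent $2$. Centrality is immediate from the formula: when either factor has trivial $x$-part the cross terms $\gamma_i\gamma'_j$ and $\gamma'_i\gamma_j$ vanish, so elements of $T$ multiply coordinatewise and commute with everything; and squaring an element of $T$ kills the $t_i$ and $t_{ij}$ exponents, since they double to $0$ modulo $2$. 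This exhibits $\mathcal W(I)$ as a central extension of an elementary abelian $2$-group by $T$, hence a group of exponent dividing $4$ and nilpotency class at most $2$, which is exactly the qualitative content recorded by items (viii) and (ix).

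With this in hand the first block of items is pure substitution. For (i), (ii) and (iii) I would plug the coordinate vectors of $t_k$, $x_k$ (and of $x_l$) into the product: (i) and (ii) are one-line checks, and for (iii) the only point to watch is that, for $k<l$, the symmetric expression $\delta_{ik}\delta_{jl}+\delta_{il}\delta_{jk}$ restricted to indices $i<j$ collapses to $\delta_{(ij)(kl)}$, because the term $\delta_{il}\delta_{jk}$ would force $i=l>k=j$, contradicting $i<j$. Item (iv) is literally the inverse already computed in the proof that $\mathcal W(I)$ is a group, so nothing new is needed. Item (v) is the specialization $h=g$ of the product, using $\gamma_i^2=\gamma_i$ and $2\alpha_i\equiv 0$; note that it already shows $g^2\in T$. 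Item (vi) is then the two-step computation $(gh)g^{-1}$ with $g^{-1}$ taken from (iv).

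Items (vii), (viii) and (ix) come almost for free from the structural picture. Computing $[g,h]=ghg^{-1}h^{-1}=h^{g}h^{-1}$ and reading off (vi) gives the $t_{ij}$-exponent $\gamma_i\gamma'_j+\gamma'_i\gamma_j$ while all $t_i$- and $x_i$-exponents cancel, so $[g,h]\in T$; this is (vii), and it incidentally shows commutators are central of order $2$, so the two standard commutator conventions agree here. Then (viii) follows since $g^2\in T$ has exponent $2$, whence $g^4=(g^2)^2=1$, and is central, whence $[g^2,h]=1$; and (ix) follows since $[z,w]\in T$ is central, so $[[z,w],h]=1$. The \emph{only} real obstacle is notational discipline: keeping the $i<j$ convention for the $t_{ij}$ index consistent throughout, tracking which factor contributes the asymmetric cross term $\gamma'_i\gamma_j$ in each product, and fixing one commutator convention once and for all. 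None of this is deep, but it is where an error would hide, so I would verify (iii) and (vii) against each other, as above, as a consistency check before declaring the computation complete.
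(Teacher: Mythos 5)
Your proof is correct and takes essentially the same route as the paper, which states this lemma as the outcome of ``straightforward calculations'' with the multiplication law; your organizing observation that the $x$-coordinate projection has a central kernel $T$ of exponent $2$ is a clean way to package exactly those calculations, and it makes items (viii) and (ix) immediate. One immaterial slip: that projection surjects onto the full product $\prod_{i\in I}\mathbb Z/2\mathbb Z$, not the direct sum $\bigoplus_{i\in I}\mathbb Z/2\mathbb Z$ (these differ for infinite $I$), but nothing in your argument depends on this since you only use that squares land in $T$ and that $T$ is central of exponent $2$, both of which you verify directly.
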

%\begin{proof}
 %$ $
 %\begin{enumerate}[i -]
  %   \item By direct calculation we get
   %  \begin{align*}
    %     t_k\cdot t_k=[(t_i^{\delta_{ik}})(1_{ij})(1_i)]\cdot[(t_i^{\delta_{ik}})(1_{ij})(1_i)]=(t_i^{\delta_{ik}+\delta_{ik}+0\cdot0})(1_{ij})(1_i)=(t_i^0)(1_{ij})(1_i)=1.
     %\end{align*}
     %\item By direct calculation we get
     %\begin{align*}
      %   x_k\cdot x_k=[(1_i)(1_{ij})(x_i^{\delta_{ik}})]\cdot
       %  [(1_i)(1_{ij})(x_i^{\delta_{ik}})]=
        % (t_i^{\delta_{ik}\delta_{ik}})(t_{ij}^{\delta_{ik}\delta_{jk}})(x_i^{\delta_{ik}+\delta_{ik}})=(t_i^{\delta_{ik}})(1_{ij})(1_i)=t_k.
     %\end{align*}
     %\item By direct calculation we get
     %\begin{align*}
        % x_k\cdot x_l=[(1_i)(1_{ij})(x_i^{\delta_{ik}})]\cdot
        % [(1_i)(1_{ij})(x_i^{\delta_{il}})]=
        % (t_i^{\delta_{ik}\delta_{il}})(t_{ij}^{\delta_{ik}\delta_{jl}})(x_i%^{\delta_{ik}+\delta_{il}}).
     %\end{align*}
     %Since $\delta_{ik}\delta_{jl}=1$ if and only if $i=k$ and $j=l$, we get
     %$$x_k\cdot x_l=(t_i^{\delta_{ik}\delta_{il}})(t_{ij}^{\delta_{ik}\delta_%{jl}})(x_i^{\delta_{ik}+\delta_{il}})
%     =(1_i)(t_{ij}^{\delta_{ik}\delta_{jl}})(1_i)=t_{kl}.$$
%     \item We already did in Lemma \ref{fixms1}.
%     \item
% \end{enumerate}
%\end{proof}

\begin{rem}
In \cite{minac1996witt}, they simply denote
$$g=(x_i^{2\alpha_i})([x_i,x_j]^{\beta_{ij}})(x_i^{\gamma_i}).$$
But we are not going to use this simplification here.
%However, in order to easy comparison with results here and there, we summarize the formulas in terms of both notations: for $g,h\in\mathcal W(I)$, with $g=(t_i^{\alpha_i})(t_{ij}^{\beta_{ij}})(x_i^{\gamma_i})$,
 %$h=(t_i^{\alpha'_i})(t_{ij}^{\beta'_{ij}})(x_i^{\gamma'_i})$ and $ z=(t_i^{\alpha''_i})(t_{ij}^{\beta''_{ij}})(x_i^{\gamma''_i})$, we have:
 %\begin{align*}
  %   gh&=(t_i^{\alpha_i+\alpha'_i+\gamma_i\gamma'_i})(t_{ij}^{\beta_{ij}+\beta'_{ij}+\gamma'_i\gamma_j})(x_i^{\gamma_i+\gamma'_i})=
   %  (x_i^{2(\alpha_i+\alpha'_i+\gamma_i\gamma'_i)})([x_i,x_j]^{\beta_{ij}+\beta'_{ij}+\gamma'_i\gamma_j})(x_i^{\gamma_i+\gamma'_i})\\
    % g^{-1}&=(t_i^{\alpha_i+\gamma_i})(t_{ij}^{\beta_{ij}+\gamma_i\gamma_j})(x_i^{\gamma_i})=(x_i^{2(\alpha_i+\gamma_i)})([x_i,x_j]^{\beta_{ij}+\gamma_i\gamma_j})(x_i^{\gamma_i})\\
     %g^2&==\\
     %h^g&=ghg^{-1}==\\
     %[g,h]&=
 %\end{align*}
%where the exponents are taken modulo 2.
\end{rem}

Now, for each $i,j\in I$, consider the following three sets:
\begin{align*}
 M_i&:=\left\lbrace g=(t_i^{\alpha_i})(t_{ij}^{\beta_{ij}})(x_i^{\gamma_i})
\in\mathcal W(I):\gamma_i=0\right\rbrace, \\
 S_i&:=\left\lbrace g=(t_i^{\alpha_i})(t_{ij}^{\beta_{ij}})(x_i^{\gamma_i})
\in\mathcal W(I):\alpha_i=\gamma_i=0\right\rbrace, \\
 D_{ij}&:=\left\lbrace g=(t_i^{\alpha_i})(t_{ij}^{\beta_{ij}})(x_i^{\gamma_i})
\in\mathcal W(I):\beta_{ij}=\gamma_i=\gamma_j=0\right\rbrace.
\end{align*}
Now, consider the following families
\begin{align*}
 M(I)&:=\{M_i: i\in I\},\, S(I):=\{S_i: i\in I\},\, D(I):=\{D_{ij} : i,j\in I,  i\le j\},\\
 V&:=M(I)\cup S(I)\cup D(I).
\end{align*}

\begin{prop}\label{fixms2}
 Let $i,j\in I$.
 \begin{enumerate}[a -]
  \item $M_i$ is a maximal normal subgroup of $\mathcal W(I)$ such that $\mathcal W(I)/M_i\cong\mathbb Z_2$.
  \item $S_i$ is a normal subgroup of $\mathcal W(I)$ such that $S_i\subseteq M_i$ and $\mathcal W(I)/S_i\cong\mathbb Z_4$.
  \item $D_{ij}$ is a normal subgroup of $\mathcal W(I)$ such that $D_{ij}\subseteq M_i\cap M_j$ and $\mathcal W(I)/D_{ij}\cong \mathbb{D}_4$.
  \item $\bigcap V=\{1\}$.
 \end{enumerate}
\end{prop}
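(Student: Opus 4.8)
The plan is to treat all four items as coordinate computations inside the explicit model $\mathcal W(I)$, using the formulas gathered in Lemma \ref{fixms1}. The unifying remark is that, by the conjugation formula \ref{fixms1}(vi), conjugating $h$ by $g$ leaves the $\alpha'$- and $\gamma'$-coordinates of $h$ unchanged and alters only its $\beta'$-coordinates, replacing $\beta'_{kl}$ by $\beta'_{kl}+\gamma_k\gamma'_l+\gamma'_k\gamma_l$. Since $M_i$ and $S_i$ are defined purely by conditions on the $\alpha$- and $\gamma$-coordinates, they are automatically stable under conjugation; and for $D_{ij}$ the only $\beta$-condition is $\beta_{ij}=0$, which persists because any $h\in D_{ij}$ has $\gamma'_i=\gamma'_j=0$, killing the perturbation $\gamma_i\gamma'_j+\gamma'_i\gamma_j$. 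Closure under products is read off from the multiplication law and closure under inverses from Lemma \ref{fixms1}(iv), so each of $M_i,S_i,D_{ij}$ is a (closed) normal subgroup. The inclusions $S_i\subseteq M_i$ and $D_{ij}\subseteq M_i\cap M_j$ follow at once from comparing the defining conditions.

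For the quotient in (a) I note that $\pi_i\colon g\mapsto\gamma_i$ is a homomorphism $\mathcal W(I)\to\mathbb Z_2$ (the $x$-exponents add under multiplication), it is onto since $\pi_i(x_i)=1$, and its kernel is exactly $M_i$; hence $\mathcal W(I)/M_i\cong\mathbb Z_2$, and a subgroup of index $2$ is maximal. For (b) a short computation with Lemma \ref{fixms1}(iv) shows that $g^{-1}g'\in S_i$ precisely when $\alpha_i=\alpha'_i$ and $\gamma_i=\gamma'_i$, so the cosets are indexed by $(\alpha_i,\gamma_i)\in(\mathbb Z/2)^2$ and there are four of them. Since $x_i^2=t_i\notin S_i$ (because $t_i$ has $\alpha_i=1$) while $x_i^4=1$ by Lemma \ref{fixms1}(i)--(ii), the image of $x_i$ has order $4$, so the quotient is cyclic and $\mathcal W(I)/S_i\cong\mathbb Z_4$.

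The crux is (c), where I must decide which group of order $8$ occurs. Take $i<j$ (harmless, since the second factor of $\mathcal W(I)$ is indexed by pairs $i<j$). The same reasoning shows $g^{-1}g'\in D_{ij}$ iff $g,g'$ agree in the coordinates $\beta_{ij},\gamma_i,\gamma_j$, so $\mathcal W(I)/D_{ij}$ has order $8$. Instead of matching the induced law to a table, I will identify generators and relations: by Lemma \ref{fixms1}(ii) the squares $x_i^2=t_i$ and $x_j^2=t_j$ lie in $D_{ij}$, so the images $\bar x_i,\bar x_j$ are two distinct nontrivial involutions; and by Lemma \ref{fixms1}(v) one has $(x_ix_j)^2=t_it_jt_{ij}$, whose image equals $\overline{t_{ij}}\neq 1$ while $(x_ix_j)^4=1$ by Lemma \ref{fixms1}(viii), so $\overline{x_ix_j}$ has order $4$. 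A finite group generated by two distinct involutions $s,t$ is dihedral of order $2\,\mathrm{ord}(st)$; with $\mathrm{ord}(\bar x_i\bar x_j)=4$ this gives $\langle\bar x_i,\bar x_j\rangle\cong\mathbb D_4$, a subgroup of order $8$ inside an order-$8$ group, hence the whole quotient. The point to be careful about is exactly that $\overline{x_ix_j}$ has order $4$ rather than $2$: this is what excludes the abelian possibilities and forces $\mathbb D_4$, and it rests on the commutator relation $[x_i,x_j]=t_{ij}$ of Lemma \ref{fixms1}(iii) surviving modulo $D_{ij}$.

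Finally, (d) is immediate: if $g\in\bigcap V$, then $g\in S_i$ for all $i$ forces every $\alpha_i$ and every $\gamma_i$ to vanish, and $g\in D_{ij}$ for all $i<j$ forces every $\beta_{ij}$ to vanish; thus all coordinates of $g$ are $0$ and $g=1$. (The sets $M_i$ contribute nothing new here, since $S_i\subseteq M_i$.)
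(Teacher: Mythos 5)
Your proof is correct, and for the decisive item (c) it follows a genuinely different route than the paper. For (a), (b), (d) the two arguments are essentially the same coordinate computations: the paper verifies that $\theta^{-1}\tau$ lands back in the subgroup to count cosets, while you package the same counts via the coordinate maps $g\mapsto\gamma_i(g)$ and $g\mapsto(\alpha_i(g),\gamma_i(g))$; in (b) you use the explicit generator $x_i$ where the paper uses an arbitrary $\sigma\notin M_i$, and your version is actually tighter — the paper only records $\overline{\sigma}^{\,3}\neq\overline{1}$, which by itself does not exclude order $2$, whereas you check directly that $\overline{x_i}^{\,2}=\overline{t_i}\neq\overline{1}$. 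The real divergence is in (c): the paper chooses abstract coset representatives $\tau_1,\tau_2,\tau_3,\tau_4$, writes out the full $8\times 8$ multiplication table of $\mathcal W(I)/D_{ij}$, and then matches it against the presentation $\langle r,s: r^4=s^2=(sr)^2=1\rangle$; you instead exhibit the images of the canonical generators $x_i,x_j$ as two distinct nontrivial involutions (using $x_i^2=t_i,\ x_j^2=t_j\in D_{ij}$), show their product has order exactly $4$ (since $(x_ix_j)^2=t_it_jt_{ij}\equiv t_{ij}\not\equiv 1 \bmod D_{ij}$, i.e., the commutator relation $[x_i,x_j]=t_{ij}$ of Lemma \ref{fixms1} survives in the quotient), and invoke the standard fact that a finite group generated by two distinct involutions $s,t$ is dihedral of order $2\,\mathrm{ord}(st)$; comparison of orders then forces the whole quotient to be $\mathbb{D}_4$. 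Your route is shorter, avoids the table entirely, and makes visible exactly which structural feature forces non-commutativity; the paper's table, on the other hand, is self-contained (it does not rely on the dihedral-generation fact) and produces explicit coset representatives that are reused later in the paper (e.g., in the proof of Theorem \ref{standardteo}). You also spell out normality via the conjugation formula of Lemma \ref{fixms1}(vi) — noting that conjugation preserves the $\alpha$- and $\gamma$-coordinates and perturbs $\beta_{ij}$ only by $\gamma_i\gamma'_j+\gamma'_i\gamma_j$, which vanishes on $D_{ij}$ — a point the paper compresses into one sentence.
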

\begin{proof}
We establish the following notation: let $g\in\mathcal W(I)$. Then $g=(t_i^{\alpha_i})(t_{ij}^{\beta_{ij}})(x_i^{\gamma_i})$ for suitable $\alpha_i,\beta_{ij},\gamma_i\in\{0,1\}$. We denote
\begin{align*} \alpha_i(g):=\alpha_i,\,\beta_{ij}(g):=\beta_{ij}\mbox{ and }\gamma_i(g):=\gamma_i.
\end{align*}
In this sense, if $g,h\in\mathcal W(I)$ then
$$gh=(t_i^{\alpha_i(g)+\alpha_i(h)+\gamma_i(g)\gamma_i(h)})(t_{ij}^{\beta_{ij}(g)+\beta_{ij}(h)+\gamma_i(h)\gamma_j(g)})(x_i^{\gamma_i(g)+\gamma_i(h)}).$$

Moreover, using the formulas in Lemma \ref{fixms1}, we obtain that for all $i,j\in I$, $M_i$, $S_i$ and $D_{ij}$ are proper normal subgroups of $\mathcal W(I)$.

\begin{enumerate}[a -]
    \item Let $\tau,\theta\in\mathcal W(I)\setminus M_i$. Then $\gamma_i(\tau)=\gamma_i(\theta)=1$ and   $$\gamma_i(\theta^{-1}\tau)=\gamma_i(\theta)+\gamma_i(\tau)=0.$$
     Therefore $\theta^{-1}\tau\in M_i$ which imply $$\mathcal W(I)/M_i=\{\overline1,\overline\tau\}\cong\mathbb Z_2.$$

    \item Note that $S_i\subseteq M_i$. Now, let $\tau,\theta\in M_i\setminus S_i$. Then $\alpha_i(\tau)=\alpha_i(\theta)=1$ and
    $$\alpha_i(\theta^{-1}\tau)=[\alpha_i(\theta)+\gamma_i(\theta)]+\alpha_i(\tau)+\gamma_i(\theta)\gamma_i(\tau)=0.$$
    Hence $\theta^{-1}\tau\in S_i$, and $M_i/S_i\cong\mathbb Z_2$. So we have an exact sequence
    $$1\rightarrow M_i/S_i\xrightarrow{\iota}\mathcal W(I)/S_i\xrightarrow{\pi}\mathcal W(I)/M_i\rightarrow1,$$
    where $\iota$ and $\pi$ are respectively the canonical inclusion and canonical projection. Moreover
    $$\mathcal W(I)/S_i\cong
    M_i/S_i\rtimes \mathcal W(I)/M_i
    \mbox{ or }
    \mathcal W(I)/S_i\cong
    \mathcal W(I)/M_i\ltimes M_i/S_i.$$
    In both cases,
    $$|\mathcal W(I)/S_i|=|M_i/S_i\times \mathcal W(I)/M_i|=|\mathbb Z_2\times\mathbb Z_2|=4.$$
    Now let $\sigma\in\mathcal M\setminus M_i$. We have $\sigma^4=1\in S_i$ with
    \begin{align*}
      \gamma_i(\sigma^3)&:=\gamma_i(\sigma)=1.
    \end{align*}
    Then $\overline\sigma^3\ne\overline1$ in $\mathcal W(I)/S_i$, which proves that $\mathcal W(I)/S_i$ has an element of order 4. Then
    $$\mathcal W(I)/S_i\cong\mathbb Z_4.$$

    \item Remember that
    $$\mathbb{D}_4:=\langle r,s:r^4=s^2=(sr)^2=1\rangle.$$
    Using the same argument of item (b), we get $(M_i\cap M_j)/D_{ij}\cong\mathbb Z_2$ and $|\mathcal W(I)/(M_i\cap M_j)|=4$ with
    $$|\mathcal W(I)/D_{ij}|=|(M_i\cap M_j)/D_{ij}\times \mathcal W(I)/(M_i\cap M_j)|=8.$$
    More specifically, if we get $\tau_1\in (M_i\cap M_j)\setminus D_{ij}$, $\tau_2,\theta_2\in M_i\setminus M_j$, $\tau_3,\theta_3\in M_j\setminus M_i$ and $\tau_4,\theta_4\in\mathcal W(I)\setminus(M_i\cup M_j)$, with
    \begin{align*}
        &\beta_{ij}(\tau_2)=\beta_{ij}(\tau_3)=\beta_{ij}(\tau_4)=1 \\
        &\beta_{ij}(\theta_2)=\theta_{ij}(\theta_3)=\theta_{ij}(\tau_4)=0
    \end{align*}
    then the following equations hold in $\mathcal W(I)/D_{ij}$
    \begin{align*}
    &\overline{\tau_1}^2=\overline1\\
        &\overline{\tau^{-1}_4\theta_4}=\overline{\tau^{-1}_3\theta_3}=\overline{\tau^{-1}_2\theta_2}=\overline{\tau_1}
    \end{align*}
    Then
    $$\mathcal W(I)/D_{ij}=\{\overline{1},\overline{\tau_1},\overline{\tau_2},\overline{\tau_3},\overline{\tau_4},\overline{\tau_1\tau_2},\overline{\tau_1\tau_3},\overline{\tau_1\tau_4}\},$$
    with the following table of multiplication:
    \begin{center}
\begin{tabular}{|l|l|l|l|l|l|l|l|l|}
\hline
$\cdot$ & $\overline{\tau_1}$ & $\overline{\tau_2}$ & $\overline{\tau_3}$ & $\overline{\tau_4}$ & $\overline{\tau_1\tau_2}$ & $\overline{\tau_1\tau_3}$ & $\overline{\tau_1\tau_4}$ \\
\hline
$\overline{\tau_1}$ & $\overline{1}$ & $\overline{\tau_1\tau_2}$ & $\overline{\tau_1\tau_3}$ & $\overline{\tau_1\tau_4}$ & $\overline{\tau_2}$ & $\overline{\tau_3}$ & $\overline{\tau_4}$ \\
\hline
$\overline{\tau_2}$ & $\overline{\tau_1\tau_2}$ &  $\overline1$ & $\overline{\tau_1\tau_4}$ & $\overline{\tau_3}$ & $\overline{\tau_1}$ & $\overline{\tau_4}$ & $\overline{\tau_1\tau_3}$ \\
\hline
$\overline{\tau_3}$ & $\overline{\tau_1\tau_3}$ & $\overline{\tau_4}$ & $\overline1$ & $\overline{\tau_1\tau_2}$ & $\overline{\tau_1\tau_4}$ & $\overline{\tau_1}$ & $\overline{\tau_2}$ \\
\hline
$\overline{\tau_4}$ & $\overline{\tau_1\tau_4}$ & $\overline{\tau_1\tau_3}$ & $\overline{\tau_2}$ & $\overline{\tau_1}$ & $\overline{\tau_3}$ & $\overline{\tau_1\tau_2}$ & $\overline1$ \\
\hline
$\overline{\tau_1\tau_2}$ & $\overline{\tau_2}$ & $\overline{\tau_1}$ & $\overline{\tau_4}$ & $\overline{\tau_1\tau_3}$ & $\overline1$ & $\overline{\tau_1\tau_4}$ & $\overline{\tau_3}$ \\
\hline
$\overline{\tau_1\tau_3}$ & $\overline{\tau_3}$ & $\overline{\tau_1\tau_4}$ & $\overline{\tau_1}$ & $\overline{\tau_2}$ & $\overline{\tau_4}$ & $\overline1$ & $\overline{\tau_1\tau_2}$ \\
\hline
$\overline{\tau_1\tau_4}$ & $\overline{\tau_4}$ & $\overline{\tau_3}$ & $\overline{\tau_1\tau_2}$ & $\overline1$ & $\overline{\tau_1\tau_3}$ & $\overline{\tau_2}$ & $\overline{\tau_1}$ \\
\hline
\end{tabular}
\end{center}
Then denoting $r=\tau_4$ and $s=\tau_2$ we get
$r^4=s^2=(sr)^2=1$ and
\begin{align*}
    \overline1&=s^2=r^4,\\
    \overline{\tau_1}&=r^2,\\
    \overline{\tau_2}&=s,\\
    \overline{\tau_3}&=sr,\\
    \overline{\tau_4}&=r,\\
    \overline{\tau_1\tau_2}&=r^2s,\\
    \overline{\tau_1\tau_3}&=r^2sr=rs,\\
    \overline{\tau_1\tau_4}&=r^3;
\end{align*}
witnessing the desired isomorphism.

    \item By the very definition
    \begin{align*}
        \bigcap V=\left\lbrace g=(t_i^{\alpha_i})(t_{ij}^{\beta_{ij}})(x_i^{\gamma_i})\in\mathcal W(I):\beta_{ij}=\alpha_i=\gamma_i=\gamma_j=0\mbox{ for all }i,j\in I\right\rbrace=\{1\}.
    \end{align*}
\end{enumerate}
\end{proof}

Let $V$ as in Proposition \ref{fixms2} and let $P_{fin}(V)$ be the set of finite subsets of $V$ and for $A\in P_{fin}(V)$, denote
$$X_A=W_I/\bigcap A.$$
Note that $P_{fin}(V)$ is a directed poset with the partial ordering induced by inclusion. If $B\subseteq A$, denote by $\pi_{AB}:X_A\rightarrow X_B$ the canonical projection. Then $(X_A,\pi_{AB},P_{fin}(V))$ is a projective system, in the sense that $\pi_{AA} = id_{X_{A}}$ and, if $E\subseteq B\subseteq A$, then $\pi_{AE}=\pi_{BE}\circ\pi_{AB}$.

\begin{prop}\label{fixms3}
 The canonical ``diagonal'' function
 $$\mathcal W(I)\rightarrow\varprojlim_{A\in P_{fin}(V)}X_A,\mbox{ which is given by the rule }g\mapsto\left(g/X_A\right)_{A\in P_{fin}(V)}$$
 is an abstract group isomorphism, so, by transport, $\mathcal W(I)$ is a (topological) profinite 2-group with $P_{fin}(V)$ as fundamental system of clopen neighborhoods of $\{1\}$.
\end{prop}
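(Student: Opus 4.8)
The plan is to show that the diagonal map $\phi\colon\mathcal W(I)\to\varprojlim_A X_A$ is an abstract group isomorphism and then transport the profinite topology along it. That $\phi$ is a homomorphism is immediate, since each projection $\mathcal W(I)\to X_A=\mathcal W(I)/\bigcap A$ is a quotient homomorphism and the inverse limit is the universal target of the resulting compatible cone. For injectivity I would compute $\ker\phi=\bigcap_{A\in P_{fin}(V)}\bigcap A$; as every singleton $\{N\}$ with $N\in V$ lies in $P_{fin}(V)$, this kernel is contained in $\bigcap_{N\in V}N=\bigcap V$, which equals $\{1\}$ by Proposition \ref{fixms2}(d). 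Thus $\phi$ is injective (equivalently, the topology below will be Hausdorff).

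The substantive step is surjectivity; for infinite $I$ this is precisely the claim that $\mathcal W(I)$ is already complete for the filtration $\{\bigcap A\}$. Given a compatible family $(x_A)_A$, I would reconstruct a preimage from the singleton quotients: from $X_{\{S_i\}}\cong\mathbb Z_4$ read off $\alpha_i,\gamma_i$ and from $X_{\{D_{ij}\}}\cong\mathbb D_4$ read off $\beta_{ij}$ (Proposition \ref{fixms2}(b),(c)), and set $g:=(t_i^{\alpha_i})(t_{ij}^{\beta_{ij}})(x_i^{\gamma_i})$. The reduction that makes this work is that for $A=\{N_1,\dots,N_k\}$ the canonical map $X_A=\mathcal W(I)/\bigcap A\hookrightarrow\prod_{l}\mathcal W(I)/N_l$ is injective, so that $g\bmod\bigcap A=x_A$ for all $A$ follows once $g\bmod N=x_{\{N\}}$ for every singleton $N\in V$.

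Checking the singletons is where the real obstacle lies, and it is here that the inclusions among the $M_i,S_i,D_{ij}$ and the transition maps of the system must be used carefully. For $N=S_i$ the match is immediate by the choice of $\alpha_i,\gamma_i$; for $N=M_i$ it reads $g\bmod M_i=\pi(x_{\{S_i\}})=x_{\{M_i\}}$, where $\pi\colon X_{\{S_i\}}\to X_{\{M_i\}}$ is the transition map (available since $S_i\subseteq M_i$) and the last equality is compatibility of the family; for $N=D_{ij}$ the $\beta_{ij}$-entry matches by construction, while the $\gamma_i,\gamma_j$-entries match because $x_{\{S_i\}}$ and $x_{\{D_{ij}\}}$ have the common image $x_{\{M_i\}}$ under the transitions induced by $S_i,D_{ij}\subseteq M_i$ (and symmetrically for $j$). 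The delicacy is exactly this coherence bookkeeping: verifying that the exponents extracted from the $S_i$ and $D_{ij}$ quotients are forced to agree on their shared $\gamma$-data through the common quotients $M_i$, which is the only place the twisted multiplication and the lattice of subgroups genuinely interact.

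Finally, granting $\phi$ an abstract isomorphism, I would transport the topology. Each $X_A$ injects into the finite product $\prod_{N\in A}X_{\{N\}}$ whose factors lie among $\mathbb Z_2,\mathbb Z_4,\mathbb D_4$, so $X_A$ is a finite $2$-group and $\varprojlim_A X_A$ is a profinite $2$-group. Transporting along $\phi$ endows $\mathcal W(I)$ with the structure of a profinite $2$-group whose fundamental system of clopen neighborhoods of $1$ is $\{\bigcap A:A\in P_{fin}(V)\}$, the Hausdorff condition being the equality $\bigcap V=\{1\}$ used above.
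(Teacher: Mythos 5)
Your proposal is correct, but your surjectivity argument takes a genuinely different route from the paper's. The paper argues ``softly'': since each $\pi_A\colon\mathcal W(I)\to X_A$ is onto, the image of the diagonal map is dense in $\varprojlim_{A}X_A$ (citing Lemma 1.1.7 of \cite{ribes2000profinite}), and it then asserts that the image is \emph{also closed}, concluding via Corollary 1.1.8 of the same reference. That closedness assertion is the delicate point of the paper's proof: it would follow from compactness of $\mathcal W(I)$, which is essentially what the proposition is in the process of establishing (it can be repaired by noting that $\mathcal W(I)$ is, as a set, a product of finite discrete sets, hence compact in the product topology, for which every $\pi_A$ is continuous since each $M_i,S_i,D_{ij}$ is determined by finitely many coordinates). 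Your reconstruction argument sidesteps this issue entirely and is purely algebraic: you build the preimage of a compatible family from its singleton coordinates --- using that $\mathcal W(I)$ is a \emph{full} product, so any assignment of exponents $\alpha_i,\beta_{ij},\gamma_i$ defines an element --- reduce arbitrary $A$ to singletons via the injection $X_A\hookrightarrow\prod_{N\in A}X_{\{N\}}$, and verify coherence of the $\gamma$-data through the common quotients $X_{\{M_i\}}$; this is exactly where the inclusions $S_i,D_{ij}\subseteq M_i$ enter, together with the fact (implicit in your write-up, and worth one line of computation with the twisted multiplication) that cosets modulo $S_i$, $M_i$, $D_{ij}$ are classified by $(\alpha_i,\gamma_i)$, $\gamma_i$, $(\beta_{ij},\gamma_i,\gamma_j)$ respectively. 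What you lose in brevity you gain in self-containedness: your proof makes the ``completeness of $\mathcal W(I)$ for the filtration'' explicit rather than outsourcing it to a topological lemma whose hypotheses are not yet in place. One small formal point: in the system indexed by $(P_{fin}(V),\subseteq)$ there is no arrow $X_{\{S_i\}}\to X_{\{M_i\}}$, because $\{M_i\}\not\subseteq\{S_i\}$; the map you call the transition map is the canonical projection $\mathcal W(I)/S_i\to\mathcal W(I)/M_i$, and the identity $\pi\bigl(x_{\{S_i\}}\bigr)=x_{\{M_i\}}$ follows from compatibility at the index $\{S_i,M_i\}\in P_{fin}(V)$, whose associated quotient is again $\mathcal W(I)/S_i$ since $S_i\cap M_i=S_i$. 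This is a one-line fix, not a gap.
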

\begin{proof}
 Let $X=\prod_{A\in P_{fin}(V)}X_A$ and $\pi_A:X\rightarrow X_A$ be the canonical projection. Denote $\Delta:\mathcal W(I)\rightarrow X$ the morphism given by the rule $\Delta(g):=\left(g/X_A\right)_{A\in P_{fin}(V)}$. This morphism $\Delta$ is injective since
 $$\mbox{Ker}(\Delta)=\bigcap_{A\in P_{fin}(V)}=\{1\}.$$
 Now, let $\overline g=\left(g/X_A\right)_{A\in P_{fin}(V)}\in\mbox{Im}(\Delta)$. If $B\subseteq A$, we get
 $$\overline g_B=g/X_B=(g/X_A)/X_B=(\pi_{AB}(g))/X_B.$$
 Moreover $\mbox{Im}(\Delta)\subseteq\varprojlim_{A\in P_{fin}(V)}X_A$. To prove the surjectivity of $\Delta$, consider the morphism $\pi_A:\mathcal W(I)\rightarrow X_A$ given by the canonical projection. Then $(\mathcal W(I),\pi_A)$ is a compatible system of surjective morphisms where $\Delta$ is the exact morphism induced by $(\mathcal W(I),\pi_A)$. Then $\mbox{Im}(\Delta)$ is a dense subset of $\varprojlim_{A\in P_{fin}(V)}X_A$ (for instance, see Lemma 1.1.7 of \cite{ribes2000profinite}) which is also closed. If $\varphi_A:\varprojlim_{A\in P_{fin}(V)}X_A\rightarrow X_A$ denote the projection, we have a new projective system $(\varphi_A(\mbox{Im}(\Delta),\pi_{AB}|_{\varphi_A(\mbox{Im}(\Delta)})$. Then (using Corollary 1.1.8 of \cite{ribes2000profinite}) we get
 $$\mbox{Im}(\Delta)=\varprojlim_{A\in P_{fin}(V)}\varphi_A(\mbox{Im}(\Delta)=\overline{\mbox{Im}(\Delta)}=\varprojlim_{A\in P_{fin}(V)}X_A.$$
 Moreover
 $$\{\mbox{Ker}(\pi_A):\mathcal W(I)\rightarrow X_A\}_{A\in P_{fin}(V)}=P_{fin}(V)$$
 is a fundamental system of neighborhoods of $\{1\}$.
\end{proof}

Lets invoke some terminology from the theory of profinite groups:
\begin{defn}
Let $G$ be a profinite group.
\begin{enumerate}[i -]
    \item We say that $X$ \textbf{generates $G$ as a profinite group} if $G=\overline{\langle G\rangle}$. In that case, we call $X$ a set of \textbf{topological generators} of $G$.

    \item We say that $X\subseteq G$ \textbf{converges to 1} if every open subgroup $U$ of $G$ contains all but a finite number of the elements in $X$.

    \item Let $G$ be a profinite group. The \textbf{Frattini subgroup} of $G$, notation $\Phi(G)$, is the intersection of all its maximal open subgroups.
\end{enumerate}
\end{defn}

\begin{fat}
Let $G$ be a profinite group.
\begin{enumerate}[i -]
    \item $G$ is compact, Hausdorff and totally disconnected (has a topological basis of clopens).

    \item A subgroup $U$ is open if and only if is closed of finite index (Lemma 2.1.2 of \cite{ribes2000profinite}).

    \item A closed subgroup $H$ of a profinite group $G$ is the intersection of all open subgroups of $G$ containing $H$. If $H$ is normal, then $H$ is the intersection of all open normal subgroups of $G$ containing $H$ (Proposition 2.1.4 of \cite{ribes2000profinite}).

    \item A maximal closed subgroup is necessarily open.

    \item $\Phi(G)$ is a characteristic subgroup of $G$: for every automorphism $\psi:G\rightarrow G$ of $G$ we have $\psi[\Phi(G)]=\Phi(G)$.

    \item If $h:H\rightarrow G$ is a continuous homomorphism of pro-2-groups then $h[\Phi(H)]\subseteq\Phi[G]$.
\end{enumerate}
\end{fat}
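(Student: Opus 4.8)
The plan is to treat items (i)--(iii) as standard facts from the theory of profinite groups: (i) is immediate from the realization of $G$ as an inverse limit of finite discrete groups, i.e. a closed subgroup of a Tychonoff product of finite discrete spaces, which is compact, Hausdorff and totally disconnected, properties inherited by closed subspaces; (ii) and (iii) are exactly the cited Lemma 2.1.2 and Proposition 2.1.4 of \cite{ribes2000profinite}. The genuinely substantive parts are (iv), (v) and (vi). For the last one I would first isolate an auxiliary observation: in a pro-$2$-group, every maximal open subgroup $M$ is normal of index $2$. Indeed, since $M$ is open it contains an open normal subgroup $N$ of $G$; then $G/N$ is a finite $2$-group in which $M/N$ is maximal, so by the standard property of finite $p$-groups $M/N$ is normal of index $2$, whence so is $M$. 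Consequently the maximal open subgroups of a pro-$2$-group are precisely the kernels of the continuous epimorphisms $G \twoheadrightarrow \mathbb{Z}/2\mathbb{Z}$, and $\Phi(G) = \bigcap_{\chi} \ker \chi$ as $\chi$ ranges over all continuous homomorphisms $G \to \mathbb{Z}/2\mathbb{Z}$.

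For (iv), let $M$ be a maximal proper closed subgroup. By (iii), $M$ equals the intersection of all open subgroups $U$ with $M \subseteq U$. Each such $U$ is closed by (ii), so it is a closed subgroup containing $M$; maximality of $M$ then forces $U = M$ or $U = G$. If $M$ were not open, no such $U$ could equal $M$, so every open subgroup containing $M$ would be $G$, and the intersection defining $M$ would collapse to $G$, contradicting that $M$ is proper. Hence $M$ is open.

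For (v), write $\Phi(G) = \bigcap_{i} M_i$ with $\{M_i\}$ the family of maximal open subgroups. A topological automorphism $\psi$ of $G$ carries open subgroups bijectively to open subgroups and preserves inclusions, hence permutes the family $\{M_i\}$; being a bijection it commutes with intersections, so $\psi[\Phi(G)] = \bigcap_i \psi[M_i] = \bigcap_i M_i = \Phi(G)$. This uses only (iv) and works for arbitrary profinite $G$. For (vi), I would invoke the description $\Phi(G) = \bigcap_{\rho} \ker \rho$ over continuous $\rho : G \to \mathbb{Z}/2\mathbb{Z}$ established above. Given $x \in \Phi(H)$ and any continuous $\rho : G \to \mathbb{Z}/2\mathbb{Z}$, the composite $\rho \circ h : H \to \mathbb{Z}/2\mathbb{Z}$ is again continuous, so $\rho(h(x)) = (\rho \circ h)(x) = 0$ because $x$ lies in the kernel of every such character of $H$. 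As $\rho$ is arbitrary, $h(x) \in \Phi(G)$, that is, $h[\Phi(H)] \subseteq \Phi(G)$.

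I expect the main obstacle to be not any of (iv)--(vi) themselves, which reduce to short formal arguments, but rather the auxiliary index-$2$ characterization on which (vi) rests. The delicate point there is the passage from the finite-quotient statement (maximal subgroups of a finite $p$-group have index $p$) to the profinite setting, which is precisely where one must use that an open subgroup contains an open \emph{normal} subgroup, and that intersections of kernels of $\mathbb{Z}/2\mathbb{Z}$-valued characters correctly recover the Frattini subgroup rather than merely a subgroup contained in it.
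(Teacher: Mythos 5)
Your proposal is correct in all items: the argument for (iv) via items (ii)--(iii), the lattice-automorphism argument for (v), and the character-theoretic argument for (vi) are all sound. Note, however, that the paper offers no proof of this statement at all: it is presented as background (a ``Fact'') with citations to Ribes--Zalesskii, so there is no internal proof to compare against. Your auxiliary observation --- that in a pro-$2$-group every maximal open subgroup is normal of index $2$, whence $\Phi(G)$ is the common kernel of all continuous characters $G \to \mathbb{Z}/2\mathbb{Z}$ --- is precisely the content of Lemma 2.8.7 of Ribes--Zalesskii, which the paper quotes immediately after this Fact; so your route is the same one the paper implicitly relies on, just with the dependency made explicit and proved rather than cited. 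Your identification of the delicate point (passing from maximal subgroups of finite $p$-group quotients to the profinite setting via open normal cores, and checking that the character kernels recover exactly $\Phi(G)$ and not a smaller subgroup) is accurate, and your treatment of it is complete.
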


\begin{lem}[Lemma 2.8.7 of \cite{ribes2000profinite}]\label{rz2.8.7}
 Let $p$ be a prime number and let $G$ be a pro-$p$ group.
 \begin{enumerate}[a -]
     \item Every maximal closed subgroup $M$ of $G$ has index $p$.
     \item The Frattini quotient $G/\Phi(G)$ is a $p$-elementary abelian profinite group and hence a vector space of the field $\mathbb F_p$ with $p$ elements.
     \item $\Phi(G)=\overline{G^p[G,G]}$, where $G^p=\{x^p:x\in G\}$ and $[G,G]$ denotes the commutator subgroup of $G$.
 \end{enumerate}
\end{lem}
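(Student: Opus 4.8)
The plan is to reduce the first two parts to the classical theory of finite $p$-groups and to isolate the genuinely infinite content in part (c).

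For (a), I would first invoke Fact (iv): a maximal closed subgroup $M$ is automatically open, hence of finite index, and therefore contains an open normal subgroup $N$ (for instance the core $\bigcap_{g\in G}gMg^{-1}$, a finite intersection since $M$ has only finitely many conjugates). Then $G/N$ is a finite $p$-group and $M/N$ is a maximal subgroup of it. The key classical input is that in a finite $p$-group every maximal subgroup is normal of index $p$: this follows from the normalizer-grows property of nilpotent (in particular $p$-) groups, which forces $N_{G/N}(M/N)=G/N$, together with the fact that a $p$-group quotient with no proper nontrivial subgroup is cyclic of order $p$. Pulling back along $G\to G/N$, the subgroup $M$ is normal of index $p$ in $G$.

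For (b), since $\Phi(G)$ is characteristic (Fact (v)) it is normal, and by (a) every maximal open subgroup $M$ is normal with $G/M\cong\mathbb Z/p$. The diagonal map $G\to\prod_M G/M$ into the product over all maximal open subgroups has kernel exactly $\bigcap_M M=\Phi(G)$, so $G/\Phi(G)$ embeds into a product of copies of $\mathbb Z/p$. Hence $G/\Phi(G)$ is abelian of exponent $p$; being a profinite abelian group of exponent $p$, it is a profinite $\mathbb F_p$-vector space, i.e. topologically isomorphic to some $\prod_I\mathbb F_p$.

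For (c), write $H=\overline{G^p[G,G]}$, a characteristic (hence normal) closed subgroup. The inclusion $H\subseteq\Phi(G)$ is the easy half: by (a) each maximal open $M$ is normal with $G/M$ abelian of exponent $p$, so $[G,G]\subseteq M$ and $x^p\in M$ for every $x$; thus $G^p[G,G]\subseteq M$, and taking closures and intersecting over all $M$ gives $H\subseteq\Phi(G)$. For the reverse inclusion I would pass to the quotient $G/H$, which by construction is abelian of exponent $p$, hence an elementary abelian pro-$p$ group, and reuse the structural description from (b): $G/H\cong\prod_I\mathbb F_p$. In such a group the coordinate projections have kernels that are open (index $p$) hyperplanes, and these separate points, so $\bigcap\{\text{maximal open subgroups of }G/H\}=\{1\}$, that is $\Phi(G/H)=\{1\}$. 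Since every maximal open subgroup of $G$ contains $\Phi(G)\supseteq H$, the maximal open subgroups of $G$ are precisely the preimages of those of $G/H$, whence $\Phi(G)/H=\Phi(G/H)=\{1\}$ and $\Phi(G)=H$.

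The main obstacle is the reverse inclusion in (c): the two finite reductions are routine, but proving $\Phi(G)\subseteq\overline{G^p[G,G]}$ requires the genuinely profinite fact that an elementary abelian pro-$p$ group has trivial Frattini subgroup, which rests on the topological structure theorem $G/H\cong\prod_I\mathbb F_p$ and on the observation that the finite-index coordinate hyperplanes already intersect in the identity.
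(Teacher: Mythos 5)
The paper itself contains no proof of this lemma: it is imported verbatim as Lemma 2.8.7 of Ribes--Zalesskii \cite{ribes2000profinite}, so there is no internal argument to compare against; your proposal has to be judged on its own merits, and it is correct. Parts (a) and (b) are exactly the standard argument: reduce (a) to finite $p$-groups via the open normal core (legitimately using the paper's Fact that maximal closed subgroups are open), where maximality forces normality by the normalizer-growth property and then index $p$; and for (b) embed $G/\Phi(G)$ diagonally into $\prod_M G/M\cong\prod_M\mathbb Z/p$, which already yields the $\mathbb F_p$-vector space structure claimed in the statement. In (c) both inclusions are argued correctly, but the reverse inclusion leans on a heavier tool than necessary: the assertion that every elementary abelian profinite group is \emph{topologically} isomorphic to some $\prod_I\mathbb F_p$ is true, yet its usual proof goes through Pontryagin duality, and you neither prove nor cite it. It can be bypassed entirely: writing $A:=G/\overline{G^p[G,G]}$, take any $a\in A$ with $a\neq 1$ and choose an open normal subgroup $N$ of $A$ with $a\notin N$ (the paper's Fact (iii) applied to the trivial subgroup); then $A/N$ is a finite elementary abelian $p$-group, i.e.\ a finite-dimensional $\mathbb F_p$-vector space, in which some hyperplane misses the image of $a$, and the preimage of that hyperplane is a maximal open subgroup of $A$ not containing $a$. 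This gives $\Phi(A)=\{1\}$ directly, after which your correspondence argument (all maximal open subgroups of $G$ contain $\overline{G^p[G,G]}$, hence are preimages of those of $A$) finishes the proof exactly as you wrote it.
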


\begin{lem} \label{lifting-le} Let ${\mathcal G}_{i}$,  $i=0,1$ be {\em projectives} profinite groups and ${\mathcal V}_{i} \subseteq {\mathcal G}_{i}$ be normal closed subgroups such that ${\mathcal V}_{i} \subseteq \Phi({\mathcal G}_{i})$. If $f : {\mathcal G}_{0}/{\mathcal V}_{0} \ \to \ {\mathcal G}_{1}/{\mathcal V}_{1}$ is an epimorphism (respectively an isomorphism) then there is some continuous homomorphism $\tilde{f} : {\mathcal G}_{0} \ \to \ {\mathcal G}_{1}$ such that $q_{1} \circ \tilde{f} = f \circ q_{0}$ where the $q_{i}$ are the projections on quotient; besides any such lifting $\tilde{f}$ is an epimorphism (resp. an isomorphism).

$$\xymatrix@!=4.5pc{
\mathcal G_0\ar[r]^{\tilde f}\ar[d]_{q_0} & \mathcal G_1\ar[d]^{q_1} \\
\mathcal G_0/\mathcal V_0\ar[r]_f & \mathcal G_1/\mathcal V_1
}$$
\end{lem}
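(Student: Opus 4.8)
The plan is to prove the three assertions in sequence — existence of a lifting, that every lifting is an epimorphism when $f$ is one, and that every lifting is an isomorphism when $f$ is one — the engine throughout being projectivity of the $\mathcal G_i$ together with a ``non-generation'' property of the Frattini subgroup. For \textbf{existence}, I would simply feed the composite $f\circ q_0:\mathcal G_0\to\mathcal G_1/\mathcal V_1$ and the epimorphism $q_1:\mathcal G_1\to\mathcal G_1/\mathcal V_1$ into the defining lifting property of the projective group $\mathcal G_0$: this at once produces a continuous homomorphism $\tilde f:\mathcal G_0\to\mathcal G_1$ with $q_1\circ\tilde f=f\circ q_0$. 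There is no subtlety here; it is just the universal property of a projective object tested against the epimorphism $q_1$.

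The real work is an auxiliary lemma that I would isolate first: \emph{if $H$ is a closed subgroup of a profinite group $G$ with $H\,\Phi(G)=G$, then $H=G$.} To see this, suppose $H\neq G$; by Fact (iii) $H$ is an intersection of open subgroups, so $H$ lies in some proper open subgroup $U$, which, having finite index, is contained in a maximal subgroup $M\subsetneq G$ that is open by Fact (iv). Then $H\subseteq M$ and, by definition of $\Phi(G)$ as the intersection of the maximal open subgroups, $\Phi(G)\subseteq M$, so $H\,\Phi(G)\subseteq M\subsetneq G$, a contradiction. I expect \emph{this} step — the fact that the Frattini subgroup consists of non-generators — to be the main conceptual obstacle; everything afterward is formal.

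Granting the lemma, the \textbf{epimorphism} claim is quick. Let $\tilde f$ be any lifting; its image $H:=\tilde f(\mathcal G_0)$ is compact, hence closed. Since $q_0$ and $f$ are surjective, $q_1(H)=f\bigl(q_0(\mathcal G_0)\bigr)=\mathcal G_1/\mathcal V_1$, which says $H\,\mathcal V_1=\mathcal G_1$; as $\mathcal V_1\subseteq\Phi(\mathcal G_1)$ this gives $H\,\Phi(\mathcal G_1)=\mathcal G_1$, whence $H=\mathcal G_1$ by the auxiliary lemma. So every lifting is onto.

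For the \textbf{isomorphism} claim, assume $f$ is an isomorphism. Then any lifting $\tilde f$ is surjective by the previous paragraph, and its kernel satisfies $\ker\tilde f\subseteq\mathcal V_0\subseteq\Phi(\mathcal G_0)$: if $x\in\ker\tilde f$ then $f(q_0(x))=q_1(\tilde f(x))=1$, so $q_0(x)=1$ because $f$ is injective. Now I would invoke projectivity of $\mathcal G_1$ against the epimorphism $\tilde f:\mathcal G_0\to\mathcal G_1$ and the identity $\mathrm{id}_{\mathcal G_1}$ to obtain a continuous section $s:\mathcal G_1\to\mathcal G_0$ with $\tilde f\circ s=\mathrm{id}_{\mathcal G_1}$. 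A one-line computation — for $x\in\mathcal G_0$, the element $x\,s(\tilde f(x))^{-1}$ lies in $\ker\tilde f$ — shows $\mathcal G_0=(\ker\tilde f)\,s(\mathcal G_1)$. Since $\ker\tilde f\subseteq\Phi(\mathcal G_0)$ and $s(\mathcal G_1)$ is closed (compact image), this yields $s(\mathcal G_1)\,\Phi(\mathcal G_0)=\mathcal G_0$, so $s(\mathcal G_1)=\mathcal G_0$ again by the auxiliary lemma. Thus $s$ is a continuous bijection of compact Hausdorff groups, hence an isomorphism, and from $\tilde f\circ s=\mathrm{id}$ we conclude $\tilde f=s^{-1}$ is an isomorphism. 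Note that projectivity of $\mathcal G_0$ is used only for existence, while projectivity of $\mathcal G_1$ is what powers the splitting in this last step.
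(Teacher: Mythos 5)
The paper states Lemma \ref{lifting-le} \emph{without proof} --- it is invoked as a known background fact and used later (e.g., in Proposition \ref{encoding-pr}) --- so there is no internal argument to compare yours against; judged on its own, your proof is correct and complete. The existence step is, as you say, immediate from projectivity of $\mathcal G_0$ applied against the epimorphism $q_1$. Your auxiliary lemma (a closed subgroup $H$ with $H\,\Phi(G)=G$ must equal $G$) is precisely the non-generator property of the Frattini subgroup that the statement turns on, and your derivation of it from the paper's Facts is sound: a proper closed subgroup lies in a proper open subgroup, which by finiteness of index lies under a maximal proper subgroup; that maximal subgroup is open, hence contains $\Phi(G)$, contradicting $H\,\Phi(G)=G$. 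The two remaining claims then follow exactly as you argue: $\tilde f(\mathcal G_0)$ is closed and $\tilde f(\mathcal G_0)\,\mathcal V_1=\mathcal G_1\subseteq \tilde f(\mathcal G_0)\,\Phi(\mathcal G_1)$ forces surjectivity of any lifting; and when $f$ is injective, $\ker\tilde f\subseteq\mathcal V_0\subseteq\Phi(\mathcal G_0)$, so the section $s$ supplied by projectivity of $\mathcal G_1$ against the epimorphism $\tilde f$ satisfies $s(\mathcal G_1)\,\Phi(\mathcal G_0)=\mathcal G_0$, whence $s$ is a continuous bijection of profinite groups, hence a topological isomorphism, and $\tilde f=s^{-1}$. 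One caveat worth recording: in the paper's actual use of this lemma the groups involved are the $\mathcal C$-free groups $\mathcal W(B)$, which have torsion and therefore are \emph{not} projective in the category of all profinite groups; ``projective'' must there be read relative to the subcategory of pro-$2$-groups (or $\mathcal C$-groups). Your argument is insensitive to this reading, since both epimorphisms you lift against ($q_1$ and $\tilde f$) remain inside that subcategory, but it is a point you would want to flag if this proof were inserted into the paper.
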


In \cite{minac1996witt} is established  the main categorical property of $\mathcal W(I)$:

\begin{teo}[Universal Property of $\mathcal W(I)$, Theorem 1.1 of \cite{minac1996witt}]\label{fixms4}
 The group $\mathcal W(I)$ is the $\mathcal C$-free group on $I$-generators. In other words, $I\subseteq\mathcal W(I)$ is a generator set converging to 1 and if $f:I\rightarrow G$ is any function to a $\mathcal C$-group $G$ such that $f[I]\subseteq G$ converges to 1 then there is an (unique) continuous homomorphism $\tilde f:\mathcal W(I)\rightarrow G$ such that $\tilde f|_I=f$. Moreover, if $H$ is any $\mathcal C$-group then $H$ has a generator set converging to 1 of cardinality $|I|$ if and only if there is an epimorphism $\mathcal W(I)\twoheadrightarrow H$ with kernel $V$ contained in $\Phi(I)$, the Frattini subgroup of $\mathcal W(I)$.
\end{teo}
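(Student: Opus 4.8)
The plan is to identify $I$ with the subset $\{x_i : i \in I\} \subseteq \mathcal{W}(I)$ and then verify the three assertions — that these generators converge to $1$, the mapping-out universal property, and the Frattini-kernel characterization — in turn, leaning throughout on the explicit multiplication law and on Lemma \ref{fixms1}. First I would check that $\{x_i\}$ topologically generates $\mathcal{W}(I)$ and converges to $1$. Generation is immediate from Lemma \ref{fixms1}, since $t_i = x_i^2$ and $t_{ij} = [x_i,x_j]$ for $i<j$, so the closed subgroup generated by the $x_i$ already contains every ``basis'' element and hence all of $\mathcal{W}(I)$. Convergence to $1$ follows from the neighbourhood basis exhibited in Proposition \ref{fixms3}: a basic open subgroup has the form $\bigcap A$ for a finite $A \subseteq V$, and each of $M_i$, $S_i$, $D_{ij}$ omits only the generators indexed by the finitely many subscripts occurring in it, so $\bigcap A$ contains all but finitely many $x_i$. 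Uniqueness of $\tilde f$ is then automatic, a continuous homomorphism being determined by its values on a set of topological generators.

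For existence I would argue conceptually. Let $S$ be the free pro-$2$-group on $I$ with $I$ converging to $1$; any $f : I \to G$ into a pro-$2$-group with $f[I]$ converging to $1$ extends uniquely to a continuous homomorphism $\phi : S \to G$. The key observation is that every continuous homomorphism carries $\hat S$ into $\hat G$, because it sends fourth powers to fourth powers and $[s^2,t]$ to $[\phi(s)^2,\phi(t)]$; so when $G \in \mathcal{C}$ we have $\phi[\hat S] \subseteq \hat G = \{1\}$ and $\phi$ factors uniquely through $S/\hat S$. Thus $S/\hat S$ already enjoys the stated universal property, and it remains to produce a $\mathcal{C}$-isomorphism $\mathcal{W}(I) \cong S/\hat S$ fixing the generators. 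Since $\mathcal{W}(I) \in \mathcal{C}$ by Lemma \ref{fixms1}(viii) and is generated by $\{x_i\}$ converging to $1$, the universal property of $S/\hat S$ supplies a canonical epimorphism $S/\hat S \twoheadrightarrow \mathcal{W}(I)$, which one checks to be injective by comparing normal forms. Equivalently — and this is the route I would actually spell out — one may bypass $S$ and define $\tilde f$ directly on the normal form by $\tilde f\big((t_i^{\alpha_i})(t_{ij}^{\beta_{ij}})(x_i^{\gamma_i})\big) = \prod f(i)^{2\alpha_i}\prod [f(i),f(j)]^{\beta_{ij}}\prod f(i)^{\gamma_i}$, and verify the homomorphism property against the defining multiplication, using that in a $\mathcal{C}$-group squares and commutators are central of order dividing $2$ while fourth powers are trivial. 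The twisted law of $\mathcal{W}(I)$ is precisely the bookkeeping of the square term ($\gamma_i\gamma_i'$) and the commutator term ($\gamma_i'\gamma_j$) forced by these relations, so the two computations agree.

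This well-definedness (equivalently, the injectivity of $S/\hat S \twoheadrightarrow \mathcal{W}(I)$) is where the real work lies and is the main obstacle: it amounts to confirming that $\mathcal{W}(I)$ imposes \emph{exactly} the relations of $\mathcal{C}$ on the $x_i$ and no more, with the additional care that for infinite $I$ the remark's identification $\mathcal{W}(n) \cong F(n)/\hat F(n)$ must be upgraded to the profinite/topological level.

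Finally, for the ``moreover'' I would run the Frattini calculus. By Lemma \ref{fixms1}(v),(vii) the square $g^2 = (t_i^{\gamma_i})(t_{ij}^{\gamma_i\gamma_j})(1_i)$ and the commutator $[g,h] = (1_i)(t_{ij}^{\gamma_i\gamma_j'+\gamma_i'\gamma_j})(1_j)$ lie in the ``$t$-part,'' and conversely every $t_i,t_{ij}$ arises this way, so by Lemma \ref{rz2.8.7} one gets $\Phi(\mathcal{W}(I)) = \{g : \gamma_i(g)=0 \text{ for all } i\}$ and hence $\mathcal{W}(I)/\Phi(\mathcal{W}(I)) \cong \prod_I \mathbb{Z}/2\mathbb{Z}$, an $\mathbb{F}_2$-space of dimension $|I|$ with the classes of the $x_i$ as basis. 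Given $H \in \mathcal{C}$ with a generating set $\{h_i\}_{i\in I}$ converging to $1$, the first part yields $\tilde f : \mathcal{W}(I) \twoheadrightarrow H$, $x_i \mapsto h_i$, surjective because its image is closed (continuous image of a compact group) and dense; conversely an epimorphism with kernel $V$ gives back the generating set $\{\tilde f(x_i)\}$, converging to $1$ by continuity. To tie $V \subseteq \Phi(\mathcal{W}(I))$ to minimality I would use the standard identity $\Phi(\mathcal{W}(I)/V) = \Phi(\mathcal{W}(I))/V$ for $V \subseteq \Phi(\mathcal{W}(I))$: under $H \cong \mathcal{W}(I)/V$ it yields $H/\Phi(H) \cong \mathcal{W}(I)/\Phi(\mathcal{W}(I))$ of dimension $|I|$, so $\{h_i\}$ is a minimal generating set of cardinality $|I|$; and in the reverse direction, minimality makes $\{h_i\Phi(H)\}$ a basis onto which $\{x_i\Phi(\mathcal{W}(I))\}$ maps bijectively, forcing the induced map on Frattini quotients to be an isomorphism and hence $V \subseteq \Phi(\mathcal{W}(I))$.
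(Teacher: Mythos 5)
The paper never proves this theorem: it is imported from Min\'a\v c--Spira (the sentence preceding it reads ``In \cite{minac1996witt} is established the main categorical property of $\mathcal W(I)$'', and no proof follows), so there is no internal argument to compare against and your proposal must stand on its own. On those terms it follows what is essentially the original route: generation from $t_i=x_i^2$ and $t_{ij}=[x_i,x_j]$ (Lemma \ref{fixms1}), convergence to $1$ from the basic clopen subgroups of Proposition \ref{fixms3}, existence of $\tilde f$ by factoring the free pro-$2$-group $S$ through $S/\hat S$ (or directly via normal forms), and the Frattini calculus of Lemma \ref{rz2.8.7} for the ``moreover''. The parts you execute are correct, and two deserve explicit credit: your identification $\Phi(\mathcal W(I))=\{g:\gamma_i(g)=0 \mbox{ for all } i\}$ agrees with Proposition \ref{fixms5}; and your insistence on \emph{minimality} in the ``moreover'' is not a pedantic choice but a necessary repair of the statement. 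Read literally (``a generator set converging to $1$ of cardinality $|I|$''), the claim is false: $(\mathbb Z/2)^2$ has the generating set $\{(1,0),(0,1),(1,1)\}$ of cardinality $3$, yet an epimorphism $\mathcal W(3)\twoheadrightarrow(\mathbb Z/2)^2$ with kernel inside $\Phi(\mathcal W(3))$ would force $(\mathbb Z/2)^2\cong H/\Phi(H)\cong\mathcal W(3)/\Phi(\mathcal W(3))\cong(\mathbb Z/2)^3$. The condition $V\subseteq\Phi(\mathcal W(I))$ encodes exactly minimality (consistent with the paper's earlier definition of ``$\mathcal C$-group on $I$''), and your basis-to-basis argument on Frattini quotients settles both directions.

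The one substantive shortfall --- which you flag yourself as ``the main obstacle'' --- is that the core of the existence step is described but not carried out: either the injectivity of $S/\hat S\twoheadrightarrow\mathcal W(I)$, or, on the direct route, the verification that $\tilde f\bigl((t_i^{\alpha_i})(t_{ij}^{\beta_{ij}})(x_i^{\gamma_i})\bigr)=\prod f(i)^{2\alpha_i}\prod[f(i),f(j)]^{\beta_{ij}}\prod f(i)^{\gamma_i}$ is a continuous homomorphism. That computation is where the entire content of ``$\mathcal W(I)$ is $\mathcal C$-free'' lives, so a complete write-up must contain it. It does go through along the lines you indicate: in a $\mathcal C$-group squares are central (the relation $[g^2,h]=1$), hence $G/Z(G)$ has exponent $2$, hence $[G,G]$ is central and $[g,h]^2=[g^2,h]=1$; the three infinite products converge because $f[I]$ converges to $1$ and, modulo any open normal $U\le G$, all but finitely many of the factors $f(i)^2$, $[f(i),f(j)]$, $f(i)$ lie in $U$; the multiplicativity identity is then a finite collection computation whose correction terms are precisely the exponents $\gamma_i\gamma_i'$ and $\gamma_i'\gamma_j$ of the twisted product; and continuity holds because modulo $U$ the map factors through finitely many coordinates of $\mathcal W(I)$. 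One small conceptual point to clean up: on the direct route there is no ``well-definedness'' issue at all, since elements of $\mathcal W(I)$ \emph{are} their normal forms; what needs proof is multiplicativity (plus continuity), and that single computation then also yields, a posteriori, the injectivity of $S/\hat S\to\mathcal W(I)$ that your first route required. So: a correct skeleton, matching the source's approach, but with the decisive computation left as an acknowledged exercise.
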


\begin{cor}
 Let $X=\{x_i:i\in I\}\subseteq\mathcal W(I)$. Then $X$ is a set of generators of $\mathcal W(I)$ converging to 1.
\end{cor}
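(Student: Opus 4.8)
The plan is to check directly the two conditions in the definition — that $X$ topologically generates $\mathcal{W}(I)$ and that it converges to $1$ — using the explicit coordinate model, although I will note at the end that the statement is also an immediate instance of Theorem \ref{fixms4}.

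For generation, I would first show that the abstract subgroup $\langle X\rangle$ contains every finite-support element, i.e. every $(t_i^{\alpha_i})(t_{ij}^{\beta_{ij}})(x_i^{\gamma_i})$ with only finitely many nonzero exponents. By Lemma \ref{fixms1}(ii) we have $t_k=x_k^2\in\langle X\rangle$, and by Lemma \ref{fixms1}(iii) we have $t_{kl}=[x_k,x_l]\in\langle X\rangle$ for $k<l$; thus $\langle X\rangle$ contains all three families of coordinate generators. Starting from the product $\prod x_i$ over the indices with $\gamma_i=1$ (which already matches the target in the $x$-coordinates) and correcting with suitable factors $t_k$ and $t_{kl}$ — noting from the multiplication law that right-multiplication by $t_k$ toggles only $\alpha_k$ and by $t_{kl}$ toggles only $\beta_{kl}$, while leaving every $\gamma$-coordinate fixed — I can realize any prescribed finite-support element. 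Since $\mathcal{W}(I)$ carries the product topology of $\prod\mathbb{Z}_2\times\prod\mathbb{Z}_2\times\prod\mathbb{Z}/2\mathbb{Z}$, the finite-support elements are dense, so $\overline{\langle X\rangle}=\mathcal{W}(I)$.

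For convergence, I would work with the fundamental system of neighborhoods of $1$ from Proposition \ref{fixms3}, namely the sets $\bigcap A$ for $A\in P_{fin}(V)$ with $V=M(I)\cup S(I)\cup D(I)$. The key point is that such a finite $A$ mentions only finitely many indices; writing $J\subseteq I$ for that finite index set, the defining conditions $\gamma_i=0$ (for $M_i$ and $S_i$) and $\beta_{ij}=\gamma_i=\gamma_j=0$ (for $D_{ij}$) show, since $x_k=(1_i)(1_{ij})(x_i^{\delta_{ik}})$, that $x_k\in\bigcap A$ whenever $k\notin J$. Hence $x_k$ lies in $\bigcap A$ for all but finitely many $k$. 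As every open subgroup $U$ contains some basic neighborhood $\bigcap A$ (Fact ii together with Proposition \ref{fixms3}), all but finitely many of the $x_k$ lie in $U$, which is exactly convergence to $1$. Finally, under the canonical identification of $I$ with $\{x_i:i\in I\}$ via $i\mapsto x_i$, both assertions are already contained in the universal property of Theorem \ref{fixms4}.

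The main obstacle is conceptual rather than computational: one must remember that ``generates as a profinite group'' means the \emph{closure} of $\langle X\rangle$ is everything, not abstract generation — for infinite $I$ the abstract subgroup yields only finite-support elements and is a proper subset — so the density step is essential. The bookkeeping of writing a finite-support element as an ordered product of the $x_k$, $x_k^2$ and $[x_k,x_l]$ must be carried out consistently with the twisted multiplication and the exponent conventions of Lemma \ref{fixms1}, but it is routine.
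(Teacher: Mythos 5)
Your proof is correct, but it follows a genuinely different route from the paper's. The paper offers no independent argument for this corollary: it reads the statement off directly from Theorem \ref{fixms4}, whose assertion that ``$I\subseteq\mathcal W(I)$ is a generator set converging to 1'' already \emph{is} the corollary once $I$ is identified with $\{x_i:i\in I\}$ via the canonical embedding (this identification, which you invoke only in your closing sentence, is the entire content of the paper's derivation). You instead verify both conditions by hand in the coordinate model: generation, by observing that $t_k=x_k^2$ and $t_{kl}=[x_k,x_l]$ (Lemma \ref{fixms1}(ii),(iii)) put all three families of coordinate generators inside $\langle X\rangle$, so that $\langle X\rangle$ is exactly the set of finite-support elements, which are dense; and convergence, by checking that a basic neighborhood $\bigcap A$, $A\in P_{fin}(V)$, constrains only the finitely many indices appearing in $A$, hence contains every $x_k$ outside that index set. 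Both steps are sound --- in particular your toggling claims for right-multiplication by $t_k$ and $t_{kl}$ follow at once from the multiplication law, and your convergence argument correctly uses the fundamental system of Proposition \ref{fixms3}. What each approach buys: the paper's is a one-line specialization of an imported theorem (Theorem 1.1 of Min\'a\v c--Spira), so it is short but opaque; yours is self-contained and makes visible \emph{why} the theorem's embedded generator set is $\{x_i\}$, at the modest cost of one step you gloss over --- the identification of the topology of Proposition \ref{fixms3} with the product topology (harmless, since two group topologies sharing the same neighborhood filter at $1$ coincide, but worth a sentence if this argument were to replace the paper's).
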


\begin{prop}\label{fixms5}
 We have $\Phi(I)=\mathcal W^2_I$, and $\Phi(I)$ has $\{x_i,t_{ij}:i<j\in I\}$ as a minimal set of generators converging to 1. Moreover
 \begin{align*}
  \Phi(I)&=\mathcal W^2_I=\bigcap M(I) \\
  &=\left\lbrace g=(t_i^{\alpha_i})(t_{ij}^{\beta_{ij}})(x_i^{\gamma_i})\in\mathcal W(I):\gamma_i=0\mbox{ for all }i\in I\right\rbrace \\
&=\{g\in\mathcal W(I):\mbox{there exists }g_1,g_2,g_3\in\mathcal W(I)\mbox{ such that }g=g_1^2g_2^2g_3^2\}.
 \end{align*}
\end{prop}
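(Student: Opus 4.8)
The plan is to prove the displayed chain of equalities from the outside in, relying throughout on the explicit formulas of Lemma \ref{fixms1} together with Lemma \ref{rz2.8.7}. Write $N := \{g=(t_i^{\alpha_i})(t_{ij}^{\beta_{ij}})(x_i^{\gamma_i})\in\mathcal W(I):\gamma_i=0\ \forall i\}$, which is exactly $\bigcap M(I)$ by the definition of the $M_i$. First I would identify $\Phi(I)$ with $N$. By Lemma \ref{rz2.8.7}(c), $\Phi(\mathcal W(I))=\overline{\mathcal W(I)^2\,[\mathcal W(I),\mathcal W(I)]}$. The inclusion $\Phi(I)\subseteq N$ holds because, by Lemma \ref{fixms1}(v) and (vii), every square $g^2$ and every commutator $[g,h]$ has vanishing $x$-component, while $N=\bigcap_i M_i$ is closed (each $M_i$ is a closed normal subgroup by Proposition \ref{fixms2}); hence the closed subgroup generated by squares and commutators lies in $N$. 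For the reverse inclusion, Lemma \ref{fixms1}(ii)--(iii) give $t_k=x_k^2\in\mathcal W(I)^2$ and $t_{kl}=[x_k,x_l]\in[\mathcal W(I),\mathcal W(I)]$; since $N\cong\prod_i\mathbb Z_2\times\prod_{i<j}\mathbb Z_2$ is topologically generated by $\{t_k,t_{kl}\}$, we get $N\subseteq\Phi(I)$. Reading the same facts through the identity $t_{kl}=x_k^2x_l^2(x_kx_l)^2$ shows $\mathcal W(I)^2=N$ directly (every square lies in the closed subgroup $N$, and these squares generate $N$), which yields $\Phi(I)=\mathcal W_I^2=\bigcap M(I)=N$.

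Second, for the generating set: $N$ is a product of copies of $\mathbb Z/2\mathbb Z$, hence an elementary abelian pro-$2$ group with $\Phi(N)=\{1\}$, so by Lemma \ref{rz2.8.7}(b) a minimal generating set of $N$ is precisely an $\mathbb F_2$-basis of $N/\Phi(N)=N$. The family $\{t_i,t_{ij}:i<j\}$ is such a basis and converges to $1$, since in the product topology any open subgroup omits only finitely many of them. (I would read the ``$x_i$'' appearing in the statement as the square $t_i=x_i^2$, since $x_i\notin\Phi(I)=N$.)

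Third, for the description of $N$ as products of three squares, one inclusion is immediate: each $g_k^2\in N$ and $N$ is a subgroup, so $g_1^2g_2^2g_3^2\in N$. For the converse I would convert it into a linear-algebra problem over $\mathbb F_2$. Since $N$ is abelian and $g^2=(t_i^{\gamma_i})(t_{ij}^{\gamma_i\gamma_j})$ by Lemma \ref{fixms1}(v), encoding each $g_k^2$ by its $x$-exponent vector $\gamma^{(k)}$, the equation $g=g_1^2g_2^2g_3^2$ for $g=(t_i^{\alpha_i})(t_{ij}^{\beta_{ij}})$ becomes the system $\sum_{k=1}^3\gamma^{(k)}=\alpha$ together with $\sum_{k=1}^3\gamma^{(k)}_i\gamma^{(k)}_j=\beta_{ij}$ for $i<j$. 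Eliminating $\gamma^{(3)}=\gamma^{(1)}+\gamma^{(2)}+\alpha$ and completing the square over $\mathbb F_2$ (substituting $p=\gamma^{(1)}+\alpha$, $r=\gamma^{(2)}+\alpha$) reduces the quadratic part to $p_ir_j+p_jr_i=\beta_{ij}+\alpha_i\alpha_j$ for all $i<j$.

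The main obstacle is precisely this last system. Its left-hand side is the rank-$\le 2$ alternating form $p\wedge r$, so it is solvable exactly when $\beta+q(\alpha)$ (with $q(\alpha)_{ij}=\alpha_i\alpha_j$) has $\mathbb F_2$-rank at most $2$. This holds automatically when $|I|\le 3$, since every alternating form in at most three variables has rank $\le 2$, and in that range three squares genuinely suffice. For $|I|\ge 4$ the condition is a real constraint — taking $\alpha=0$ and $\beta$ of rank $4$ makes it fail, and a cardinality count ($2^{3|I|}<|N|$ once $|I|\ge 6$) confirms that the three-square map cannot be surjective — so I expect establishing the three-squares identity in full generality to be the delicate point. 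I would flag it as the step most likely to need adjustment, either by imposing $|I|\le 3$ or by replacing ``product of three squares'' with the always-correct ``finite product of squares'', which by the first part equals $\mathcal W_I^2=N$.
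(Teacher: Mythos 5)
The paper never actually proves Proposition \ref{fixms5}: it is stated without proof (it is imported, with notational changes, from Section 1 of \cite{minac1996witt}), so there is no internal argument to compare yours against. Judged on its own terms, your proposal is essentially correct, and its most valuable feature is that it exposes a genuine defect in the statement itself. Your treatment of the first equalities is complete: by Lemma \ref{rz2.8.7}(c), $\Phi(I)=\overline{\mathcal W(I)^2[\mathcal W(I),\mathcal W(I)]}$; by Lemma \ref{fixms1}(v),(vii) every square and every commutator lies in the closed subgroup $N=\bigcap M(I)=\{g:\gamma_i(g)=0\ \forall i\}$, and conversely $t_k=x_k^2$ and $t_{kl}=[x_k,x_l]=x_k^2x_l^2(x_kx_l)^2$ topologically generate $N$, so $\Phi(I)=\mathcal W^2_I=N$, provided $\mathcal W^2_I$ is read --- as the paper's stated conventions dictate --- as the \emph{closed} subgroup generated by the squares. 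Your emendation of the generating set ($x_i$ must mean $t_i=x_i^2$, since $\gamma_i(x_i)=1$ shows $x_i\notin\Phi(I)$) is also the right reading of an evident typo.

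Your refutation of the three-squares equality is correct, and I confirm it. Since squares are central (Lemma \ref{fixms1}(viii)) and lie in the abelian group $N$, one has $g_1^2g_2^2g_3^2=(t_i^{\sum_k\gamma_i^{(k)}})(t_{ij}^{\sum_k\gamma_i^{(k)}\gamma_j^{(k)}})(1_i)$, and your elimination shows that $g=(t_i^{\alpha_i})(t_{ij}^{\beta_{ij}})(1_i)$ is a product of three squares iff the alternating form $(\beta_{ij}+\alpha_i\alpha_j)_{i<j}$ can be written as $(p_ir_j+p_jr_i)_{i<j}$, i.e.\ has rank at most $2$ over $\mathbb F_2$. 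Hence $t_{12}t_{34}$ (here $\alpha=0$ and $\beta$, with $\beta_{12}=\beta_{34}=1$ and all other entries $0$, has rank $4$) lies in $\Phi(I)$ but is not a product of three squares once $|I|\ge 4$. Note also that the set of products of three squares is the continuous image of the compact space $\mathcal W(I)^3$, hence closed, so no appeal to closures can rescue the equality as written; the identity that is actually true, and presumably generated the claim, is $[g,h]=g^2h^2(gh)^2$, which is exactly why the closed subgroup generated by the squares is all of $\Phi(I)$, and for $|I|\le 3$ the literal statement does hold, as you observe. One caveat on your proposed repair: ``finite products of squares'' is still not correct for infinite $I$. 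When $\alpha=0$, a product of $n$ squares has $\beta$-matrix $\sum_{k\le n}\gamma^{(k)}(\gamma^{(k)})^{T}$ (the diagonal vanishes because $\sum_k\gamma^{(k)}=0$), whose rank is at most $n$; so the element of $N$ with $\alpha=0$ and $\beta_{2k-1,2k}=1$ for all $k$ is in $\Phi(I)$ but is not a finite product of squares --- the abstract subgroup generated by the squares is dense in, but not equal to, $\Phi(I)$. The robust correction is the one the paper's conventions already impose: the last displayed set should be the closure of the set of finite products of squares, that is, $\mathcal W^2_I$ as a topologically generated subgroup.
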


We have some kind of duality theorems for $\Phi(I)$ and $\mathcal W(I)$.

\begin{prop}\label{fixms6}
 Consider the $\mathbb Z_2$-module of homogeneous quadratic polynomials in $I$ variables $\{z_i\}_{i\in I}$
 $$P_2(I)=\{q\in\mathbb Z_2[I]:q=\sum_{i\in I}a_iz_i^2+
 \sum_{i<j\in I}b_{ij}z_iz_j\} \cong \bigoplus_{i \in I} \mathbb Z_2 \oplus \bigoplus_{i<j \in I} \mathbb Z_2.$$
 Then we have a topological group isomorphism
 $$\Phi(I)
 %\cong\prod_{i\in I}\mathbb Z_2\times\prod_{i<j\in I}\mathbb Z_2
 \cong \mbox{Hom}(P_2(I),\mathbb Z_2) \cong\prod_{i\in I}\mathbb Z_2\times\prod_{i<j\in I}\mathbb Z_2,$$
 with the associated ``perfect pairing''
 $$\langle\_,\_\rangle:\Phi(I)\times P_2(I)\rightarrow\mathbb Z_2.$$
\end{prop}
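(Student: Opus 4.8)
The plan is to prove the two stated isomorphisms separately and glue them through one explicit perfect pairing. The second isomorphism $\mbox{Hom}(P_2(I),\mathbb Z_2)\cong\prod_{i\in I}\mathbb Z_2\times\prod_{i<j\in I}\mathbb Z_2$ is pure duality: by the decomposition already recorded in the statement, $P_2(I)$ is the \emph{discrete} $\mathbb Z_2$-vector space with basis $\{z_i^2:i\in I\}\cup\{z_iz_j:i<j\in I\}$, i.e. a coproduct $\bigoplus_{\lambda\in\Lambda}\mathbb Z_2$ indexed by $\Lambda:=I\sqcup\{(i,j):i<j\}$. Applying $\mbox{Hom}(-,\mathbb Z_2)$ and using that $\mbox{Hom}$ turns coproducts into products yields $\mbox{Hom}(\bigoplus_\Lambda\mathbb Z_2,\mathbb Z_2)\cong\prod_\Lambda\mbox{Hom}(\mathbb Z_2,\mathbb Z_2)\cong\prod_\Lambda\mathbb Z_2$, which is exactly the claimed product. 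Since $P_2(I)$ is discrete and $\mathbb Z_2$ is finite discrete, the compact-open topology realizes this as the Pontryagin dual, so the identification is topological and not merely algebraic.

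For the first isomorphism I would define the pairing by hand. By Proposition \ref{fixms5} every element of $\Phi(I)$ is uniquely $g=(t_i^{\alpha_i})(t_{ij}^{\beta_{ij}})(1_i)$ with vanishing $\gamma$-coordinates, and since all $\gamma_i=0$ the multiplication formula restricts on $\Phi(I)$ to coordinatewise addition of exponents mod $2$, making $\Phi(I)$ an elementary abelian pro-$2$-group. For $q=\sum_i a_iz_i^2+\sum_{i<j}b_{ij}z_iz_j$ set
$$\langle g,q\rangle:=\sum_{i\in I}\alpha_i a_i+\sum_{i<j\in I}\beta_{ij}b_{ij}\in\mathbb Z_2.$$
The sum is finite because $q$, lying in $\mathbb Z_2[I]$, has only finitely many nonzero coefficients, even when $g$ has infinite support; this is precisely what makes the pairing well defined. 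Bilinearity over $\mathbb Z_2$ is then immediate — linearity in $g$ from the additive group law on $\Phi(I)$, linearity in $q$ from the vector-space structure of $P_2(I)$.

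Next I would show the induced map $\Psi:\Phi(I)\to\mbox{Hom}(P_2(I),\mathbb Z_2)$, $\Psi(g)=\langle g,-\rangle$, is a topological isomorphism. Nondegeneracy on the left gives injectivity: evaluating against $z_i^2$ and $z_iz_j$ recovers $\alpha_i$ and $\beta_{ij}$, so $\Psi(g)=0$ forces $g=1$. Surjectivity follows because, given $\phi$, the element $g$ with $\alpha_i:=\phi(z_i^2)$ and $\beta_{ij}:=\phi(z_iz_j)$ lies in $\Phi(I)$ — available since $\Phi(I)$ is the full product in the $t$-coordinates — and satisfies $\Psi(g)=\phi$. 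For continuity, each subbasic open $\{\phi:\phi(q)=c\}$ pulls back to $\{g:\langle g,q\rangle=c\}$, which depends only on the finitely many coordinates in the support of $q$ and is therefore clopen in $\Phi(I)$. As $\Phi(I)$ is compact (it is closed in the profinite group $\mathcal W(I)$ of Proposition \ref{fixms3}) and the target is Hausdorff, the continuous bijection $\Psi$ is automatically a homeomorphism. Composing $\Psi$ with the second isomorphism reproduces the coordinatewise identification $\Phi(I)\cong\prod_i\mathbb Z_2\times\prod_{i<j}\mathbb Z_2$ already visible from Proposition \ref{fixms5}, confirming everything is consistent.

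The hard part, and the conceptual heart of the statement, is the product/direct-sum asymmetry when $I$ is infinite. One cannot pair $\Phi(I)$, a product with possibly infinite support, against a second copy of a product and land in $\mathbb Z_2$; perfectness forces the companion object to be the \emph{restricted} space $P_2(I)=\bigoplus\mathbb Z_2$ of finitely supported forms, and it is exactly this finite support that simultaneously makes the evaluation sum finite and the map $\Psi$ continuous. Verifying that the pairing is \emph{perfect} in the topological sense — i.e. that $\Psi$ is an isomorphism of topological groups rather than of bare groups — is where the genuine care lies; the bijectivity over $\mathbb F_2$ is otherwise a routine check on bases.
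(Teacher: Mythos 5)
Your proposal is correct and follows essentially the same route as the paper: your pairing $\langle g,q\rangle=\sum_i\alpha_i a_i+\sum_{i<j}\beta_{ij}b_{ij}$ and the induced map $\Psi$ are exactly the paper's $\tilde\lambda$, defined there by sending the generators $t_i,t_{ij}$ of $\Phi(I)$ to the dual basis of the monomial basis of $P_2(I)$. The only difference is one of detail: you spell out surjectivity (constructing the preimage coordinatewise, which is legitimate since $\Phi(I)$ is the full product in the $t$-coordinates by Proposition \ref{fixms5}) and the compact-to-Hausdorff argument for the homeomorphism, both of which the paper compresses into a one-line assertion.
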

\begin{proof}
 First of all, note that $P_2(I)$ is generated (as $\mathbb Z_2$-vector space) by the set of monomials $B=\{z_i^2,z_iz_j\}_{i< j\in I}$. In fact, this is a $\mathbb Z_2$-basis of $P_2(I)$. Let $B^*$ be the dual basis
 $$B^*:=\{\varphi_{ij}\}_{i\le j\in I},$$
 where $\varphi_{ij}:P_2(I)\rightarrow\mathbb Z_2$ is given by
 $$\varphi_{ij}(z_kz_l)=\begin{cases}1\mbox{ if }i=k\mbox{ and }j=l \\ 0\mbox{ otherwise}\end{cases}$$
 Now we define a function $\lambda:\{t_i,t_{ij}:i<j\in I\}\rightarrow B^*$ by the rules $t_i\mapsto\varphi_{ii}$ and $t_{ij}\mapsto\varphi_{ij}$. Since $\{t_i,t_{ij}:i<j\in I\}$ is a set of generators of $\Phi(I)$, this function $\lambda$ induces a continuous group homomorphism $\tilde\lambda:\Phi(I)\rightarrow\mbox{Hom}(P_2(I),\mathbb Z_2)$ by the following: let $g=(t_i^{\alpha_i})(t_{ij}^{\beta_{ij}})(1_i)\in\Phi(I)$ (see Proposition \ref{fixms5}). Define $\tilde\lambda(g):P_2(I)\rightarrow\mathbb Z_2$ for $q=\sum_{i\in I}a_iz_i^2+\sum_{i<j\in I}b_{ij}z_iz_j\in P_2(I)$ by
 $$\tilde\lambda(g)(q):=\sum_{i\in I}a_i\alpha_i+\sum_{i<j\in I}b_{ij}\beta_{ij}.$$
 We immediately have that $\tilde\lambda$ is a continuous injective group homomorphism. Since $\lambda$ is bijective and $B^*$ is a $\mathbb Z_2$-basis of $\mbox{Hom}(P_2(I),\mathbb Z_2)$, we have that $\tilde\lambda$ is an isomorphism.
\end{proof}

\begin{prop}\label{fixms7}
 Consider the $\mathbb Z_2$-module of homogeneous linear polynomials in $I$ variables $\{z_i\}_{i\in I}$
 $$P_1(I)=\{q\in\mathbb Z_2[I]:q=\sum_{i\in I}c_iz_i\} \cong \bigoplus_{i \in I} \mathbb Z_2.$$
 Then we have a topological group isomorphism
 $$\mathcal W(I)/\Phi(I)\cong\mbox{Hom}(P_1(I),\mathbb Z_2) \cong\prod_{i\in I}\mathbb Z_2,$$
 with the associated ``perfect pairing''
 $$\langle\_,\_\rangle:\mathcal W(I)/\Phi(I)\times P_1(I)\rightarrow\mathbb Z_2.$$
\end{prop}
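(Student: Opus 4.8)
The plan is to mirror the proof of Proposition \ref{fixms6}, replacing the quadratic data by the linear data and $\Phi(I)$ by the Frattini quotient. The whole statement will reduce to the observation that a coset of $\Phi(I)$ is determined precisely by the $\gamma$-coordinates of any representative.

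First I would use Proposition \ref{fixms5} to pin down $\mathcal W(I)/\Phi(I)$ coordinatewise. Since $\Phi(I)=\{g:\gamma_i(g)=0\mbox{ for all }i\in I\}$, and since the multiplication rule gives $\gamma_i(gh)=\gamma_i(g)+\gamma_i(h)\pmod 2$, the assignment $g\mapsto(\gamma_i(g))_{i\in I}$ is a continuous group homomorphism $\gamma:\mathcal W(I)\to\prod_{i\in I}\mathbb Z_2$ whose kernel is exactly $\Phi(I)$. It is surjective because the generators $x_i=(1_i)(1_{ij})(x_i^{\delta_{ik}})$ map to the standard coordinate elements, which converge to $1$. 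Hence $\gamma$ induces an isomorphism $\mathcal W(I)/\Phi(I)\cong\prod_{i\in I}\mathbb Z_2$; that it is a \emph{topological} isomorphism follows exactly as in Proposition \ref{fixms3} (a continuous bijective homomorphism of profinite groups is an isomorphism), using the density-plus-closedness argument there to treat the infinite case.

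Next I would set up the duality with $P_1(I)$. The set $B=\{z_i\}_{i\in I}$ is a $\mathbb Z_2$-basis of $P_1(I)=\bigoplus_{i\in I}\mathbb Z_2$, with dual basis $B^*=\{\psi_i\}_{i\in I}$ given by $\psi_i(z_k)=\delta_{ik}$; because $P_1(I)$ is a direct sum, $\mbox{Hom}(P_1(I),\mathbb Z_2)=\prod_{i\in I}\mathbb Z_2$. Following the template of Proposition \ref{fixms6}, I would define $\lambda:\{x_i:i\in I\}\to B^*$ by $x_i\mapsto\psi_i$ and check that it induces a continuous homomorphism $\tilde\lambda:\mathcal W(I)/\Phi(I)\to\mbox{Hom}(P_1(I),\mathbb Z_2)$ given explicitly by $\tilde\lambda(\overline g)(q)=\sum_{i\in I}c_i\gamma_i(g)$ for $q=\sum_{i\in I}c_iz_i$. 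I would then record the pairing $\langle\overline g,q\rangle:=\sum_{i\in I}c_i\gamma_i(g)$ and verify the three properties making it perfect: well-definedness (independent of the coset representative, since every element of $\Phi(I)$ has all $\gamma_i=0$), bilinearity (linear in $q$ trivially, additive in $\overline g$ by $\gamma_i(gh)=\gamma_i(g)+\gamma_i(h)$), and non-degeneracy on both sides. Non-degeneracy reduces to the matched-basis fact $\overline{x_i}\mapsto\psi_i$: the adjoint $\overline g\mapsto\langle\overline g,\_\rangle$ is precisely $\tilde\lambda$, which carries a topological generating set bijectively onto a basis of $\mbox{Hom}(P_1(I),\mathbb Z_2)$ and is therefore a continuous isomorphism.

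The only genuine subtlety, and not really an obstacle, is keeping the product/direct-sum bookkeeping straight when $I$ is infinite. The profinite side $\mathcal W(I)/\Phi(I)$ is the \emph{full} product $\prod_{i\in I}\mathbb Z_2$, and it is dual to the \emph{discrete} direct sum $P_1(I)=\bigoplus_{i\in I}\mathbb Z_2$: one has $\mbox{Hom}(\bigoplus_{i\in I}\mathbb Z_2,\mathbb Z_2)=\prod_{i\in I}\mathbb Z_2$ for the first adjoint, while for the second adjoint a continuous character of $\prod_{i\in I}\mathbb Z_2$ factors through a finite quotient and hence lies in $\bigoplus_{i\in I}\mathbb Z_2=P_1(I)$. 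This is exactly what upgrades the non-degenerate pairing to a perfect one, and it is handled by the same Pontryagin-type argument in every infinite case.
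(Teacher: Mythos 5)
Your proposal is correct and takes essentially the same route as the paper: the paper's proof defines $\theta:\mathcal W(I)\to\mbox{Hom}(P_1(I),\mathbb Z_2)$ by $\theta(g)(q)=\sum_{i\in I}c_i\gamma_i$, gets surjectivity from the dual basis $B^*\subseteq\mbox{Im}(\theta)$, and identifies $\mbox{Ker}(\theta)=\Phi(I)$ via Proposition \ref{fixms5} --- which is exactly your $\tilde\lambda$, i.e.\ your coordinate map $\gamma$ composed with the identification $\prod_{i\in I}\mathbb Z_2\cong\mbox{Hom}(P_1(I),\mathbb Z_2)$. Your additional remarks on compactness and the product/direct-sum bookkeeping merely make explicit what the paper leaves implicit.
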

\begin{proof}
 First of all, note that $P_1(I)$ is generated (as $\mathbb Z_2$-vector space) by the set of monomials $B=\{z_i\}_{i\in I}$. In fact, this is a $\mathbb Z_2$-basis of $P_1(I)$. Let $B^*$ be the dual basis
 $$B^*:=\{\varphi_{i}\}_{i\in I},$$
 where $\varphi_i:P_1(I)\rightarrow\mathbb Z_2$ is given by
 $$\varphi_i(z_j)=\begin{cases}1\mbox{ if }i=j \\ 0\mbox{ otherwise}\end{cases}$$
 Now we define $\theta:\mathcal W(I)\rightarrow \mbox{Hom}(P_1(I),\mathbb Z_2)$ by the following: for $g=(t_i^{\alpha_i})(t_{ij}^{\beta_{ij}})(x_i^{\gamma_i})\in\mathcal W(I)$, let $\theta(g):P_1(I)\rightarrow\mathbb Z_2$ be the morphism defined by the rule
$$\mbox{For }q=\sum_{i\in I}c_iz_i\in P_1(I),\,\theta(g)(q)=\sum_{i\in I}c_i\gamma_i.$$
Then $\theta$ is a surjective morphism (because $B^*\subseteq\mbox{Im}(\theta)$) and by Proposition \ref{fixms5} $\mbox{Ker}(\theta)=\Phi(I)$. Hence
$$\mathcal W(I)/\Phi(I)=\mathcal W(I)/\mbox{Ker}(\theta)\cong\mbox{Hom}(P_1(I),\mathbb Z_2).$$
\end{proof}

Now, is time to return to our first goal: present the description of $G_F:=\mbox{Gal}_F(F^{(3)})$ by $G_F\cong \mathcal W(I)/\mathcal V(I)$.

\begin{prop}[2.1 in \cite{minac1996witt}]
 $G_F\cong\overline{G^q_F}$.
\end{prop}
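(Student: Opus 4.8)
The plan is to realize $F^{(3)}$ as the fixed field, inside the quadratic closure $F_q$, of the subgroup $\widehat{G^q_F}=(G^q_F)^4[(G^q_F)^2,G^q_F]$, and then to read off the isomorphism from the (profinite) Galois correspondence. Writing $G:=G^q_F=\mathrm{Gal}(F_q|F)$, which is a pro-$2$ group since $F_q$ is a $2$-tower over $F$, the proposition reduces to the single identity $\mathrm{Gal}(F_q|F^{(3)})=\hat G$. Granting it, $F^{(3)}|F$ is Galois (it is a compositum of extensions that are Galois over $F$, by the very definition of $\mathcal E$), so $\hat G\trianglelefteq G$ and $G_F=\mathrm{Gal}(F^{(3)}|F)\cong G/\hat G=\overline{G^q_F}$, as desired.

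To prove the identity I would proceed in two layers, first field-theoretic and then purely group-theoretic. By Kummer theory $F^{(2)}$ is the maximal elementary abelian $2$-extension of $F$, with $\mathrm{Gal}(F^{(2)}|F)\cong\mathrm{Hom}(\dot F/\dot F^2,\mathbb Z/2)$; hence it corresponds to the maximal elementary abelian $2$-quotient of $G$, and by Lemma \ref{rz2.8.7}(b)--(c) this means $\mathrm{Gal}(F_q|F^{(2)})=\Phi(G)=\overline{G^2[G,G]}$. Set $H:=\Phi(G)$. By construction $F^{(3)}$ is the compositum of those quadratic extensions $F^{(2)}(\sqrt y)$ of $F^{(2)}$ that are moreover Galois over $F$; under the Galois correspondence these are exactly the index-$2$ subgroups of $H$ that are normal in $G$, so $\mathrm{Gal}(F_q|F^{(3)})$ is the intersection of all of them.

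The key computation is to evaluate this intersection. In the $\mathbb F_2$-vector space $V:=H/\Phi(H)$ the $G$-invariant index-$2$ subgroups of $H$ correspond to the $G$-invariant hyperplanes of $V$, i.e. to the kernels of nonzero functionals factoring through the coinvariants $V_G=V/I_GV$, where $I_GV=\langle gv-v\rangle$. Since such functionals separate the points of $V_G$, their common kernel is exactly $I_GV$; translating back through $(g-1)(h\Phi(H))=[g,h]\Phi(H)$ yields $\mathrm{Gal}(F_q|F^{(3)})=\overline{[G,H]\,\Phi(H)}=\overline{[G,\Phi(G)]\,\Phi(\Phi(G))}$ (closures in the profinite topology). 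It then remains to check the group identity $\overline{[G,\Phi(G)]\Phi(\Phi(G))}=\hat G$. The inclusion $\supseteq$ is immediate, since $G^4\subseteq\Phi(G)^2\subseteq\Phi(\Phi(G))$ and $[G^2,G]\subseteq[\Phi(G),G]$. For $\subseteq$ I would pass to $\overline G=G/\hat G$, which lies in $\mathcal C$ (as $\overline G^4=1$ and $[\overline G^2,\overline G]=1$) and hence, by the universal property of Theorem \ref{fixms4}, is a quotient of some $\mathcal W(I)$; consequently the relations of Lemma \ref{fixms1} descend, so in $\overline G$ both squares and commutators are central of order $\le 2$ and triple commutators vanish. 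Thus the image of $\Phi(G)$ in $\overline G$ is a \emph{central} elementary abelian $2$-subgroup, which forces both $[G,\Phi(G)]$ and $\Phi(\Phi(G))$ to map to $1$ in $\overline G$, i.e. to lie in $\hat G$.

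I expect the main obstacle to be the middle layer: the faithful translation of the field datum ``$F^{(2)}(\sqrt y)|F$ is Galois'' into the group condition ``$G$-invariant hyperplane of $\Phi(G)/\Phi(\Phi(G))$'', together with the careful bookkeeping of profinite closures in the coinvariants argument. Once that dictionary is in place, the concluding group identity is a routine consequence of the $\mathcal C$-relations already recorded in Lemma \ref{fixms1}. A cleaner alternative, closer in spirit to \cite{minac1996witt}, would bypass the explicit intersection by showing directly that $\overline{G^q_F}$ is the largest quotient of $G^q_F$ lying in $\mathcal C$ and that $F^{(3)}$ is the fixed field of the corresponding kernel; the route above has the advantage of making the subgroup $\hat G$ appear explicitly.
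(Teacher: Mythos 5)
The paper itself never proves this proposition: it is imported verbatim from Min\'a\v{c}--Spira (Proposition 2.1 of \cite{minac1996witt}), the only internal gloss being the earlier remark, just after the definition of $\mathcal C$, that $\mathrm{Gal}(F^{(3)}|F)\cong G_q/G_q^4[G_q^2,G_q]$. So there is no in-paper argument to compare yours against; what you have written is a self-contained reconstruction, and I find it correct. The field-theoretic layer is the standard dictionary: Kummer theory plus Lemma \ref{rz2.8.7} identifies $\mathrm{Gal}(F_q|F^{(2)})$ with $H:=\Phi(G^q_F)$, and since $F^{(3)}$ is by definition the compositum of the quadratic extensions of $F^{(2)}$ that are Galois over $F$, the (profinite) Galois correspondence exhibits $\mathrm{Gal}(F_q|F^{(3)})$ as the intersection of the open index-$2$ subgroups of $H$ that are normal in $G^q_F$. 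Your coinvariants computation of that intersection is also sound; it rests on two facts you use implicitly but correctly, namely that over $\mathbb F_2$ a hyperplane determines its defining functional (so $G$-stability of the kernel forces $G$-invariance of the functional), and that continuous characters separate points of a profinite $\mathbb F_2$-module, which together give exactly $\overline{[G,H]\Phi(H)}$. Finally, the purely group-theoretic identity $\overline{[G,\Phi(G)]\,\Phi(\Phi(G))}=\hat G$ holds, with both inclusions argued as you state.

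One small economy: the detour through Theorem \ref{fixms4} and Lemma \ref{fixms1} (realizing $\overline G$ as a quotient of some $\mathcal W(I)$, which also requires citing that every profinite group has a generating set converging to $1$) is unnecessary. In $\overline G=G/\hat G$ the defining relations already give that every square is central (from $[g^2,h]=1$) and of order at most $2$ (from $g^4=1$); and once squares are central, every commutator is a product of squares, $[g,h]=(hg)^{-2}g^2h^2$, so $\Phi(\overline G)=\overline{\overline G^2[\overline G,\overline G]}$ is central and elementary abelian --- which is all your argument needs to kill the images of $[G,\Phi(G)]$ and $\Phi(\Phi(G))$.
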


Then $G_F$ is a $C$-group on $B$, where $B=\{a_i:i\in I\}$ is an well-ordered basis of $\dot F/\dot F^2$, so, using Theorem \ref{fixms4}, there exists an epimorphism $\pi_B:\mathcal W(B)\rightarrow G_F$. Then we simply take $\mathcal V(B):=\mbox{Ker}(\pi_B)$.

Moreover, J. Minac and M. Spira (again, in \cite{minac1996witt}) gave a  nice explicit description to $\mathcal V (B)$. Let $\mbox{Quat}(F)$ be the subgroup of $Br(F)$, the Brauer group of $F$, generated by the quaternion algebras of $F$.  By Merkurjev's Theorem (\cite{wadsworth55merkurjev}), we have $$\mbox{Quat}(F)\cong \mbox{Br}_2(F)\cong k_2(F),$$
 where $Br_2(F)$ is the subgroup of $Br(F)$ generated by elements of order 2 and  $k_*(F)$ is the graded ring of Milnor's mod 2 reduced K-theory.

  Consider $\varphi_B:P_2(B)\twoheadrightarrow k_2(F)$ as the epimorphism defined by the rule
$$\left(\sum_{i\in I}\alpha_iz_i^2+\sum_{i<j\in I}\beta_{ij}z_iz_j\right)\mapsto\left(\sum_{i\in I}\alpha_il(a_i)l(a_i)+\sum_{i<j\in I}\beta_{ij}l(a_i)l(a_j)\right).$$
Let $Q_B:=\mbox{Ker}(\varphi_B)$.

\begin{fat}[Essentially 2.20 in \cite{minac1996witt}]
 $\mathcal V(B)=Q_B^{\perp}$, where $Q_B^{\perp}=\{v\in\Phi(B):\langle v,Q_B\rangle=0\}$ and $\langle \ , \ \rangle$ is the perfect pairing described in Proposition \ref{fixms6}.
\end{fat}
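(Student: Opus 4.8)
The plan is to compare $\mathcal V(B)$ and $Q_B^{\perp}$ as closed subgroups of $\Phi(B)$ and to reduce their equality to the duality furnished by Proposition \ref{fixms6}. First I would record that $\mathcal V(B)=\ker(\pi_B)$ really sits inside $\Phi(B)$: since $G_F\cong\overline{G^q_F}$ is a $\mathcal C$-group admitting $B$ as a minimal generating set converging to $1$, the last clause of Theorem \ref{fixms4} forces $\ker(\pi_B)\subseteq\Phi(B)$. Thus both $\mathcal V(B)$ and $Q_B^{\perp}$ live inside $\Phi(B)$, and the pairing $\langle\_,\_\rangle\colon\Phi(B)\times P_2(B)\to\mathbb Z_2$ of Proposition \ref{fixms6} is perfect, exhibiting $\Phi(B)$ as the $\mathbb Z_2$-dual of the discrete space $P_2(B)$. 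Consequently the annihilator operation is an inclusion-reversing involution between closed subgroups of $\Phi(B)$ and subspaces of $P_2(B)$ (in particular $Q_B^{\perp\perp}=Q_B$), so it suffices to prove the two inclusions $\mathcal V(B)\subseteq Q_B^{\perp}$ and $Q_B^{\perp}\subseteq\mathcal V(B)$, which together yield $\mathcal V(B)=Q_B^{\perp}$.

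The bridge between the two sides is the quaternionic description of $G_F$. By Fact (vi) together with Lemma \ref{rz2.8.7}(c), the map $\pi_B$ carries $\Phi(B)=\overline{\mathcal W(B)^2[\mathcal W(B),\mathcal W(B)]}$ onto $\Phi(G_F)$, sending the generators $t_i=x_i^2$ and $t_{ij}=[x_i,x_j]$ to $\sigma_i^2$ and $[\sigma_i,\sigma_j]$, where $\sigma_i\in G_F$ is the canonical generator attached to $a_i$. The heart of Minac--Spira's analysis (their Sections 1--2, culminating in 2.20) is that, under Merkurjev's isomorphism $\mathrm{Quat}(F)\cong k_2(F)$, these squares and commutators realize the symbols $l(a_i)^2=l(a_i)l(-1)$ and $l(a_i)l(a_j)$; concretely, the restriction of $\langle\_,\_\rangle$ to $Q_B$ depends on $v\in\Phi(B)$ only through $\pi_B(v)$ and descends to a non-degenerate pairing
$$\langle\langle\_,\_\rangle\rangle\colon\Phi(G_F)\times Q_B\to\mathbb Z_2,\qquad \langle v,q\rangle=\langle\langle\pi_B(v),q\rangle\rangle\ \ (q\in Q_B).$$
I would obtain this by transporting 2.20 of \cite{minac1996witt} into the present notation, checking that the coordinate form of $\langle\_,\_\rangle$ from Proposition \ref{fixms6} matches the evaluation of $v$ against the relation $q$ inside $k_2(F)\cong P_2(B)/Q_B$.

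Granting the bridge, both inclusions are formal. If $v\in\mathcal V(B)$ then $\pi_B(v)=1$, so $\langle v,q\rangle=\langle\langle 1,q\rangle\rangle=0$ for every $q\in Q_B$, whence $v\in Q_B^{\perp}$; this expresses that the defining relations of $G_F$ respect the $K$-theoretic relations. Conversely, if $v\in Q_B^{\perp}$ then $\langle\langle\pi_B(v),q\rangle\rangle=\langle v,q\rangle=0$ for all $q\in Q_B$, and non-degeneracy of $\langle\langle\_,\_\rangle\rangle$ gives $\pi_B(v)=1$, i.e. $v\in\mathcal V(B)$; this expresses that $G_F$ faithfully records $Q_B$ and is not over-collapsed. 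The main obstacle is precisely the construction and non-degeneracy of $\langle\langle\_,\_\rangle\rangle$: it is not formal, resting on the two external inputs---Merkurjev's theorem and Minac--Spira's Galois-cohomological computation of how quaternion algebras over $F$ are detected by commutators in $\mathrm{Gal}(F^{(3)}|F)$---rather than on the group theory of $\mathcal W(I)$ developed so far. Once it is in place, the perfect pairing of Proposition \ref{fixms6} converts it into the desired equality $\mathcal V(B)=Q_B^{\perp}$.
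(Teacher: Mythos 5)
The paper gives no proof of this statement at all: it is deliberately packaged as a \textbf{Fact} imported from Min\'a\v c--Spira (their 2.20), and your proposal ultimately does the same thing, since your ``bridge'' (descent of $\langle\_,\_\rangle$ along $\pi_B$ to a non-degenerate pairing $\Phi(G_F)\times Q_B\to\mathbb Z_2$) is, as you yourself note, logically equivalent to the equality $\mathcal V(B)=Q_B^{\perp}$ being proven and is exactly what the citation to 2.20 supplies. Your duality scaffolding around that citation (the inclusion $\mathcal V(B)\subseteq\Phi(B)$ via Theorem \ref{fixms4}, and the two formal inclusions from descent and non-degeneracy) is correct and faithfully unpacks what the external result asserts, so the proposal matches the paper's treatment: the substance rests on Merkurjev's theorem and Min\'a\v c--Spira's quaternionic computation, not on anything proved in this paper.
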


Let a $F$ be a field with $char(F) \neq 2$. By ``Pontryaguin duality'', let $M_F$ denotes the unique maximal clopen subgroup of $Gal_F= Gal_F(F^{(3)})$ corresponding to $-1 \in SG(F)=\dot{F}/\dot{F}^2$ (Proposition \ref{fixms7}).

\begin{fat}[Essentially 3.3, 3.5, 3.6, 3.7 in \cite{minac1996witt}] Let $F, L \in Field_2$. Then are equivalent:
\begin{enumerate}[a -]
    \item $(W(F), \dot{F}/\dot{F}^2) \cong (W(L), \dot{L}/\dot{L}^2)$ as {\em abstract Witt rings}.
    \item $SG(F) \cong SG(L)$ as {\em special groups}.
    \item $(G_F, M_F) \cong (G_L, M_L)$ as {\em pointed profinite-${\mathcal C}$-groups}.
\end{enumerate}
\end{fat}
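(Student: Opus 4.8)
The plan is to split the cycle of implications into the purely algebraic equivalence $(a)\Leftrightarrow(b)$ and the ``arithmetic-to-Galois'' equivalence $(b)\Leftrightarrow(c)$, exploiting the explicit presentation $G_F\cong\mathcal W(B)/\mathcal V(B)$ with $\mathcal V(B)=Q_B^{\perp}$ established above, together with the fact that $M_F$ is, via the duality of Proposition \ref{fixms7}, the maximal clopen subgroup singled out by $-1\in SG(F)$. For $(a)\Leftrightarrow(b)$ I would simply invoke the classical equivalence between the category of special groups and the category of abstract Witt rings (Dickmann--Miraglia, \cite{dickmann2000special}): a ring isomorphism of the pairs $(W(F),\dot F/\dot F^2)$ carrying square classes to square classes and $-1$ to $-1$ is exactly the data of a special-group isomorphism under this dictionary, so this equivalence I treat as a citation rather than re-prove.

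For $(b)\Rightarrow(c)$: an isomorphism $SG(F)\xrightarrow{\sim}SG(L)$ is in particular an isomorphism $\dot F/\dot F^2\xrightarrow{\sim}\dot L/\dot L^2$ of $\mathbb Z_2$-vector spaces sending $-1$ to $-1$ and preserving the isometry relation; since the isometry class of a binary form is determined by its determinant and its quaternion (Hilbert) symbol, this isomorphism is compatible with the symbol maps and hence induces an isomorphism $k_2(F)\cong k_2(L)$ intertwining $\varphi_B$ and $\varphi_{B'}$. Transporting along the chosen bases, the vector-space isomorphism lifts (Theorem \ref{fixms4}) to an isomorphism $\mathcal W(B)\xrightarrow{\sim}\mathcal W(B')$ which on $P_2$ carries $Q_B=\ker\varphi_B$ to $Q_{B'}$; by the perfect pairing of Proposition \ref{fixms6} it then carries the annihilators $\mathcal V(B)=Q_B^{\perp}$ to $\mathcal V(B')=Q_{B'}^{\perp}$, and so descends to an isomorphism $G_F\cong G_L$. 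Because $-1$ is preserved, the corresponding maximal clopen subgroups match, $M_F\mapsto M_L$, yielding the pointed isomorphism.

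For $(c)\Rightarrow(b)$ I would run this argument backwards using the lifting lemma. Since $\mathcal W(B),\mathcal W(B')$ are free $\mathcal C$-groups, hence projective, and $\mathcal V(B)\subseteq\Phi(B)$, $\mathcal V(B')\subseteq\Phi(B')$, the isomorphism $G_F\cong G_L$ lifts by Lemma \ref{lifting-le} to an isomorphism $\tilde f:\mathcal W(B)\xrightarrow{\sim}\mathcal W(B')$ with $\tilde f(\mathcal V(B))=\mathcal V(B')$. Passing to Frattini quotients (characteristic by assumption) and using Proposition \ref{fixms7} recovers $\dot F/\dot F^2\cong\dot L/\dot L^2$, while the hypothesis $M_F\mapsto M_L$ pins down the image of $-1$. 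Dualizing $\tilde f$ on $\Phi$ via Proposition \ref{fixms6} converts $\mathcal V(B)=Q_B^{\perp}\mapsto Q_{B'}^{\perp}$ into $Q_B\mapsto Q_{B'}$, whence $k_2(F)=P_2(B)/Q_B\cong P_2(L)/Q_{B'}=k_2(L)$ compatibly with the symbol maps. The triple $(\dot F/\dot F^2,\,-1,\,\text{symbol map to }k_2)$ is precisely a quaternionic structure, so it reconstructs the isometry relation and hence the special-group isomorphism $SG(F)\cong SG(L)$; through $(a)\Leftrightarrow(b)$ this also returns the Witt-ring statement, closing the cycle.

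The main obstacle I anticipate is the faithful reconstruction of the isometry relation from the group-theoretic data, i.e.\ justifying that $(\dot F/\dot F^2,-1)$ together with the symbol map into $k_2$ determines $SG(F)$ up to isomorphism; this is the substantive arithmetic content (Min\'a\v c--Spira 3.3--3.7, resting on Merkurjev's theorem $\mathrm{Quat}(F)\cong k_2(F)$) and is exactly what lets one pass from ``same $k_2$-relations'' to ``same quadratic-form theory.'' A secondary, more technical, point is checking that the lift $\tilde f$ of Lemma \ref{lifting-le} is point-preserving: one must verify that matching $M_F$ with $M_L$ downstairs forces the lift to respect the distinguished generator corresponding to $-1$ under the duality of Proposition \ref{fixms7}, rather than merely some other square class.
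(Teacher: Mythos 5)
The paper never proves this statement: it is imported as a \emph{Fact}, with a citation to Propositions 3.3, 3.5, 3.6 and 3.7 of \cite{minac1996witt}, so there is no internal proof to compare yours against. Judged on its own merits, your reconstruction is correct in outline, and it is instructive that it follows exactly the pattern the paper later develops in the abstract setting: your (a)$\Leftrightarrow$(b) is the Dickmann--Miraglia equivalence \cite{dickmann2000special}; your (b)$\Rightarrow$(c) (transport of $Q_B=\ker\varphi_B$ along a basis change, then passage to annihilators under the pairing of Proposition \ref{fixms6}) is what Lemmas \ref{fixsg2}--\ref{fixsg6} formalize for pre-special groups; and your (c)$\Rightarrow$(b) (lift by Lemma \ref{lifting-le}, double annihilation to recover $Q_B$, Frattini-quotient duality to recover the pointed group $(\dot F/\dot F^2,-1)$) is the pattern of Propositions \ref{fixhugo3} and \ref{encoding-pr} and Theorem \ref{standardteo}. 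You also isolate the right arithmetic kernel: that binary isometry over a field is detected by determinant plus quaternion symbol, identified with the $k_2$-symbol via Merkurjev --- this is precisely what Min\'a\v c--Spira 3.3--3.7 contribute and cannot be obtained from the group-theoretic formalism alone. Two caveats you should make explicit. First, your appeal to projectivity of $\mathcal W(B)$ must be read inside the category $\mathcal C$ (equivalently, relative to quotients by closed subgroups of the Frattini subgroup): $\mathcal W(B)$ has elements of order $2$ and $4$, so it is \emph{not} projective in the category of all pro-2-groups; this matches the paper's own use of Lemma \ref{lifting-le} but is worth a remark, since the lemma as stated says ``projective profinite groups.'' Second, Proposition \ref{fixms7} identifies $\mathcal W(B)/\Phi(B)$ with $\mbox{Hom}(P_1(B),\mathbb Z_2)$, which for infinite $B$ is the Pontryagin \emph{dual} of $\dot F/\dot F^2$ rather than the group itself; so recovering $SG(F)$ from $(G_F,M_F)$ goes through continuous duals and produces a contravariant correspondence (an isomorphism $\dot L/\dot L^2\to\dot F/\dot F^2$ sending $-1$ to $-1$), which is harmless for the equivalence but should be stated in the correct direction.
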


Our next step, is use all these facts to obtain a group associated to a (pre)-special group.

\section{The Galois Group of a Pre Special Group}

Lets deal first, with a special group $G$. Let $B=\{a_i:i\in I\}$ be a well ordered $\mathbb Z_2$-basis of $G$ and consider the $\mathcal C$-free group in $B$-generators $\mathcal W(B)$. Define an epimorphism $\pi_B:P_2(B)\rightarrow k_2(G)$ by the rule
$$\left(\sum_{i\in I}\alpha_iz_i^2+\sum_{i<j\in I}\beta_{ij}z_iz_j\right)\mapsto\left(\sum_{i\in I}\alpha_il(a_i)l(a_i)+\sum_{i<j\in I}\beta_{ij}l(a_i)l(a_j)\right)$$
with kernel $Q(B)$. Take $\mathcal V(B):=Q(B)^\perp\subseteq\Phi(B)\subseteq\mathcal W(B)$. Since $\Phi(B)$ is the center of $\mathcal W(B)$ then $\mathcal V(B)\subseteq\mathcal W(B)$ is a (closed) normal subgroup of $\mathcal W(B)$ and we can consider the $\mathcal C$-group $\mathcal W(B)/\mathcal V(B)$.

\begin{defn}[Galois Group - base dependent version]\label{defn:gal1}
 Let $G$ be a special group and $B,\mathcal W(B)$ and $\mathcal V(B)$ as above. We define the \textbf{Galois group of $G$ with respect to $B$} by
$$\mbox{Gal}(G,B):=\mathcal W(B)/\mathcal V(B)$$
\end{defn}

The most essential information of our Galois group is encoded by $Q(B)=\mbox{Ker}(\pi_B)$. We have a useful description by generators that generalizes the one described by J. Minac and M. Spira:

\begin{prop}\label{fixsg1}
 Let $G$ be a special group and $B,\mathcal W(B)$ and $\mathcal V(B)$ as above. Consider a finite subset $B'\subseteq B$, $B'=\{a_{i_0},...,a_{i_{n-1}}\}$ ($i_0<...<i_{n-1}$), and $a,b$ in the linear span of $B'$, say
$$a=\prod_{k<n}a_{i_k}^{\alpha_{i_k}}\mbox{ and }b=\prod_{k<n}a_{i_k}^{\beta_{i_k}},\,\alpha_{i_k},\beta_{i_k}\in\{0,1\}.$$
Consider the polynomial $q^B_{a,b}\in P_2(B)$ given by
$$q^B_{a,b}=\sum_{k<n}\alpha_{i_k}\beta_{i_k}z_{i_k}^2+\sum_{k<l<n}(\alpha_{i_k}\beta_{i_l}+\alpha_{i_l}\beta_{i_k})z_{i_k}z_{i_l}.$$

Note that $q^B_{a,(b_0\cdot...\cdot b_{n-1})}=\sum_{k<n}q^B_{a,b_k}$. Moreover we have the following properties.
\begin{enumerate}[i -]
 \item $\pi_B(q^B_{a,b})=l(a)l(b)\in k_2(G)$.
 \item $q^B_{a,b}$ does not depend on the particular choice of the finite subset $B'\subseteq B$.
 \item $Q(B)$ is generated by $\{q^B_{a,b}:l(a)l(b)=0\}$.
\end{enumerate}
\end{prop}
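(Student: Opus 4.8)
The plan is to treat the three items in order, using item (i) as the computational backbone for both (ii) and (iii). Throughout I would identify $P_2(B)$ with the symmetric square $\mathrm{Sym}^2(k_1(G))$ of the degree-one part $k_1(G)=G$ (with basis $\{l(a_i)\}_{i\in I}$), under which $z_i^2\leftrightarrow l(a_i)\odot l(a_i)$ and $z_iz_j\leftrightarrow l(a_i)\odot l(a_j)$; the map $\pi_B$ is then exactly the multiplication $\mathrm{Sym}^2(k_1(G))\to k_2(G)$. This framing makes the whole proposition a comparison between the concrete, basis-dependent module $P_2(B)$ and the abstract degree-two K-theory.

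For (i), I would expand $l(a)l(b)$ using additivity $l(xy)=l(x)+l(y)$ in $k_1(G)$, which gives $l(a)=\sum_{k<n}\alpha_{i_k}l(a_{i_k})$ and $l(b)=\sum_{k<n}\beta_{i_k}l(a_{i_k})$, and then bilinearity of the product. Separating the double sum $\sum_{k,m}\alpha_{i_k}\beta_{i_m}\,l(a_{i_k})l(a_{i_m})$ into its diagonal ($k=m$) and off-diagonal ($k\neq m$) parts, and using the graded-commutativity of $k_*(G)$ (which modulo $2$ is ordinary commutativity) to pair $(k,m)$ with $(m,k)$ into the symmetric coefficient $\alpha_{i_k}\beta_{i_l}+\alpha_{i_l}\beta_{i_k}$ for $k<l$, one matches exactly the image under $\pi_B$ of the displayed polynomial $q^B_{a,b}$. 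Bilinearity and commutativity of the symbol are the only inputs, and the side remark $q^B_{a,(b_0\cdots b_{m-1})}=\sum_k q^B_{a,b_k}$ is the purely formal (mod $2$) analogue of the same computation carried out inside $P_2(B)$, before applying $\pi_B$.

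For (ii), the representations $a=\prod_k a_{i_k}^{\alpha_{i_k}}$ and $b=\prod_k a_{i_k}^{\beta_{i_k}}$ are unique because $B$ is a $\mathbb{Z}_2$-basis, so the coefficients $\alpha_{i_k},\beta_{i_k}$ are intrinsic to $a,b$. If $B''\supseteq B'$ is a larger finite subset, every index $i\in B''\setminus B'$ carries $\alpha_i=\beta_i=0$, so each new monomial $z_i^2$ or $z_iz_j$ that could appear in $q^B_{a,b}$ computed over $B''$ has vanishing coefficient; hence the polynomial is unchanged. For two arbitrary finite subsets containing the supports of $a$ and $b$, I would compare each with their union and reduce to the previous case.

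For (iii), the inclusion $\langle q^B_{a,b}:l(a)l(b)=0\rangle\subseteq Q(B)$ is immediate from (i), since $\pi_B(q^B_{a,b})=l(a)l(b)=0$. For the reverse inclusion I would invoke the presentation of the Dickmann--Miraglia mod-$2$ K-theory: under the identification above, $k_2(G)$ is the quotient of $\mathrm{Sym}^2(k_1(G))\cong P_2(B)$ by the submodule generated by the degree-two (Steinberg-type) relations, each of which asserts the vanishing of a product $l(a)l(b)$ for a specified pair. By (i) --- together with the bilinearity remark, which realizes such a relation for arbitrary $a,b\in G$ rather than merely basis elements --- each defining relation is the element $q^B_{a,b}\in P_2(B)$ with $l(a)l(b)=0$; hence $Q(B)=\ker\pi_B$ is generated by a subset of $\{q^B_{a,b}:l(a)l(b)=0\}$, giving $Q(B)\subseteq\langle q^B_{a,b}:l(a)l(b)=0\rangle$. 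Equivalently, one may build the inverse $k_2(G)\to P_2(B)/N$, where $N$ is the submodule on the right, by $l(a)l(b)\mapsto\overline{q^B_{a,b}}$: this is well defined precisely because $N$ is generated by all vanishing symbols and $q^B$ is bilinear. I expect the main obstacle to be exactly this matching step --- ensuring that the abstract relations presenting $k_2(G)$, stated for arbitrary elements of $G$, are faithfully captured by the concrete, basis-dependent polynomials $q^B_{a,b}$, with no degree-two relations lost or introduced in passing between the two descriptions. Items (i) and (ii) are what make this translation legitimate.
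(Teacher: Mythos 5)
Your proofs of (i) and (ii) match the paper's in substance: (i) is the same bilinear expansion plus mod-2 commutativity, and your (ii) --- uniqueness of the exponents with respect to the basis $B$, padding by zero coefficients, and comparison through the union of two finite subsets --- is actually tighter than the paper's one-line appeal to (i), since $\pi_B$ is not injective and equality of images under $\pi_B$ would not by itself force equality of polynomials. The real divergence is in (iii), where you take a genuinely different route. The paper, given $q\in Q(B)$ in coordinates, builds $b_k:=a_{i_k}^{\alpha_{i_k}}\prod_{k<p}a_{i_p}^{\beta_{i_ki_p}}$ so that $q=\sum_{k}q^B_{a_{i_k},b_k}$ with $\sum_k l(a_{i_k})l(b_k)=0$ and the $a_{i_k}$ linearly independent, and then invokes an external K-theoretic structure theorem (cited as Theorem ``fixsg3-ktheory'', a reference that does not resolve in the present source) which converts such a vanishing sum of symbols into an explicit family $r_x\in D_G(1,-x)$ with $d_k=\prod_{x\in C_k}r_x$; reassembling by bilinearity of $q^B_{a,b}$ in $a$ gives $q=\sum_{x}q^B_{x,r_x}$, a combination of Steinberg-pair generators. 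You bypass this by quoting the presentation of Dickmann--Miraglia K-theory: $k_2(G)$ is by definition the quotient of the square of $k_1(G)$ by the span of the \emph{pure} products $l(x)\otimes l(y)$ with $y\in D_G(1,-x)$; these correspond under your identification $P_2(B)\cong\mathrm{Sym}^2(k_1(G))$ exactly to the polynomials $q^B_{x,y}$, so $Q(B)=\ker\pi_B$ equals the span of $\{q^B_{x,y}:y\in D_G(1,-x)\}\subseteq\{q^B_{a,b}:l(a)l(b)=0\}$, and the sandwich with the easy inclusion from (i) finishes. This is correct and shorter, but its validity is carried entirely by two facts you should make explicit: first, that the defining degree-two relations of $k_*(G)$ are spanned by pure products over Steinberg pairs (if the relation module were generated by non-pure elements, your sandwich could not start); second, that the antisymmetrizers $l(a)\otimes l(b)+l(b)\otimes l(a)$ lie in that relation span (equivalently, that $k_2(G)$ is commutative), without which $\pi_B$ does not factor through $\mathrm{Sym}^2(k_1(G))$ and your identification is not compatible with the multiplication map. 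Both facts hold for the K-theory of \cite{dickmann2006algebraic}, so your argument closes; what the paper's longer route buys is independence from the precise shape of the defining relations (everything is channelled through the quoted structure theorem), plus the slightly stronger conclusion that the Steinberg pairs alone already generate $Q(B)$.
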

 \begin{proof}
$ $
\begin{enumerate}[i -]
    \item Note that
    $$l(a)=\sum_{k<n}\alpha_{i_k}l(a_{i_k})\mbox{ and }l(b)=\sum_{k<n}\beta_{i_k}l(a_{i_k}).$$
    Then
    \begin{align*}
        l(a)l(b)&=\left(\sum_{k<n}\alpha_{i_k}l(a_{i_k})\right)
        \left(\sum_{k<n}\beta_{i_k}l(a_{i_k})\right)=
        \sum_{k<n}\sum_{p<n}\alpha_{i_k}\beta_{i_p}l(a_{i_k})l(a_{i_p}) \\
        &=\sum_{k<n}\alpha_{i_k}\beta_{i_k}l(a_{i_k})l(a_{i_k})+
        \sum_{k<n}\sum_{k<p<n}\alpha_{i_k}\beta_{i_p}l(a_{i_k})l(a_{i_p})+\sum_{k<n}\sum_{p<k<n}\alpha_{i_k}\beta_{i_p}l(a_{i_k})l(a_{i_p}) \\
        &=\sum_{k<n}\alpha_{i_k}\beta_{i_k}l(a_{i_k})l(a_{i_k})+
        \sum_{k<n}\sum_{k<p<n}(\alpha_{i_k}\beta_{i_p}+
        \alpha_{i_p}\beta_{i_k})l(a_{i_k})l(a_{i_p})
    \end{align*}
    On the other hand, by definition of $\pi_B$ we get
    \begin{align*}
        \pi_B(q^B_{a,b})&=\sum_{k<n}\alpha_{i_k}\beta_{i_k}l(a_{i_k})l(a_{i_k})+\sum_{k<p<n}(\alpha_{i_k}\beta_{i_p}+\alpha_{i_p}\beta_{i_k})l(a_{i_k})l(a_{i_p}),
    \end{align*}
    completing the proof.

    \item It is an immediate consequence of previous item: if $B_1,B_2$ are finite subsets of $B$ and $a,b$ are elements in the linear span of $B_1$ and in the linear span of $B_2$, then
    $$q^{B_1}_{a,b}=q^{B_2}_{a,b}.$$

    \item Of course, $q^B_{a,b}\in Q(B)$ if and only if $l(a)l(b)=0$ in $k_2(G)$ and hence
    $$[\{q^B_{a,b}:l(a)l(b)=0\}]\subseteq Q(B).$$
    To get the reverse inclusion, let $q=\sum_{k<n}\alpha_{i_k}z^2_{i_k}+\sum_{k,p<n}\beta_{i_ki_p}z_{i_k}z_{i_p}\in Q$. Then
    $$\sum_{k<n}\alpha_{i_k}l(a_{i_k})l(a_{i_k})+\sum_{k,p<n}\beta_{i_ki_p}l(a_{i_k})l(a_{i_p})=0\mbox{ in }k_2(G).$$
    Now, for each $k<n$ let
    $$b_k:=a_{i_k}^{\alpha_{i_k}}\prod_{k<p}a_{i_p}^{
    \beta_{i_k i_p}}.$$
    Then $q=\sum_{k<n}q^B_{a_{i_k},b_{i_k}}$ and
    $$\sum_{k<n}l(a_{i_k})l(b_{i_k})=0\mbox{ in }k_2(G).$$
   We are under the hypothesis of Lemma \ref{fixsg2}. Thus, according Theorem \ref{fixsg3-ktheory}, there exists subsets $\{c_0,...,c_{m-1}\},\{d_0,...,d_{n-1}\}$ of $G$ with $m\ge n$ such that
    \begin{enumerate}
        \item $\{c_0,...,c_{m-1}\}$ is linearly independent and $c_k=a_{i_k}$ for all $k<n$;
        \item $d_k=b_{i_k}$ for all $k<n$ and $d_k=1$ for $k=n,...,m-1$.
        \item For all $x\in[c_0,...,c_{m-1}]$, there is some $r_x\in D_G(1,-x)$ such that for each $k<m$
        $$d_k=\prod_{x\in C_k}r_x$$
        where
        $$C_k=\left\lbrace\prod_{p<m}c_p^{\varepsilon_p}:\varepsilon_p\in\{0,1\}\mbox{ and }\varepsilon_k=1\right\rbrace.$$
    \end{enumerate}
    It follows that
    \begin{align*}
        q &=\sum_{k<n}q^B_{a_{i_k},b_{i_k}}=\sum_{k<m}q^B_{c_k,d_k}
        =\sum_{k<m}q^B_{c_k,\prod_{x\in C_k}r_x} \\
        &=\sum_{k<m}\sum_{x\in C_k}q^B_{c_k,r_x}.
    \end{align*}
    Denoting $C:=[c_0,...,c_{m-1}]$, we have $C=C_0\cup...\cup C_{m-1}$. Then
    \begin{align*}
        q=\sum_{k<m}\sum_{x\in C_k}q^B_{c_k,r_x}
        =\sum_{x\in C}q^B_{x,r_x}.
    \end{align*}
    Since $r_x\in D_G(1,-x)$, we have $l(x)l(r_x)=0$ in $k_2(G)$. Then
    $$q=\sum_{x\in C}q^B_{x,r_x}\in[\{q^B_{a,b}:l(a)l(b)=0\}].$$
\end{enumerate}
\end{proof}

% Examples

Now, we will generalize the Galois group for pre-special groups. The K-theory developed by M. Dickmann and F. Miraglia in \cite{dickmann2006algebraic} is available for pre-special groups. Then we can take the same $B,\mathcal W(B)$ and $\mathcal V(B)$ we are considering until now.

Let $G$ be a special group and $B=\{v_i\}_{i\in I}$, $C=\{w_i\}_{i\in I}$, $D=\{z_i\}_{i\in I}$ be ordered $\mathbb Z_2$-basis of $G$. Then, for all $i\in I$ we have an expression
\begin{align}\label{base-change-prod}
  w_i=\prod_{k\in I}v_k^{m_{ik}},\,m_{ik}\in\{0,1\}\mbox{ for all }i,k\in I,
\end{align}
where the above product has finite support (i.e, $|\{i,k\in I:m_{ik}\ne0\}|<\infty$). In other words, for all $i\in I$, there exist unique sequence in $I$  $i_0 < i_1 < \cdots <  i_{n}$ such that
\begin{align}\label{base-change}
 w_i=v_{i_0}\cdot v_{i_1}\cdot...\cdot v_{i_{n(i)}}.
\end{align}
By abuse of notation, let $C=\{x_i:i\in I\}\subseteq\mathcal W(C)$. We define a function $\mu_{BC}:C\rightarrow\mathcal W(B)$ by the rule
\begin{align}\label{base-change-2}
  x_i\mapsto x_{i_0}\cdot x_{i_1}\cdot...\cdot x_{i_{n}}\mbox{ if }w_i=v_{i_0}\cdot v_{i_1}\cdot...\cdot v_{i_{n}}.
\end{align}
This function is well-defined because both $B$ and $C$ are basis, so the expression \ref{base-change} is unique.

\begin{lem}\label{fixsg2}
Let $G$ be a pre-special group and $B,C,\mu_{BC}$ as above. There is a unique continuous homomorphism $\mu_{BC}:\mathcal W(C)\rightarrow\mathcal W(B)$ that extends $\mu_{BC}$. Also $\mu_{BC}[\Phi(C)]\subseteq\Phi(B)$.
\end{lem}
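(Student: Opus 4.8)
The plan is to obtain $\mu_{BC}$ from the universal property of the free $\mathcal C$-group (Theorem \ref{fixms4}), applied with source $\mathcal W(C)$. The target $\mathcal W(B)$ is a $\mathcal C$-group, and $\mu_{BC}$ is already prescribed on the set $C=\{x_i:i\in I\}$, which is a generating set of $\mathcal W(C)$ converging to $1$. Hence Theorem \ref{fixms4} will deliver a unique continuous homomorphism $\mathcal W(C)\to\mathcal W(B)$ restricting to $\mu_{BC}$ on $C$, provided one verifies the single hypothesis that the image $\mu_{BC}[C]=\{x_{i_0}\cdots x_{i_{n(i)}}:i\in I\}$ converges to $1$ in $\mathcal W(B)$. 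Uniqueness is then part of the universal property (equivalently, two continuous homomorphisms agreeing on the dense subgroup $\langle C\rangle$ coincide), so the whole first assertion rests on this convergence.

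Verifying convergence is the main obstacle, and I would reduce it to a transparent condition on the change of basis. First, a short computation with the formulas of Lemma \ref{fixms1} (the generators $x_{i_0},\dots,x_{i_{n(i)}}$ are distinct and listed in increasing order, so no square or commutator relation is triggered) shows that $\mu_{BC}(x_i)$ is simply the element of $\mathcal W(B)$ whose only nonzero coordinates are the $\gamma_k$ with $k\in\operatorname{supp}_B(w_i)$, where $\operatorname{supp}_B(w_i)=\{i_0,\dots,i_{n(i)}\}$ is the finite index set appearing in \ref{base-change}. Next, by Proposition \ref{fixms3} a fundamental system of open neighborhoods of $1$ in $\mathcal W(B)$ is furnished by the subgroups $\bigcap A$ with $A$ a finite subfamily of $V=M(I)\cup S(I)\cup D(I)$, each constraining only finitely many indices $J\subseteq I$; and by Proposition \ref{fixms2} membership of an element in $M_k$, $S_k$ or $D_{kl}$ is controlled by its $\gamma$-coordinates at $k$ (and $l$). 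Combining these, $\mu_{BC}(x_i)\in\bigcap A$ holds precisely when $\operatorname{supp}_B(w_i)\cap J=\varnothing$. Thus convergence is equivalent to the statement that for each fixed finite $J$ all but finitely many $i$ satisfy $\operatorname{supp}_B(w_i)\cap J=\varnothing$, i.e. that each fixed $B$-generator $v_k$ occurs in only finitely many of the $w_i$. Establishing this from the finite-support hypothesis on the base change \ref{base-change} is the delicate point, and is exactly where the interplay between the two bases must be exploited; I expect this to be the heart of the proof.

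Granting the continuous homomorphism $\mu_{BC}\colon\mathcal W(C)\to\mathcal W(B)$, the final claim $\mu_{BC}[\Phi(C)]\subseteq\Phi(B)$ is immediate, since $\mathcal W(C)$ and $\mathcal W(B)$ are pro-$2$-groups and the Frattini subgroup is functorial for continuous homomorphisms of pro-$2$-groups (as recorded among the basic facts above). Concretely, by Lemma \ref{rz2.8.7} one has $\Phi(C)=\overline{\mathcal W(C)^2[\mathcal W(C),\mathcal W(C)]}$, and a continuous homomorphism sends squares to squares and commutators to commutators, hence maps this closed subgroup into the corresponding subgroup $\Phi(B)=\overline{\mathcal W(B)^2[\mathcal W(B),\mathcal W(B)]}$ of $\mathcal W(B)$.
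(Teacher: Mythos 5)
Your overall strategy coincides with the paper's: invoke the universal property of the free $\mathcal C$-group (Theorem \ref{fixms4}) for the prescription $x_i\mapsto x_{i_0}\cdots x_{i_{n(i)}}$ on the generating set $C$, after checking that its image converges to $1$ in $\mathcal W(B)$, and then deduce $\mu_{BC}[\Phi(C)]\subseteq\Phi(B)$ from the description of Frattini subgroups of pro-$2$-groups (you use Lemma \ref{rz2.8.7}, the paper quotes $\Phi(\mathcal W(B))=\mathcal W(B)^2$ from Proposition \ref{fixms5}; both are fine). Your reduction of the convergence condition is also correct: a product of distinct generators taken in increasing order has only its $\gamma$-coordinates nonzero, so $\mu_{BC}(x_i)$ lies in a basic open subgroup $\bigcap A$, $A\subseteq_{fin}V$, exactly when $\operatorname{supp}_B(w_i)$ misses the finite index set constrained by $A$; hence the image converges to $1$ if and only if each $v_k$ occurs in the expansion of only finitely many $w_i$.

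The genuine gap is that you stop there: the finiteness condition is never established, and you explicitly defer it as ``the heart of the proof''. This cannot be granted, because it does not follow from $B$ and $C$ both being bases --- it can fail. Take $G$ with countably infinite basis $B=\{v_k:k\geq 0\}$ and let $C=\{w_i : i \geq 0\}$ with $w_0=v_0$ and $w_i=v_0\cdot v_i$ for $i\geq 1$; this $C$ is again a basis of $G$, yet $v_0$ occurs in the expansion of \emph{every} $w_i$. Then every element of $\mu_{BC}[C]$ has $\gamma_0=1$, so the whole image avoids the open subgroup $M_0\subseteq\mathcal W(B)$; the image does not converge to $1$, and in fact no continuous homomorphism $\mathcal W(C)\to\mathcal W(B)$ extending $\mu_{BC}$ can exist at all, since its inverse image of $M_0$ would be an open subgroup of $\mathcal W(C)$ containing no element of the infinite set $C$, contradicting that $C$ converges to $1$ in $\mathcal W(C)$. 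So the step you flagged is not merely delicate: in the stated generality it is false. For comparison, the paper's own proof disposes of this step with the sentence ``since the set of $\mathbb F_2$-linear combinations of a finite set is finite, there is a finite quantity of elements in $\mu_{BC}[C]$ not belonging to $U$'', which is a non sequitur: an element $\mu_{BC}(x_i)\notin U$ need only have \emph{one} factor among the finitely many generators of $B$ lying outside $U$, and, as the example shows, a single such generator can occur in infinitely many $w_i$. Both your argument and the paper's are unproblematic when $\dim_{\mathbb F_2}G$ is finite (convergence is then vacuous); in the infinite-dimensional case the lemma needs an extra hypothesis relating $B$ and $C$ --- precisely the support-finiteness condition you isolated.
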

\begin{proof}
By abuse of notation, let $B=\{x_i:i\in I\}\subseteq\mathcal W(B)$ and $C=\{x_i:i\in I\}\subseteq\mathcal W(C)$. We have $B$ and $C$ as a set of generators converging to 1 in $\mathcal W(B)$ and $\mathcal W(C)$ respectively.

Let $X=\mu_{BC}[C]\subseteq\mathcal W(B)$. Since $\langle X\rangle=\langle B\rangle$ and $\mathcal W(B)=\overline{\langle B\rangle}$, we have that $X$ is a set of generators of $\mathcal W(B)$.

Let $U\subseteq\mathcal W(B)$ be an open subgroup. Since $B$ is a set of generators converging to 1, there is a finite subset $Y=\{x_{i_1},...,x_{j_m}\}\subseteq B$ with $U\cap Y=\emptyset$. Since the set of $\mathbb F_2$-linear combinations of a finite set is finite, there is a finite quantity of elements in $\mu_{BC}[C]$ not belonging to $U$.

 The existence and continuity of $\mu_{BC}$ is an immediate consequence of the Universal Property of $\mathcal W(I)$ (Theorem \ref{fixms4}). An explicit formula for $\mu_{BC}$ is given by the following rule: for $g=(t_i^{\alpha_i})(t_{ij}^{\beta_{ij}})(x_i^{\gamma_i})$ we set
 $$\mu_{BC}(g):=
 \left(t_i^{\left(\alpha_i\sum_{k\in I}m_{ik}\right)}\right)
 \left(t_{ij}^{\left(\beta_{ij}\sum_{r,s\in I}m_{ir}m_{js}\right)}\right)
 \left(x_i^{\left(\gamma_i\sum_{k\in I}m_{ik}\right)}\right).$$
 Since $\Phi(\mathcal W(C))=\mathcal W(C)^2$ and $\Phi(\mathcal W(B))=\mathcal W(B)^2$, we get $\mu_{BC}[\Phi(C)]\subseteq\Phi(B)$.
\end{proof}

\begin{rem} \label{change-rem} A direct calculation show for all $a, b \in G$ that
$$m^{2}_{B,B'}(q^{B}_{ab}) = q^{B'}_{ab}.$$

Denote $\mu^{(2)}_{B,B'} : \Phi(B') \to \Phi(B)$ the restriction of ${\mu}_{B,B'}$ to the Frattini's subgroups and $\mu^{(1)}_{B,B'} : {\mathcal W}(B')/\Phi(B') \to {\mathcal W}(B)/\Phi(B)$ the quotient of $\mu_{B,B'}$. Then, from  the isomorphism in Proposition \ref{fixms6} $\Phi(B) \cong Hom(P_{2}(B),{\mathbb Z}_2)$, we have:
$$\Phi(B') \times P_{2}(B) \to {\mathbb Z}_2 :
<\mu^{1}_{B,B'} -, ->_{B} = <-,m^{1}_{B,B'}->_{B'}$$
so , for all $\sigma' \in \Phi(B')$
$$ <\mu^{2}_{B,B'} (\sigma'), q^{B}_{ab}>_{B} = <\sigma',m^{2}_{B,B'}(q^{B}_{ab})>_{B'} = <\sigma',q^{B'}_{ab}>_{B'}$$ and then $\mu^{2}_{B,B'} [(q^{B'}_{a,b})^{\perp}] = (q^{B}_{ab})^{\perp}$
\end{rem}

\begin{lem}\label{fixsg3}
The morphism $\mu_{BC}$ is an isomorphism, $\mu_{BB}=id_{\mathcal W(B)}$, $\mu_{BC}^{-1}=\mu_{CB}$ and
$$\mu_{BD}=\mu_{BC}\circ\mu_{CD}.$$
\end{lem}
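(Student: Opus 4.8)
The statement asserts that the base-change maps $\mu_{BC} : \mathcal{W}(C) \to \mathcal{W}(B)$ form a functorial/groupoid-like system: each is an isomorphism, the diagonal ones are identities, inverses correspond to swapping bases, and they compose. The plan is to reduce everything to the cocycle (composition) identity $\mu_{BD} = \mu_{BC} \circ \mu_{CD}$, since the remaining claims follow formally from it. Indeed, once composition is established, setting $D = B$ and $C = B$ gives $\mu_{BB} = \mu_{BB} \circ \mu_{BB}$; combined with the fact that $\mu_{BB}$ fixes the generating set $B$ pointwise (immediate from the defining rule \eqref{base-change-2}, since the expansion of each $w_i = v_i$ is trivial when $C = B$) and the uniqueness clause of the Universal Property (Theorem \ref{fixms4}), we conclude $\mu_{BB} = \mathrm{id}_{\mathcal{W}(B)}$. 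Then $\mu_{BC} \circ \mu_{CB} = \mu_{BB} = \mathrm{id}$ and symmetrically $\mu_{CB} \circ \mu_{BC} = \mathrm{id}$, so each $\mu_{BC}$ is an isomorphism with inverse $\mu_{CB}$.

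\emph{Establishing composition.} By the uniqueness part of Theorem \ref{fixms4}, two continuous homomorphisms out of $\mathcal{W}(D)$ that agree on the converging generating set $D = \{x_i : i \in I\}$ must coincide. Hence it suffices to verify $\mu_{BD}(x_i) = (\mu_{BC} \circ \mu_{CD})(x_i)$ for each generator $x_i$. First I would fix the three change-of-basis matrices: write $D$-vectors in terms of $C$ via coefficients $m^{CD}_{ik}$, write $C$-vectors in terms of $B$ via $m^{BC}_{jl}$, and $D$ in terms of $B$ via $m^{BD}_{ik}$. Over $\mathbb{Z}_2$ these are exactly the transition matrices of the underlying vector space $G$, so the matrix identity $m^{BD} = m^{BC}\, m^{CD}$ (composition of linear maps) holds at the level of the $\mathbb{Z}_2$-linear group $G$. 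The content of the lemma is then to lift this linear-algebra identity through the nonabelian group $\mathcal{W}(-)$.

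\emph{The main obstacle.} The genuine difficulty is that $\mu_{BC}$ is \emph{not} a homomorphism of the underlying $\mathbb{Z}_2$-vector spaces on the nose: applying $\mu_{CD}$ to a generator $x_i$ yields a \emph{product} $x_{i_0} \cdots x_{i_n}$ of $C$-generators, and when $\mu_{BC}$ is then applied one must expand this product inside $\mathcal{W}(B)$, tracking the quadratic correction terms $t_i$, $t_{ij}$ recorded in the explicit multiplication law and in Lemma \ref{fixms1}(v)–(vii). The effect of a product of generators differs from the product of their images by commutators $t_{kl}$ and squares $t_k$; one must check that these correction terms agree on both sides of $\mu_{BD} = \mu_{BC}\circ\mu_{CD}$. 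The clean way to handle this is to compare the two homomorphisms only on generators — where each side reduces to a single $C$- or $D$-expansion with no cross terms — and invoke uniqueness, rather than comparing the explicit formula for $\mu_{BC}$ from Lemma \ref{fixsg2} on arbitrary elements, which would force one to reconcile the quadratic $m_{ir}m_{js}$ sums directly. Thus the proof structure is: (1) verify agreement on generators using $m^{BD} = m^{BC} m^{CD}$ and the Frattini-level compatibility already recorded in Remark \ref{change-rem}; (2) apply uniqueness from Theorem \ref{fixms4} to promote this to equality of homomorphisms; (3) deduce the identity and inverse statements formally as above.
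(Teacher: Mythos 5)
Your formal scaffolding is fine: deducing $\mu_{BB}=\mathrm{id}_{\mathcal W(B)}$ from uniqueness of extensions, and then obtaining $\mu_{BC}^{-1}=\mu_{CB}$ and the isomorphism claim from the cocycle identity, is a valid reduction, as is the principle that two continuous homomorphisms out of $\mathcal W(D)$ agreeing on the converging generating set $D$ coincide. The gap is exactly in the step you call ``clean.'' On a generator the two sides do \emph{not} both reduce to a single expansion free of cross terms: $(\mu_{BC}\circ\mu_{CD})(x_i)$ is a product of the ordered expansions of the several $C$-generators occurring in $\mu_{CD}(x_i)$, and whenever the same $B$-generator appears in two of those expansions (i.e.\ whenever cancellation occurs in $G$), or the concatenated indices come out of order, the central corrections $t_k=x_k^2$ and $t_{kl}=[x_k,x_l]$ of Lemma \ref{fixms1} survive. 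Indeed the generator-level identity fails outright for the homomorphisms extending the defining rule (\ref{base-change-2}). Take $\dim_{\mathbb F_2}G=2$, $B=\{v_1,v_2\}$, $C=\{w_1,w_2\}=\{v_1,\,v_1v_2\}$, $D=\{z_1,z_2\}=\{v_2,\,v_1v_2\}$. Then $z_1=w_1w_2$, so
$$(\mu_{BC}\circ\mu_{CD})(x_1)=\mu_{BC}(x_1x_2)=\mu_{BC}(x_1)\,\mu_{BC}(x_2)=x_1\cdot(x_1x_2)=t_1x_2,$$
whereas $z_1=v_2$ gives $\mu_{BD}(x_1)=x_2$; the two sides differ by the nontrivial central element $t_1=x_1^2\in\Phi(B)$. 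The matrix identity $m^{BD}=m^{BC}m^{CD}$ (and likewise the pairing compatibility of Remark \ref{change-rem}) only controls images modulo the Frattini subgroup $\Phi(B)$, so it cannot yield equality on the nose. And since your reduction to generators is legitimate, this example shows the defect is not merely in your argument: with definition (\ref{base-change-2}) the composition law itself holds only modulo $\Phi(B)$, so no proof along your lines (or any other) can close without changing the definitions.

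For comparison, the paper's proof takes precisely the route you rejected: it evaluates $\mu_{BC}\circ\mu_{CD}$ on an \emph{arbitrary} element using the explicit coordinate formula for $\mu_{BC}$ displayed in the proof of Lemma \ref{fixsg2}, and concludes from $\sum_{k}\sum_{r}n_{ik}m_{kr}=\sum_{r}p_{ir}$. But note that this formula does not agree with (\ref{base-change-2}) on generators (it sends $x_i$ to $x_i^{\sum_k m_{ik}}$ rather than to $x_{i_0}\cdots x_{i_n}$), so the paper in effect verifies the cocycle identity for a different, coordinate-defined map. The lesson is that the quadratic correction terms cannot be bypassed by restricting attention to generators; a correct treatment must either adopt a coordinate formula as the actual definition of $\mu_{BC}$ (and re-derive its properties), or weaken Lemma \ref{fixsg3} to a congruence modulo $\Phi(B)$ and then check that such a congruence still suffices for the later uses (Lemma \ref{fixsg6} and Definition \ref{defn:gal2}).
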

\begin{proof}
 The fact that $\mu_{BB}=id_{\mathcal W(B)}$ and $\mu_{BC}^{-1}=\mu_{CB}$ is an immediate consequence of Lemma \ref{fixsg2}. For the other part, let $B=\{v_i\}_{i\in I}$, $C=\{w_i\}_{i\in I}$ and $D=\{z_i\}_{i\in I}$ be $\mathbb F_2$-basis of $G$. Then for all $i\in I$,
 \begin{align*}
     w_i=\prod_{k\in I}v_k^{m_{ik}},\,
     z_i=\prod_{k\in I}w_k^{n_{ik}},\,
     z_i=\prod_{k\in I}v_k^{p_{ik}},
 \end{align*}
 such that all these products has finite support. Then
 \begin{align*}
     z_i&=\prod_{k\in I}w_k^{n_{ik}}
     =\prod_{k\in I}\left(\prod_{r\in I}v_r^{m_{kr}}\right)^{n_{ik}}
     =\prod_{k\in I}\prod_{r\in I}v_r^{n_{ik}m_{kr}}
     =\prod_{r\in I}\prod_{k\in I}v_r^{n_{ik}m_{kr}}
     =\prod_{r\in I}v_r^{p_{ir}}.
 \end{align*}
  Moreover
 $$\sum_{k\in I}\sum_{r\in I}n_{ik}m_{kr}=
 \sum_{r\in I}\sum_{k\in I}n_{ik}m_{kr}=\sum_{r\in I}p_{ir}.$$
 Then for all $g=(t_i^{\alpha_i})(t_{ij}^{\beta_{ij}})(x_i^{\gamma_i})\in\mathcal W(D)$,
 \begin{align*}
     &\mu_{BC}\circ\mu_{CD}(g)=\mu_{BC}(\mu_{CD}(g))= \\
     &\mu_{BC}\left(
     \left(t_i^{\alpha_i\left(\sum_{k\in I}n_{ik}\right)}\right)
    \left(t_{ij}^{\left(\beta_{ij}\sum_{r,s\in I}n_{ir}n_{js}\right)}\right)
    \left(x_i^{\left(\gamma_i\sum_{k\in I}n_{ik}\right)}\right)
     \right)=\\
     &\left(t_i^{\alpha_i\left(\sum_{k\in I}n_{ik}\right)\left(\sum_{r\in I}m_{kr}\right)}\right)
 \left(t_{ij}^{\beta_{ij}\left(\sum_{r,s\in I}n_{ir}n_{js}\right)\left(\sum_{e,f\in I}m_{re}m_{sf}\right)}\right)
 \left(x_i^{\gamma_i\left(\sum_{k\in I}n_{ik}\right)\left(\sum_{r\in I}m_{kr}\right)}\right)=\\
 &\left(t_i^{\alpha_i
 \left(\sum_{r\in I}\sum_{k\in I}n_{ik}m_{kr}\right)}\right)
 \left(t_{ij}^{\beta_{ij}
 \left(\sum_{r,s\in I}\sum_{e,f\in I}(n_{ir}m_{re})(n_{js}m_{sf})\right)}\right)
 \left(x_i^{\gamma_i
 \left(\sum_{r\in I}\sum_{k\in I}n_{ik}m_{kr}\right)}\right)=\\
 &\left(t_i^{\alpha_i\left(\sum_{r\in I}p_{ir}\right)}\right)
    \left(t_{ij}^{\beta_{ij}\left(\sum_{e,f\in I}p_{ie}p_{jf}\right)}\right)
    \left(x_i^{\gamma_i\left(\sum_{r\in I}p_{ir}\right)}\right)
    =\mu_{BD}(g).
 \end{align*}
 Then we get $\mu_{BD}=\mu_{BC}\circ\mu_{CD}$.
\end{proof}

Now, consider the Equation \ref{base-change-prod}. This expression induce isomorphisms $m^1_{BC}:P_1(B)\rightarrow P_1(C)$ and $m^2_{BC}:P_2(B)\rightarrow P_2(C)$ given by the rules
\begin{align*}
  m^1_{BC}\left(\sum_{i\in I}c_iz_i\right)
  &:=\sum_{i\in I}c_i\left(\sum_{k\in I}m_{ik}\right)z_i  \\
  m^2_{BC}\left(\sum_{i\in I}a_iz_i^2+\sum_{i<j\in I}b_{ij}z_iz_j\right)
  &:=
  \sum_{i\in I}a_i\left(\sum_{k\in I}m_{ik}\right)z^2_i+
  \sum_{i<j\in I}b_{ij}\left[
  \left(\sum_{r\in I}m_{ir}\right)\left(\sum_{s\in I}m_{js}\right)\right]z_iz_j.
\end{align*}
where all these sums has finite support.

\begin{lem}\label{fixsg4}
We have
$$m^1_{BD}=m^1_{BC}\circ m^1_{CD}\mbox{ and }
m^2_{BD}=m^2_{BC}\circ m^2_{CD}.$$
\end{lem}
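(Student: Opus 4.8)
The plan is to prove both identities by a direct computation that reduces, exactly as in the proof of Lemma~\ref{fixsg3}, to the multiplicativity of the three change-of-basis matrices. Writing $w_i=\prod_k v_k^{m_{ik}}$, $z_i=\prod_k w_k^{n_{ik}}$ and $z_i=\prod_k v_k^{p_{ik}}$, substituting the first relation into the second and reducing modulo $2$ yields $p_{ir}\equiv\sum_{k\in I}n_{ik}m_{kr}\pmod 2$; this is precisely the matrix identity already extracted in the proof of Lemma~\ref{fixsg3}, so I take it as the single arithmetic input. I also note that $P_1(B),P_1(C),P_1(D)$ are the one module $P_1(I)$ (and likewise the $P_2$'s are $P_2(I)$), so the composite $m^1_{BC}\circ m^1_{CD}$ is genuinely a composition of endomorphisms; by $\mathbb Z_2$-linearity it suffices to test each identity on the monomial bases $\{z_i\}$ and $\{z_i^2,z_iz_j\}_{i<j}$.

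For the linear case, applying $m^1_{CD}$ and then $m^1_{BC}$ transforms $z_i$ by the composite of the $D\to C$ and $C\to B$ coordinate changes, so the coefficient of $z_r$ in $m^1_{BC}(m^1_{CD}(z_i))$ is $\sum_{k\in I}n_{ik}m_{kr}$, which by the identity above equals $p_{ir}$, i.e. exactly the coefficient produced by $m^1_{BD}$. Hence $m^1_{BD}$ and $m^1_{BC}\circ m^1_{CD}$ agree on every $z_i$, and therefore coincide.

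The quadratic case runs along the same lines but demands more bookkeeping, and this is where I expect the real work to be. Here $m^2$ sends $z_i^2$ to the ``square'' of the transformed linear form and $z_iz_j$ to the product of the two transformed forms, the product being the symmetric bilinear operation underlying $q^{(\cdot)}_{a,b}$ of Proposition~\ref{fixsg1} (with its cross-term coefficient $\alpha\beta'+\alpha'\beta$). Composing $m^2_{BC}\circ m^2_{CD}$ then produces, on the off-diagonal monomial $z_iz_j$, coefficients of the shape $\sum_{k,l}\sum_{e,f}n_{ik}m_{ke}\,n_{jl}m_{lf}$, which one must reorganize—respecting the ordering $i<j$ and symmetrizing the cross terms—into $\sum_{e,f}p_{ie}p_{jf}$ by applying $p=n\cdot m$ twice. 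The delicate points are the mod-$2$ collapse of the diagonal doubling and keeping the $i<j$ indexing consistent; the cleanest way to control them is to check the equality after pairing against $\Phi$ via Proposition~\ref{fixms6}, or equivalently to verify it on the distinguished generators $q^B_{a,b}$, for which Remark~\ref{change-rem} already records $m^2_{B,B'}(q^{B}_{a,b})=q^{B'}_{a,b}$.

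Finally, a more conceptual route avoids the quadratic bookkeeping entirely: by Remark~\ref{change-rem} the maps $m^1_{BC}$ and $m^2_{BC}$ are the adjoints of $\mu^1_{BC}$ and $\mu^2_{BC}$ under the perfect pairings of Propositions~\ref{fixms7} and~\ref{fixms6}, and Lemma~\ref{fixsg3} gives the cocycle relation $\mu_{BD}=\mu_{BC}\circ\mu_{CD}$. Passing to the Frattini quotient and to $\Phi$ preserves this composition, and dualizing through the pairings—which are the same canonical pairing for every basis—transports it to the corresponding relations $m^1_{BD}=m^1_{BC}\circ m^1_{CD}$ and $m^2_{BD}=m^2_{BC}\circ m^2_{CD}$. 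I would present the direct computation as the main argument and flag the quadratic cross-term bookkeeping as the only genuinely delicate step.
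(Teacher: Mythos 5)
Your main argument---the direct computation that reduces both identities to the mod-2 matrix identity $p_{ir}\equiv\sum_{k\in I}n_{ik}m_{kr}$ extracted from the proof of Lemma \ref{fixsg3}---is exactly the paper's proof; the paper simply carries out the quadratic case explicitly on a general element of $P_2(I)$, and with the formulas as the paper states them this amounts to nothing more than regrouping products of sums, so the ``delicate bookkeeping'' you flag does not really materialize. Your two fallback routes are also sound and arguably cleaner: since the monomials $z_i^2$ and $z_iz_j$ are themselves of the form $q^B_{a_i,a_i}$ and $q^B_{a_i,a_j}$, checking on the generators $q^B_{a,b}$ via Remark \ref{change-rem} immediately gives the quadratic identity, and the duality argument transporting $\mu_{BD}=\mu_{BC}\circ\mu_{CD}$ through the perfect pairings of Propositions \ref{fixms6} and \ref{fixms7} is legitimate and non-circular---though neither route is the one the paper takes, and both lean on Remark \ref{change-rem}, which the paper itself only asserts by ``direct calculation.''
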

\begin{proof}
Lets recover the calculations in the proof of Lemma \ref{fixsg3}: let $B=\{v_i\}_{i\in I}$, $C=\{w_i\}_{i\in I}$ and $D=\{z_i\}_{i\in I}$ be $\mathbb F_2$-basis of $G$. Then for all $i\in I$,
  \begin{align*}
     w_i=\prod_{k\in I}v_k^{m_{ik}},\,
     z_i=\prod_{k\in I}w_k^{n_{ik}},\,
     z_i=\prod_{k\in I}v_k^{p_{ik}},
 \end{align*}
 such that all these products has finite support. Then
 \begin{align*}
     z_i&=\prod_{k\in I}w_k^{n_{ik}}
     =\prod_{k\in I}\left(\prod_{r\in I}v_r^{m_{kr}}\right)^{n_{ik}}
     =\prod_{k\in I}\prod_{r\in I}v_r^{n_{ik}m_{kr}}
     =\prod_{r\in I}\prod_{k\in I}v_r^{n_{ik}m_{kr}}
     =\prod_{r\in I}v_r^{p_{ir}}.
 \end{align*}
 Moreover
 $$\sum_{k\in I}\sum_{r\in I}n_{ik}m_{kr}=
 \sum_{r\in I}\sum_{k\in I}n_{ik}m_{kr}=\sum_{r\in I}p_{ir}.$$
 Then
 \begin{align*}
     m^1_{BC}\left(m^1_{CD}\left(\sum_{k\in I}c_kz_i\right)\right)
     &=m^1_{BC}\left(\sum_{i\in I}c_i\left(\sum_{k\in I}n_{ik}\right)z_i\right)
     =\sum_{i\in I}c_i\left(\sum_{k\in I}n_{ik}\right)\left(\sum_{r\in I}m_{kr}\right)z_i \\
     &=\left(\sum_{r\in I}p_{ir}\right)z_i
     =m^1_{BD}\left(\sum_{k\in I}c_kz_i\right)
 \end{align*}
 and hence $m^1_{BD}=m^1_{BC}\circ m^1_{CD}$. In the same reasoning,
 \begin{align*}
     &m^2_{BC}\left(m^2_{CD}\left(\sum_{i\in I}a_iz_i^2+\sum_{i<j\in I}b_{ij}z_iz_j\right)\right)= \\
     &m^2_{BC}\left(\sum_{i\in I}a_i\left(\sum_{k\in I}n_{ik}\right)z^2_i+
  \sum_{i<j\in I}b_{ij}\left[
  \left(\sum_{r\in I}n_{ir}\right)\left(\sum_{s\in I}n_{js}\right)\right]z_iz_j\right)= \\
  &\sum_{i\in I}a_i\left(m^2_{BC}\left(\left(\sum_{k\in I}n_{ik}\right)z^2_i\right)\right)+
  m^2_{BC}\left(\sum_{i<j\in I}b_{ij}\left[\left(
  \sum_{r\in I}n_{ir}\right)\left(\sum_{s\in I}n_{js}\right)\right]z_iz_j\right)= \\
  &\sum_{i\in I}a_i\left(\sum_{k\in I}n_{ik}\left(\sum_{r\in I}m_{kr}\right)z^2_i\right)+
  \sum_{i<j\in I}b_{ij}
  \left[
  \left(\sum_{r\in I}n_{ir}\right)\left(\sum_{s\in I}n_{js}\right)
  \left(\sum_{e\in I}m_{re}\right)\left(\sum_{f\in I}m_{sf}\right)
  \right]z_iz_j= \\
  &\sum_{i\in I}a_i\left(
  \left(\sum_{r\in I}\sum_{k\in I}n_{ik}m_{kr}\right)z^2_i\right)+
  \sum_{i<j\in I}b_{ij}
  \left[
  \left(\sum_{e\in I}\sum_{r\in I}n_{ir}m_{re}\right)
  \left(\sum_{f\in I}\sum_{s\in I}n_{js}m_{sf}\right)
  \right]z_iz_j=\\
  &\sum_{i\in I}a_i\left(\sum_{r\in I}p_{ir}\right)z^2_i+
  \sum_{i<j\in I}b_{ij}\left[
  \left(\sum_{e\in I}p_{ie}\right)
  \left(\sum_{f\in I}p_{jf}\right)
 \right] z_iz_j=\\
  &m^2_{BD}\left(\sum_{i\in I}a_iz_i^2+\sum_{i<j\in I}b_{ij}z_iz_j\right)
 \end{align*}
 provide that $m^2_{BD}=m^2_{BC}\circ m^2_{CD}$.
\end{proof}

Note that $m^1_{BC},m^2_{BC}$ induces respectively the isomorphisms
\begin{align*}
 \mathfrak m^1_{BC}&:\mbox{Hom}(P_1(C),\mathbb Z_2)\rightarrow
\mbox{Hom}(P_1(B),\mathbb Z_2) \\
\mathfrak m^2_{BC}&:\mbox{Hom}(P_2(C),\mathbb Z_2)\rightarrow
\mbox{Hom}(P_2(B),\mathbb Z_2).
\end{align*}
given by the respective rules: if $f:P_1(C)\rightarrow\mathbb Z_2$ and $q=\sum_{i\in I}c_iz_i\in P_1(B)$ then
\begin{align*}
 \mathfrak m^1_{BC}(f)(q)&:=f(m^1_{BC}(q))=f\left(\sum_{i\in I}c_i\left(\sum_{k\in I}m_{ik}\right)z_i\right).
\end{align*}
In the same reasoning, if $f:P_2(C)\rightarrow\mathbb Z_2$ and $q=\sum_{i\in I}a_iz_i^2+\sum_{i<j\in I}b_{ij}z_iz_j\in P_2(B)$ then
\begin{align*}
 \mathfrak m^2_{BC}(f)(q)&:=f(m^2_{BC}(q))
 =f\left(
 \sum_{i\in I}a_i\left(\sum_{k\in I}m_{ik}\right)z^2_i+
  \sum_{i<j\in I}b_{ij}\left[
  \left(\sum_{r\in I}m_{ir}\right)\left(\sum_{s\in I}m_{js}\right)
  \right]z_iz_j\right)
\end{align*}

Now denote
  $$\mu^1_{BC}:\mathcal W(C)/\Phi(C)\rightarrow\mathcal W(B)/\Phi(B)$$
  the quotient of $\mu_{BC}$ and
  $$\mu^2_{BC}:\Phi(C)\rightarrow\Phi(B)$$
  the restriction of $\mu_{BC}$ to the Frattini's subgroups. Also consider the isomorphisms
  \begin{align*}
  \tilde\theta:\mathcal W(I)/\Phi(I)&\xrightarrow{\cong}\mbox{Hom}(P_1(I),\mathbb Z_2) \\
 \tilde\lambda:\Phi(I)&\xrightarrow{\cong}\mbox{Hom}(P_2(I),\mathbb Z_2)
  \end{align*}
 of Propositions \ref{fixms6}, and \ref{fixms7}.

\begin{lem}\label{fixsg5}
Denote $\pi_B:\mathcal W(B)\rightarrow\mathcal W(B)/\Phi(B)$ the canonical projection, with the same for $\pi_C$. Then we have a commutative diagram
$$
\xymatrix@!=4.5pc{
\mathcal W(C)\ar[d]_{\mu_{BC}}\ar[r]^{\theta_C} &
**[r]\mbox{Hom}(P_1(C),\mathbb Z_2)\ar[d]^{\mathfrak m^1_{BC}} \\
\mathcal W(B)\ar[r]^{\theta_B} &
**[r]\mbox{Hom}(P_1(B),\mathbb Z_2) }
$$
which induces a commutative diagram
$$
\xymatrix@!=4pc{
\mathcal W(C)\ar[d]_{\pi_C}\ar[dr]^{\theta_C} & \\
**[l]\mathcal W(C)/\Phi(C)\ar[r]^{\tilde\theta_C}\ar[d]_{\mu^1_{BC}} &
**[r]\mbox{Hom}(P_1(C),\mathbb Z_2)\ar[d]^{\mathfrak m^1_{BC}} \\
**[l]\mathcal W(B)/\Phi(B)\ar[r]^{\tilde\theta_B} &
**[r]\mbox{Hom}(P_1(B),\mathbb Z_2)\\
\mathcal W(B)\ar[u]^{\pi_B}\ar[ur]_{\theta_B} & }
$$
\end{lem}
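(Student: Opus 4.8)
The plan is to verify the outer square by a direct coordinate computation and then deduce the inner diagram formally, using that both $\theta$'s factor through the Frattini quotient and that $\pi_C$ is an epimorphism. First I would check commutativity of the first square, i.e. $\theta_B \circ \mu_{BC} = \mathfrak{m}^1_{BC} \circ \theta_C$, by evaluating both composites on an arbitrary argument. Take $g = (t_i^{\alpha_i})(t_{ij}^{\beta_{ij}})(x_i^{\gamma_i}) \in \mathcal W(C)$ and an arbitrary $q = \sum_{i \in I} c_i z_i \in P_1(B)$. The crucial point is that, by the formula for $\theta$ in Proposition \ref{fixms7}, the homomorphism $\theta_B$ only reads off the $x_i$-exponents of its argument. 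Using the explicit formula for $\mu_{BC}$ from Lemma \ref{fixsg2}, the $x_i$-exponent of $\mu_{BC}(g)$ is $\gamma_i \sum_{k \in I} m_{ik}$, so
$$\theta_B(\mu_{BC}(g))(q) = \sum_{i \in I} c_i \gamma_i \left(\sum_{k \in I} m_{ik}\right).$$
On the other side, $\mathfrak{m}^1_{BC}(\theta_C(g))(q) = \theta_C(g)(m^1_{BC}(q)) = \theta_C(g)\left(\sum_{i \in I} c_i \left(\sum_{k \in I} m_{ik}\right) z_i\right)$, and applying the formula for $\theta_C$ this is again $\sum_{i \in I} c_i \left(\sum_{k \in I} m_{ik}\right)\gamma_i$. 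The two expressions agree for every $q$, so the square commutes. Notice that the $t$- and $t_{ij}$-coordinates play no role here: they lie in $\Phi(\cdot) = \ker(\theta)$, which is exactly what makes the computation clean.

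Next I would pass to the induced diagram. The two triangles are nothing but the defining factorizations $\theta_C = \tilde\theta_C \circ \pi_C$ and $\theta_B = \tilde\theta_B \circ \pi_B$, which hold because $\ker(\theta) = \Phi$ by Proposition \ref{fixms7}, so each $\theta$ descends through its canonical projection to the stated isomorphism $\tilde\theta$. For the middle square I rely on Lemma \ref{fixsg2}, which guarantees $\mu_{BC}[\Phi(C)] \subseteq \Phi(B)$; hence $\mu_{BC}$ descends to $\mu^1_{BC}$ and $\pi_B \circ \mu_{BC} = \mu^1_{BC} \circ \pi_C$. Chaining the identities,
$$\tilde\theta_B \circ \mu^1_{BC} \circ \pi_C = \tilde\theta_B \circ \pi_B \circ \mu_{BC} = \theta_B \circ \mu_{BC} = \mathfrak{m}^1_{BC} \circ \theta_C = \mathfrak{m}^1_{BC} \circ \tilde\theta_C \circ \pi_C,$$
and since $\pi_C$ is surjective it may be cancelled on the right, giving $\tilde\theta_B \circ \mu^1_{BC} = \mathfrak{m}^1_{BC} \circ \tilde\theta_C$, the commutativity of the inner square.

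The argument has no genuine obstacle; it is essentially bookkeeping. The only place requiring care is the direction of the index summations $\sum_{k} m_{ik}$ coming from the base-change coefficients, together with the fact that $\mathfrak{m}^1_{BC}$ is the transpose of $m^1_{BC}$, so one must compose in the correct order; getting this variance right is what makes both sides of the outer square land on the same scalar $\sum_{i \in I} c_i \gamma_i \sum_{k \in I} m_{ik}$. I expect the verification of the outer square to be the main (and essentially only) computational step, with the inner diagram following purely formally from the surjectivity of $\pi_C$.
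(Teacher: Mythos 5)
Your proposal is correct and follows essentially the same route as the paper: the same coordinate computation of $\theta_B\circ\mu_{BC}=\mathfrak m^1_{BC}\circ\theta_C$ on an arbitrary $g\in\mathcal W(C)$ and $q\in P_1(B)$, followed by descending through the Frattini quotients via $\theta=\tilde\theta\circ\pi$. The only difference is presentational — you spell out the cancellation of the epimorphism $\pi_C$ to get the inner square, which the paper compresses into a single closing sentence.
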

\begin{proof}
Let $g=(t_i^{\alpha_i})(t_{ij}^{\beta_{ij}})(x_i^{\gamma_i})\in\mathcal W(C)$ and $q=\sum_{i\in I}c_iz_i\in P_1(B)$. Then
\begin{align*}
    (\mathfrak m^1_{BC}\circ\theta)(g)(q)&=
    \mathfrak m^1_{BC}(\theta(g)(q))=\theta(g)(m^1_{BC}(q)) \\
    &=\theta(g)\left(
    \sum_{i\in I}c_i\left(\sum_{k\in I}m_{ik}\right)z_i\right)
    =\sum_{i\in I}c_i\left(\sum_{k\in I}m_{ik}\right)\gamma_i\\
    &=\sum_{i\in I}\sum_{k\in I}c_im_{ik}\gamma_i.
\end{align*}
On the other hand,
\begin{align*}
(\theta\circ\mu_{BC})(g)(q)&=\theta(\mu_{BC}(g))(q) \\
&=\theta\left(
\left(t_i^{\alpha_i\left(\sum_{k\in I}m_{ik}\right)}\right)
 \left(t_{ij}^{\beta_{ij}\left(\sum_{r,s\in I}m_{ir}m_{js}\right)}\right)
 \left(x_i^{\gamma_i\left(\sum_{k\in I}m_{ik}\right)}\right)
\right)\left(\sum_{i\in I}c_iz_i\right) \\
&=\sum_{i\in I}c_i\left(\gamma_i\left(\sum_{k\in I}m_{ik}\right)\right)
=\sum_{i\in I}\sum_{k\in I}c_im_{ik}\gamma_i=(\mathfrak m^1_{BC}\circ\theta)(g)(q).
\end{align*}
Then $\mathfrak m^1_{BC}\circ\theta=\theta\circ\mu_{BC}$. Since $\theta_B=\tilde\theta_B\circ\pi_B$ and $\theta_C=\tilde\theta_C\circ\pi_C$ we have the desired commutative diagram.
\end{proof}

\begin{lem}\label{fixsg5b}
We have a commutative diagram

$$
\xymatrix@!=4.5pc{**[l]\Phi(C)\ar[r]^{\tilde\lambda_C}\ar[d]_{\mu^2_{BC}} &  **[r]\mbox{Hom}(P_2(C),\mathbb Z_2)\ar[d]^{\mathfrak m^2_{BC}} \\
**[l]\Phi(B)\ar[r]_{\tilde\lambda_B} & **[r]\mbox{Hom}(P_2(B),\mathbb Z_2)}
$$
\end{lem}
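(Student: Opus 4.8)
The plan is to establish commutativity by direct evaluation, in complete analogy with Lemma \ref{fixsg5} but with the degree-one data replaced by its degree-two counterpart. That is, I would fix an arbitrary element $g\in\Phi(C)$ together with an arbitrary test polynomial $q\in P_2(B)$, and verify the single scalar identity
$$\tilde\lambda_B\big(\mu^2_{BC}(g)\big)(q)=\mathfrak m^2_{BC}\big(\tilde\lambda_C(g)\big)(q),$$
since two homomorphisms into $\mathbb Z_2$ coincide exactly when they agree on every $q$.

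First I would compute the left-hand (``down-then-right'') composite. By Proposition \ref{fixms5} an element of $\Phi(C)$ has $\gamma_i=0$, so write $g=(t_i^{\alpha_i})(t_{ij}^{\beta_{ij}})(1_i)$. The explicit formula for $\mu_{BC}$ from Lemma \ref{fixsg2}, restricted to the Frattini subgroup, then gives
$$\mu^2_{BC}(g)=\Big(t_i^{\alpha_i\sum_{k\in I}m_{ik}}\Big)\Big(t_{ij}^{\beta_{ij}\sum_{r,s\in I}m_{ir}m_{js}}\Big)(1_i).$$
Pairing this with $q=\sum_{i\in I}a_iz_i^2+\sum_{i<j\in I}b_{ij}z_iz_j$ via the perfect pairing of Proposition \ref{fixms6} yields
$$\tilde\lambda_B\big(\mu^2_{BC}(g)\big)(q)=\sum_{i\in I}a_i\alpha_i\Big(\sum_{k\in I}m_{ik}\Big)+\sum_{i<j\in I}b_{ij}\beta_{ij}\Big(\sum_{r,s\in I}m_{ir}m_{js}\Big).$$

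Next I would compute the right-hand (``right-then-down'') composite. By the definition of $\mathfrak m^2_{BC}$ as precomposition with $m^2_{BC}$, we have $\mathfrak m^2_{BC}(\tilde\lambda_C(g))(q)=\tilde\lambda_C(g)\big(m^2_{BC}(q)\big)$. Substituting the definition of $m^2_{BC}$ and then applying the degree-two pairing on $C$ produces exactly the same sum obtained above, with the square terms and the cross terms matching separately. This proves the identity for all $g$ and $q$, hence $\tilde\lambda_B\circ\mu^2_{BC}=\mathfrak m^2_{BC}\circ\tilde\lambda_C$ and the diagram commutes.

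There is no genuine conceptual obstacle here: the content is entirely the bookkeeping of indices, and the definitions of $m^2_{BC}$ and $\mu_{BC}$ were arranged precisely so that the coefficient patterns $\alpha_i\sum_{k}m_{ik}$ and $\beta_{ij}\sum_{r,s}m_{ir}m_{js}$ appear symmetrically on both sides. The only mild care required is to keep the quadratic ($z_i^2$) and bilinear ($z_iz_j$, $i<j$) parts separate throughout, and to observe that this commutativity is exactly the degree-two incarnation of the adjunction between $\mu^2_{B,B'}$ and $m^2_{B,B'}$ with respect to the perfect pairing already recorded in Remark \ref{change-rem}.
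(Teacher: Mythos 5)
Your proposal is correct and follows essentially the same route as the paper's own proof: fix $g=(t_i^{\alpha_i})(t_{ij}^{\beta_{ij}})(1_i)\in\Phi(C)$ and a test polynomial $q\in P_2(B)$, evaluate both composites using the explicit formula for $\mu_{BC}$ on the Frattini subgroup and the definition of $\mathfrak m^2_{BC}$ as precomposition with $m^2_{BC}$, and observe that the two resulting sums agree (the only hidden step, present in both arguments, being the distributivity identity $\bigl(\sum_{r}m_{ir}\bigr)\bigl(\sum_{s}m_{js}\bigr)=\sum_{r,s}m_{ir}m_{js}$ modulo $2$). No gaps; this matches the paper's proof in structure and substance.
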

\begin{proof}
Let $g=(t_i^{\alpha_i})(t_{ij}^{\beta_{ij}})(1_i)\in\Phi(C)$ and
$\sum_{i\in I}a_iz_i^2+\sum_{i<j\in I}b_{ij}z_iz_j\in P_2(B)$. We have
\begin{align*}
    &(\mathfrak m^2_{BC}\circ\tilde\lambda_C)(g)(q)=
    \tilde\lambda_C(g)(m^2_{BC}(q))=\\
    &\tilde\lambda_C(g)
    \left(
    \sum_{i\in I}a_i\left(\sum_{k\in I}m_{ik}\right)z^2_i+
  \sum_{i<j\in I}b_{ij}\left[
  \left(\sum_{r\in I}m_{ir}\right)\left(\sum_{s\in I}m_{js}\right)\right]z_iz_j
    \right)=\\
    &\sum_{i\in I}a_i\left(\sum_{k\in I}m_{ik}\right)\alpha_i+
  \sum_{i<j\in I}b_{ij}\left[
  \left(\sum_{r\in I}m_{ir}\right)\left(\sum_{s\in I}m_{js}\right)\right]\beta_{ij}
\end{align*}
On the other hand,
\begin{align*}
    &(\tilde\lambda_B\circ\mu^2_{BC})(g)(q)=\tilde\lambda(\mu^2_{BC}(g)(q)) \\
    &=\tilde\lambda_B\left(
    \left(t_i^{\alpha_i\left(\sum_{k\in I}m_{ik}\right)}\right)
 \left(t_{ij}^{\beta_{ij}\left(\sum_{r,s\in I}m_{ir}m_{js}\right)}\right)
 (1_i)\right)\left(\sum_{i\in I}a_iz_i^2+\sum_{i<j\in I}b_{ij}z_iz_j\right) =\\
 &\sum_{i\in I}a_i\left(\alpha_i\left(\sum_{k\in I}m_{ik}\right)\right)+
 \sum_{i<j\in I}b_{ij}\left(\beta_{ij}\left(\sum_{r,s\in I}m_{ir}m_{js}\right)\right)
\end{align*}
proving that $\mathfrak m^2_{BC}\circ\tilde\lambda_C=\tilde\lambda_B\circ\mu^2_{BC}$.
\end{proof}

\begin{lem}\label{fixsg6}
 With the notations of Lemmas \ref{fixsg1}-\ref{fixsg5} we have the following.
 \begin{enumerate}[i -]
  \item The arrows $\mu^1_{BC}$ and $\mu^2_{BC}$ are isomorphisms. Moreover, for all well-ordered basis $B,C,D$ we have $\mu^1_{BB}=id$, $\mu^2_{BB}=id$, $\mu^1_{BD}=\mu^1_{BC}\circ\mu^1_{CD}$ and $\mu^2_{BD}=\mu^2_{BC}\circ\mu^2_{CD}$.

  \item The isomorphism $\mu_{BC}:\mathcal W(C)\rightarrow\mathcal W(B)$ restricts to an isomorphism $\mathcal V(C)\rightarrow\mathcal V(B)$ so we get quotient isomorphism
$$\tilde\mu_{BC}:\mathcal W(C)/\mathcal V(C)\rightarrow\mathcal W(B)/\mathcal V(B).$$
  \item If $B,C,D$ are well-ordered base of $G$, then $\tilde\mu_{CC}=id$ and $\tilde\mu_{BD}=\tilde\mu_{BC}\circ\tilde\mu_{CD}$.
 \end{enumerate}
\end{lem}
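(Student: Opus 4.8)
The plan is to deduce all three items from the functoriality of the total isomorphism $\mu_{BC}$ already proved in Lemma \ref{fixsg3}, combined with the two adjunction diagrams of Lemmas \ref{fixsg5} and \ref{fixsg5b} and the base-change identity $m^2_{BC}(q^B_{a,b})=q^C_{a,b}$ recorded in Remark \ref{change-rem}. For part (i) I would argue directly from these diagrams: $\mu^1_{BC}$ and $\mu^2_{BC}$ are by definition the quotient and the restriction of $\mu_{BC}$ (which carries $\Phi(C)$ into $\Phi(B)$ by Lemma \ref{fixsg2}). In each square the vertical maps $\tilde\theta$, $\tilde\lambda$ are isomorphisms (Propositions \ref{fixms7} and \ref{fixms6}) and the right-hand maps $\mathfrak m^1_{BC}$, $\mathfrak m^2_{BC}$ are isomorphisms since $m^1_{BC}$, $m^2_{BC}$ are; hence $\mu^1_{BC}$, $\mu^2_{BC}$ are isomorphisms. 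The identities $\mu^i_{BB}=\mathrm{id}$ and $\mu^i_{BD}=\mu^i_{BC}\circ\mu^i_{CD}$ are then inherited from the corresponding identities for $\mu_{BC}$ in Lemma \ref{fixsg3} simply by passing to restrictions and quotients (equivalently, they can be read off the diagrams using Lemma \ref{fixsg4}).

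Part (ii) is the heart of the statement. First I would record, using Proposition \ref{fixsg1}(iii), that $Q(B)=\langle\{q^B_{a,b}:l(a)l(b)=0\}\rangle$, and that since the pairing of Proposition \ref{fixms6} is bilinear, the perp of a generated subgroup is the intersection of the perps of its generators, so that
$$\mathcal V(B)=Q(B)^\perp=\bigcap_{l(a)l(b)=0}(q^B_{a,b})^\perp,$$
and likewise for $C$. Remark \ref{change-rem}, applied with $C$ in place of $B'$, gives for each fixed pair $a,b$ that $\mu^2_{BC}[(q^C_{a,b})^\perp]=(q^B_{a,b})^\perp$; crucially the condition $l(a)l(b)=0$ lives in $k_2(G)$ and is therefore intrinsic to $G$, so it selects the same family of generators on both the $B$ and the $C$ side. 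Because $\mu^2_{BC}$ is a bijection it commutes with intersections, whence
$$\mu^2_{BC}[\mathcal V(C)]=\bigcap_{l(a)l(b)=0}\mu^2_{BC}[(q^C_{a,b})^\perp]=\bigcap_{l(a)l(b)=0}(q^B_{a,b})^\perp=\mathcal V(B).$$
Since $\mathcal V(C)\subseteq\Phi(C)$ and $\mu_{BC}|_{\Phi(C)}=\mu^2_{BC}$, the isomorphism $\mu_{BC}$ restricts to an isomorphism $\mathcal V(C)\xrightarrow{\sim}\mathcal V(B)$; as both $\mathcal V$'s are closed normal (indeed central) subgroups, it descends to the desired quotient isomorphism $\tilde\mu_{BC}$.

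For part (iii) the identities $\tilde\mu_{CC}=\mathrm{id}$ and $\tilde\mu_{BD}=\tilde\mu_{BC}\circ\tilde\mu_{CD}$ follow mechanically from $\mu_{CC}=\mathrm{id}$ and $\mu_{BD}=\mu_{BC}\circ\mu_{CD}$ of Lemma \ref{fixsg3}: having shown in (ii) that each $\mu_{BC}$ maps $\mathcal V(C)$ \emph{exactly} onto $\mathcal V(B)$, the quotient construction is compatible with composition and identities, so the induced maps on quotients compose accordingly. The step I expect to demand the most care is part (ii), and within it the passage from the single-generator statement of Remark \ref{change-rem} to the full subgroups $\mathcal V$. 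The two delicate points are that the perp of a generated subgroup really equals the intersection of the perps of its generators (which needs bilinearity together with the perfectness of the pairing in Proposition \ref{fixms6} so that $\perp$ is well behaved), and that the index set $\{(a,b):l(a)l(b)=0\}$ is genuinely the same for $B$ and $C$, i.e. depends only on $k_2(G)$ and not on the chosen basis — this is precisely what allows the two intersections to be matched up after applying $\mu^2_{BC}$.
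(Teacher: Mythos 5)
Your proposal is correct and follows essentially the same route as the paper: item (ii) is reduced to Proposition \ref{fixsg1}(iii) together with the base-change compatibility $m^2_{BC}(q^B_{a,b})=q^C_{a,b}$ and the duality $\mathcal V=Q^\perp$ (Remark \ref{change-rem}), while (i) and (iii) are inherited from Lemmas \ref{fixsg3}--\ref{fixsg5b} by passing to restrictions and quotients. The only difference is one of detail: you make explicit the two steps the paper leaves implicit, namely that the perp of a generated subgroup is the intersection of the perps of its generators, and that the index family $\{(a,b):l(a)l(b)=0\in k_2(G)\}$ is intrinsic to $G$ and hence the same for both bases.
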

\begin{proof}
 $ $
 \begin{enumerate}[i -]
     \item Just use the same calculations made in Lemma \ref{fixsg3}.

     \item By Proposition \ref{fixsg1} we have
     $$\mathfrak m^2_{BC}(Q(B))=Q(C).$$
     Since $\mathcal V(B)=Q(B)^\perp$, we have an induced isomorphism
     $\mu_{BC}|_{\mathcal V(B)}:\mathcal V(B)\xrightarrow{\cong}\mathcal V(C)$, legitimating the quotient isomorphism
     $$\tilde\mu_{BC}:\mathcal W(C)/\mathcal V(C)\rightarrow\mathcal W(B)/\mathcal V(B).$$

     \item It is an immediate consequence of item (i).
 \end{enumerate}
\end{proof}

\begin{defn}[Galois Group - base independent version]\label{defn:gal2}
 Let $G$ be a pre-special group. Take
 $$E_G=\{B:B\mbox{ is a well-ordered }\mathbb F_2\mbox{-basis of }G\}.$$
 Consider the set $E_G$ endowed with the trivial groupoid operation of concatenation of pairs (i.e., the arrows are $E_G \times E_G$) and take the functor $\mbox{Gal}:E_G\rightarrow\mathcal C$, (where $\mathcal C$ is the category of $\mathcal C$-groups) given by the following rules: for an object $B\in E_G$, $\mbox{Gal}(B):=\mathcal W(B)/\mathcal V(B)$ and for an arrow $(B,C) \in E^2_G$,
$$\mbox{Gal}(B,C)=\mu_{BC}:\mathcal W(C)/\mathcal V(C)\rightarrow\mathcal W(B)/\mathcal V(B).$$
We define the \textbf{Galois group } of $G$, notation $\mbox{Gal}(G)$ by
$$\mbox{Gal}(G):=\varprojlim_{B\in E_G}\mathcal W(B)/\mathcal V(B).$$
\end{defn}

\begin{rem} Keeping the notation above, note that
$$\pi_B : \mbox{Gal}(G):=\varprojlim_{B\in E_G}\mathcal W(B)/\mathcal V(B) \to \mathcal W(B)/\mathcal V(B)$$ is an isomorphism, for each $B \in E_G$. This holds because $$\mbox{Gal}(B,C)=\mu_{BC}:\mathcal W(C)/\mathcal V(C)\rightarrow\mathcal W(B)/\mathcal V(B)$$
is an isomorphism for each  arrow $(B,C) \in E^2_G$.
\end{rem}

\section{On the structure of Galois Groups  of Pre Special Groups}

%{\textcolor{red}{Encrespou a partir daqui}}

As occurs with fields, the Galois group of a pre-special groups (in a certain subclass) is able to encode many relevant quadratic information.

{\em All pre-special groups occurring is this section will be assumed $k$-stable.}

We start this Section, by providing more details on the structure of $\mathcal C$-groups.

Let $\mathcal G$ be a $\mathcal C$-group on $I$-minimal generators. By Theorem \ref{fixms4}, There is an epimorphism $\lambda:\mathcal W(I)\rightarrow\mathcal G$ with kernel $\mathcal V\subseteq\Phi(I)$, and then, we have an isomorphism $\tilde\lambda:\mathcal W(I)/\mathcal V\rightarrow\mathcal G$.

\begin{prop} \label{fixhugo2}
With the above notation, we have the following.
\begin{enumerate}[i -]
    \item We have a natural bijection
    \begin{align*}
      &\{M\subseteq\mathcal G:M\mbox{ is a maximal open subgroup}\}\cong \\
    &\{M\subseteq\mathcal W(I):M\mbox{ is a maximal open subgroup}\}.
    \end{align*}
    Then $\Phi(\mathcal G)\cong \Phi(I)/\mathcal V$ and $\mathcal G/\Phi(\mathcal G)\cong\mathcal W(I)/\Phi(I)$.

    \item The maximal closed subgroups of $\mathcal W(I)$ are precisely the clopen (normal) subgroups with quotient $\mathbb Z_2$. Moreover, we have a natural bijection
    $$\{M\subseteq\mathcal G:M\mbox{ is a maximal open subgroup}\}\cong P_{fin}(I)\setminus\{\emptyset\}.$$
    and that extends to a natural bijection
    $$\{N\subseteq\mathcal G:N\mbox{ is an open subgroup with index} \leq 2\}\cong P_{fin}(I).$$

    \item We have a natural isomorphism of $\mathbb Z_2$-modules
    $$ Homcont(\mathcal G, \mathbb Z_2) \cong fsFunc(I, \mathbb Z_2)$$
    where $fsFunc(I, \mathbb Z_2)$ is the set of all function $f: I \to \mathbb Z_2$ with finite support.
\end{enumerate}
\end{prop}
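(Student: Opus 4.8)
The plan is to prove the three items in the order (i), (iii), (ii), since (ii) becomes essentially transparent once the continuous dual has been identified in (iii). Throughout I will use the isomorphism $\tilde\lambda:\mathcal W(I)/\mathcal V\xrightarrow{\cong}\mathcal G$ together with the hypothesis $\mathcal V\subseteq\Phi(I)$.

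For (i), the observation to exploit is that $\Phi(I)$ is by definition the intersection of all maximal open subgroups of $\mathcal W(I)$, so \emph{every} maximal open subgroup $M$ of $\mathcal W(I)$ already contains $\mathcal V$. Hence the lattice correspondence for the quotient $\lambda:\mathcal W(I)\twoheadrightarrow\mathcal G=\mathcal W(I)/\mathcal V$ restricts to a bijection $M\mapsto M/\mathcal V$ between the maximal open subgroups of $\mathcal W(I)$ and those of $\mathcal G$; this is the asserted natural bijection. Intersecting both families — and using that all the $M$ contain $\mathcal V$ — gives $\Phi(\mathcal G)=\bigcap_M(M/\mathcal V)=(\bigcap_M M)/\mathcal V=\Phi(I)/\mathcal V$, and the third isomorphism theorem then yields $\mathcal G/\Phi(\mathcal G)=(\mathcal W(I)/\mathcal V)/(\Phi(I)/\mathcal V)\cong\mathcal W(I)/\Phi(I)$, which is $\prod_{i\in I}\mathbb Z_2$ by Proposition \ref{fixms7}.

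For (iii), I would first reduce to the Frattini quotient: since $\mathbb Z_2$ is abelian of exponent two, any continuous homomorphism $\mathcal G\to\mathbb Z_2$ annihilates $\mathcal G^2$ and $[\mathcal G,\mathcal G]$, hence the closed subgroup $\Phi(\mathcal G)=\overline{\mathcal G^2[\mathcal G,\mathcal G]}$ they generate (Lemma \ref{rz2.8.7}(c)), and so factors uniquely through $\mathcal G/\Phi(\mathcal G)$. Therefore $Homcont(\mathcal G,\mathbb Z_2)\cong Homcont(\mathcal G/\Phi(\mathcal G),\mathbb Z_2)$, which by (i) equals $Homcont(\prod_{i\in I}\mathbb Z_2,\mathbb Z_2)$. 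The remaining and genuinely substantive step is this continuous-dual computation: a continuous homomorphism from the profinite group $\prod_{i\in I}\mathbb Z_2$ into the discrete group $\mathbb Z_2$ has open kernel, and every open subgroup of $\prod_{i\in I}\mathbb Z_2$ contains a cofinite subproduct $\prod_{i\notin F}\mathbb Z_2$ with $F$ finite; hence such a homomorphism factors through the finite quotient $\prod_{i\in F}\mathbb Z_2$ and is determined by a finite-support function $I\to\mathbb Z_2$. Conversely each finite-support function defines a continuous homomorphism, and the assignment is $\mathbb Z_2$-linear and bijective, so $Homcont(\mathcal G,\mathbb Z_2)\cong fsFunc(I,\mathbb Z_2)$.

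For (ii), the first assertion follows directly from the cited facts: by Lemma \ref{rz2.8.7}(a) a maximal closed subgroup of the pro-$2$ group $\mathcal W(I)$ has index $2$; being maximal closed it is open, hence clopen, and of prime index $2$ it is normal with quotient $\mathbb Z_2$; conversely a clopen normal subgroup with quotient $\mathbb Z_2$ has prime index and is therefore maximal among closed subgroups. For the bijections I combine (i) with the elementary fact that $f\mapsto\ker f$ is a bijection between $Homcont(\mathcal G,\mathbb Z_2)$ and the open subgroups of $\mathcal G$ of index $\le 2$ — the zero map giving $\mathcal G$ itself and each nonzero map a distinct index-$2$ subgroup, distinctness because a surjection onto $\mathbb Z_2$ is recovered from its kernel. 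Transporting through (iii) and the support bijection $fsFunc(I,\mathbb Z_2)\xrightarrow{\cong}P_{fin}(I)$, $f\mapsto\{i:f(i)=1\}$, produces the bijection with $P_{fin}(I)$, and restricting to nonzero maps (equivalently to the maximal subgroups) gives $P_{fin}(I)\setminus\{\emptyset\}$. The bookkeeping in (i) and the kernel bijection in (ii) are routine; the main obstacle is the continuous-dual computation in (iii), where passing from the full product to the restricted dual $\bigoplus_{i\in I}\mathbb Z_2=fsFunc(I,\mathbb Z_2)$ is exactly where continuity forces finite support. This is also where I would pin down the meaning of ``natural'': each bijection is induced by a canonical construction (the lattice correspondence, the pairing of Proposition \ref{fixms7}, and $f\mapsto\ker f$), so it is compatible with the basis-change isomorphisms $\mu^1_{BC}$ and involves no arbitrary choice.
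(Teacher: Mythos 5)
Your proof is correct, and for item (i) it is essentially the paper's own argument: the lattice correspondence along the quotient $\mathcal W(I)\twoheadrightarrow\mathcal W(I)/\mathcal V$, using that $\mathcal V\subseteq\Phi(I)$ forces every maximal open subgroup to contain $\mathcal V$, followed by the third isomorphism theorem. For items (ii) and (iii), however, you invert the paper's logical order and use a different key computation. The paper proves (ii) \emph{directly}: for each nonempty finite $\{i_0,\dots,i_n\}\subseteq I$ it exhibits the clopen subgroup $\zeta_I(i_0,\dots,i_n)=\{\sigma\in\mathcal W(I):\gamma_{i_0}(\sigma)+\dots+\gamma_{i_n}(\sigma)=0\}$, and then shows every maximal open subgroup is of this form via compactness of $\mathcal W(I)$ (a maximal open subgroup contains a finite intersection of the subgroups $M'_i, S'_j, D'_{kl}$) together with an index argument; item (iii) is then obtained either from the universal property of $\mathcal W(I)$ (Theorem \ref{fixms4}) --- continuous homomorphisms into the discrete $\mathcal C$-group $\mathbb Z_2$ correspond to functions $I\to\mathbb Z_2$ converging to $1$, i.e.\ of finite support --- or, as the paper remarks, as a corollary of (ii). You instead prove (iii) first, by reducing modulo $\Phi(\mathcal G)$ (legitimate, since any continuous homomorphism to $\mathbb Z_2$ kills $\overline{\mathcal G^2[\mathcal G,\mathcal G]}$ by Lemma \ref{rz2.8.7}(c)) and then computing the continuous dual of $\prod_{i\in I}\mathbb Z_2$ directly: continuity forces factorization through a finite subproduct, whence finite support. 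You then deduce (ii) from (iii) via the bijection $f\mapsto\ker f$ between $Homcont(\mathcal G,\mathbb Z_2)$ and open subgroups of index $\leq 2$. Both routes are sound. The paper's route buys an explicit parametrization of the maximal open subgroups by their defining equations, which is reused later in the paper (the concrete subgroups $M'_i$ and $M'$ reappear in Lemma \ref{Zd-le} and Theorem \ref{standardteo}); your route is shorter and more conceptual, replacing the compactness argument by the standard fact that the continuous dual of a power of $\mathbb Z_2$ is the restricted (direct-sum) dual. The explicit subgroup attached to a finite set $F$ is still recoverable from your argument --- unwinding your kernel correspondence gives exactly $\{\sigma:\sum_{i\in F}\gamma_i(\sigma)=0\}$ --- but you would need to trace this through if the later sections' explicit descriptions are wanted.
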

\begin{proof}
 $ $
 \begin{enumerate}[i -]
     \item The desired bijection follows by the bijection
     \begin{align*}
      &\{M\subseteq\mathcal W(I)/\mathcal V:M\mbox{ is a maximal open subgroup}\}\cong \\
    &\{M\subseteq\mathcal W(I):M\mbox{ is a maximal open subgroup}\}.
    \end{align*}
    given by the following rule: lets $q:\mathcal W(I)\rightarrow\mathcal W(I)/\mathcal V$ denote the canonical projection. We have a function $\overline q:\mathcal P(\mathcal W(I)/\mathcal V)\rightarrow\mathcal P(\mathcal W(I))$ given by the rule
    $$\overline q(X):=q^{-1}[X]\mbox{ (the inverse image)}.$$
    This function $\overline{q}$ induces the desired bijection. Since the Frattini subgroup of $\mathcal G$ is the intersection of all open normal subgroups we have (via $\tilde\lambda$ and the bijection) $\Phi(\mathcal G)\cong\Phi(I)/\mathcal V$. Then
    $$\mathcal G/\phi(\mathcal G)\cong(\mathcal W(I)/\mathcal V)/(\Phi(I)/\mathcal V)\cong\mathcal W(I)/\Phi(I).$$

     \item For $\{i_0,...,i_n\}\subseteq I$ denote
     $$\zeta_I(i_0,...,i_n):=\{\sigma\in\mathcal W(I):
     \gamma_{i_0}(\sigma)+...+\gamma_{i_n}(\sigma)=0\}.$$
     We have that $\zeta_I(i_0,...,i_n)$ is a subgroup of $\mathcal W(I)$. Now let $\tau,\theta\in\mathcal W(I)\setminus\zeta_I(i_0,...,i_n)$. Then for all $i\in I$,
    $$\gamma_i(\theta^{-1}\tau)=\gamma_i(\theta)+\gamma_i(\tau).$$
     Therefore
     $$\sum^n_{p=1}\gamma_{i_p}(\theta^{-1}\tau)=
     \sum^n_{p=1}[\gamma_{i_p}(\theta)+\gamma_{i_p}(\tau)]=
     \sum^n_{p=1}\gamma_{i_p}(\theta)+\sum^n_{p=1}\gamma_{i_p}(\tau)=
     1+1=0.$$
     Then $\theta^{-1}\tau\in\zeta_I(i_0,...,i_n)$ which imply
     $$\mathcal W(I)/\zeta_I(i_0,...,i_n)=\{\overline1,\overline\tau\}\cong\mathbb Z_2.$$

     To verify that $\zeta_I(i_0,...,i_n)$ is clopen, note that
     $$\gamma_{i_0}(\sigma)+...+\gamma_{i_n}(\sigma)=0\mbox{ iff }
     \begin{cases}
     \gamma_{i_0}(\sigma)+...+\gamma_{i_{n-1}}(\sigma)=0\mbox{ and }\gamma_{i_n}(\sigma)=0 \mbox{ or }\\
     \gamma_{i_0}(\sigma)+...+\gamma_{i_{n-1}}(\sigma)=1\mbox{ and }\gamma_{i_n}(\sigma)=1.
     \end{cases}$$
     In other words,
     $$\zeta_I(i_0,...,i_n)=[\zeta_I(i_0,...,i_{n-1})\cap M_{i_n}]\cup
     [\zeta_I(i_0,...,i_{n-1})^c\cap M_{i_n}^c].$$
     So, in order to verify that $\zeta_I(i_0,...,i_n)$ is clopen is enough to deal with the case $n=0$. But $\zeta_I(i_0)=M_{i_0}$ is in fact a clopen, which provide (by induction) that $\zeta_I(i_0,...,i_n)$ is clopen for all $i_0,...,i_n\in I$. Then $\zeta_I(i_0,...,i_n)$ is a maximal clopen subgroup of $\mathcal W(I)$, and we have a well-defined injective function
     $$\zeta_I:\mathcal P_{fin}(I)\setminus\{\emptyset\}\rightarrow
     \{M\subseteq\mathcal W(I):M\mbox{ is a maximal open subgroup}\}$$
     given by the rule $\{i_0,...,i_n\}\mapsto\zeta_I(i_0,...,i_n)$.

     For surjectivity, let $M$ be a maximal open subgroup. Then $M$ is closed of finite index and by Lemma \ref{rz2.8.7}(a), $M$ has index 2 in $G$. Using Propositions \ref{fixms2} and \ref{fixms3} and the compacity of $\mathcal W(I)$, there exists $i_1,...,i_n,j_1,...,j_m,k_1,...,k_p\in I$ with
     $$M_{i_1}\cap...\cap M_{i_n}\cap S_{j_1}\cap...\cap S_{j_m}\cap D_{k_1}\cap...\cap D_{k_p}\subseteq M.$$
     Note that we have
     $$M_{i_1}\cap...\cap M_{i_n}\cap S_{j_1}\cap...\cap S_{j_m}\cap D_{k_1}\cap...\cap D_{k_p}\subseteq     \zeta_I(i_1,...,i_n,j_1,...,j_m,k_1,...,k_p).$$
     Lets denote $\zeta_I(i_1,...,i_n,j_1,...,j_m,k_1,...,k_p):=\zeta_I(\vec i,\vec j,\vec k)$ and $H:=M\cap \zeta_I(\vec i,\vec j,\vec k)$. Suppose $H\ne M$ and let $\tau,\theta\in M\setminus H$. The same calculations made for injectivity shows that $\theta^{-1}\tau\in\zeta_I(\vec i,\vec j,\vec k)$ which imply
     $$M/H=\{\overline1,\overline\tau\}\cong\mathbb Z_2.$$
     Moreover, using the same calculations made in Proposition \ref{fixms2}(b) we have that $H$ has index $\mathbb Z_4$ in $\mathcal W(I)$. Since $H\subseteq M$ and $H\subseteq\zeta_I(\vec i,\vec j,\vec k)$ with both maximal clopen subgroups, by Lemma \ref{ZqDq-le}(i) we have $M=\zeta_I(\vec i,\vec j,\vec k)$, contradicting the assumption $H\ne M$. Then $H=M$ and we have $M=\zeta_I(\vec i,\vec j,\vec k)$. Therefore we have bijections
     \begin{align*}
         \mathcal P_{fin}(I) \setminus\{\emptyset\}&\cong
         \{M\subseteq\mathcal W(I):M\mbox{ is a maximal open subgroup}\} \\
         &\cong\{M\subseteq\mathcal G:M\mbox{ is a maximal open subgroup}\}.
     \end{align*}
     Therefore
     $$\{N\subseteq\mathcal G:N\mbox{ is an open subgroup with index} \leq 2\}\cong P_{fin}(I).$$

     \item Since $\mathcal G = \mathcal W(I)/{\mathcal V}$ and $${\mathcal V} \subseteq \Phi(I) = \bigcap \{ker(\varphi) : \varphi \in  Homcont(\mathcal W(I), \mathbb Z_2)\},$$
     then the natural epimorphism $\mathcal W(I) \twoheadrightarrow \mathcal W(I)/{\mathcal V}$ induces the isomorphism $$Homcont(\mathcal G, \mathbb Z_2) \cong  Homcont(\mathcal W(I), \mathbb Z_2).$$
     Since $\mathbb Z_2$ is a finite/discrete $\mathcal C$-group, then the universal property of $\mathcal W(I)$ (Theorem \ref{fixms4}) gives a natural isomorphism $Homcont(\mathcal W(I), \mathbb Z_2) \cong fsFunc(I, \mathbb Z_2)$.

     Alternatively, the result follows also from the item (ii) above and the (obvious) natural bijections
     $$Homcont(\mathcal G, \mathbb Z_2) \cong \{N\subseteq\mathcal G:N\mbox{ is an open subgroup with index} \leq 2\}$$
     $$P_{fin}(I) \cong fsFunc(I, \mathbb Z_2).$$
 \end{enumerate}
\end{proof}

%{\textcolor{red}{Aqui come\c cou a faltar os enunciados}}
\begin{prop} [Prontryagin duality] \label{fixhugo3}
Let $\mathcal G=\mbox{Gal}(G)$ for some pre-special group $G$. Then
\begin{enumerate}[i -]
    \item There is a canonical bijection
    $$\mathbb{M}: G\overset\cong\to\{M\subseteq\mathcal G:M\mbox{ is a (closed, normal) subgroup of index less or equal to }2\}$$
    $a \mapsto M_a$ such that it induces a canonical bijection
    $$G\setminus\{1\}\cong\{M\subseteq\mathcal G:M\mbox{ is a maximal subgroup}\}.$$

    \item There is a canonical isomorphism of $\mathbb Z_2$-modules $\psi_G : G \overset\cong\to  Homcont(\mathcal G, \mathbb Z_2)$, $a \mapsto \mu_a$, where $\mu_a : \mathcal G \to \mathbb Z_2$ is the unique continuous homomorphism such that $ker(\mu_a) = M_a$.

 \item There is a canonical isomorphism of pro-2-groups $\phi_G:\mathcal G/\Phi(\mathcal G)\rightarrow\mbox{Hom}(G,\mathbb Z_2)$.
\end{enumerate}
\end{prop}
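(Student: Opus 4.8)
The plan is to deduce all three items from a single object: a canonical \emph{perfect} pairing of $\mathbb{Z}_2$-modules
$$\langle\,\cdot\,,\,\cdot\,\rangle_G:\mathcal{G}/\Phi(\mathcal{G})\times G\longrightarrow\mathbb{Z}_2,$$
so that (iii) is the adjoint isomorphism $\phi_G(\bar g)=\langle\bar g,\,\cdot\,\rangle_G$, (ii) is the adjoint $\psi_G(a)=\langle\,\cdot\,,a\rangle_G$ (precomposed with the quotient $\mathcal{G}\twoheadrightarrow\mathcal{G}/\Phi(\mathcal{G})$), and (i) is obtained by taking kernels $M_a:=\ker(\mu_a)=\ker\langle\,\cdot\,,a\rangle_G$. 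The internal relation $M_a=\ker(\mu_a)$ demanded in (ii) then holds by construction.

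First I would build the pairing from a fixed well-ordered basis $B=\{a_i\}_{i\in I}$. By the Remark following Definition \ref{defn:gal2} the projection $\pi_B:\mathcal{G}\to\mathcal{W}(B)/\mathcal{V}(B)$ is an isomorphism, and since $\mathcal{V}(B)\subseteq\Phi(B)$, Proposition \ref{fixhugo2}(i) gives $\bar\pi_B:\mathcal{G}/\Phi(\mathcal{G})\xrightarrow{\cong}\mathcal{W}(B)/\Phi(B)$. Proposition \ref{fixms7} supplies the perfect pairing $\mathcal{W}(B)/\Phi(B)\times P_1(B)\to\mathbb{Z}_2$, and the basis isomorphism $\iota_B:G\xrightarrow{\cong}P_1(B)$, $a=\prod_i a_i^{\epsilon_i}\mapsto\sum_i\epsilon_i z_i$ (equivalently $l(a_i)\leftrightarrow z_i$ under $k_1(G)\cong G$), transports its second slot to $G$. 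Composing yields $\langle\bar g,a\rangle_G$, perfect because each of $\bar\pi_B$, $\iota_B$ and the pairing of \ref{fixms7} is. Concretely, for $g=(t_i^{\alpha_i})(t_{ij}^{\beta_{ij}})(x_i^{\gamma_i})$ and $a=\prod_i a_i^{\epsilon_i}$ one reads off $\langle\bar g,a\rangle_G=\sum_i\epsilon_i\gamma_i$.

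Granting the pairing, the three items are formal. For (iii): $G$ is discrete, so $\mathrm{Hom}(G,\mathbb{Z}_2)\cong\prod_{i\in I}\mathbb{Z}_2$, and $\phi_G$ is a continuous homomorphism of profinite $2$-groups which is bijective by perfectness, hence a topological isomorphism. For (ii): each $\mu_a$ kills $\Phi(\mathcal{G})=\overline{\mathcal{G}^2[\mathcal{G},\mathcal{G}]}$ (Lemma \ref{rz2.8.7}(c)) since $\mathbb{Z}_2$ is abelian of exponent $2$, so $\psi_G$ is well defined; continuity forces its image to be the finitely supported functionals, giving $\mathrm{Homcont}(\mathcal{G},\mathbb{Z}_2)\cong\bigoplus_{i\in I}\mathbb{Z}_2\cong G$, and $\psi_G$ is a $\mathbb{Z}_2$-linear bijection. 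For (i): the map $\mu\mapsto\ker\mu$ is a bijection between $\mathrm{Homcont}(\mathcal{G},\mathbb{Z}_2)$ and the closed normal subgroups of index $\le 2$ (as $\mathrm{Aut}(\mathbb{Z}_2)$ is trivial), so $\mathbb{M}=\ker\circ\,\psi_G$ is the asserted bijection; by Lemma \ref{rz2.8.7}(a) the subgroups of index exactly $2$ are the maximal ones while $M_1=\mathcal{G}$ is the unique one of index $1$, whence the restriction to $G\setminus\{1\}$ lands precisely on the maximal subgroups.

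The main obstacle is canonicity, i.e. independence of $B$. For two bases $B,C$ the limit structure gives $\pi_B=\tilde\mu_{BC}\circ\pi_C$ with $\tilde\mu_{BC}$ the transition isomorphism of Lemma \ref{fixsg6}, hence on Frattini quotients $\bar\pi_B=\mu^1_{BC}\circ\bar\pi_C$. The two \ref{fixms7}-pairings are adjoint with respect to $\mu^1_{BC}$ and $m^1_{BC}$, namely $\langle\mu^1_{BC}(\bar g),q\rangle_B=\langle\bar g,m^1_{BC}(q)\rangle_C$, which is exactly the commuting square $\mathfrak{m}^1_{BC}\circ\theta_C=\theta_B\circ\mu_{BC}$ of Lemma \ref{fixsg5}; and $\iota_C=m^1_{BC}\circ\iota_B$, as both encode the transition between the $B$- and $C$-coordinates of the same element of $G\cong k_1(G)$. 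Chaining these,
$$\langle\bar\pi_B\bar g,\iota_B a\rangle_B=\langle\mu^1_{BC}\bar\pi_C\bar g,\iota_B a\rangle_B=\langle\bar\pi_C\bar g,m^1_{BC}\iota_B a\rangle_C=\langle\bar\pi_C\bar g,\iota_C a\rangle_C,$$
so the pairing computed via $B$ agrees with the one via $C$. The cocycle identities $\tilde\mu_{BB}=\mathrm{id}$, $\tilde\mu_{BD}=\tilde\mu_{BC}\circ\tilde\mu_{CD}$ (Lemma \ref{fixsg6}) and $m^1_{BD}=m^1_{BC}\circ m^1_{CD}$ (Lemma \ref{fixsg4}) make these identifications coherent over the whole groupoid $E_G$, so the pairing—and therefore $\phi_G,\psi_G,\mathbb{M}$—descends canonically to $\mathcal{G}=\varprojlim_B\mathcal{W}(B)/\mathcal{V}(B)$. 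I expect this base-change bookkeeping, rather than any single identity, to be the delicate part.
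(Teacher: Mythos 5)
Your proof is correct, and every step is supported by results already available in the paper; but it is organized along a genuinely different route than the paper's own argument. The paper handles the three items separately: item (i) is quoted directly from Proposition \ref{fixhugo2} (the $P_{fin}(B)$-classification of maximal open subgroups of a $\mathcal C$-group, transported along the isomorphism $\pi_B$ using $\mathcal V(B)\subseteq\Phi(B)$); item (ii) comes from Proposition \ref{fixhugo2}(iii), $Homcont(\mathcal W(B)/\mathcal V(B),\mathbb Z_2)\cong fsFunc(B,\mathbb Z_2)$, glued over the groupoid $E_G$ of bases; item (iii) comes from the Frattini factorization $\pi^*_{\mathcal G}$ together with the naturality (Lemma \ref{fixsg5}) of the isomorphism of Proposition \ref{fixms7}. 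You instead build a single canonical perfect pairing $\mathcal G/\Phi(\mathcal G)\times G\to\mathbb Z_2$ and derive the items in the reverse order: (iii) as one adjoint of the pairing, (ii) as the other adjoint precomposed with $\mathcal G\twoheadrightarrow\mathcal G/\Phi(\mathcal G)$ (legitimate since, by Lemma \ref{rz2.8.7}(c), every continuous character kills $\Phi(\mathcal G)$, with surjectivity coming from your finite-support/continuity argument), and (i) by passing to kernels. In doing so you re-prove inline, via the character--kernel dictionary, the content of Proposition \ref{fixhugo2}(ii)--(iii) rather than citing it, and your base-change bookkeeping with Lemmas \ref{fixsg4}, \ref{fixsg5} and \ref{fixsg6} is exactly the gluing over $E_G$ that the paper performs, made explicit as a compatibility of pairings. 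The trade-off: your single-pairing decomposition makes the internal compatibilities hold by construction --- notably $\ker(\mu_a)=M_a$, which the statement of item (ii) requires and which the paper never explicitly checks against its separately constructed bijection $\mathbb M$ of item (i) --- whereas the paper's route obtains items (i) and (ii) as immediate corollaries of a structure theorem (Proposition \ref{fixhugo2}) proved once for arbitrary $\mathcal C$-groups and reusable elsewhere.
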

\begin{proof}
 $ $
 \begin{enumerate}[i -]
     \item This follows directly from Proposition \ref{fixhugo2}, since $\pi_B : Gal(G) \overset\cong\to {\mathcal W}(B)/{\mathcal V}(B)$ and ${\mathcal V}(B) \subseteq \Phi(B)$, for every well ordered basis $B$ of $G$.

     \item By Proposition \ref{fixhugo2}(iii), for each well ordered basis $B$ in $G$, we have a natural isomorphism of $\mathbb Z_2$-modules.
     $$ Homcont(\mathcal W(B)/\mathcal V(B), \mathbb Z_2) \cong fsFunc(B, \mathbb Z_2)$$
     This is, in fact, an isomorphism of $\mathbb Z_2$-modules. Taking into account the isomorphisms of "change of base", we glue the above isomorphisms to obtain the natural isomorphism
     $$Homcont(Gal(G), \mathbb Z_2) = Homcont(\varprojlim_{B\in E_G}\mathcal W(B)/\mathcal V(B), \mathbb Z_2) \cong  \varprojlim_{B\in E_G} fSFunc(B, \mathbb Z_2) \cong G$$

\item Note that $\pi_{\mathcal G} : {\mathcal G} \twoheadrightarrow {\mathcal G}/\Phi({\mathcal G})$ induces a $\mathbb Z_2$-isomorphism
$$\pi^*_{\mathcal G} : Homcont(\mathcal G/\Phi({\mathcal G}), \mathbb Z_2) \overset\cong\to Homcont(\mathcal G, \mathbb Z_2).$$

By Lemma \ref{fixsg5}, the isomorphisms described in Proposition \ref{fixms7}, namely $$\mathcal W(B)/\Phi(B)\cong\mbox{Hom}(P_1(B),\mathbb Z_2),$$
are natural. Thus we obtain a natural isomorphism
$$\mathcal G/\Phi({\mathcal G}) \cong  Hom(G, \mathbb Z_2)$$
 \end{enumerate}
\end{proof}

\begin{rem}
Note that combining items (iii), (ii) of the Proposition above, we obtain the  Pontryagin duality:
$$ \mathcal G/\Phi({\mathcal G}) \cong  Hom(G, \mathbb Z_2) \cong Hom(Homcont(\mathcal G/\Phi({\mathcal G}), \mathbb Z_2), \mathbb Z_2)$$

$$ G\cong  Homcont(\mathcal G/\Phi({\mathcal G}), \mathbb Z_2) \cong Homcont(Hom(G, \mathbb Z_2), \mathbb Z_2)$$

This induces a canonical duality between the pointed $\mathbb Z_2$-module $(G, -1)$  and the "pointed" pro-2-group $(Gal(G), M)$, where $M \subseteq Gal(G)$ is an open subgroup of index $\leq 2$.
\end{rem}

Let $G$ be a k-stable special group. Write $\mathcal G=\mbox{Gal}(G)$.

The isomorphism of pro-2-groups $\phi_G:\mathcal G/\Phi(\mathcal G)\rightarrow\mbox{Hom}(G,\mathbb Z_2)$, determines a "perfect pairing" $\hat\phi_G:\mathcal G/\Phi(\mathcal G)\times G\rightarrow\mathbb Z_2$ given by the rule
$$\hat\phi_G(\overline\alpha,g):=\langle\overline\alpha,g\rangle:=\phi_G(\overline\alpha)(g).$$
We will denote  $( \ )^\perp$, generically, both the correspondences between subsets of $\mathcal G/\Phi(\mathcal G)$ and subsets of $G$.

%    This "perfect pairing"  gives an anti-isomorphism, , of complete lattices between the poset of closed subgroups of $\mathcal G/\Phi(\mathcal G)$ and the poset of subgroups of $G$. Moreover,  this map restricts to a bijection
% between discrete subgroups of order 2
% $\{id, \sigma/\Phi(\mathcal G)\} \subseteq \mathcal G/\Phi(\mathcal G)$ and  maximal subgroups of $G$.

  %the ordered sets $\{\bar{T} \ closed\ subgroups\ of\ {Gal}(G)/\Phi({Gal}(G))\} \cong \{\ subgroups \ of \ G\}$
  %map $H$ equivalent class of closed subgroup of $Gal(G)$ $\mapsto$ $hat{H} = \{ a \in G : \forall \sigma \in H <\sigma/\Phi(G),a> =0 \}$ so we get a bijection betteween .

\begin{prop} \label{pairing-prop}
The perfect pairing $\hat\phi_G:\mathcal G/\Phi(\mathcal G)\times G\rightarrow\mathbb Z_2$ gives an anti-isomorphism of complete lattices between the poset
$$\{R\subseteq\mathcal G/\Phi(\mathcal G):R\mbox{ is a closed subgroup of }\mathcal G/\Phi(\mathcal G)\}=$$
    $$\{\pi(T)\subseteq\mathcal G/\Phi(\mathcal G):T\mbox{ is a closed subgroup of }\mathcal G\},$$
    with $\pi:\mathcal G\rightarrow\mathcal G/\Phi(\mathcal G)$ being the canonical projection, and the poset
    $$\{\Delta\subseteq G:\Delta\mbox{ is a subgroup of }G\}.$$
    These anti-isomorphisms are given by the rules
    \begin{align*}
        \pi(T)&\mapsto\pi(T)^\perp:=\{a\in G:
        \hat\phi_G(\sigma/\Phi(G),a)=0\mbox{for all } \sigma\in T\} \\
        H&\mapsto H^\perp:=\{\sigma/\Phi(G):\hat\phi_G(\sigma/\Phi(G),a)=0\mbox{ for all } a\in H\}.
    \end{align*}
From this we get:
\begin{enumerate}[i -]
    \item  An anti-isomorphism of complete lattices between the posets
    $$\{\Delta\subseteq G:\Delta\mbox{ is a subgroup of }G\}$$
    and
    $$\{T\subseteq\mathcal G:T\mbox{ is a closed subgroup and }\Phi(\mathcal G) \subseteq T\}.$$

    \item A bijection between the sets
    $$\{\Delta\subseteq G:\Delta\mbox{ is a maximal subgroup of }G\}$$
    and
   $$\{\pi(T)\subseteq\mathcal G/\Phi(\mathcal G):T\mbox{ is a discrete subgroup with order }\ 2\} = $$
      $$\{\pi(T)\subseteq\mathcal G/\Phi(\mathcal G):T\mbox{ is a closed subgroup with }\pi(T)=\{id,\sigma/\Phi\}\mbox{, for some }\sigma\in  {\mathcal G} \setminus \Phi({\mathcal G})\}.$$
\end{enumerate}
\end{prop}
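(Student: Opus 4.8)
The plan is to recognize the statement as an instance of Pontryagin duality for $\mathbb F_2$-vector spaces, transported through the isomorphisms collected in Proposition \ref{fixhugo3}. Write $V := \mathcal G/\Phi(\mathcal G)$. By Lemma \ref{rz2.8.7}(b), $V$ is a profinite elementary abelian $2$-group, i.e.\ a profinite $\mathbb F_2$-vector space, while $G$ is a discrete $\mathbb F_2$-vector space. Proposition \ref{fixhugo3}(iii) gives the (topological) isomorphism $\phi_G : V \xrightarrow{\cong} \mathrm{Hom}(G, \mathbb Z_2)$, and items (ii)--(iii) together (via $\pi^*_{\mathcal G}$) give $G \xrightarrow{\cong} \mathrm{Homcont}(V, \mathbb Z_2)$. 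Thus the pairing $\hat\phi_G(\bar\alpha, a) = \phi_G(\bar\alpha)(a)$ is \emph{perfect}: it is separately continuous, and each side is identified with the continuous dual of the other. First I would record this non-degeneracy explicitly, since every subsequent step rests on it.

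Next I would set up the two annihilator operators and check they land in the right posets. For a closed subspace $R \subseteq V$ put $R^\perp = \{a \in G : \hat\phi_G(\sigma, a) = 0 \text{ for all } \sigma \in R\}$; this is an intersection of kernels of the functionals $a \mapsto \hat\phi_G(\sigma, a)$, hence an $\mathbb F_2$-subspace of $G$. For a subspace $H \subseteq G$ put $H^\perp = \{\bar\alpha \in V : \hat\phi_G(\bar\alpha, a) = 0 \text{ for all } a \in H\}$; this is an intersection of kernels of the \emph{continuous} functionals $\bar\alpha \mapsto \hat\phi_G(\bar\alpha, a)$, hence a closed subspace of $V$. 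Both operators are visibly inclusion-reversing. Since the subspace lattices on both sides are complete, it then suffices to prove that the two operators are mutually inverse, because mutually inverse inclusion-reversing bijections between complete lattices constitute a lattice anti-isomorphism.

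The core of the argument is the biduality identities $H^{\perp\perp} = H$ and $R^{\perp\perp} = R$; the inclusions $H \subseteq H^{\perp\perp}$ and $R \subseteq R^{\perp\perp}$ are immediate. For the reverse inclusion on the discrete side, given $a \notin H$ I would pass to the quotient $G/H$ and choose a homomorphism $G/H \to \mathbb Z_2$ not vanishing on the nonzero class $a + H$; pulled back along $G \twoheadrightarrow G/H$ and transported through $\phi_G^{-1}$ it yields $\bar\alpha \in H^\perp$ with $\hat\phi_G(\bar\alpha, a) \neq 0$, so $a \notin H^{\perp\perp}$. The reverse inclusion on the profinite side is where the \textbf{main obstacle} lies: given a \emph{closed} $R \subseteq V$ and $\sigma \notin R$, I must produce $a \in R^\perp$ with $\hat\phi_G(\sigma, a) \neq 0$. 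Here closedness is essential --- one uses that in a profinite $\mathbb F_2$-vector space every closed subspace is the intersection of the open (finite-index) subspaces containing it, so there is a continuous functional $V \to \mathbb Z_2$ killing $R$ but not $\sigma$; under $G \cong \mathrm{Homcont}(V, \mathbb Z_2)$ this functional is exactly an element $a \in R^\perp$ separating $\sigma$ from $R$. This separation statement is the genuinely topological input, and I would isolate it as a lemma, citing the profinite duality in \cite{ribes2000profinite}.

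Finally I would deduce (i) and (ii) formally. For (i), since $\pi : \mathcal G \to V$ is a continuous surjection of compact Hausdorff groups it is a closed map, so the correspondence theorem gives an isomorphism of posets between closed subgroups $T$ of $\mathcal G$ containing $\Phi(\mathcal G)$ and closed subgroups $\pi(T) = T/\Phi(\mathcal G)$ of $V$ (this also yields the displayed identification of the two descriptions of the left-hand poset); as $V$ is abelian, its closed subgroups are exactly its closed $\mathbb F_2$-subspaces, so composing with the anti-isomorphism just established yields the anti-isomorphism between subgroups of $G$ and closed subgroups of $\mathcal G$ containing $\Phi(\mathcal G)$. For (ii), a lattice anti-isomorphism sends the codimension-one subspaces of $G$ (its maximal subgroups) to the minimal nonzero closed subspaces of $V$, i.e.\ the one-dimensional ones $\{id, \sigma/\Phi\}$ with $\sigma \in \mathcal G \setminus \Phi(\mathcal G)$; restricting the bijection of (i) to these extremal elements gives the stated bijection.
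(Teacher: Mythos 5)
Your proposal is correct and follows essentially the same route as the paper: the paper's (very terse) proof likewise reduces everything to the duality isomorphism $\phi_G$ of Proposition \ref{fixhugo3} and, for item (i), to the complete-lattice correspondence between closed subgroups of $\mathcal G$ containing $\Phi(\mathcal G)$ and closed subgroups of $\mathcal G/\Phi(\mathcal G)$ induced by the quotient map of compact Hausdorff groups. The only difference is one of detail: where the paper says ``all items are immediate consequences of the isomorphism,'' you explicitly carry out the annihilator calculus and the two biduality identities (using separation of a point from a subspace on the discrete side, and the fact that a closed subgroup of a profinite group is the intersection of the open subgroups containing it on the profinite side), which is exactly the content the paper leaves implicit.
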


%(i) The ''perfect pairing'' $\widehat{\phi}_{G} : {\mathcal G}/{\Phi(\mathcal G)} \times G \rightarrow {\mathbb Z_2}$ gives a antiismorphism of complete lattices between the ordered sets $\bar{T} \subseteq {\mathcal G}/{\Phi(\mathcal G)} : T$ is a closed subgroup of ${\mathcal G}/{\Phi(\mathcal G)}\}$ and $\{ \Delta \subseteq G: \Delta$ is a  subgroup of $\mathcal G$  $\bar{T}$ $\mapsto$ $\bar{T}^{\perp} = \{ a \in G : \forall \sigma \in  T  <\sigma/\Phi(G),a> =0 \}$ , $H \mapsto$ $H^{\perp} = \{ \sigma/\Phi(G) : \forall a \in H <\sigma/\Phi(G)> = 0\}$.\\
%From this we get:\\
%$\ast$ \ A  antiisomorphism of complete lattices between the ordered sets: $\{\Delta \subseteq G : \Delta$ is a subgroup of  $ G\}$ and $\{T \subseteq {\mathcal G} : T$ is a closed subgroup of ${\mathcal G}$, $\Phi({\mathcal G}) \subseteq T\}$. \\
%$\ast$ \  A bijection between the sets  $\{\Delta \subseteq G : \Delta$ is a {\em maximal} subgroup of  $ G\}$ and $\{\bar{T} \subseteq {\mathcal G}/{\Phi(\mathcal G)} : \bar{T}$ is a closed subgroup of ${\mathcal G}/{\Phi(\mathcal G)}$, $\bar{T}$ has 2 elements $\}$ $\bar{T} \cong \{id, \sigma/Phi\}$
%$\ast$ \ For each $\sigma \in {\mathcal G} - \Phi({\mathcal G})$ if there is an involution in $\sigma.\Phi(\mathcal G)$ then of all element in $asifgam\Phi({\mathcal G})$ are involutions
% Moreover,  this map restricts to a bijection between discrete subgroups of order 2  $\{id, \sigma/\Phi(\mathcal G)\} \subseteq \mathcal G/\Phi(\mathcal G)$ and  maximal subgroups of $G$.
\begin{proof}

All items are immediate consequences of the isomorphism.

Since ${\mathcal G}$ is a compact Hausdorff group and $\Phi({\mathcal G}) \subseteq {\mathcal G}$ is a closed normal subgroup,  note that then the quotient map $ {\mathcal G} \twoheadrightarrow {\mathcal G}/{\Phi(\mathcal G)}$ gives an isomorphism of complete lattices between the poset of closed subgroups of ${\mathcal G}$ which contains $\Phi({\mathcal G})$  and the poset of closed subgroups of ${\mathcal G}/{\Phi(\mathcal G)}$.
\end{proof}

\begin{rem}
Let $\sigma \in {\mathcal G} \setminus \Phi({\mathcal G})$. It follows from  the definition of pairing  that:
\begin{itemize}
\item For any $x \in G-\{1\}$:  $\sigma \in M_{x}$ iff $<\sigma/\Phi({\mathcal G}),x> =0$ iff $x \in \{\Phi({\mathcal G}),\sigma.\Phi({\mathcal G})\}^\perp$.

\item If there is an involution in $\sigma.\Phi(\mathcal G)$ then of all element in $\sigma.\Phi({\mathcal G})$ are involutions.
\end{itemize}
\end{rem}

To obtain quadratic information from the Galois groups, we will need develop deeper group theoretic results.

\begin{lem} \label{Zd-le}
Let $B$ an well ordered basis of $G$ and  consider $\eta_{B} = \pi_B^{-1} : \mathcal W(B)/\mathcal V(B) \ \overset{\cong}\to \ Gal(G)$.
\begin{enumerate}[i- ]
    \item Let $a \neq 1$, choose $B= \{a_i: i\in I\}$ an well ordered basis of $G$ such that $a \in B$, say $a = a_{i}$.  Then $\eta_{B}[M'_{i}/\mathcal V(B)] = M_{a}$.

    \item Let $a,b \neq 1$ , $a\neq b$ so $\{a,b\}$ is a $\mathbb Z_2$-l.i. subset of $G$, choose $B= \{a_{i} : i\in I\}$ an well ordered basis of $G$ such that $a,b \in B$, say $a = a_{i}$, $b =a_{j}$ , $i<j \in I$. Let $\{M'_{i},M'_{j},M'\}$ be the three maximal subgroups of $\mathcal W(B)$ above $M'_{i}\cap M'_{j}$. Then $\eta_{B}[M'_{i}/{\mathcal V}(B)] = M_{a}$, $\eta_{B}[M'_{j}/\mathcal V(B)] = M_{b}$ and $\eta_{B}[M'/\mathcal V(B)] = M_{ab}$.

    \item Let $\{M_{1}, M_{2} ,M_{3}\} \subseteq Gal(G)$ maximal subgroups that are pairwise distinct.  Then are equivalent:

    $\bullet$\ $\{M_{1}, M_{2}, M_{3}\}$ are independent, which means that for {\em each} of 3 enumerations $\{u,v,w\}$ of $\{1,2,3\}$, $M_{u} \cap M_{v}  \nsubseteq M_{w}$.

    $\bullet$\  There is {\em some} enumeration $\{u,v,w\}$ of $\{1,2,3\}$ with $M_{u} \cap M_{v} \nsubseteq M_{w}$.

    $\bullet$\ $Gal(G)/(M_{1}\cap M_{2} \cap M_{3}) \cong\mathbb Z_2 \times\mathbb Z_2 \times \mathbb Z_2$
\end{enumerate}
\end{lem}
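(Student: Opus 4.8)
The plan is to push everything through the Pontryagin dictionary of Proposition~\ref{fixhugo3}, under which a nonzero $c\in G$ corresponds to the functional $\mu_c\in Homcont(\mathcal G,\mathbb Z_2)$ with $\ker\mu_c=M_c$, and then to read off the relevant kernels inside the explicit group $\mathcal W(B)$. For item (i), first note that $\mathcal V(B)\subseteq\Phi(B)\subseteq M'_i$, so the image $M'_i/\mathcal V(B)$ is a well-defined index-$2$ subgroup of $\mathcal W(B)/\mathcal V(B)$. The crux is the computation that the functional $\mu_a\circ\eta_B:\mathcal W(B)/\mathcal V(B)\to\mathbb Z_2$, composed with the quotient map $\mathcal W(B)\twoheadrightarrow\mathcal W(B)/\mathcal V(B)$, is exactly the coordinate homomorphism $g\mapsto\gamma_i(g)$. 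This I would obtain by chasing the naturality square of Lemma~\ref{fixsg5} together with the construction of $\psi_G$ in Proposition~\ref{fixhugo3}(ii): the basis vector $a=a_i$ is sent by $\theta_B$ of Proposition~\ref{fixms7} to the dual vector $\varphi_i$, whose associated homomorphism is precisely $\gamma_i$. Since $\ker\gamma_i=M'_i$ by the definition of $M'_i$, this gives $\eta_B^{-1}[M_a]=M'_i/\mathcal V(B)$, that is, $\eta_B[M'_i/\mathcal V(B)]=M_a$.

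For item (ii), the first two equalities $\eta_B[M'_i/\mathcal V(B)]=M_a$ and $\eta_B[M'_j/\mathcal V(B)]=M_b$ are instances of (i). For the third, I would use that $\mathcal W(B)/(M'_i\cap M'_j)\cong\mathbb Z_2\times\mathbb Z_2$, as in the proof of Proposition~\ref{fixms2}(c); hence there are exactly three index-$2$ subgroups above $M'_i\cap M'_j$, namely the kernels of $\gamma_i$, of $\gamma_j$, and of $\gamma_i+\gamma_j$. The first two are $M'_i$ and $M'_j$, so the remaining one $M'$ is $\ker(\gamma_i+\gamma_j)$. Because $\psi_G$ is a $\mathbb Z_2$-module isomorphism, it sends $ab$ to $\mu_a+\mu_b=\mu_{ab}$, and the functional $\mu_{ab}\circ\eta_B$ pulls back to $\gamma_i+\gamma_j$; the argument of (i) then yields $\eta_B[M'/\mathcal V(B)]=M_{ab}$.

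For item (iii), I would pass to the Frattini quotient. Each maximal subgroup $M_k$ contains $\Phi(\mathcal G)$, so $M_1\cap M_2\cap M_3\supseteq\Phi(\mathcal G)$ and $\mathcal G/(M_1\cap M_2\cap M_3)$ is a quotient of the $\mathbb F_2$-vector space $\mathcal G/\Phi(\mathcal G)$. Writing $M_k=\ker\mu_k$ with $0\neq\mu_k\in Homcont(\mathcal G,\mathbb Z_2)$, pairwise distinctness of the $M_k$ forces the $\mu_k$ to be pairwise distinct, nonzero, and hence pairwise $\mathbb F_2$-independent. The elementary fact $M_u\cap M_v\subseteq M_w\iff\mu_w\in\mathrm{span}(\mu_u,\mu_v)$ (valid since the computation takes place in the finite quotient $\mathcal G/(M_1\cap M_2\cap M_3)$) translates the three bullets: the last one holds iff $(\mu_1,\mu_2,\mu_3):\mathcal G\to\mathbb Z_2^3$ is surjective, i.e. iff $\{\mu_1,\mu_2,\mu_3\}$ is $\mathbb F_2$-independent; and over $\mathbb F_2$ three pairwise-independent vectors are either independent or satisfy the unique relation $\mu_1+\mu_2+\mu_3=0$, in which case $\mu_w\in\mathrm{span}(\mu_u,\mu_v)$ for every enumeration. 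Thus the ``all enumerations'' and ``some enumeration'' bullets both coincide with independence, and the cycle of implications closes.

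The step I expect to be the main obstacle is the naturality bookkeeping in (i): one must check that the several identifications --- the isomorphism $\theta_B$ of Proposition~\ref{fixms7}, the passage $Homcont(\mathcal W(B)/\mathcal V(B),\mathbb Z_2)\cong Homcont(\mathcal W(B),\mathbb Z_2)$ coming from $\mathcal V(B)\subseteq\Phi(B)$, and the gluing over $E_G$ defining $\psi_G$ --- are mutually compatible, so that $a_i$ genuinely yields the coordinate functional $\gamma_i$ and not merely some functional with the same kernel after an unrecorded change-of-basis map $\mathfrak m^1_{BC}$. Once this single identification is secured, items (ii) and (iii) follow formally, (iii) being pure $\mathbb F_2$-linear algebra.
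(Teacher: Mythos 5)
Your proposal is correct and follows essentially the same route as the paper: both rest on the compatibility of the perfect pairings (the Pontryagin dictionary $a \leftrightarrow M_a$, $M'_i \leftrightarrow z_i$) to settle item (i), and then reduce (ii) and (iii) to $\mathbb{F}_2$-linear algebra. The only cosmetic difference is that you phrase (ii) and (iii) on the dual side, using additivity $\mu_{ab} = \mu_a + \mu_b$ of the functionals, whereas the paper argues by elimination among the three maximal subgroups above the index-$4$ intersection and reduces (iii) to (ii) via the elements $a,b,ab$; the bookkeeping obstacle you flag (that $a_i$ genuinely corresponds to the coordinate functional $\gamma_i$, not merely one with the same kernel) is exactly the "compatibility of pairings" step the paper also asserts without further detail.
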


\begin{proof}
$ $
\begin{enumerate}[i- ]
\item Recall that $\mathcal V(B) \subseteq \Phi(B) \subseteq M'_{k}, \forall  k \in I$, $M'_{i} = \{\sigma \in \mathcal W(B) : \gamma_{i}(\sigma) = 0\}$. The "isomorphic" perfect pairings,
\begin{align*}
 <,>_{B} &: \mathcal W(B)/\Phi(B) \times P_{1}(B) \to {\mathbb Z}_2 \\
 <,> &: Gal(G)/\Phi(Gal(G))\times G \to \mathbb Z_2
\end{align*}
provides $M'_{i}/\Phi(B) = \{z_{i}\}^{\perp}$ and $M_{a_{i}}/\Phi(Gal(G)) = \{a_{i}\}^{\perp}$. Since the pairings are ``compatible'', i.e., the dual of the isomorphism $P_{1}(B) \overset{\cong}\to G$ : $z_{k} \mapsto a_{k}$, $k \in I$ corresponds to the isomorphism $$\mathcal W(B)/\Phi(B) \underset{can}{\overset{\cong}\to} (\mathcal W(B)/\mathcal V(B))/\Phi((\mathcal W(B)/{\mathcal V}(B))) \underset{\bar{\eta}_{B}}{\overset{\cong}\to} Gal(G)/\Phi(Gal(G)),$$
we have $\bar{\eta}_{B} \circ can [M'_{i}/\Phi(B)] = M_{a_{i}}/\Phi(Gal(G))$. Therefore, as the Frattini subgroups are contained in all maximal open subgroups, we get $\eta_{B}[M'_{i}/{\mathcal V}(B)] = M_{a_{i}}$.

\item $M' = \{\sigma \in \mathcal W(B) : \gamma_{i}(\sigma) +\gamma_{j}(\sigma) = 0\}$. The "isomorphic" perfect pairings,
$$\mathcal W(B)/\Phi(B) \times P_{1}(B) \to {\mathbb Z}_2\mbox{ and }{Gal(G)/\Phi(Gal(G))} \times G \to {\mathbb Z}_2$$
provides $M_{a}/\Phi(Gal(G)) = \{a\}^{\perp}$ and $M_{b}/\Phi(Gal(G)) = \{b\}^{\perp}$, so
\begin{align*}
    M_{ab}/\Phi(Gal(G)) &= \{ab\}^{\perp} = \{ \theta/\Phi(Gal(G)) : <\theta/\Phi(Gal(G)), ab> =0\} \\
    &\subseteq (M_{a})/\Phi(Gal(G)) \cap M_{b})/\Phi(Gal(G)))= (M_{a}\cap M_{b})/\Phi(Gal(G))
\end{align*}
 Therefore $\{M_a,M_b,M_{ab}\}$  {\em are the three maximal subgroups of $Gal(G)$ above $M_a \cap M_b$}. Since  $\{M'_{i},M'_{j},M'\}$ are the three maximal subgroups of $\mathcal W(B)$ above $M'_{i}\cap M'_{j}$ and
 $$\eta_{B}[M'_{i}/{\mathcal V}(B)] = M_{a}\mbox{ and }\eta_{B}[M'_{j}/{\mathcal V}(B)] = M_{b},$$
 we must have $\eta_{B}[M'/{\mathcal V}(B)] = M_{ab}$

\item We have uniquely determined $\{a,b,c \} \subseteq G \setminus\{1\}$, with $M_{1}= M_a , M_2= M_b , M_3 =M_c$ and, from the hypothesis, $a,b,c$ are pairwise distinct so the result follows from (ii) since if $\{x, y\}$ is  a ${\mathbb Z}_2$-li set then $\{x, xy\}$ and $\{y, xy\}$ are ${\mathbb Z}_2$-li sets and those 3 sets are ${\mathbb Z}_2$-basis of the group $\{1,x,y,xy\}$.
\end{enumerate}
\end{proof}

\begin{lem}\label{ZqDq-le}
Let $G$ be a $k$-stable pre-special group and denote $\mathcal G := Gal(G)$.
$ $
\begin{enumerate}[i -]
\item  Let $S \subseteq\mathcal G$ a normal closed  of with  $\mathcal G/S \cong \mathbb Z_4$ then there is a unique maximal subgroup  $H \subseteq\mathcal G$ such that $S \subseteq H$.

\item Let $D \subseteq\mathcal G$ a normal closed  of with  $\mathcal G/S \cong\mathbb D_4$ then there is a unique set $\{H_{1}, H_{2}\}$ with $H_{1} \neq H_{2}$, $H_{i} \subseteq\mathcal G$  maximal subgroups  such that $D \subseteq H_{1} \cap H_{2}$ and if $\{H, H_{1}, H_{2}\}$ are the {\em three} maximal subgroups above $H_{1}\cap H_{2}$ then $H/D \cong\mathbb Z_4$.
\end{enumerate}
\end{lem}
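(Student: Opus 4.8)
The plan is to reduce both parts to the lattice correspondence theorem for the quotient map $q:\mathcal G\twoheadrightarrow\mathcal G/N$ (with $N=S$ in (i) and $N=D$ in (ii)), combined with the explicit subgroup structure of the two finite $\mathcal C$-groups $\mathbb Z_4$ and $\mathbb{D}_4$. Recall that for a closed normal subgroup $N$ the assignment $K\mapsto K/N$ is an inclusion-preserving bijection between the closed subgroups of $\mathcal G$ containing $N$ and the subgroups of the finite group $\mathcal G/N$, carrying maximal subgroups to maximal subgroups. Since by Lemma \ref{rz2.8.7}(a) every maximal closed subgroup of the pro-$2$-group $\mathcal G$ has index $2$, and since $q^{-1}$ of a closed set is closed, all subgroups produced below automatically land in the correct (closed, open of index $2$) category; the $k$-stability hypothesis is not needed for this particular statement.

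For item (i) I would simply observe that $\mathbb Z_4$, being cyclic of order $4$, has a unique subgroup of order $2$ and hence a unique maximal (index $2$) subgroup. Transporting this back through $q$ gives exactly one maximal subgroup $H\subseteq\mathcal G$ with $S\subseteq H$.

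For item (ii) the key input is the subgroup lattice of $\mathbb{D}_4=\langle r,s:r^4=s^2=(sr)^2=1\rangle$. First I would record that $\Phi(\mathbb{D}_4)=\langle r^2\rangle$ (the center, which here is also the commutator subgroup), so that $\mathbb{D}_4/\langle r^2\rangle\cong\mathbb Z_2\times\mathbb Z_2$ has exactly three index-$2$ subgroups; equivalently $\mathbb{D}_4$ has exactly three maximal subgroups, $\langle r\rangle\cong\mathbb Z_4$ and the two Klein groups $\langle r^2,s\rangle,\langle r^2,sr\rangle\cong\mathbb Z_2\times\mathbb Z_2$, each containing $\langle r^2\rangle$, with any two distinct ones meeting precisely in $\langle r^2\rangle$. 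Pulling these back through $q$ yields the three maximal subgroups $H,H_1,H_2\supseteq D$ of $\mathcal G$, with $H/D\cong\mathbb Z_4$ and $H_1/D,H_2/D\cong\mathbb Z_2\times\mathbb Z_2$. Choosing $\{H_1,H_2\}$ to be the two with Klein quotient, one gets $H_1\cap H_2=q^{-1}[\langle r^2\rangle]$, and since $\mathcal G/(H_1\cap H_2)\cong\mathbb Z_2\times\mathbb Z_2$ the three maximal subgroups above $H_1\cap H_2$ are precisely $H,H_1,H_2$, with $H/D\cong\mathbb Z_4$ as required. For uniqueness, any admissible pair $\{H_1',H_2'\}$ maps to two distinct maximal subgroups of $\mathbb{D}_4$, which again meet in $\langle r^2\rangle$, so the three maximal subgroups above $H_1'\cap H_2'$ are once more $H,H_1,H_2$; the requirement that the remaining one $H'$ satisfy $H'/D\cong\mathbb Z_4$ forces $H'=H$, the unique maximal subgroup with cyclic quotient, whence $\{H_1',H_2'\}=\{H_1,H_2\}$.

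I expect the only genuinely delicate point to be the $\mathbb{D}_4$ bookkeeping in item (ii): one must check that the pairwise intersections of the three maximal subgroups all coincide with the Frattini subgroup — so that the phrase ``the three maximal subgroups above $H_1\cap H_2$'' is well posed and independent of the admissible pair chosen — and that exactly one of the three pulled-back subgroups has cyclic quotient. Both facts follow at once from $\Phi(\mathbb{D}_4)=\langle r^2\rangle$ and $\mathbb{D}_4/\Phi(\mathbb{D}_4)\cong\mathbb Z_2\times\mathbb Z_2$, which is exactly the picture already exploited in Proposition \ref{fixms2}(c); once the correspondence theorem is in place, no serious obstacle remains.
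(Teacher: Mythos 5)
Your proof is correct, and the core reduction is the same as the paper's: pass to the finite quotient and exploit the subgroup structure of $\mathbb Z_4$ and $\mathbb{D}_4$. Indeed, the paper's explicit coset unions ($1D\cup r^2D\cup rD\cup r^3D$, $1D\cup r^2D\cup sD\cup sr^2D$, $1D\cup r^2D\cup srD\cup sr^3D$) are precisely your pullbacks of the three maximal subgroups $\langle r\rangle$, $\langle r^2,s\rangle$, $\langle r^2,sr\rangle$ of $\mathbb{D}_4$, so the existence halves of both arguments coincide up to language. Where you genuinely diverge is the uniqueness argument in item (ii). The paper fixes the constructed pair $\{H_1,H_2\}$ and rules out any further admissible maximal subgroup $H_3\supseteq D$ by a two-case analysis: if $H_1\cap H_2\subseteq H_3$ then $H_3=H$ and one checks directly that the pairs $\{H,H_1\}$ and $\{H,H_2\}$ fail the $\mathbb Z_4$-quotient condition; if $H_1\cap H_2\nsubseteq H_3$ then $\{H_1,H_2,H_3\}$ are independent (via Lemma \ref{Zd-le}), which would force an epimorphism $\mathbb{D}_4\twoheadrightarrow\mathbb Z_2\times\mathbb Z_2\times\mathbb Z_2$ --- impossible since $\mathbb{D}_4$ has an element of order $4$. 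Your observation that all pairwise intersections of the three maximal subgroups of $\mathbb{D}_4$ equal $\Phi(\mathbb{D}_4)=\langle r^2\rangle$ collapses both cases at once: every admissible pair has the same intersection $q^{-1}[\langle r^2\rangle]$, hence determines the same triple of maximal subgroups, and the cyclic-quotient requirement then pins down which element of the triple is excluded. This is shorter and dispenses with the independence lemma entirely; what the paper's longer route buys is self-containedness (explicit cosets, no appeal to the lattice correspondence theorem or to the $\mathbb{D}_4$ subgroup lattice as known facts), plus the case analysis it develops is reused in the proof of Proposition \ref{encoding-pr}. Your side remark that $k$-stability is never used is also consistent with the paper: its proof is pure profinite group theory and nowhere invokes the quadratic-form structure of $G$.
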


\begin{proof}
$ $
\begin{enumerate}[i -]
    \item For the existential  part take $r \in\mathcal G\setminus S$ such that $r^{2} \notin S$ and  $\mathcal G/S = \{1.S,r.S,r^{2}.S,r^{3}.S\}$ $ \cong \mathbb Z_4$ and take the maximal subgroup $H = 1S \cup r^{2}S$. Then $ S \subseteq H$ and the canonical epimorphism $\mathcal G/S \twoheadrightarrow\mathcal G/H$ corresponds to the epimorphism $\mathbb Z_4 \twoheadrightarrow\mathbb Z_2$. This maximal $H \supseteq S$ is unique because if $S \subseteq H_{1} , H_{2}$ with $H_{1} \neq H_{2}$, then $S \subseteq H_{1} \cap H_{2}$ and there will be an epimorphism $\mathbb Z_4 \cong\mathcal G/S \twoheadrightarrow\mathcal G/H_{1}\cap H_{2} \cong\mathbb Z_2\times \mathbb Z_2$, however there is no element in $\mathbb Z_2 \times\mathbb Z_2$ of order 4.

    \item For the existential  part let $r,s \in\mathcal G\setminus D$ be such that $r^{2} \notin D, s^{2} \in D$ and then
    \begin{align*}
     \mathbb D_4 &\cong \mathcal G/D=\{1.D,r.D,r^{2}.D,r^{3}.D,s.D,sr.D,sr^{2}.D,sr^{3}.D\}
    \end{align*}
    Then each one of the maximal subgroups above $D$ is a reunion of four classes so they must contain $1D, r^{2}D$. they are three:
    \begin{align*}
        1D \cup r^{2}D \cup rD \cup r^{3}D \\
        1D \cup r^{2}D \cup sD\cup sr^{2}D \\
        1D \cup r^{2}D \cup srD \cup sr^{3}D
    \end{align*}
    Then take
    \begin{align*}
     \{H_{1}, H_{2} \} &= \{1D \cup r^{2}D \cup sD\cup sr^{2}D, 1D \cup r^{2}D \cup srD \cup sr^{3}D\} \mbox{ and }\\
     H &= 1D \cup r^{2}D \cup rD \cup r^{3}D.
    \end{align*}
    Then $ D \subseteq H_1\cap H_2 = 1D \cup r^{2}D \subseteq H, H_{1}, H_{2}$. Then the canonical epimorphism $\mathcal G/D \twoheadrightarrow\mathcal G/H_{1} \cap H_{2}$ corresponds to the epimorphism $\mathbb D_4 \twoheadrightarrow\mathbb Z_2 \times\mathbb Z_2$ and $H/D \cong\mathbb Z_4$ . This pair of maximals $H_1, H_{2} \supseteq D$ is unique because if $H_{3}$ is a maximal $S \subseteq H_{3}$ with $H_{3} \neq H_{1}, H_{2}$, then $D \subseteq H_{1} \cap H_{2}\cap H_{3}$ and we have two  to consider:\\
$\bullet$ \ $H_{1} \cap H_{2} \subseteq H_{3}$:  in this case $H_{3} = H = 1D \cup r^{2}D \cup rD \cup r^{3}D$, then
$$1D =\cup r^{2}D = H_{1}\cap H_{2} = H\cap H_{1} = H\cap H_{2}$$
so $D \subseteq H\cap H_{1}$, but we see directly that $H_{2}/D \ncong{\mathbb Z}_4$; similarly $D \subseteq H\cap H_{2}$ but also $H_{1}/D \ncong{\mathbb Z}_4$;\\
$\bullet$ \ $H_{1} \cap H_{2} \nsubseteq H_{3}$ then $H_{3} \neq H,H_{1},H_{2}$, also $H_{1} \cap H_{3} \nsubseteq H_{2}$ (because if $H_{1} \cap H_{3} \subseteq H_{2}$, then $H_{1} \cap H_{2} \nsubseteq H_{3}$, see the previous Lemma) and similarly $H_{2} \cap H_{3} \nsubseteq H_{1}$, then $\{H_{1},H_{2},H_{3}\}$ are {\em independent}, so in this case, the epimorphism $\mathcal G/D \twoheadrightarrow \mathcal G/H_{1}\cap H_{2}\cap H_{3}$ corresponds to an epimorphism ${\mathbb D}_4 \twoheadrightarrow {\mathbb Z}_2\times {\mathbb Z}_2 \times {\mathbb Z}_2$ but there is no element in ${\mathbb Z}_2 \times {\mathbb Z}_2 \times {\mathbb Z}_2$ of order 4.
\end{enumerate}
\end{proof}

\begin{teo}\label{standardteo}
$ $
\begin{enumerate}[i -]
    \item %There is an injective map
    %$$\{a\in G\setminus\{1\}:l(a)l(a)=0\in k_2(G)\}\rightarrow\mbox{Im}(j_1).$$
    Let $a \in G \setminus\{1\}$.
    $$l(a).l(a) = 0 \in k_2(G) \ \Rightarrow$$
    There is $S \subseteq Gal(G)$, a  normal clopen subgroup such that $Gal(G)/S \cong {\mathbb Z}_4$ and $S \subseteq M_{a}$.
    \item %There is an injective map
    %$$\{a,b\in G:\{a,b\}\mbox{ is linearly independent and }l(a)l(b)=0\in k_2(G)\}\rightarrow\mbox{Im}(j_2).$$
     Let $a, b \in G \setminus \{1\}$ such that $a \neq b$
    $$l(a).l(b) = 0 \in k_2(G) \ \Rightarrow$$
    There is $D \subseteq Gal(G)$, a normal clopen subgroup such that $Gal(G)/D \cong {\mathbb D}_4$ and $D \subseteq M_{a} \cap M_{b}$, $M_{ab}/D \cong {\mathbb Z}_4$.
\end{enumerate}
\end{teo}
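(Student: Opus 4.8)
The plan is to prove both items by transporting the problem, through the isomorphism $\eta_B:\mathcal W(B)/\mathcal V(B)\xrightarrow{\cong}\mathrm{Gal}(G)$ of Lemma \ref{Zd-le}, to the explicit $\mathcal C$-free group $\mathcal W(B)$, where the subgroups $S_i$ and $D_{ij}$ of Proposition \ref{fixms2} already realize the quotients $\mathbb Z_4$ and $\mathbb D_4$. The crux is to show that the hypothesis $l(a)l(b)=0$ forces $\mathcal V(B)=Q(B)^\perp$ to lie inside the appropriate one of these subgroups, so that $S_i$ (resp.\ $D_{ij}$) descends to $\mathrm{Gal}(G)$ with the desired quotient.

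For item (i), since $a\neq 1$ I would choose a well-ordered basis $B=\{a_k:k\in I\}$ with $a=a_i$. Direct substitution in the formula of Proposition \ref{fixsg1} gives $q^B_{a,a}=z_i^2$, and by \ref{fixsg1}(i) $\pi_B(q^B_{a,a})=l(a)l(a)=0$, so $z_i^2\in Q(B)$. Now $\mathcal V(B)=Q(B)^\perp$ for the perfect pairing of Proposition \ref{fixms6}, and evaluating that pairing on the monomial $z_i^2$ reads off precisely the coordinate $\alpha_i$. Hence every $v\in\mathcal V(B)\subseteq\Phi(B)$ satisfies $\alpha_i(v)=0$, and $\gamma_i(v)=0$ automatically since $v\in\Phi(B)$; by the description of $S_i$ in Proposition \ref{fixms2}(b) this says exactly $\mathcal V(B)\subseteq S_i$. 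Then $S:=\eta_B[S_i/\mathcal V(B)]$ is normal and clopen (finite index), with $\mathrm{Gal}(G)/S\cong\mathcal W(B)/S_i\cong\mathbb Z_4$; and since $S_i\subseteq M_i$ while $\eta_B[M_i/\mathcal V(B)]=M_a$ by Lemma \ref{Zd-le}(i), we obtain $S\subseteq M_a$.

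For item (ii), $\{a,b\}$ is $\mathbb Z_2$-independent, so I would pick $B$ with $a=a_i$, $b=a_j$, $i<j$. The same substitution gives $q^B_{a,b}=z_iz_j\in Q(B)$, and pairing against $z_iz_j$ reads off the coordinate $\beta_{ij}$; thus $\mathcal V(B)\subseteq D_{ij}=\{g:\beta_{ij}=\gamma_i=\gamma_j=0\}$ as in Proposition \ref{fixms2}(c). Setting $D:=\eta_B[D_{ij}/\mathcal V(B)]$ yields $\mathrm{Gal}(G)/D\cong\mathbb D_4$, and $D_{ij}\subseteq M_i\cap M_j$ together with Lemma \ref{Zd-le}(ii) gives $D\subseteq M_a\cap M_b$.

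The last assertion, $M_{ab}/D\cong\mathbb Z_4$, is where care is needed and is the main (though small) obstacle: among the three maximal subgroups of $\mathcal W(B)$ above $M_i\cap M_j$ --- namely $M_i$, $M_j$, and $M'=\{g:\gamma_i+\gamma_j=0\}$ --- exactly one has $\mathbb Z_4$ as its quotient by $D_{ij}$, matching the unique cyclic index-$2$ subgroup of $\mathbb D_4$. Reusing the explicit presentation in the proof of Proposition \ref{fixms2}(c), where the generator $r$ is the class of an element outside $M_i\cup M_j$ and hence inside $M'$, I would check that it is $M'$ that satisfies $M'/D_{ij}\cong\mathbb Z_4$. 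Since Lemma \ref{Zd-le}(ii) identifies $\eta_B[M'/\mathcal V(B)]=M_{ab}$, this transports to $M_{ab}/D\cong\mathbb Z_4$, and the uniqueness statement of Lemma \ref{ZqDq-le}(ii) confirms there is no ambiguity in the choice of the three maximals.
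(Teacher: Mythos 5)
Your proof is correct, and its skeleton is the same as the paper's: fix a well-ordered basis containing $a$ (resp.\ $a,b$), work in $\mathcal W(B)$ where Proposition \ref{fixms2} supplies the subgroups $S_i$, $D_{ij}$ with quotients $\mathbb Z_4$, $\mathbb D_4$, reduce everything to the inclusion $\mathcal V(B)\subseteq S_i$ (resp.\ $\mathcal V(B)\subseteq D_{ij}$), and transport through $\eta_B$ using Lemma \ref{Zd-le}. Where you genuinely diverge is in how that inclusion is obtained, and your route is shorter: since $q^B_{a,a}=z_i^2\in Q(B)$ (resp.\ $q^B_{a,b}=z_iz_j\in Q(B)$), since the pairing of Proposition \ref{fixms6} evaluated at $z_i^2$ (resp.\ $z_iz_j$) is exactly the coordinate $\alpha_i$ (resp.\ $\beta_{ij}$), and since all $\gamma$-coordinates vanish on $\Phi(B)$ by Proposition \ref{fixms5}, the chain $\mathcal V(B)=Q(B)^\perp\subseteq(q^B_{a,a})^\perp\subseteq S_i$ is immediate. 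The paper instead establishes the stronger \emph{equality} $S'_i\cap\Phi(B)=(q^{B}_{a_ia_i})^\perp$ (resp.\ $D'_{ij}\cap\Phi(B)=(q^{B}_{a_ia_j})^\perp$), and this is by far the most technical part of its proof: an index-$2$ computation relying on compactness of $\mathcal W(B)$, followed by a topological-closure argument with the generators $x_kx_l$ and $x_j^2$. For the theorem as stated your one-line inclusion suffices, so the simplification is legitimate; the only caveat is that the paper reuses the stronger equality later (the proof of Proposition \ref{encoding-pr}(ii) cites $(q^{B}_{a_i,a_i})^\perp=S'_i\cap\Phi(B')$ as coming from this proof), so if your argument were adopted, that equality would need to be recorded separately --- though it too follows from the same coordinate reading. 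Finally, your treatment of $M_{ab}/D\cong\mathbb Z_4$ --- observing that $M'=\{g:\gamma_i+\gamma_j=0\}$ contains the order-$4$ element $r=\overline{\tau_4}$ of Proposition \ref{fixms2}(c), has index $2$, hence $M'/D_{ij}$ is the unique cyclic subgroup of order $4$ in $\mathcal W(B)/D_{ij}$, and then invoking Lemma \ref{Zd-le}(ii) to identify $\eta_B[M'/\mathcal V(B)]=M_{ab}$ --- is actually more explicit than the paper's, which asserts this consequence of its Claim without further detail.
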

\begin{proof}
$ $
\begin{enumerate}[i -]
    \item Since $\{a\} \subseteq G$  is a l.i. subset, take an  well ordered basis $B = \{a_{i} : i \in I\} \subseteq G$ such that $a \in B$, say $a = a_{i}$. Then $\eta_{B} : {\mathcal W}(B)/{\mathcal V}(B) \overset{\cong}\to \ Gal(G)$ with ${\mathcal V}(B) \subseteq \Phi(B)$ and ${\mathcal V}(B) = Q(B)^{\perp}$ where $Q(B) = ker(P_{2}(B) \twoheadrightarrow k_{2}(G))$, then, by Proposition \ref{fixsg1}, $Q(B) = [\{q^{B}_{xy} : l(x)l(y) = 0\}]$ so
    $${\mathcal V}(B) = \bigcap\{(q^{B}_{xy})^{\perp} : l(x)l(y) =0\} = \bigcap\{(q^{B}_{xy})^{\perp} : x, y \neq 1 , l(x)l(y) =0\}.$$ Denote $M'_{i} = \{ \sigma \in {\mathcal W}(B) : \gamma_{i}(\sigma) =0\}$ and $S'_{i} = \{ \sigma \in {\mathcal W}(B) : \alpha_{i}(\sigma) =\gamma_{i}(\sigma) = 0\}$ then, by Proposition \ref{fixms2}(i), $S'_{i} \subseteq M'_{i} \subseteq {\mathcal W}(B)$ are clopen normal subgroups with ${\mathcal W}(B)/M'_{i}  \cong {\mathbb Z}_2$, ${\mathcal W}(B)/S'_{i} \cong {\mathbb Z}_4$. We have ${\mathcal V}(B) \subseteq \Phi(B) \subseteq M'_{i}$, and we state the

$ $

{\underline{Claim}:}  ${\mathcal V}(B) \subseteq S'_{i}$.

This entails that $M_{a} = \eta_{B}[M'_{i}/{\mathcal V}(B)] \subseteq Gal(G)$, $Gal(G)/M_{a} \cong {\mathbb Z}_2$ and
$$S := \eta_{B}[S'_{i}/{\mathcal V}(B)] \subseteq Gal(G)$$
is a clopen normal subgroup of $Gal(G)$ with $Gal(G)/S \cong {\mathbb Z}_4$ and $S \subseteq M_{a}$, as we need.

$ $

{\underline{Proof of the Claim}}: We will see that $S'_{i} \cap \Phi(B) = (q^{B}_{a_{i}a_{i}})^{\perp}$ then as $a = a_{i}$ and $l(a)l(a) =0$ we get ${\mathcal V}(B) \subseteq (q^{B}_{a_{i}a_{i}})^{\perp}$ so ${\mathcal V}(B) \subseteq S'_{i} \cap \Phi(B)$. Since $1 \neq a =a_{i}$ it follows that $q^{B}_{a_{i} a_{i}} = z^{2}_{i} \in P_{2}(B)$ is such that $(q^{B}_{a_{i}a_{i}})^{\perp} \subseteq \Phi(B)$ has {\em index 2} and we will proof that $S'_{i} \cap \Phi(B) \subseteq \Phi(B)$ has also {\em index 2} and $S'_{i} \cap \Phi(B) \subseteq (q^{B}_{a_{i}a_{i}})^{\perp}$ so  we get  $S'_{i} \cap \Phi(B) = (q^{B}_{a_{i}a_{i}})^{\perp}$. Firstly we show  that $\Phi(B)/S'_{i} \cap \Phi(B) \cong{\mathbb Z}_2$: as $\Phi(B) \hookrightarrow M'_{i}$ then $\Phi(B)/S'_{i} \cap \Phi(B) \rightarrowtail M'_{i}/S'_{i}$ and $M'_{i}/S'_{i} \cong {\mathbb Z}_2$ so $\Phi(B)/S'_{i} \cap \Phi(B)$ has 1 or 2 elements.  However, it cannot has 1 element: if $S'_{i} \cap \Phi(B) = \Phi(B)$ then, by Proposition \ref{fixms5}, $$\bigcap\{ M'_{j} : j \in I\} = \Phi(B) \subseteq S'_{i}$$
but ${\mathcal W}(B)$ is a compact space and $S'_{i} \subseteq {\mathcal W}(B)$ is open subset , $M'_{j} \subseteq {\mathcal W}(B)$ is a closed subset $j \in I$ so there is a {\em finite subset} $\{j_{0},\ldots, j_{n}\} \subseteq I$ such that $M'_{j_{0}} \cap \ldots \cap M'_{j_{n}} \subseteq S'_{i}$, choose $n \in {\mathbb N} $ {\em minimum} with this property so for each $m \leq n$ , $\bigcap\{M'_{j_{l}} : l \neq m\} \nsubseteq M'_{j_{m}}$ then we have an {\em isomorphism} $$Gal(G)/\bigcap\{M'_{j_{l}} : l\leq n\} \overset{\cong}\to \prod_{l \leq n} Gal(G)/M'_{j_{l}}$$
so the epimorphism $Gal(G)/\bigcap\{M'_{j_{l}} : l\leq n\} \twoheadrightarrow Gal(G)/S'_{i}$ corresponds to an epimorphism $\prod_{l \leq n} {\mathbb Z}_2 \twoheadrightarrow {\mathbb Z}_4$, but the two elements of order 4 in ${\mathbb Z}_4$ cannot be in the image of the homomorphism. Now we prove that $S'_{i} \cap \Phi(B) \subseteq (q^{B}_{a_{i}a_{i}})^{\perp}$: we have $(q^{B}_{a_{i}a_{i}})^{\perp}  = \{z^{2}_{i}\}^{\perp}$ and $$S'_{i} \cap \Phi(B) = \{ \sigma \in {\mathcal W}(I) : \alpha_{i}(\sigma) =0\mbox{ and }\gamma_{j}(\sigma) = 0\mbox{ for each }j \in I\}$$
and it follows from of the group operation and the definition of the pairing $<,> : \Phi(B) \times P_{2}(B) \to {\mathbb Z}_2$ that
$$\{x_{k}x_{l} : k <l \in I\} \cup\{x^{2}_{j} : i \neq j \in I\}\subseteq \left(S'_{i} \cap \Phi(B)\right)\cap \{z^{2}_{i}\}^{\perp}$$
 Since $S'_{i} \cap\Phi(B) ,\{z^{2}_{i}\}^{\perp}  \subseteq \Phi(B)$ are closed subgroups,
 $$closure([\{x_{k}x_{l} : k <l \in I\} \cup\{x^{2}_{j} : i \neq j \in I\}])\subseteq \left(S'_{i} \cap \Phi(B)\right)\cap \{z^{2}_{i}\}^{\perp}.$$
 Now we will  prove that $S'_{i} \cap \Phi(B) \subseteq closure([\{x_{k}x_{l} : k <l \in I\} \cup\{x^{2}_{j} : i \neq j \in I\}])$; it is enough find for each $\sigma  \in S'_{i} \cap \Phi(B)$ and each {\em basic} neighborhood $T$ of $1 \in {\mathcal W}(B)$ two finite sets $\{j_{1}, \ldots j_{n}\} \subseteq I-\{i\}$ and $\{ (k_1,l_{1}) , \ldots (k_{m},l_{m}) : k_{u} < l_{u} \in I , 1\leq u \leq m\}$  such that $(x^{2}_{j_{1}}. \ldots .x^{2}_{j_{n}}. x_{k_{1}}x_{l_{1}}. \ldots . x_{k_{m}}.x_{l_{m}}) \in \sigma. T$ : let $T = \bigcap U$ where
 \begin{align*}
     U \subseteq_{fin} V&=
     \{M'_j: j \in I\}\cup\{S'_j: j \in I\}\cup\{D'_{kl}:k<l\in I\}
 \end{align*}
 and take
 \begin{align*}
  \{j_{1}, \ldots j_{n}\} &= \{ j \in I : S'_{j} \in U , \alpha_{j}(\sigma) =1\} \subseteq I \setminus\{i\}\mbox{ and }\\
  \{ k_{1} < l_{1}, \ldots k_{m} < l_{m} \} &= \{ k < l \in I: D'_{kl} \in U, \beta_{kl}(\sigma) = 1\}
 \end{align*}
 $\{j_{1}, \ldots j_{n}\} = \{ j \in I : S'_{j} \in U , \alpha_{j}(\sigma) =1\} \subseteq I \setminus\{i\}$ and $\{ k_{1} < l_{1}, \ldots k_{m} < l_{m} \} = \{ k < l \in I: D'_{kl} \in U, \beta_{kl}(\sigma) = 1\}$ then,  since $\gamma_{j}(\sigma) = 0$, for all $j \in I$, we get
$$(x^{2}_{j_{1}}. \ldots .x^{2}_{j_{n}}. x_{k_{1}}x_{l_{1}}. \ldots . x_{k_{m}}.x_{l_{m}}) \in \sigma. T$$

    \item  Since $\{a,b\} \subseteq G$  is a  l.i. subset, take an well ordered basis $B = \{a_{i} : i \in I\} \subseteq G$ such that $a,b \in B$, say $a = a_{i} , b = a_{j} , i<j \in I$. %Then $\eta_{B} : {\mathcal W}(B)/{\mathcal V}(B) \overset{\cong}\to \ Gal(G)$ with ${\mathcal V}(B) \subseteq \Phi(B)$ and ${\mathcal V}(B) = Q(B)^{\perp}$ where $Q(B) = ker(P_{2}(B) \twoheadrightarrow k_{2}(G))$, by Lemma (\kmar{colocar ref}) $Q(B) = [\{q^{B}_{xy} : l(x)l(y) = 0\}]$ so ${\mathcal V}(B) = \bigcap\{(q^{B}_{xy})^{\perp} : l(x)l(y) =0\} = \bigcap\{(q^{B}_{xy})^{\perp} : x, y \neq 1 , l(x)l(y) =0\}$ ; let us
    We denote
    \begin{align*}
        M'_{i} &= \{ \sigma \in {\mathcal W}(B) : \gamma_{i}(\sigma) =0\}\\
        M'_{j} &= \{ \sigma \in {\mathcal W}(B) : \gamma_{j}(\sigma) =0\}\\
        M' &= \{\sigma \in {\mathcal W}(B) : \gamma_{i}(\sigma) + \gamma_{j}(\sigma) = 0 \}\\
        D'_{ij} &= \{ \sigma \in {\mathcal W}(B) : \beta_{ij}(\sigma) =\gamma_{i}(\sigma) =\gamma_{j}(\sigma) = 0\}
    \end{align*}
    By Proposition \ref{fixms2}(ii), $D'_{ij} \subseteq M'_{i}, M'_{j} \subseteq {\mathcal W}(B)$ are clopen normal subgroups with ${\mathcal W}(B)/M'_{i} \cap M'_{j}  \cong {\mathbb Z}_2 \times {\mathbb Z}_2$, ${\mathcal W}(B)/D'_{ij} \cong {\mathbb D}_4$. Besides ${\mathcal V}(B) \subseteq \Phi(B) \subseteq M'_{i} \cap M'_{j}$ %, by Lemma (\kmar{colocar ref}),
and we stat the

$ $

{\underline{Claim}}: ${\mathcal V}(B) \subseteq D'_{ij}$.

This entails that $M_{a} = \eta_{B}[M'_{i}/{\mathcal V}(B)] \subseteq Gal(G)$, $M_{b} = \eta_{B}[M'_{j}/{\mathcal V}(B)]$, $M_{ab} = \eta_{B}[M'/{\mathcal V}(B)]$ , $Gal(G)/M_{a} \cong {\mathbb Z}_2 \cong Gal(G)/M_{b}$ and $D \doteq \eta_{B}[S'_{i}/{\mathcal V}(B)] \subseteq Gal(G)$ is a clopen normal subgroup of $Gal(G)$ with $Gal(G)/D \cong {\mathbb D}_4$ such that $D \subseteq M_{a} \cap M_{b}$ and $M_{ab}/D \cong {\mathbb Z}_4$, as we need.

$ $

{\underline{Proof of the Claim}}: We will see that $D'_{ij} \cap \Phi(B) = (q^{B}_{a_{i}a_{j}})^{\perp}$ then as $a = a_{i} , b =a_{j}$ and $l(a)l(b) =0$ we get ${\mathcal V}(B) \subseteq (q^{B}_{a_{i}a_{j}})^{\perp}$ so ${\mathcal V}(B) \subseteq D'_{ij} \cap \Phi(B)$. As $1 \neq a =a_{i}$ and $1 \neq b = a_{j}$ with $i<j \in I$, it follows that $q^{B}_{a_{i} a_{j}} = z_{i}z_{j} \in P_{2}(B)$ is such that $(q^{B}_{a_{i}a_{j}})^{\perp} \subseteq \Phi(B)$ has {\em index 2} and we will proof that $D'_{ij} \cap \Phi(B) \subseteq \Phi(B)$ has also {\em index 2} and $D'_{ij} \cap \Phi(B) \subseteq (q^{B}_{a_{i}a_{j}})^{\perp}$ so  we get  $D'_{ij} \cap \Phi(B) = (q^{B}_{a_{i}a_{j}})^{\perp}$.

Firstly, we will prove that $\Phi(B)/D'_{ij} \cap \Phi(B) \cong{\mathbb Z}_2$: since $\Phi(B) \hookrightarrow M'_{i} \cap M'_{j}$ then
$$\Phi(B)/D'_{ij} \cap \Phi(B) \rightarrowtail M'_{i}\cap M'_{j}/D'_{ij}\mbox{ and }M'_{i}\cap M'_{j}/D'_{ij} \cong {\mathbb Z}_2$$
so $\Phi(B)/D'_{ij} \cap \Phi(B)$ has 1 or 2 elements. However it cannot has 1 element: if $D'_{ij} \cap \Phi(B) = \Phi(B)$ then  %, by Lemma (\kmar{colocar ref}).
$\bigcap\{ M'_{k} : k \in I\} = \Phi(B) \subseteq D'_{ij}$ but ${\mathcal W}(B)$ is a compact space and $D'_{ij} \subseteq {\mathcal W}(B)$ is open subset , $M'_{k} \subseteq {\mathcal W}(B)$ is a closed subset $k \in I$ so there is a {\em finite subset} $\{k_{0},\ldots, k_{n}\} \subseteq I$ such that $M'_{k_{0}} \cap \ldots \cap M'_{k_{n}} \subseteq D'_{ij}$, choose $n \in {\mathbb N} $ {\em minimum} with this property so for each $m \leq n$ , $\bigcap\{M'_{k_{l}} : l \neq m\} \nsubseteq M'_{k_{m}}$ then we have an {\em isomorphism} $$Gal(G)/\bigcap\{M'_{k_{l}} : l\leq n\} \overset{\cong}\to \prod_{l \leq n} Gal(G)/M'_{k_{l}}$$
so the epimorphism $Gal(G)/\bigcap\{M'_{k_{l}} : l\leq n\} \twoheadrightarrow Gal(G)/D'_{ij}$ corresponds to an epimorphism $\prod_{l \leq n} {\mathbb Z}_2 \twoheadrightarrow {\mathbb D}_4$, but the two elements of order 4 in ${\mathbb D}_4$ cannot be in the image of the homomorphism.

Now we prove that $D'_{ij} \cap \Phi(B) \subseteq (q^{B}_{a_{i}a_{j}})^{\perp}$: we have $(q^{B}_{a_{i}a_{j}})^{\perp}  = \{z_{i}z_{j}\}^{\perp}$ and
$$D'_{ij} \cap \Phi(B) = \{ \sigma \in {\mathcal W}(I) : \beta_{ij}(\sigma) =0\mbox{ and }\gamma_{k}(\sigma) = 0\mbox{ for each }k \in I\}$$
 and it follows from of the group operation and the definition of the pairing
 $$<,> : \Phi(B) \times P_{2}(B) \to {\mathbb Z}_2$$
 that
$$\{x_{k}x_{l} : k <l \in I , (k,l) \neq (i,j)\} \cup\{x^{2}_{k} : k \in I\} \subseteq \left(D'_{ij} \cap \Phi(B)\right)\cap\{z_{i}z_{j}\}^{\perp}$$
then, since $D'_{ij} \cap\Phi(B) ,\{z_{i}z_{j}\}^{\perp}  \subseteq \Phi(B)$ are closed subgroups,
$$closure([\{x_{k}x_{l} : k <l \in I , (k,l) \neq (i,j)\} \cup\{x^{2}_{k} : k \in I\}]) \subseteq\left(D'_{ij}\cap\Phi(B)\right)\cap\{z_{i}z_{j}\}^{\perp}.$$
Now we will  prove that $D'_{ij} \cap \Phi(B) \subseteq closure([\{x_{k}x_{l} : k <l \in I, (k,l) \neq (i,j)\} \cup\{x^{2}_{k} : k \in I\}])$; it is enough find for each $\sigma  \in D'_{ij} \cap \Phi(B)$ and each {\em basic} neighborhood $T$ of $1 \in {\mathcal W}(B)$ two finite sets $\{j_{1}, \ldots, j_{n}\} \subseteq I$ and $\{ (k_1,l_{1}) , \ldots (k_{m},l_{m}) : k_{u} < l_{u} \in I , (k_{u},l_{u}) \neq (i,j), 1\leq u \leq m \}$  such that $(x^{2}_{j_{1}}. \ldots .x^{2}_{j_{n}}. x_{k_{1}}x_{l_{1}}. \ldots . x_{k_{m}}.x_{l_{m}}) \in \sigma. T$ : let $T = \bigcap U$ where
\begin{align*}
    U \subseteq_{fin} V&=
    \{M'_j: j \in I\}\cup\{S'_j: j \in I\}\cup\{D'_{kl}:k<l\in I\}
\end{align*}
and take
\begin{align*}
 \{j_{1}, \ldots j_{n}\} &= \{ j \in I : S'_{j} \in U , \alpha_{j}(\sigma) =1\} \subseteq I\mbox{ and }\\
 \{ k_{1} < l_{1}, \ldots k_{m} < l_{m} \} &= \{ k < l \in I: D'_{kl} \in U, \beta_{kl}(\sigma) = 1\} \subseteq I\times I \setminus\{(i,j)\}
\end{align*}
 then, since $\gamma_{k}(\sigma) = 0$, for all  $k \in I$, we get
 $$(x^{2}_{j_{1}}. \ldots .x^{2}_{j_{n}}. x_{k_{1}}x_{l_{1}}. \ldots . x_{k_{m}}.x_{l_{m}}) \in \sigma. T.$$
\end{enumerate}
\end{proof}

The above proposition suggests the following:

\begin{defn} \label{pSGstandard-def}
A  pre-special group $G$ is said to be \textbf{standard} if it is a $k$-stable pre-special group and holds both the reverse implications in the  Theorem \ref{standardteo} above.
\end{defn}

\begin{rem}
Lemma \ref{ZqDq-le} determines (injective) maps
\begin{align*}
    j_1&:\{S\subseteq\mathcal G:S\mbox{ is a normal subgroup of index }\mathbb Z_4\}\rightarrow \\
    &\{M\subseteq\mathcal G:M\mbox{ is a maximal subgroup}\}; \\
    j_2&:\{D\subseteq\mathcal G:D\mbox{ is a normal subgroup of index }\mathbb{D}_4\}\rightarrow \\
    &\qquad\{\{M_1,M_2\}:M_1,M_2\subseteq\mathcal G,\,M_1\ne M_2\mbox{ are maximal subgroups}\}.
\end{align*}

By the canonical bijection $\mathbb{M} : G\setminus \{1\} \ \overset{\cong}\to \ \{M \subseteq Gal(G): M$ is a maximal clopen subgroup $\}$, it is natural to ask: \\
(1) Which subset of $\{ a \in G : a\neq 1\}$ corresponds bijectively with $image(j_{1})$?\\
(2) Which subset of $\{ \{a,b\} \subseteq G : a,b\neq 1 , a\neq b\}$ corresponds bijectively with $image(j_{2})$?\\
%As we will see (\cite{}, \cite{}), the answers to those questions encodes much of the structure of Galois groups of pre-special groups!

The concept of {\em standard } pre-special group provides a full answer to these questions:\\
(1) The set $\{ \{a\} \subseteq G: \{a\} l.i., l(a)l(a) = 0 \in k_{2}(G)\}$ corresponds bijectively with $image(j_{1})$.\\
(2) The set $\{ \{a,b\} \subseteq G : \{a,b\} l.i., l(a)l(b) =0 \in k_{2}(G)\}$ corresponds bijectively with $image(j_{2})$.
\end{rem}

It follows from  Propositions 2.3 and 2.4 in \cite{minac1996witt} that $SG(F)$ is a standard special group, for every field $F$ with $char(F) \neq 2$.

We have already established that every special group $G$ is $k$-stable (see Proposition \ref{k-stable-aph}(iii)).

These suggest the following:

\begin{op}
    Is every special group $G$ standard?\footnote{We are unable to solve this question with the methods so far developed. We believe that to address this question, we will have to develop the theory of quadratic extensions of pre-special hyperfields.}
\end{op}

In the sequel, we will see the relevance of the subclass of standard pre-special groups. We invite the reader to recall Proposition \ref{pairing-prop}.

\begin{prop}\label{encoding-pr}
Let $G$ be a $k$-stable pre special group and denote ${\mathcal G} := Gal(G)$.
\begin{enumerate}[i- ]
    \item Let  $\sigma \in {\mathcal G} \setminus \Phi({\mathcal G})$ be such that $\sigma^{2} =id$. Then $\{\Phi({\mathcal G}),\sigma.\Phi({\mathcal G})\}^{\perp} \subseteq G$ is a maximal saturated subgroup of $G$. %ESTE ITEM (i) NAO PRECISA DESTA HIPOTESE K-stable
% If $G$ $k$-stable {\em pre-special} group then
\item Suppose that $G$ is a {\em standard pre-special group}. Let $\sigma \in {\mathcal G} \setminus \Phi(\mathcal G)$ be such that $\sigma^{2} \neq id$ (so $\sigma^{4} =id$). Then $\{\Phi({\mathcal G}),\sigma.\Phi({\mathcal G})\}^{\perp} \subseteq G$  is {\em not} a saturated subgroup of $G$.

%If $G$ $k$-stable {\em pre-special} group then

\item Suppose that $G$ is a {\em standard special group}. The set of all classes of conjugacy of involutions $\sigma \in {\mathcal G} \setminus \Phi(G)$ corresponds to the set of all  orderings (= maximal saturated subgroups) of $G$.
% ($H_{\sigma} = \{id, \sigma\}, \sigma^{2} =id$) (both Closed sets)

\item Suppose that $G$ is a {\em standard pre-special group}. Then we have an anti-isomorphism of complete lattices between the  posets $$\{ \Delta \subseteq G : \Delta \mbox{ is a saturated subgroups of}\ G\}$$
and
$$\{T \subseteq {\mathcal G} : T \mbox{ is a closed subgroup of}\ {\mathcal G} \mbox{(topologically) generated by involutions such that}\ \Phi({\mathcal G}) \subseteq T\}$$
\end{enumerate}
\end{prop}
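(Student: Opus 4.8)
The plan is to reduce all four parts to a single computation relating the order of an element $\sigma$ to the multiplicative behaviour of the character it defines, and then to feed this into the lattice duality of Proposition \ref{pairing-prop}. Fix $\sigma\in\mathcal{G}\setminus\Phi(\mathcal{G})$ and let $\chi_\sigma:G\to\mathbb{Z}_2$ be the character $\chi_\sigma(a)=\langle\sigma\Phi(\mathcal{G}),a\rangle=\mu_a(\sigma)$; by the Remark following Proposition \ref{pairing-prop} we have $\Delta_\sigma:=\{\Phi(\mathcal{G}),\sigma\Phi(\mathcal{G})\}^\perp=\ker\chi_\sigma$, of index $2$ since $\sigma\notin\Phi(\mathcal{G})$ forces $\chi_\sigma\neq0$. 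The crucial first step is to compute $\sigma^2$. Lifting $\sigma$ to $g\in\mathcal{W}(B)$ with coordinate vector $\vec\gamma=(\gamma_i(g))$, Lemma \ref{fixms1}(v) gives $g^2=(t_i^{\gamma_i})(t_{ij}^{\gamma_i\gamma_j})\in\Phi(B)$, and via the pairing of Proposition \ref{fixms6} one checks $\langle g^2,q\rangle=q(\vec\gamma)$ (evaluation of the quadratic polynomial at $\vec\gamma$), while a direct expansion yields $q^B_{a,b}(\vec\gamma)=\chi_\sigma(a)\,\chi_\sigma(b)$. Since $\mathcal{V}(B)=Q(B)^\perp$, since $Q(B)$ is generated by the $q^B_{a,b}$ with $l(a)l(b)=0$ (Proposition \ref{fixsg1}), and since $q\mapsto q(\vec\gamma)$ is $\mathbb{Z}_2$-linear, I obtain the central equivalence
\[\sigma^2=\mathrm{id}\iff\big(\forall a,b\in G:\ l(a)l(b)=0\Rightarrow\chi_\sigma(a)\chi_\sigma(b)=0\big).\]

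For (i), the relation $l(a)l(-a)=0$ together with this equivalence forces $\chi_\sigma(-1)=1$, so $-1\notin\Delta_\sigma$. Saturation then follows: if $d\in\Delta_\sigma$ and $c\in D_G(1,d)$, i.e. $l(-d)l(c)=0$, then from $\chi_\sigma(-d)=\chi_\sigma(-1)=1$ the equivalence gives $\chi_\sigma(c)=0$, so $D_G(1,d)\subseteq\Delta_\sigma$; as $\Delta_\sigma$ is a subgroup this yields full saturation. Thus $\Delta_\sigma$ is a proper saturated subgroup of index $2$, hence a maximal saturated subgroup. For (ii), I argue by contradiction: $\sigma^2\neq\mathrm{id}$ produces, by negating the equivalence, a pair $a,b$ with $l(a)l(b)=0$ and $\chi_\sigma(a)=\chi_\sigma(b)=1$. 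Were $\Delta_\sigma$ saturated it would be proper of index $2$, hence would not contain $-1$ (else $D_G(1,-1)=G\subseteq\Delta_\sigma$); then $\chi_\sigma(-a)=0$, so $-a\in\Delta_\sigma$, and saturation applied to $b\in D_G(1,-a)$ gives $b\in\Delta_\sigma$, contradicting $\chi_\sigma(b)=1$. (Notice this last argument uses only $k$-stability; the role of standardness is clarified below.)

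Part (iv) is obtained by restricting the anti-isomorphism of Proposition \ref{pairing-prop} between subgroups $\Delta\subseteq G$ and closed subgroups $T\supseteq\Phi(\mathcal{G})$ of $\mathcal{G}$. Via Pontryagin duality (Proposition \ref{fixhugo3}) I would match the two sub-posets: the pull-back $T$ of $\Delta^\perp$ is topologically generated by involutions exactly when $T/\Phi(\mathcal{G})$ is spanned by the cosets $\sigma\Phi(\mathcal{G})$ with $\sigma^2=\mathrm{id}$, which by (i)–(ii) are precisely the signatures annihilating $\Delta$; on the other side, a saturated subgroup equals the intersection of the orderings above it, so $\mathrm{Ann}(\Delta)$ is spanned by exactly those signatures. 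The two conditions coincide, giving the claimed anti-isomorphism of complete lattices. The structural input here is the representation of saturated subgroups as intersections of orderings.

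Finally (iii). By (i)–(ii) a coset $\sigma\Phi(\mathcal{G})$ consists of involutions iff $\sigma^2=\mathrm{id}$ iff $\chi_\sigma$ is a signature; and for a fixed ordering $P$ the dual character is such a signature (reusing saturation of $P$, exactly as in the contradiction argument of (ii)). Since conjugation is trivial modulo $[\mathcal{G},\mathcal{G}]\subseteq\Phi(\mathcal{G})$, the assignment $\sigma\mapsto\Delta_\sigma$ depends only on the coset, so it descends to a \emph{well-defined surjection} from conjugacy classes of involutions in $\mathcal{G}\setminus\Phi(\mathcal{G})$ onto orderings. The main obstacle is injectivity at the level of conjugacy classes: I must show that two involutions defining the same ordering — equivalently, lying in one coset $\sigma\Phi(\mathcal{G})$ — are conjugate. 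Using that $[\mathcal{G},\mathcal{G}]$ is central (Lemma \ref{fixms1}(ix)), the map $g\mapsto[g,\sigma]$ is a homomorphism whose image is the conjugacy class of $\sigma$ translated back to $\Phi(\mathcal{G})$, so the claim amounts to the identity $\{[g,\sigma]:g\in\mathcal{G}\}\cdot\mathcal{V}(B)=\Phi(B)$. Verifying this is delicate: it fails for a poorly chosen basis (a pure square $z_i^2\in Q(B)$ obstructs it), so one must choose $B$ with no $a_i\in D_G(1,1)$ and genuinely invoke that $G$ is a \emph{standard special} group rather than merely pre-special. I expect this conjugacy identity to be the hardest step of the whole proposition, and the place where the stronger hypothesis of (iii) is indispensable.
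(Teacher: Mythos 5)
Your parts (i) and (ii) are correct, and they take a genuinely different route from the paper's. The paper proves (i) by pushing $\sigma$ into the finite quotients $\mathcal{G}/S\cong\mathbb{Z}_4$ and $\mathcal{G}/D\cong\mathbb{D}_4$ supplied by Theorem \ref{standardteo} and arguing case by case inside them, and proves (ii) by lifting $\sigma$ to $\mathcal{W}(B)$, changing basis so that $a,b$ become basis elements, and re-using the computations $(q^{B'}_{a_ia_i})^\perp=S'_i\cap\Phi(B')$ and $(q^{B'}_{a_ia_j})^\perp=D'_{ij}\cap\Phi(B')$ from the proof of that theorem. Your single identity $\langle g^2,q^B_{a,b}\rangle=\chi_\sigma(a)\chi_\sigma(b)$ (a correct computation), combined with $\mathcal{V}(B)=Q(B)^\perp$ and Proposition \ref{fixsg1}(iii), compresses both arguments into the equivalence ``$\sigma^2=\mathrm{id}$ iff $\chi_\sigma(a)\chi_\sigma(b)=0$ whenever $l(a)l(b)=0$'', and it makes the hypotheses transparent: your (i) uses only relations valid in any pre-special group, and your (ii) uses only $k$-stability, which matches what the paper's proof actually consumes (its appeal to standardness there is idle). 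Your (iv) follows the paper's own plan (restrict the anti-isomorphism of Proposition \ref{pairing-prop}, using that a saturated subgroup of a \emph{special} group is the intersection of the orderings above it --- a fact both you and the paper need, even though the statement says pre-special); it is only sketchier, and the lattice bookkeeping the paper writes out (closedness of $\bigvee T_\Sigma$, the exchange of $\bigvee$ and $\bigcap$ under $(\ )^\perp$) would still have to be supplied.

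The genuine gap is in (iii), and you have put your finger on exactly the right spot, but your proposed repair does not work. Injectivity on conjugacy classes needs $\mathrm{Im}(g\mapsto[g,\sigma])=\Phi(\mathcal{G})$, equivalently $\{[g,\tilde\sigma]:g\in\mathcal{W}(B)\}\cdot\mathcal{V}(B)=\Phi(B)$; note that the paper never proves this either --- its one-sentence proof of (iii) only yields a bijection between orderings and the cosets of $\Phi(\mathcal{G})$ consisting of involutions. Choosing a basis that avoids $D_G\langle 1,1\rangle$ cannot help, because conjugacy classes are basis-free objects: an obstruction exhibited in one basis is absolute. And the obstruction is not confined to pure squares: extending your pairing computation gives $\langle[g,\sigma],q^B_{a,b}\rangle=\chi_g(a)\chi_\sigma(b)+\chi_\sigma(a)\chi_g(b)$, so \emph{every} generator $q^B_{a,b}$ with $a,b\in\ker\chi_\sigma$ annihilates the whole commutator image. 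Hence, whenever the ordering $\ker\chi_\sigma$ contains elements $a,b\neq1$ with $l(a)l(b)=0$, the commutator image has index at least $2$ in $\Phi(\mathcal{G})$ and the involution coset splits into several conjugacy classes over a single ordering. This already happens for $G=SG(\mathbb{Q})$, which the paper asserts is a standard special group: $2$ is positive, $2\neq1$ in $\dot{\mathbb{Q}}/\dot{\mathbb{Q}}^2$, and $2\cdot(1/2)^2+2\cdot(1/2)^2=1$ gives $\langle 2,2\rangle\equiv\langle 1,1\rangle$, hence $l(2)l(2)=0$ by $k$-stability; taking a basis with $a_1=-1$, $a_2=2$, the element $q^B_{2,2}=z_2^2\in Q(B)$ pairs trivially with every $[g,\sigma]$ but pairs to $1$ with $t_2$, so $\sigma$ and $\sigma\bar{t}_2$ are non-conjugate involutions inducing the same (unique) ordering of $\mathbb{Q}$. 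So the conjugacy-class formulation of (iii) is false as stated, not merely hard; what (i) and (ii) actually support --- and what your argument correctly establishes --- is the correspondence between orderings and involution cosets modulo $\Phi(\mathcal{G})$.
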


\begin{proof}
$ $
\begin{enumerate}[i- ]
\item Let $\bar{T} = \{\Phi({\mathcal G}), \sigma\Phi({\mathcal G})\}$.

$ $

\underline{Claim:} To have that $\bar{T}^\perp \subseteq G$ is a {\em  saturated} subgroup it is enough to prove the following:  $\forall x, y \in G$ if $<x,y> \equiv <1,xy>$, then $x \in \bar{T}^\perp$ or $ y \in \bar{T}^\perp$ .

$ $

\underline{Proof of Claim:}  Firstly we prove that $-1 \notin \bar{T}^\perp$: take any $x \notin \bar{T}^{\perp}$ (there is some $x$,  as $\bar{T}^{\perp} \subseteq G$ has index 2) then as $<x,-x> \equiv <1,-1>$ it follows from assumption in the claim that $-x \in \bar{T}^\perp$ so if $-1 \in \bar{T}^\perp$ then $x = -1.(-x) \in \bar{T}^\perp$, a contradiction. Now let us prove that $\bar{T}^\perp$ is saturated: take any $a, b \in G$ such that $b \in D_{G}(<1,a>)$, assume $a \in \bar{T}^{\perp}$ then we have to prove that $b \in \bar{T}^\perp$: as $(-a). a = -1 \notin \bar{T}^\perp$ then $-a \notin \bar{T}^{\perp}$ and as $<b,ba>\equiv<1,a>$ we have $<b,-a> \equiv <1,-ba>$ so, by the assumption in the claim, we get $b \in {\bar{T}}^\perp$. \\
Now we will prove that $\sigma^2 = id$ entails
$\forall x, y \in G$ if $<x,y> \equiv <1,xy>$ then $x \in \bar{T}^\perp$ or $ y \in \bar{T}^\perp$:\\
We have three cases:\\
$\ast$ $x$ (or $y$) is $1$;\\
$\ast$ $x=y\neq 1$; \\
$\ast$ $x,y \neq 1$ and $x\neq y$.\\
There is nothing to proof in the first case. Now consider $x \in G \setminus\{1\}$ such that $<x,x>\equiv <1,1>$: we must prove that $x\in {\bar{T}}^\perp$.
%$G \rightarrow G^{(k)}$ is a $SG$-morphism
Since $G$ is $k$-stable, we have $l(x)l(x)=l(1)l(1) = 0$ then, by Theorem \ref{standardteo}(i), there is a $S \subseteq {\mathcal G}$ a clopen normal subgroup such that ${\mathcal G}/S \cong {\mathbb Z_4}$ and $S \subseteq M_{x}$. Consider the quotient map $p_{S}: {\mathcal G} \twoheadrightarrow {\mathcal G}/S$ and write ${\mathcal G}/S =\{1/S,r/S,r^2/S,r^3/S\}$ then as $\sigma^2 = id$ we must have $\sigma/S  \in \{1/S,r^2/S\}$. If $\sigma/S = 1/S$ then $\sigma \in S \subseteq M_{x}$ i.e. $<\sigma/\Phi({\mathcal G}),x>=0$ so $x \in \{\Phi({\mathcal G}),\sigma\Phi({\mathcal G})\}^\perp$. If $\sigma/S = r^2/S$ then $\sigma.r^2 = \sigma.r^{-2} \in S \subseteq M_{x}$ i.e. $<(\sigma.r^2)/\Phi({\mathcal G}),x>=0$ but
$$<(\sigma.r^2)/\Phi({\mathcal G}), x> = <\sigma/\Phi({\mathcal G}),x> + <r/\Phi({\mathcal G}),x> + <r/\Phi({\mathcal G}),x> = <\sigma/\Phi({\mathcal G}),x>$$
then $<\sigma/\Phi({\mathcal G}),x> =0$ so $x \in \{\Phi({\mathcal G}),\sigma\Phi({\mathcal G})\}^\perp$. Now take $x,y \in G\setminus\{1\}$ with $x\neq y$ and  $<x,y> \equiv <1,xy>$ and supose $x \notin \bar{T}^\perp$ then we must prove that $y \in \bar{T}^\perp$. As $\{x,y\}$ is a two element l.i. set and $l(x)l(y) = l(1)l(xy) = 0$ we have, by Theorem \ref{standardteo}(ii), some $D\subseteq {\mathcal G}$ a clopen normal subgroup such that ${\mathcal G}/D \cong {\mathbb D}_4$, $D \subseteq M_{x}\cap M_{y}$ and $M_{x.y}/D \cong {\mathbb Z_4}$. Consider the quotient morphism $p_{D} : {\mathcal G} \twoheadrightarrow {\mathcal G}/D$ and write
$${\mathcal G}/D = \{1/D,r/D,r^2/D,r^3/D,s/D,sr/D,sr^2/D,sr^3/D\}$$
then, as $\sigma^2 =id$, we have $\sigma/D \notin  \{r/D,r^3/D\}$. Let us prove that $\sigma/D \notin \{1/D,r^2/D\}$: as $<1/\Phi({\mathcal G}),x> = <r^2/\Phi({\mathcal G}),x> = 0$ we have $\{id, r^2 \} \subseteq M_{x}$ and as we selected $x \notin \{\Phi,\sigma\Phi\}^\perp$ we have  $\sigma \notin M_{x}$ then if $\sigma/D = r^2/D$ then $\sigma.r^{-2} \in D \subseteq M_{x}$ so
$$\sigma = (\sigma.r^{-2}).r^2 \in D.M_{x} \subseteq M_{x}.M_{x} \subseteq M_{x},$$
a contradiction; similarly $\sigma/D \neq 1/D$. So we have $\sigma/D \in \{s/D,sr/D,sr^2/D,sr^3/D\}$. Now, as $M_{x.y}/D \cong {\mathbb Z_4}$ we have   $M_{x.y} = 1D \cup r^{2}D \cup rD \cup r^{3}D $ (see the proof of Theorem \ref{standardteo}(ii)) and
$$\{M_{x}, M_{y} \} = \{1D \cup r^{2}D \cup sD\cup sr^{2}D, 1D \cup r^{2}D \cup srD \cup sr^{3}D\}.$$
If $M_{x} = 1D \cup r^{2}D \cup sD\cup sr^{2}D$ then as $\sigma \notin M_{x}$ we have $\sigma/D \notin \{s/D,sr^2/D\}$ so $$\sigma/D \in \{sr/D,sr^3/D\}\subseteq M_{y}/D$$
then $\sigma \in M_{y}$ that is $y \in \{\Phi({\mathcal G}),\sigma\Phi({\mathcal G})\}^\perp$; similarly if $M_{x} = 1D \cup r^{2}D \cup srD \cup sr^{3}D$ then $y \in \{\Phi({\mathcal G}),\sigma\Phi({\mathcal G})\}^\perp$.

\item Let $\bar{T} = \{\Phi({\mathcal G}), \sigma\Phi({\mathcal G})\}$.

$ $

{\underline{Claim}}: To have that $\bar{T}^\perp \subseteq G$ {\em is  not} a  saturated subgroup it is enough to prove the following: $\exists x, c \in G \setminus \bar{T}^\perp$ such that $<x,c> \equiv <1,xc>$.

 $ $

\underline{Proof of Claim:} If $-1 \in \bar{T}^\perp$ then $\bar{T}^\perp \subsetneq G$ so $G = D_{G}<1,-1> \nsubseteq \bar{T}^{\perp}$ so  $\bar{T}^\perp$ is not a saturated subgroup. If $-1 \notin \bar{T}^\perp$ then take $x, c \in G\setminus \bar{T}^\perp$ such that $<x,c> \equiv <1,xc>$ so we have $<c,-xc> \equiv <1,-x>$, that is $c \in D_{G}<1,-x>$ and $-x \in \bar{T}^\perp$: if $-x \notin \bar{T}^\perp$ then as $\bar{T}^\perp \subseteq G$ has index 2 $-1.\bar{T}^\perp = -x.\bar{T}^\perp$ so $x = -1.-x \in \bar{T}^\perp$;  that is we established that there are $a (= -x) \in \bar{T}^{\perp}$ and $c \in D_{G} <1,a>$ with $c \notin \bar{T}^{\perp}$: this means that $\bar{T}^\perp$ is not saturated. \\
Now we will prove that $\sigma^2 \neq id$ entails $\exists x, c \in G \setminus\bar{T}^\perp$ such that $<x,c> \equiv <1,xc>$:\\
Take $B$ any well ordered base of $G$ and consider the composition
$$\mathcal W(B) \twoheadrightarrow\mathcal W(B)/\mathcal V(B){\overset{\cong}\rightarrow}{\mathcal G}$$
and, by Lemma \ref{lifting-le}, {\em choose} any lifting $\widetilde{\sigma} \in\mathcal W(B)$ of $\sigma \in{\mathcal G}$. Since $\sigma^{2} \neq id \in {\mathcal G}$ we get $\widetilde{\sigma}^2 \in \Phi(B) \setminus {\mathcal V}(B)$. Since
\begin{align*}
  {\mathcal V}(B) &= Q_{B}^{\perp} = [\{q_{ab}^B \in P_{2}(B) : l(a)l(b) = 0 \in k_{2}(G)\}]^\perp \\
  &= \bigcap \{ (q_{ab}^{B})^\perp \subseteq \Phi(B) : a,b \neq 1 , l(a)l(b) =0\}
\end{align*}
we get $a, b \neq 1$ with $l(a)l(b)=0 \in k_{2}(G)$ and $\widetilde{\sigma}^{2} \in \Phi(B) \setminus (q^{B}_{ab})^\perp$. There are two cases to consider: $a =b$ and $a \neq b$. In the first case $\{a\}$ is a singleton l.i. set and in the second $\{a,b\}$ is a two element l.i. set: consider any well ordered basis $B'= \{a'_{i} : i \in I\}$ {\em such that} $a = a_{i}$ for some $i \in I$ in the first case and, $a=a_{i} , b=a_{j}$ for some $i <j \in I$ in the second case. Now consider the isomorphism of change of basis  $\mu_{B'B} : \mathcal W(B) \overset\cong\rightarrow \mathcal W(B')$ (see Lemma \ref{fixsg2})) and  take $\sigma' = \mu_{B'B}(\widetilde{\sigma}) \in \mathcal W(B')$. Then $\sigma' \in \mathcal W(B')$ is a lifting of $\sigma \in {\mathcal G}$ with respect to the epimorphism $\mathcal W(B') \twoheadrightarrow \mathcal W(B')/{\mathcal V}(B')$ %\underset{\eta_{B'}\to{\overset{\cong}\to\rightarrow}
${\overset{\cong}\rightarrow}$
${\mathcal G}$  (Lemma \ref{lifting-le}, again) and $\sigma'^{2} \in \Phi(B') \setminus \mu_{B'B}[(q^{B}_{a,b})^{\perp}] = \Phi(B') \setminus (q^{B'}_{a,b})^{\perp}$ (see Remark \ref{change-rem}). As $l(a)l(b) =0$, by the proof of the Theorem \ref{standardteo}, we have $(q^{B}_{a_i,a_{i}})^\perp = S'_{i} \cap \Phi(B')$ in first case  and  $(q^{B}_{a_i,a_{j}})^\perp = D'_{ij} \cap \Phi(B')$ in the second case, then $\sigma'^2 \in \Phi(B') \setminus S'_{i}$ (resp. $\sigma'^2 \in \Phi(B') \setminus D'_{ij}$). A straightforward calculation with the group operation in $\mathcal W(B')$ gives $\sigma' \notin  M'_{i} \subseteq \mathcal W(B')$ in the first case and $\sigma' \notin M'_{i}\cup M'_{j} \subseteq \mathcal W(B')$ in the second case, then applying $\mathcal W(B') \twoheadrightarrow {\mathcal G}$ we have $\sigma \notin M_{a_{i}} \subseteq {\mathcal G}$ (resp. $\sigma \notin M_{a_{i}}\cup M_{a_{j}} \subseteq {\mathcal G}$). Now recall that for each $y \in G \setminus\{1\}$ and each $\theta \in {\mathcal G} \setminus \Phi({\mathcal G})$, $\theta \notin M_{y}$ iff $<\{\Phi({\mathcal G}),\theta.\Phi({\mathcal G})\},y> =1$ iff $y \notin \{\Phi({\mathcal G}),\theta.\Phi({\mathcal G})\}^\perp$. Then, since $G$ is a standard pre-special group we have, in both cases, $1 \neq a,b$ , $l(a)l(b) = 0 \in k_{2}(G)$, $a,b \notin \{\Phi({\mathcal G}),\sigma.\Phi({\mathcal G})\}^\perp$ and, in particular, since $G$ is a $k$-stable pre-special group, $1 \in D_G(<a,b>)$ or, equivalently, $<a,b> \equiv <1,ab>$.

\item Recall that for {\em special groups} the maximal saturated subgroups are precisely the index 2 saturated subgroups so the result follows from items (i) and (ii).

\item Let $\Delta \subseteq G$ be a saturated subgroup: as $G$ is a {\em special group} $\Delta = \bigcap \{ \Sigma \subseteq G : \Sigma \in X_{\Delta}\}$ where
$$X_{\Delta} = \{ \Sigma \subseteq G : \Sigma\mbox{ is a maximal saturated subgroup and }\Delta \subseteq \Sigma\}$$ then, by Proposition \ref{pairing-prop},
$$\Delta^{\perp} = \bigvee \{ \Sigma^{\perp} \subseteq {\mathcal G}/{\Phi(\mathcal G)} : \Sigma \in X_{\Delta}\};$$
by item (iii) $\Sigma^{\perp} = \{ \Phi({\mathcal G}), \sigma\Phi({\mathcal G})\}$ for some $\sigma \in {\mathcal G} \setminus \Phi({\mathcal G})$, $\sigma^2 = id$; take $T_{\Sigma} = \Phi({\mathcal G}) \cup \sigma\Phi({\mathcal G})$ then $T_{\Sigma} \subseteq {\mathcal G}$ is a closed (normal) subgroup such that $\Phi({\mathcal G}) \subseteq T_{\Sigma}$ and all elements of $T_{\Sigma} \setminus \{1\}$ are involutions so
$$\bigvee \{ T_{\Sigma} : \Sigma \in X_{\Delta}\} = closure([ \{ T_{\Sigma} : \Sigma \in X_{\Delta}\} ])$$
is a closed subgroup of ${\mathcal G}$ that contains $\Phi({\mathcal G})$ and is (topologically) generated by involutions. Now note that $T_{\Sigma}/\Phi({\mathcal G}) = \{\Phi({\mathcal G}), \sigma\Phi({\mathcal G})\} = \Sigma^{\perp}$ and then
\begin{align*}
  \Delta^\perp &= \bigvee \{ \Sigma^{\perp} \subseteq {\mathcal G}/{\Phi(\mathcal G)} : \Sigma \in X_{\Delta}\} = \bigvee \{ T_\Sigma/\Phi({\mathcal G}) \subseteq {\mathcal G}/{\Phi(\mathcal G)} : \Sigma \in X_{\Delta}\}\\
  &= (\bigvee \{ T_\Sigma \subseteq {\mathcal G} : \Sigma \in X_{\Delta}\})/\Phi({\mathcal G})
\end{align*}
as  $q : {\mathcal G} \twoheadrightarrow {\mathcal G}/{\Phi(\mathcal G)}$ gives an isomorphism of complete lattices between the set of closed subgroups of ${\mathcal G}$ which contains $\Phi({\mathcal G})$  and the set of closed subgroups of ${\mathcal G}/{\Phi(\mathcal G)}$.

Now take $ T \subseteq {\mathcal G}$ a closed subgroup of ${\mathcal G}$  such that $\Phi({\mathcal G}) \subseteq T$ and $T$ is topologically generated by involutions. Write $I_{T} = \{ \sigma \in T : \sigma \in {\mathcal G} \setminus \Phi({\mathcal G}) , \sigma^2 =id\}$ then, for each $\sigma \in I_{T}$, $\sigma \Phi({\mathcal G}) \subseteq T$ , $T_{\sigma} = \Phi({\mathcal G})\cup \sigma\Phi({\mathcal G})$ is a closed (normal) subgroup of ${\mathcal G}$ and
$$T =  closure([ \bigcup \{ T_{\sigma} : \sigma \in I_{T}\}]) = \bigvee \{T_{\sigma} : \sigma \in I_{T}\},$$
also $T_{\sigma}/\Phi({\mathcal G}) = \{\Phi({\mathcal G}),\sigma\Phi({\mathcal G})\}$ and, by item (iv), $(T_\sigma/\Phi({\mathcal G}))^\perp \subseteq G$ is a maximal saturated subgroup of $G$. Then we have
\begin{align*}
    (T/\Phi({\mathcal G}))^\perp &= ( (\bigvee \{T_{\sigma} : \sigma \in I_{T})/\Phi({\mathcal G}) )^\perp = (\bigvee \{T_{\sigma}/\Phi({\mathcal G}) : \sigma \in I_{T}\})^\perp \\
    &= \bigcap \{(T_{\sigma}/\Phi({\mathcal G}))^\perp : \sigma \in I_{T}\}
\end{align*}
which is a saturated subgroup of $G$.
\end{enumerate}
\end{proof}

\begin{teo} \label{reduced-teo}
 Let $G$ be a standard special group. Are equivalent
 \begin{enumerate}[i -]
     \item $G$ is "Pythagorean" or "almost reduced"\footnote{I.e. for all $a \in G$, $<a,a> \equiv <1, 1>$ iff $a =1$, but eventually $-1 = 1$.}.
     \item $\mbox{Gal}(G)$ is generated by involutions.
 \end{enumerate}
\end{teo}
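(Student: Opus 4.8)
The plan is to deduce the equivalence directly from the lattice anti-isomorphism of Proposition \ref{encoding-pr}(iv), by recognizing each of the two conditions as a statement about a distinguished element of the relevant lattice. On the form-theoretic side, condition (i) will be rephrased as the assertion that the trivial subgroup $\{1\} \subseteq G$ is saturated; on the Galois side, condition (ii) will be rephrased as the assertion that $\mathcal{G} = \mbox{Gal}(G)$ itself lies in the family of closed, involution-generated subgroups containing $\Phi(\mathcal{G})$. Since under the $(\ )^\perp$-correspondence $\mathcal{G}$ and $\{1\}$ are paired (being respectively the top and bottom of the anti-isomorphism of Proposition \ref{pairing-prop}(i)), the equivalence will follow at once.

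First I would translate (i). Comparing discriminants, an isometry $\langle 1,1\rangle \equiv \langle a,c\rangle$ forces $ac = 1$, hence $c = a$, so $\langle a,a\rangle \equiv \langle 1,1\rangle$ holds precisely when $a \in D_G(\langle 1,1\rangle)$. Thus the defining condition of ``Pythagorean/almost reduced'' from the footnote, namely that $\langle a,a\rangle \equiv \langle 1,1\rangle$ only for $a=1$, is literally $D_G(\langle 1,1\rangle) = \{1\}$; and this is exactly the statement that $\{1\}$ is a \emph{saturated} subgroup of $G$, since saturation of $\{1\}$ unwinds to $D_G(\langle 1,1\rangle) \subseteq \{1\}$.

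Next I would translate (ii). By non-degeneracy of the pairing $\hat\phi_G$, the top subgroup $\mathcal{G}$ satisfies $\mathcal{G}^\perp = \{1\}$. To line up ``$\mathcal{G}$ is generated by involutions'' with the precise family appearing in Proposition \ref{encoding-pr}(iv) — closed subgroups $T \supseteq \Phi(\mathcal{G})$ that are topologically generated by the involutions lying outside $\Phi(\mathcal{G})$ — I would first observe that $\Phi(\mathcal{G})$ is itself topologically generated by involutions: by Proposition \ref{fixms5} the Frattini subgroup is generated by the $t_i$ and $t_{ij}$, and by Lemma \ref{fixms1}(i),(v) these square to $1$, a property preserved on passing to the quotient $\mathcal{G} = \mathcal{W}(B)/\mathcal{V}(B)$ (whose Frattini subgroup is $\Phi(B)/\mathcal{V}(B)$ by Proposition \ref{fixhugo2}(i)). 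Granting this, the closed subgroup generated by all involutions coincides with the closed subgroup generated by $\Phi(\mathcal{G})$ together with the involutions outside $\Phi(\mathcal{G})$, so (ii) holds iff $\mathcal{G}$ belongs to the family of Proposition \ref{encoding-pr}(iv).

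Finally I would invoke Proposition \ref{encoding-pr}(iv): the $(\ )^\perp$-map restricts to a bijection between saturated subgroups of $G$ and the closed involution-generated subgroups containing $\Phi(\mathcal{G})$. Combined with the fact that $(\ )^\perp$ is already a bijection on the full lattices and that $\mathcal{G}^\perp = \{1\}$, this yields ``$\mathcal{G}$ is involution-generated iff $\{1\}$ is saturated'', which by the two translations above is exactly (ii) $\Leftrightarrow$ (i). The step I expect to be most delicate is the reconciliation in the previous paragraph: the family in Proposition \ref{encoding-pr}(iv) is phrased relative to $\Phi(\mathcal{G})$, so one must genuinely verify that $\Phi(\mathcal{G})$ is generated by involutions in order to identify ``$\mathcal{G}$ generated by involutions'' with membership in that family; without this observation one cannot pass between the two formulations and the equivalence could break down.
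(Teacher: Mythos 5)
Your proof is correct, and it coincides with the paper's argument in one direction while taking a genuinely different route in the other. The implication (i) $\Rightarrow$ (ii) is exactly the paper's: unwind the footnote condition to ``$\{1\}$ is a saturated subgroup of $G$'' (your discriminant computation, $\langle 1,1\rangle \equiv \langle a,c\rangle$ forces $c=a$, is the right justification, which the paper leaves implicit) and apply Proposition \ref{encoding-pr}(iv). For (ii) $\Rightarrow$ (i), however, the paper does \emph{not} reuse the duality: it observes that an involution-generated pro-2-group admits no continuous epimorphism onto $\mathbb{Z}_4$, so by \emph{standardness} (the reverse implication of Theorem \ref{standardteo}(i)) there is no $a\neq 1$ with $l(a)l(a)=0\in k_2(G)$, and $k$-stability then yields the Pythagorean property. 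You instead apply the backward half of the anti-isomorphism of Proposition \ref{encoding-pr}(iv) to $T=\mathcal{G}$ itself, concluding that $(\mathcal{G}/\Phi(\mathcal{G}))^{\perp}=\{1\}$ is saturated by non-degeneracy of the pairing. Both arguments are valid. Yours is more uniform, deriving both implications from the single lattice correspondence; the paper's is more transparent about where the ``standard'' hypothesis enters, exhibiting the reverse implication of Theorem \ref{standardteo}(i) as the engine of this direction (in your version standardness is only invoked through the hypotheses of the cited proposition, and indeed the backward half of Proposition \ref{encoding-pr}(iv) rests on item (i) of that proposition, which needs only $k$-stability).

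One small remark: the step you flag as most delicate is not actually needed under the literal reading of Proposition \ref{encoding-pr}(iv), whose family consists of closed subgroups \emph{containing} $\Phi(\mathcal{G})$ that are topologically generated by involutions; since $\mathcal{G}$ trivially contains $\Phi(\mathcal{G})$, condition (ii) is verbatim membership in that family. Your observation that $\Phi(\mathcal{G})$ is itself generated by involutions is nonetheless correct --- in fact stronger holds: every element of $\Phi(\mathcal{G})$ is an involution or the identity, since elements of $\Phi(B)$ have the form $(t_i^{\alpha_i})(t_{ij}^{\beta_{ij}})(1_i)$ and square to $1$ by Lemma \ref{fixms1}(v) --- and it does correctly reconcile the statement with the proof of Proposition \ref{encoding-pr}(iv), which works with the involutions lying outside the Frattini subgroup. (You also silently, and correctly, fixed the typo in Proposition \ref{fixms5}, where the generating set of $\Phi(I)$ should read $\{t_i,t_{ij}\}$ rather than $\{x_i,t_{ij}\}$.)
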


\begin{proof}
Note that the unique non-formally real Pythagorean special group  (equivalently, $-1 \neq 1$) is $G=\{1\}$ and thus $Gal(G) = \{1\}$.

$(i) \Rightarrow (ii)$: The hypothesis means that $\{1\}\subseteq G$ is a saturated subset of $G$, then by item (iv) of the previous Proposition, $Gal(G)$ is generated by involutions.

$(ii) \Rightarrow (i)$: It follows the hypothesis that there is no continuous epimorphism ${\mathcal G}/\Phi({\mathcal G}) \twoheadrightarrow {\mathbb Z_4}$. Since $G$ is standard SG, for all $a\in G \setminus \{1\}$, $l(a)l(a) \neq 0 = l(1)l(1)$ and, since $G$ is in particular $k$-stable, then for all $a \in G \setminus\{1\}$, is not the case $<a,a>\equiv <1,1>$, that is: $G$ is Pythagorean.
\end{proof}

\begin{rem}
    Another Galois theoretic characterization of the  Pythagoreaness of $G$ is
    $$\Phi(\mbox{Gal}(G))=[\mbox{Gal}(G),\mbox{Gal}(G)].$$
\end{rem}

%formally realthis  follows directly from item (iv) of the  Theorem above.  If $-1 =1$, since $G$  is a Pythagorean SG, then

% thus it is trivilly generated : Note that $\{1\}$ is a saturated subgroup iff $\{1\}$ corresponds with a closed subgroup generated by involutions: and as ${\mathcal G}/\Phi({\mathcal G}) = \{1\}^{\perp}$ then ${\mathcal G}$ corresponds with $\{1\}$  corresponds  is generated by involutions $(ii) \Rightarrow (iii)$

%${\mathcal G}$ is a ${\mathcal C}$-group then $\Phi({\mathcal G}) \subseteq Center({\mathcal G})$ (as $\Phi({\mathcal G}) = {\mathcal G}^2 and [{\mathcal G}^2,{\mathcal G}]=1$)

%Como ? Todo conjunto gerador contem um subconjunto que converge a 1: Ribes pag 44, VER LIVRO KOCH, todo quadrado cescente e produto de quadrado de imnvolicions and commutatdos ; ...

\begin{teo}
Let $G$ be a standard special group. Consider the following
\begin{enumerate}[i-]
\item  $G$ is not formally real.
\item Every involution is is $\Phi(Gal(G))$.
\item  Every involution in ${Gal(G)}$ is central.
\end{enumerate}
Then $(i) \Rightarrow (ii) \Rightarrow (iii)$  and if $card({Gal(G)}) > 2 $, then all are equivalent.
\end{teo}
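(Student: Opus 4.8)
The plan is to prove the two unconditional implications directly and then, under the cardinality hypothesis, to close the cycle by extracting from any ordering a non-central involution living in a dihedral quotient. Throughout write $\mathcal{G}=\mathrm{Gal}(G)$; the backbone is Proposition \ref{encoding-pr}(iii), which for a standard special group identifies the orderings of $G$ with the conjugacy classes of involutions lying in $\mathcal{G}\setminus\Phi(\mathcal{G})$.

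For $(i)\Rightarrow(ii)$ I would argue contrapositively: an involution $\sigma\in\mathcal{G}\setminus\Phi(\mathcal{G})$ produces, by Proposition \ref{encoding-pr}(i) together with the computation $-1\notin\{\Phi(\mathcal{G}),\sigma\Phi(\mathcal{G})\}^{\perp}$ carried out in its proof, a maximal saturated subgroup of $G$ not containing $-1$, i.e. an ordering, so $G$ is formally real. Hence if $G$ is not formally real then every involution lies in $\Phi(\mathcal{G})$. For $(ii)\Rightarrow(iii)$ I would use only that $\Phi(\mathcal{G})$ is central: it is the image of $\Phi(B)=\mathcal{W}(B)^2$, which the text records as the centre of $\mathcal{W}(B)$, under $\mathcal{W}(B)\twoheadrightarrow\mathcal{G}$ (and $\Phi(\mathcal{G})\cong\Phi(B)/\mathcal{V}(B)$ by Proposition \ref{fixhugo2}(i)); so any involution contained in $\Phi(\mathcal{G})$ is automatically central.

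The substantial implication is $(iii)\Rightarrow(i)$ under $\mathrm{card}(\mathcal{G})>2$, which I would prove contrapositively by exhibiting a non-central involution whenever $G$ is formally real. Fix an ordering $P$; by Proposition \ref{encoding-pr}(iii) it equals $\{\Phi(\mathcal{G}),\sigma\Phi(\mathcal{G})\}^{\perp}$ for some involution $\sigma\in\mathcal{G}\setminus\Phi(\mathcal{G})$, and by the pairing Remark following Proposition \ref{pairing-prop} one has, for $x\ne 1$, $\sigma\in M_x\iff x\in P$, so in particular $\sigma\notin M_{-1}$. First I would reduce to $\dim_{\mathbb{F}_2}G\ge 2$: if $\dim G=1$ then $\mathcal{G}/\Phi(\mathcal{G})\cong\mathrm{Hom}(G,\mathbb{Z}_2)\cong\mathbb{Z}_2$ forces $\mathcal{G}$ to be procyclic of exponent dividing $4$, hence $\mathbb{Z}_2$ or $\mathbb{Z}_4$; since the unique involution of $\mathbb{Z}_4$ lies in its Frattini subgroup, the existence of $\sigma\notin\Phi(\mathcal{G})$ forces $\mathcal{G}\cong\mathbb{Z}_2$, contradicting $\mathrm{card}(\mathcal{G})>2$. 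With $\dim G\ge 2$ the cone $P$ has index $2$ and is nontrivial, so I may choose $a\in P\setminus\{1\}$; then $-a=(-1)a\notin P$, the set $\{a,-a\}$ is $\mathbb{Z}_2$-linearly independent, and the Steinberg relation gives $l(a)l(-a)=0$ in $k_2(G)$.

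Now I would invoke standardness. By Theorem \ref{standardteo}(ii) there is a normal clopen $D\subseteq\mathcal{G}$ with $\mathcal{G}/D\cong\mathbb{D}_4$, $D\subseteq M_a\cap M_{-a}$ and $M_{a(-a)}/D=M_{-1}/D\cong\mathbb{Z}_4$; by Lemma \ref{Zd-le}(ii) the three maximal subgroups over $M_a\cap M_{-a}$ are $M_a,M_{-a},M_{-1}$, the last being the cyclic one. Pushing $\sigma$ through $\mathcal{G}\twoheadrightarrow\mathbb{D}_4$: from $\sigma\in M_a$, $\sigma\notin M_{-a}$ and $\sigma\notin M_{-1}$ its image $\bar\sigma$ lies in $(M_a/D)\setminus(M_{-a}/D)$ and outside the cyclic maximal subgroup, hence $\bar\sigma$ is one of the two reflections of $\mathbb{D}_4$, which is a non-central involution. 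Choosing $h\in\mathcal{G}$ whose image does not commute with $\bar\sigma$ yields $[\sigma,h]\notin D$, so $\sigma$ is not central, contradicting $(iii)$; this gives $(iii)\Rightarrow(i)$ and hence the full equivalence when $\mathrm{card}(\mathcal{G})>2$. I expect the main obstacle to be precisely this last step: correctly locating $\bar\sigma$ as a reflection rather than as the central $r^2$, for which the membership pattern $\sigma\in M_a$, $\sigma\notin M_{-a}$, $\sigma\notin M_{-1}$ must be read off from the ordering and matched against the three maximal subgroups of $\mathbb{D}_4$, together with the slightly delicate cardinality reduction to $\dim G\ge 2$.
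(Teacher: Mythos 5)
Your proof is correct, and for the first two implications it is essentially the paper's own argument: $(i)\Rightarrow(ii)$ is the correspondence of Proposition \ref{encoding-pr}(iii) between orderings and involutions outside the Frattini subgroup, and $(ii)\Rightarrow(iii)$ rests in both cases on the centrality of $\Phi(\mbox{Gal}(G))$ --- the paper derives it from $[\sigma^2,\tau]=1$ in a $\mathcal C$-group together with $\Phi(\mbox{Gal}(G))=\mbox{Gal}(G)^2$, while you obtain it as the image of the centre $\Phi(B)$ of $\mathcal W(B)$ under $\mathcal W(B)\twoheadrightarrow\mbox{Gal}(G)$; these amount to the same fact. Where you genuinely diverge is $(iii)\Rightarrow(i)$: the paper's proof of this step consists of the bare assertion that, for $G$ formally real and $\mbox{card}(\mbox{Gal}(G))>2$, any involution outside $\Phi(\mbox{Gal}(G))$ fails to be central, with no justification given, whereas you actually prove that assertion. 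Your two steps --- first the reduction to $\dim_{\mathbb F_2}G\geq 2$ via the procyclic $\mathcal C$-group analysis (which is precisely where the hypothesis $\mbox{card}(\mbox{Gal}(G))>2$ is consumed, ruling out $\mbox{Gal}(G)\cong\mathbb Z_2$), and then the use of $l(a)l(-a)=0$ for $a\in P\setminus\{1\}$ together with Theorem \ref{standardteo}(ii) and Lemma \ref{Zd-le}(ii) to produce a quotient $\mbox{Gal}(G)/D\cong\mathbb{D}_4$ in which $\sigma$ must land on a reflection (since $\sigma\notin M_{-1}$ by $-1\notin P$, while $M_{-1}/D$ is the unique cyclic maximal subgroup), hence on a non-central element --- are both sound and use only results already established in the paper; indeed the memberships $\sigma\in M_a$, $\sigma\notin M_{-a}$ you record are not even needed, as $\sigma\notin M_{-1}$ alone forces the image off the cyclic maximal subgroup. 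In short, the paper's proof is shorter but elides the only nontrivial point of the theorem; your argument supplies a complete and correct proof of it, at the cost of invoking the dihedral-quotient machinery of standardness explicitly.
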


\begin{proof}
 By Proposition \ref{encoding-pr}(iii) $G$ is formally real iff there is an involution $\sigma \in {Gal(G)} \setminus \Phi({Gal(G)})$, so we get $(i) \Rightarrow (ii)$. As ${Gal(G)}$ is a ${\mathcal C}$-group we have $[\sigma^2,\tau] =1$ and, since  $\Phi({Gal(G)}) = {Gal(G)}^2$ (pro-2-group), then $\Phi({Gal(G)}) \subseteq center({Gal(G)})$ so $(ii) \Rightarrow (iii)$. Now suppose  $card({Gal(G)}) >2$: to prove $(iii) \Rightarrow (i)$ let us assume $G$ formally real and note that any involution $\notin \Phi({Gal(G)})$ {\em is not in} the center of ${Gal(G)}$.
\end{proof}

\begin{comment}
\begin{prop}
Let $G$ be a k-stable special group. Write $\mathcal G=\mbox{Gal}(G)$.
\begin{enumerate}[i -]
    \item If $\sigma\in\mathcal G\setminus\Phi(\mathcal G)$ with $\sigma^2=id$ then $\{id,\sigma/\Phi\}^{\perp} \subseteq G$ is a maximal saturated subgroup.

    \item If $G$ is a {\em pre-special} group and $\sigma \in \mathcal G -\Phi(G)$, then $\sigma^{2} \neq id$ (so $\sigma^{4} =id$)
and $\{id,\Sigma/Phi\}^{\perp} \subseteq G$ is {\em not} a saturated subgroup.

    \item If $G$ $k$-stable {\em pre-special} group then the classes of involutions $\sigma \in \mathcal G -\Phi(\mathcal G)$ corresponds to orderings (maximal saturated subgroups) via $H_{\sigma} = \{id, \sigma\}, \sigma^{2} =id$, and both are closed sets.

    \item If $G$ is $k$-stable {\em pre-special} group then we have and anti-isomorphism of complete lattices between the  set of $\{ \Delta \subseteq G : \Delta$ is a saturated subgroups of $G\}$ and the $\{T \subseteq \mathcal G : T $ is a closed subgroup of $\mathcal G$ (topologically) generated by involutions such that $\Phi(\mathcal G \subseteq T\}$.
\end{enumerate}
\end{prop}
\end{comment}

\section{The functorial behavior of $Gal$ and SG-cohomology}

We have developed the theme "basis change induced {\em isomorphisms}" in the general context of {\em pre-Special Groups}, as the fundamental step to get a {\em single} Galois group of a pre-special group. In this final section, we analyze some functorial behavior of the $Gal$ construction of $SG$-theory and provide the first steps to a Galoisian cohomology for the $SG$-theory, in an attempt to complete the "Milnor scenario" of Igr's (\cite{milnor1970algebraick}) in abstract theories of quadratic forms.

\subsection{From PSG to Galois groups}

\begin{rem} \label{construPSGGAL-ct} {\bf Construction:}

Let $f: G \rightarrow G'$ a $pSG$-homomorphism of pre-special groups.

 Let $B_1'=\{a'_{k} : k \in I_1'\}$ be an well ordered basis of $f[G]$ and extends it to $B'=\{a'_{k} : k \in I'\}$, an well ordered basis of $G'$. Now  {\em select} $a_{k} \in f^{-1}[\{a'_{k}\}] , k \in I'_1$.  Then the set $B_1 = \{a_{i} : i \in I'_1\} \subseteq G$ is linearly independent, now complete this to basis of $G$, $B= \{a_i : i \in I\}$: we just need to glue  a well ordered basis of $ker(f)$.

We have some induced functions:

(0) ${f}^{0}_{B,B'} : B' \rightarrow {\mathcal W}(B)$  is  such that ${f}^{0}_{B,B'}(a'_{k}) = a_k$, if $a'_k \in B'_1$ and ${f}^{0}_{B,B'}(a'_{k}) = 1$, if $a'_k \in B' \setminus B'_1$.

(1) $f^{(1)}_{B,B'} : B \rightarrow P_{2}(B')$ is  such that ${f}^{1}_{B,B'}(a_{k}) = a'_k$, if $a_k \in B_1$ and ${f}^{1}_{B,B'}(a_{k}) = 0$, if $a_k \in B \setminus B_1$.

(2) $f^{(2)}_{B,B'} : \{(a_i, a_j) \in B \times B : i, j \in I, i \leq j\} \rightarrow P_{2}(B')$  is  such that ${f}^{2}_{B,B'}(a_i, a_j) = a'_i.a'_j$, if $(a_i, a_j) \in  B_1 \times B_1$ and  ${f}^{2}_{B,B'}(a_i, a_j) =   0$, if  $(a_i, a_j) \in  B \times B \setminus  B_1 \times B_1$.
\end{rem}

%Then we get an induced continuous homomorphism $f^{\star}_{B,B'} : {\mathcal W}(B')/{\mathcal V}(B') \rightarrow {\mathcal W}(B)/{\mathcal V}(B)$.

%; besides the family of those homomorphisms are compatible (=functorial) in some (weak) sense.

%We break the construction and proofs in two Lemmas:

%There is a natural way to

Keeping the notation above, we have

\begin{prop} \label{GaltoSG-pr}
The function ${f}^{0}_{B,B'} : B' \rightarrow {\mathcal W}(B)$ induces a continuous homomorphism $\bar{f}_{B,B'} : {\mathcal W}(B')/{\mathcal V}(B') \rightarrow {\mathcal W}(B)/{\mathcal V}(B)$.
\end{prop}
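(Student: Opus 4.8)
The plan is to produce $\bar f_{B,B'}$ in two stages: first lift $f^{0}_{B,B'}$ to a continuous homomorphism $\tilde f_{B,B'}\colon\mathcal W(B')\rightarrow\mathcal W(B)$ via the universal property of $\mathcal W(-)$, and then verify that this lift respects the distinguished normal subgroups, i.e. that $\tilde f_{B,B'}[\mathcal V(B')]\subseteq\mathcal V(B)$. The latter inclusion is exactly what is needed for $\tilde f_{B,B'}$ to descend to the quotients and define $\bar f_{B,B'}$; its continuity is then automatic, being the factorization of a continuous map through quotient maps.

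For the first stage I would identify $B'$ with the generating set $\{x'_k:k\in I'\}\subseteq\mathcal W(B')$ and observe that $f^{0}_{B,B'}[B']=B_1\cup\{1\}$, where $B_1=\{a_i:i\in I'_1\}$ sits inside the generating set of $\mathcal W(B)$. Since the generators of $\mathcal W(B)$ converge to $1$ and $B_1\subseteq B$, the set $f^{0}_{B,B'}[B']$ also converges to $1$ (adjoining the neutral element is harmless). Theorem \ref{fixms4} then yields a unique continuous homomorphism $\tilde f_{B,B'}\colon\mathcal W(B')\rightarrow\mathcal W(B)$ extending $f^{0}_{B,B'}$. Using Lemma \ref{fixms1}(ii),(iii) one computes it on the canonical elements: $\tilde f_{B,B'}(t'_k)=t_k$ and $\tilde f_{B,B'}(t'_{kl})=t_{kl}$ when the relevant indices lie in $I'_1$, and the value is $1$ otherwise. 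In particular $\tilde f_{B,B'}[\Phi(B')]\subseteq\Phi(B)$, directly since Frattini subgroups coincide with the squares (Proposition \ref{fixms5}) and $\tilde f_{B,B'}$ is a homomorphism.

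The second stage is the core, and I would run it through the perfect pairing of Proposition \ref{fixms6}, exactly as in Remark \ref{change-rem}. Restricting $\tilde f_{B,B'}$ to $\Phi(B')\rightarrow\Phi(B)$ and dualizing produces a $\mathbb Z_2$-linear map $\phi\colon P_2(B)\rightarrow P_2(B')$ uniquely characterized by $\langle\tilde f_{B,B'}(\sigma),q\rangle_B=\langle\sigma,\phi(q)\rangle_{B'}$ for all $\sigma\in\Phi(B')$ and $q\in P_2(B)$. The explicit formula above shows that $\phi$ is precisely the linearization of $f^{(2)}_{B,B'}$: it deletes every monomial involving a variable indexed outside $I'_1$ and fixes the rest. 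The key identity to establish is
$$\phi(q^{B}_{a,b})=q^{B'}_{f(a),f(b)}\qquad(a,b\in G),$$
which follows by comparing the coordinate formula for $q^{B}_{a,b}$ in Proposition \ref{fixsg1} with the fact that $f(a)=\prod_{i\in I'_1}(a'_i)^{\alpha_i}$ whenever $a=\prod_{i\in I}a_i^{\alpha_i}$ (since $f(a_i)=a'_i$ for $i\in I'_1$ and $f(a_i)=1$ on the $\ker f$-part of $B$).

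Granting this identity I would finish as follows. By Proposition \ref{fixsg1}(iii), $Q(B)$ is generated by the $q^{B}_{a,b}$ with $l(a)l(b)=0\in k_2(G)$; applying the functoriality of mod-$2$ $K$-theory for pre-special groups, so that $k_2(f)(l(a)l(b))=l(f(a))l(f(b))$, together with Proposition \ref{fixsg1}(i), we obtain $\pi_{B'}(\phi(q^{B}_{a,b}))=l(f(a))l(f(b))=0$, whence $\phi[Q(B)]\subseteq Q(B')$. Then for $\sigma\in\mathcal V(B')=Q(B')^{\perp}$ and $q\in Q(B)$ the adjunction gives $\langle\tilde f_{B,B'}(\sigma),q\rangle_B=\langle\sigma,\phi(q)\rangle_{B'}=0$, so $\tilde f_{B,B'}(\sigma)\in Q(B)^{\perp}=\mathcal V(B)$, which is the desired inclusion. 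The main obstacle I expect is the bookkeeping around the dualization $\phi$ and the base-change identity $\phi(q^{B}_{a,b})=q^{B'}_{f(a),f(b)}$ (morally Remark \ref{change-rem} adapted to a possibly non-invertible $f$), together with invoking exactly the right functoriality of the $K$-theory of pre-special groups to transport the relation $l(a)l(b)=0$ from $k_2(G)$ to $k_2(G')$.
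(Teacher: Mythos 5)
Your proposal is correct and follows essentially the same route as the paper's proof: lift $f^{0}_{B,B'}$ to $\mathcal W(B')\to\mathcal W(B)$ via the universal property (Theorem \ref{fixms4}), transport the relation $l(a)l(b)=0$ through the Igr-morphism $k_*(f)$ to get $\hat f^{(2)}_{B,B'}(q^B_{a,b})=q^{B'}_{fa,fb}$ and hence $Q(B)\mapsto Q(B')$, then use the adjunction $\langle \hat f^{(0)}_{B,B'}(\sigma'),q\rangle_B=\langle\sigma',\hat f^{(2)}_{B,B'}(q)\rangle_{B'}$ to conclude $\mathcal V(B')$ lands in $\mathcal V(B)=Q(B)^\perp$. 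The only cosmetic difference is that you obtain the map $P_2(B)\to P_2(B')$ by dualizing the lift and then identify it with the linearization of $f^{(2)}_{B,B'}$, whereas the paper defines that linearization directly and asserts the same pairing identity.
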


\begin{proof}
It follows from Proposition \ref{fixms3} and the definition of ${f}^{0}_{B,B'} : B' \rightarrow {\mathcal W}(B)$, that its image converges to $1 \in {\mathcal W}(B)$. Thus, by the universal property ${\mathcal W}(B')$ (Theorem \ref{fixms4}),  ${f}^{0}_{B,B'}$ extends uniquely to a continuous homomorphism  of pro-2-groups  $\hat{f}^0_{B,B'} : {\mathcal W}(B') \rightarrow {\mathcal W}(B)$.

Now, $f : G \to G'$ also induces a $\mathbb{Z}_2$-module homomorphism $\hat{f}^{(2)}_{B,B'} : P_{2}(B) \rightarrow P_{2}(B')$: this is just the unique $\mathbb Z_2$-linear extension of the induced map  $f^{(2)}_{B,B'} : \{(a_i, a_j) \in B \times B : i, j \in I, i \leq j\} \rightarrow P_{2}(B')$. Moreover, since $k_*(f) : k_*(G) \to k_*(G')$ is an Igr-morphism, then for each $a, b \in G$ such that  $l(a)l(b) = 0 \in k_2(G)$, we have $l(fa).l(fb) = 0 \in k_2(G')$. From this we obtain that $\hat{f}^{(2)}_{B,B'}(q^B_{a,b}) = q^{B'}_{fa,fb}$ (see Proposition \ref{fixsg1}) and, therefore, $f^{(2)}_{B,B'}[Q(B)] \subseteq Q(B')$.

For each $a,b \in G$ and $\sigma' \in {\mathcal W}(B')$, we have
$$<\hat{f}^{(0)}_{B,B'}(\sigma') , q^{B}_{a,b}> = <\sigma', \hat{f}^{(2)}_{B,B'}(q^{B}_{a,b}> = <\sigma',  q^{B'}_{fa,fb}> .$$

%, gerados, produto, densidade

Thus $\hat{f}^{(0)}_{B,B'}[{\mathcal V}(B')] \subseteq {\mathcal V}(B)$ and then, $\hat{f}^0_{B,B'} : {\mathcal W}(B') \rightarrow {\mathcal W}(B)$ induces a unique continuous homomorphism of pro-2-groups $\bar{f}_{B,B'} : {\mathcal W}(B')/{\mathcal V}(B') \rightarrow {\mathcal W}(B)/{\mathcal V}(B)$.

%(v) ``carater funtorial''weak  $G \rightarrow G' \rightarrow G''$ $B'' \mapsto B'_{f'} \cup K'_{f'} \mapsto B_{f}\cup K(f) , Kf'\circ f \subseteq K_{f}$
\end{proof}

%\begin{rem} When $f : G \rightarrow G'$ is an {\em injective} $pSG$-morphism we can identify $\hat{f}^0_{B,B'} : {\mathcal W}(B') \rightarrow {\mathcal W}(B)$ and $\bar{f}_{B,B'} : {\mathcal W}(B')/{\mathcal V}(B') \rightarrow {\mathcal W}(B)/{\mathcal V}(B)$ with   {\em projections}, i.e., they are surjective morphisms of pro-2-groups.
%\end{rem}

%\begin{rem}
%Remark ' dependente da base $G \sigma {\mathbb Z_2} \hookrightarrow G'$ ``ker(f) nao contido em ${\mathcal V}(B)$''
%\end{rem}

%\begin{prop} $F : G \rightarrowtail G'$ injective (ESTABILIZAR monoMORPHISMOS POR FATORACOES)
%(1) $f*$ and $f\star$ are epimorphisms also $B'i \twoheadrightarrow Bi$ is compatible with basis change (ESTABILIZAR EPIMORPHISMOS POR FATORACOES)
%(2)' carater funtorial para morphismos injectives  condicao necessaria e suficiente para obter flecha entre limites indutivos (morp de conexao sao isos)
%\end{prop}

\begin{prop}
Let $f : G \rightarrow G'$ be an injective $pSG$-morphism.
\begin{enumerate}[i -]
    \item Then $\hat{f}^0_{B,B'} : {\mathcal W}(B') \rightarrow {\mathcal W}(B)$ and $\bar{f}_{B,B'} : {\mathcal W}(B')/{\mathcal V}(B') \rightarrow {\mathcal W}(B)/{\mathcal V}(B)$  are surjective morphisms of pro-2-groups (thus they can be identified  with   {\em projections}).
 \item Let  $f' : G'' \rightarrow G$ is an injective $pSG$-morphism. If $B'$ is an well ordered basis of $G'$ obtained by successive extensions of an well ordered basis $B'_2$ of $f\circ f'[G'']$ to $B'_1 $, an  well ordered basis  of $f[G] \supseteq f\circ f'[G'']$, then applying the construction  above described,  we obtain $\hat{f}^0_{B'',B'} =  \hat{f}^0_{B'',B} \circ \hat{f}^0_{B,B'} : {\mathcal W}(B') \rightarrow {\mathcal W}(B'')$ and $\bar{f}^0_{B'',B'} =  \bar{f}^0_{B'',B} \circ \bar{f}^0_{B,B'} :   {\mathcal W}(B')/{\mathcal V}(B') \rightarrow {\mathcal W}(B'')/{\mathcal V}(B'')$.
\end{enumerate}
\end{prop}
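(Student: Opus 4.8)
The plan is to reduce both items to the action of the maps on the distinguished (topological) generating sets, invoking the uniqueness clause of the universal property of $\mathcal{W}(B')$ (Theorem \ref{fixms4}) together with the surjectivity of the quotient projections.

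For item (i) the crucial point is that injectivity of $f$ means $\ker(f) = \{1\}$, so in the construction of Remark \ref{construPSGGAL-ct} there is no basis of $\ker(f)$ to adjoin and we have $B = B_1 = \{a_k : k \in I_1'\}$. Consequently $f^0_{B,B'}$ carries the generators $a_k'$ with $a_k' \in B_1'$ onto the full distinguished generating set $\{a_k : k \in I_1'\}$ of $\mathcal{W}(B)$. Since $\mathcal{W}(B')$ is compact and $\hat{f}^0_{B,B'}$ is continuous, its image is a closed subgroup of $\mathcal{W}(B)$; as it contains a set of topological generators, it must be all of $\mathcal{W}(B)$, proving surjectivity of $\hat{f}^0_{B,B'}$. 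For $\bar{f}_{B,B'}$ I would use the defining relation $q_B \circ \hat{f}^0_{B,B'} = \bar{f}_{B,B'}\circ q_{B'}$ from Proposition \ref{GaltoSG-pr}: the left-hand side is surjective (the surjection $q_B$ precomposed with the surjection $\hat{f}^0_{B,B'}$), hence so is the right-hand side, and since $q_{B'}$ is surjective this forces $\bar{f}_{B,B'}$ to be surjective as well.

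For item (ii) I would first fix compatible choices of bases along the chain $B_2' \subseteq B_1' \subseteq B'$, writing $I_2' \subseteq I_1'$ for the corresponding index sets. The key claim, which is where I expect the main work to lie, is that the preimages $a_k \in f^{-1}[a_k']$ chosen for $k \in I_2'$ already lie in $f'[G'']$ and form a basis of it: indeed $f(a_k) = a_k' \in (f\circ f')[G''] = f(f'[G''])$, and injectivity of $f$ then gives $a_k \in f'[G'']$, while $f$ restricts to an isomorphism $f'[G''] \xrightarrow{\cong} (f\circ f')[G'']$ sending $\{a_k : k \in I_2'\}$ onto the basis $B_2'$. Extending this basis of $f'[G'']$ within $B$ (legitimate since $\{a_k : k \in I_2'\} \subseteq B$ because $I_2' \subseteq I_1'$) and selecting preimages $a_k''$ with $f'(a_k'') = a_k$, one checks that $(f\circ f')(a_k'') = a_k'$, so that the construction applied to $f'$ and to $f\circ f'$ yields one and the same basis $B''$ of $G''$.

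With these choices the three maps are explicit on generators: $\hat{f}^0_{B,B'}$ sends $x_k' \mapsto x_k$ for $k \in I_1'$ and $x_k' \mapsto 1$ otherwise; the map $\hat{f}^0_{B'',B}\colon \mathcal{W}(B) \to \mathcal{W}(B'')$ induced by $f'$ sends $x_k \mapsto x_k''$ for $k \in I_2'$ and $x_k \mapsto 1$ otherwise; and the map $\hat{f}^0_{B'',B'}\colon \mathcal{W}(B') \to \mathcal{W}(B'')$ induced by $f\circ f'$ sends $x_k' \mapsto x_k''$ for $k \in I_2'$ and $x_k' \mapsto 1$ otherwise. A three-way case distinction ($k \in I_2'$, $k \in I_1'\setminus I_2'$, $k \in I'\setminus I_1'$) shows that $\hat{f}^0_{B'',B}\circ \hat{f}^0_{B,B'}$ and $\hat{f}^0_{B'',B'}$ agree on the generating set $B'$, which converges to $1$; by the uniqueness part of the universal property they are equal. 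Passing to quotients via the intertwining relations $q_{B''}\circ \hat{f}^0 = \bar{f}^0\circ q$ and cancelling the surjection $q_{B'}$ on the right then yields $\bar{f}^0_{B'',B'} = \bar{f}^0_{B'',B}\circ \bar{f}^0_{B,B'}$, as required.
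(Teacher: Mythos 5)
Your proof is correct and follows essentially the same route as the paper's: for (i), the observation that injectivity of $f$ forces $f^0_{B,B'}[B'] = B \cup \{1\}$, so the continuous image of the compact group ${\mathcal W}(B')$ is a closed subgroup containing a topological generating set (hence everything), and surjectivity of $\bar{f}_{B,B'}$ follows from the commuting square with the quotient projections; for (ii), verifying $f^0_{B'',B'} = \hat{f}^0_{B'',B}\circ f^0_{B,B'}$ on the generating set and concluding by the uniqueness clause of Theorem \ref{fixms4} plus the homomorphism theorem. The only difference is that you spell out what the paper calls a ``straightforward calculation'' — the compatibility of the chosen preimages/bases along $B_2' \subseteq B_1' \subseteq B'$ and the three-way case distinction on generators — which is a useful elaboration but not a different argument.
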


\begin{proof}
$ $
\begin{enumerate}[i -]
    \item By the injectivity hypothesis, we have ${f}^0_{B,B'}[B'] = B \cup \{1\} \subseteq {\mathcal W}(B)$, thus $\hat{f}^0_{B,B'} : {\mathcal W}(B') \rightarrow {\mathcal W}(B)$ is a continuous function with dense image from a compact space into a Hausdorff space. Therefore $\hat{f}^0_{B,B'}$ and $\bar{f}^0_{B,B'} $ are surjective continuous homomorphisms.

    \item It follows from a straightforward calculation that ${f}^0_{B'',B'} =  \hat{f}^0_{B'',B} \circ {f}^0_{B,B'}$. Therefore, the uniqueness of extensions and the homomorphism theorem guarantees that $\hat{f}^0_{B'',B'} =  \hat{f}^0_{B'',B} \circ \hat{f}^0_{B,B'}$ and $\bar{f}^0_{B'',B'} =  \bar{f}^0_{B'',B} \circ \bar{f}^0_{B,B'}$.
\end{enumerate}
\end{proof}

\subsection{From Galois Groups to PSG}

Let $G$ be a pre-special group an denote $\mathcal G = Gal(G)$. We have seen in Proposition \ref{fixhugo3} that there is a {\em canonical} isomorphism $\phi_{G} : {\mathcal G}/\Phi({\mathcal G}) \overset{\cong}\rightarrow Hom(G,{\mathbb Z_2})$ so we get a ``perfect pairing'' $\hat{\phi}_{G} : {\mathcal G}/\Phi({\mathcal G}) \times G \rightarrow {\mathbb Z_2}$ and there is also  a {\em canonical} bijection  $G  \cong \{T \subseteq {\mathcal G} : T$ is a closed normal subgroup of index $\leq 2\}$.

We will  explain now the term ``canonical'' employed, starting with the following

\begin{lem}
$ $
\begin{enumerate}[i -]
    \item Let $G, G'$ be pre-special groups  Then each continuous homomorphism $\theta : Gal(G') \ \rightarrow \ Gal(G)$ induces a $\mathbb{Z}_2$-module homomorphism $\check{\theta} : G \to G'$.

\item  The association above, $\theta \mapsto \check{\theta}$, determines a contravariant functor from the category from all pairs $(G, Gal(G))$, $G$ a pre-special group, and continuous homomorphisms, into the category of ${\mathbb Z_2}$-modules.

%Galois groups ofpre-special groups and continuous homomorphism  to the category of ${\mathbb Z_2}$-vector spaces.  in a FUNCTORIAL way (because diagrams above comute)
%(iii) The association $\theta^{\star}$ can be identifyied with the arrow $\theta^{-1} : \{T \subseteq {\mathcal G} : T$ is a closed normal subgroup of index $\leq 2\} \rightarrow   \{T' \subseteq {\mathcal G}' : T'$ is a closed normal subgroup of index $\leq 2\}$ : $T \mapsto \theta^{-1}[T]$
\end{enumerate}

\end{lem}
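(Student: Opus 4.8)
The plan is to construct $\check\theta$ from the canonical duality isomorphism $\psi_G : G \overset{\cong}\to Homcont(Gal(G),\mathbb{Z}_2)$ of Proposition \ref{fixhugo3}(ii), using that $Homcont(-,\mathbb{Z}_2)$ is itself a contravariant functor from pro-$2$-groups to $\mathbb{Z}_2$-modules. Given a continuous homomorphism $\theta : Gal(G') \to Gal(G)$, I would first form the precomposition map
$$\theta^* : Homcont(Gal(G),\mathbb{Z}_2) \to Homcont(Gal(G'),\mathbb{Z}_2), \qquad \mu \mapsto \mu\circ\theta .$$
This is well defined, since $\mu\circ\theta$ is a continuous homomorphism into $\mathbb{Z}_2$ whenever $\mu$ and $\theta$ are, and it is $\mathbb{Z}_2$-linear because the module structure on $Homcont(-,\mathbb{Z}_2)$ is given by pointwise addition. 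I then set
$$\check\theta := \psi_{G'}^{-1}\circ\theta^*\circ\psi_G : G \to G' .$$
As a composite of $\mathbb{Z}_2$-module homomorphisms (the two duality isomorphisms and $\theta^*$), $\check\theta$ is a $\mathbb{Z}_2$-module homomorphism, which establishes item (i).

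For item (ii), I would check the two functor axioms directly from this formula, reading each morphism $\theta : Gal(G') \to Gal(G)$ as an arrow $(G',Gal(G')) \to (G,Gal(G))$. For identities, $\theta = id_{Gal(G)}$ gives $\theta^* = id$, hence $\check\theta = \psi_G^{-1}\circ\psi_G = id_G$. For composition, let $\theta' : Gal(G'') \to Gal(G')$ and $\theta : Gal(G') \to Gal(G)$; the contravariance of precomposition yields $(\theta\circ\theta')^* = \theta'^*\circ\theta^*$, so
$$\check{(\theta\circ\theta')} = \psi_{G''}^{-1}\circ\theta'^*\circ\theta^*\circ\psi_G = \check{\theta'}\circ\check\theta ,$$
where the second equality is obtained by inserting $\psi_{G'}^{-1}\circ\psi_{G'} = id$ between $\theta'^*$ and $\theta^*$. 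This is exactly the contravariant composition law, so $\theta\mapsto\check\theta$ is a contravariant functor into the category of $\mathbb{Z}_2$-modules.

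Finally, the one point that deserves care --- and the reason this is stated as a lemma rather than a triviality --- is the canonicity, in particular the base-independence, of $\psi_G$, given that $Gal(G)$ is itself defined as an inverse limit over all well-ordered bases $B \in E_G$. This is precisely what Proposition \ref{fixhugo3}(ii) provides: $\psi_G$ is described intrinsically by $a \mapsto \mu_a$ with $\ker(\mu_a) = M_a$, and the change-of-base isomorphisms $\tilde\mu_{BC}$ of Lemma \ref{fixsg6} make the identification $G \cong Homcont(Gal(G),\mathbb{Z}_2)$ independent of any choice. Consequently no choices enter the definition of $\check\theta$, and the only remaining work is the formal bookkeeping above; I expect no further obstacle.
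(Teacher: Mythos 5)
Your proposal is correct and takes essentially the same route as the paper: both define $\check{\theta} := \psi_{G'}^{-1}\circ\theta^{*}\circ\psi_{G}$ via the precomposition map $\theta^{*}$ and the duality isomorphisms of Proposition \ref{fixhugo3}, and both verify functoriality from $id^{*}_{\mathcal G}=id$ and $(\theta\circ\theta')^{*}={\theta'}^{*}\circ\theta^{*}$. Your closing remark on the base-independence of $\psi_{G}$ is a sensible extra precaution but matches what the paper already relies on implicitly.
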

\begin{proof}
$ $
\begin{enumerate}[i -]
    \item We have a ${\mathbb Z_2}$-homomorphism $\theta^{*} : Homcont(Gal(G), {\mathbb Z_2}) \to Homcont(Gal(G'), {\mathbb Z_2})$, $ \mu \mapsto \mu\circ \theta $.   By Proposition \ref{fixhugo3}(iii), we have ${\mathbb Z_2}$-isomorphisms $\psi_G, \psi_{G'}$. Combining the information we define the ${\mathbb Z_2}$-homomorphism  $\check{\theta} := \psi_{G'}^{-1} \circ\theta^*\circ \psi_G  : G \ \rightarrow \ G'$.
    \item Note that  $id_{\mathcal G}^* = id$, thus $\check{id}_{\mathcal G} = id_G$. Let $\theta' : Gal(G'') \to Gal(G')$ be a continuous homomorphism. Then  we have $(\theta \circ \theta')^* = {\theta'}^* \circ \theta^*$, thus $(\theta \circ \theta')\check{ } = \check{\theta'} \circ \check{\theta}$.
    \end{enumerate}
    \end{proof}
 %By Proposition \ref{fixms5}, we have $\theta[\Phi({\mathcal G}')] \subseteq \Phi({\mathcal G})$, thus, by the Theorem of Homomorphism there is a unique pro-2-group morphism $\bar{\theta} : {\mathcal G}'/\Phi({\mathcal G}') \to {\mathcal G}/\Phi({\mathcal G})$$ such that $\phi_{G}\circ \overline{\theta} = \phi_{G'} \circ Hom(\theta^{\star})$ where $\overline{\theta} : {\mathcal G}'/\Phi({\mathcal G}') \rightarrow {\mathcal G}/\Phi({\mathcal G})$ is the quotient of  $\theta : {\mathcal G}' \rightarrow {\mathcal G}$ and  $Hom(\theta^{\star}) : Hom(G',{\mathbb Z_2}) \rightarrow Hom(G,{\mathbb Z_2})$ is the arrow $(?)\circ\theta^{\star}$ , as in diagrams below.
%$\phi {\mathcal W} quot e \phi' {\mathcal W}/{\mathcal V} quot'$ logo $quo' \cong quo \cong G^* $
%In that way we get, for each contynuos homomorphism $\theta : gal(G') \rightarrow {\mathcal G}$ an ${\mathbb Z_2}$-homomorphism $\theta^{\star} L: G \rightarrow G'$ in a FUNCTORIAL way (because diagrams above comute)

\begin{rem}
Note that for  any surjective continuous homomorphism $\theta : {\mathcal G}' \rightarrow {\mathcal G}$, we have that $\check{\theta} : G \to G'$ is an injective ${\mathbb Z_2}$-homomorphism.
 \end{rem}

This suggest that the (sub)category of Galois groups and continuous epimorphisms is the ``right'' domain category of Galois groups. We have the following:

\begin{prop}
The functor described in the Lemma above restricts to a functor from the  subcategory of Galois groups of standard pre-special groups (Definition above \ref{pSGstandard-def}) and continuous epimorphisms to the category of standard pre-special groups and injective $qSG$-morphisms (i.e., the group homomorphisms that preserves $\equiv$, but that eventually does not preserves -1).
\end{prop}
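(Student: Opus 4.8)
The plan is to verify that the contravariant functor $\theta \mapsto \check\theta$ of the previous Lemma, once its domain is restricted to the Galois groups $\mathcal G = \mbox{Gal}(G)$ of \emph{standard} pre-special groups together with continuous \emph{epimorphisms}, and its codomain to standard pre-special groups, actually lands in the subcategory of injective $qSG$-morphisms. On objects there is nothing to check: the functor sends $(G,\mbox{Gal}(G))$ to $G$, which is standard by hypothesis, and identities and composition are already handled by the Lemma. So the entire content is the morphism statement: given a continuous epimorphism $\theta : \mbox{Gal}(G') \twoheadrightarrow \mbox{Gal}(G)$ with $G,G'$ standard, the induced $\mathbb Z_2$-homomorphism $\check\theta : G \to G'$ is an injective $qSG$-morphism. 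Injectivity is exactly the content of the Remark preceding the statement, so it remains to show that $\check\theta$ preserves the isometry relation $\equiv$.

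First I would record the dictionary between $\check\theta$ and the maximal subgroups. By construction $\check\theta(a) = \psi_{G'}^{-1}(\mu_a \circ \theta)$, and $\ker(\mu_a \circ \theta) = \theta^{-1}(\ker \mu_a) = \theta^{-1}(M_a)$; hence $M_{\check\theta(a)} = \theta^{-1}(M_a)$ for every $a \in G$. Next I would reduce preservation of $\equiv$ to a single, ``$-1$-free'' relation. Since $\check\theta$ is a group homomorphism it respects determinants, so by the axioms of special groups it suffices to show that $\check\theta$ carries $1 \in D_G\langle a,b\rangle$ into $1 \in D_{G'}\langle \check\theta(a),\check\theta(b)\rangle$: indeed $c \in D_G\langle a,b\rangle$ is equivalent to $1 \in D_G\langle ac,bc\rangle$ (scale by $c$, using $c^2=1$), so preservation of ``represents $1$'', together with multiplicativity of $\check\theta$ and the same scaling identity read in $G'$, yields $c \in D_G\langle a,b\rangle \Rightarrow \check\theta(c) \in D_{G'}\langle \check\theta(a),\check\theta(b)\rangle$; this is precisely preservation of binary isometries, and higher isometries follow by their inductive definition. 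Because $G$ and $G'$ are $k$-stable, $1 \in D_G\langle a,b\rangle$ is equivalent to $l(a)l(b) = 0 \in k_2(G)$ (and likewise in $G'$), so the relation to be transported is the vanishing of the symbol $l(a)l(b)$.

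The heart of the argument is transporting symbol vanishing through $\theta$ by pulling back the distinguished quotients of Theorem \ref{standardteo}. Assume $a,b \neq 1$ (if $a$ or $b$ equals $1$ the statement is trivial, and injectivity of $\check\theta$ forces $\check\theta(a),\check\theta(b)\neq1$ and $\check\theta(a)\neq\check\theta(b)$ when $a\neq b$). Suppose $l(a)l(b)=0$ in $k_2(G)$. If $a=b$, Theorem \ref{standardteo}(i) (valid since $G$ is $k$-stable) provides a normal clopen subgroup $S \subseteq \mbox{Gal}(G)$ with $\mbox{Gal}(G)/S \cong \mathbb Z_4$ and $S \subseteq M_a$; if $a \neq b$, Theorem \ref{standardteo}(ii) provides a normal clopen $D \subseteq \mbox{Gal}(G)$ with $\mbox{Gal}(G)/D \cong \mathbb D_4$, $D \subseteq M_a \cap M_b$ and $M_{ab}/D \cong \mathbb Z_4$. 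I would then set $S' := \theta^{-1}(S)$, resp. $D' := \theta^{-1}(D)$: these are normal (preimage of a normal subgroup) and clopen (continuity of $\theta$), and since $\theta$ is surjective the induced maps give $\mbox{Gal}(G')/S' \cong \mbox{Gal}(G)/S \cong \mathbb Z_4$, resp. $\mbox{Gal}(G')/D' \cong \mathbb D_4$. Using $M_{\check\theta(x)} = \theta^{-1}(M_x)$ and $\check\theta(ab) = \check\theta(a)\check\theta(b)$, the inclusions $S \subseteq M_a$ and $D \subseteq M_a \cap M_b$ pull back to $S' \subseteq M_{\check\theta(a)}$ and $D' \subseteq M_{\check\theta(a)} \cap M_{\check\theta(b)}$, while the isomorphism $\mbox{Gal}(G')/D' \cong \mbox{Gal}(G)/D$ carries $M_{\check\theta(ab)}/D' = \theta^{-1}(M_{ab})/\theta^{-1}(D)$ onto $M_{ab}/D \cong \mathbb Z_4$. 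Thus $S'$, resp. $D'$, witnesses exactly the configuration appearing in the \emph{reverse} implications of Theorem \ref{standardteo}; since $G'$ is \emph{standard}, I conclude $l(\check\theta(a))l(\check\theta(a)) = 0$, resp. $l(\check\theta(a))l(\check\theta(b)) = 0$, in $k_2(G')$, and by $k$-stability of $G'$ this gives $1 \in D_{G'}\langle \check\theta(a),\check\theta(b)\rangle$.

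I expect the main obstacle to be the reduction step, namely justifying that preserving the single relation ``represents $1$'' (equivalently the vanishing of $l(a)l(b)$, a condition that does not mention $-1$) suffices for a group homomorphism to be a $qSG$-morphism even though $\check\theta$ need not fix $-1$; once that is in place, the pull-back of the $\mathbb Z_4$- and $\mathbb D_4$-quotients is a clean diagram chase whose only genuine inputs are the surjectivity of $\theta$ (to transfer the quotient isomorphisms) and the standardness of $G'$ (to run the reverse implication of Theorem \ref{standardteo}).
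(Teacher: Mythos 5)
Your proposal is correct and follows essentially the same route as the paper's proof: reduce (via $k$-stability) to preservation of symbol vanishing $l(a)l(b)=0$, pull back the $\mathbb{Z}_4$ and $\mathbb{D}_4$ quotients of Theorem \ref{standardteo} along the epimorphism $\theta$ using $M_{\check{\theta}(x)} = \theta^{-1}[M_x]$, transfer the quotient isomorphisms by surjectivity, and close with standardness of $G'$. The only differences are presentational: you justify the reduction to the relation ``represents $1$'' explicitly (the paper asserts it), and you obtain $M_{\check{\theta}(ab)} = \theta^{-1}[M_{ab}]$ directly from the kernel description of $\check{\theta}$, where the paper instead invokes the ``three maximal subgroups above $M'_{\check{\theta}(a)} \cap M'_{\check{\theta}(b)}$'' argument.
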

\begin{proof}
Assume that $G$ is a $k$-stable pre-special group and that $G'$ is a standard pre-special group, we will prove that $\check{\theta}$ is a injective $qSG$-homomorphism from $G$ to $G'$.

Since  $\check{\theta} : G \to G'$ is a group homomorphism, it   is enough to show that, for each $a,b \in G\setminus\{1\}$, $1 \in D_{G}<a,b> \Rightarrow 1' \in D_G<\check{\theta}(a), \check{\theta}(b)>$ and, since $G, G'$ are $k$-stable pre-special group, this is equivalent to show $l(a)l(b) = 0 \in k_2(G) \Rightarrow l(\check{\theta}(a))l(\check{\theta}(b)) = 0 \in k_{2}(G')$. Now we have 2 cases to consider:

{\underline{$a =b$}} : Since $G$ is a $k$-stable pre-special group then, by Theorem \ref{standardteo}(i), there exists $S \subseteq {\mathcal G}$ a closed normal subgroup with ${\mathcal G}/S \cong {\mathbb Z_4}$ and $S \subseteq M_{a}$. As $\theta : {\mathcal G}' \twoheadrightarrow {\mathcal G}$ is an {\em surjective} continuous homomorphism we have that the quotient map $\theta_{S} : {\mathcal G}'/\theta^{-1}[S] \rightarrow {\mathcal G}/S$ is an {\em isomorphism} so $\theta^{-1}[S] \subseteq {\mathcal G}'$ is a closed normal subgroup such that ${\mathcal G}'/\theta^{-1}[S] \cong {\mathcal G}/S \cong {\mathbb Z_4}$ and $\theta^{-1}[S] \subseteq \theta^{-1}[M_{a}] = M'_{\theta^\star(a)}$, where this last equality holds  by the bijections in items (ii) and (iii) in Proposition \ref{fixhugo3} and the definition of $\check{\theta}$.  Now, since $G'$ is a standard pre-special group, we have $l(\check{\theta}(a))l(\check{\theta}(a))=0 \in k_{2}(G')$

{\underline {$a \neq b$}} : As  $\check{\theta}$ is an {\em injective} ${\mathbb Z_2}$-homomorphism, $\check{\theta}(a) \neq \check{\theta}(b)$.  Since $G$ is a $k$-stable pre-special group then, by Theorem \ref{standardteo}(ii), there exists $D \subseteq {\mathcal G}$ a closed normal subgroup with ${\mathcal G}/D \cong \mathbb{D}_4$, $D \subseteq M_{a}\cap M_{b}$ and $M_{ab}/D \cong {\mathbb Z_4}$. As ${\theta} : {\mathcal G}' \twoheadrightarrow {\mathcal G}$ is an {\em surjective} continuous homomorphism we have that the quotient map $\theta_{D} : {\mathcal G}'/\theta^{-1}[D] \rightarrow {\mathcal G}/D$ is an {\em isomorphism} so $\theta^{-1}[D] \subseteq {\mathcal G}'$ is a closed normal subgroup such that ${\mathcal G}'/\theta^{-1}[D] \cong {\mathcal G}/D \cong {\mathbb D_4}$ and $\theta^{-1}[D] \subseteq \theta^{-1}[M_{a} \cap M_{b}] = M'_{\check{\theta}(a)} \cap M'_{\check{\theta}(b)}$. Now we  check that $M'_{\check{\theta}(a) \check{\theta}(b)}/\theta^{-1}[D] \cong {\mathbb Z_4}$:  as $\theta$ is an epimorphism $\theta[theta^{-1}[M_{ab}]] = M_{ab}$ so, as $M'_{\check{\theta}(a) \check{\theta}(b)} = M'_{\check{\theta}(ab)} = \theta^{-1}[M_{ab}]$ (because $\check{\theta}(a) \neq \check{\theta}(b)$ and there are exactly three maximal above $M'_{\check{\theta}(a)} \cap M'_{\check{\theta}(b)} $), we have an epimorphism $\theta_| : M'_{\check{\theta}(ab)} \twoheadrightarrow M_{ab}$ thus $ker(\theta_|) = \theta^{-1}[D] \subseteq M'_{\check{\theta}(ab)}$ and the quotient map  $\theta_{|{D}} : M'_{\check{\theta}(ab)}/\theta^{-1}[D] \rightarrow M_{ab}/D$ is an {\em isomorphism} so $M'_{\check{\theta}(ab)}/\theta^{-1}[D] \cong M_{ab}/D \cong {\mathbb Z_4}$. Now, since $G'$ is a standard pre-special group, we have $l(\check{\theta}(a))l(\check{\theta}(b))=0 \in k_{2}(G')$.
\end{proof}

%Lemas M, S, D
%Lema M: $G- \{1\} \cong \{$maximais clopen $H\}$ , $G \cong \{$clopens de indice 1 ou 2$\}$
%Lema M diz $\phi' = \phi/{\mathcal V}$
%$\phi {\mathcal W} quot e \phi' {\mathcal W}/{\mathcal V} quot'$ logo
%$quo' \cong quo \cong G^*$

%In that way we get, for each contynuos homomorphism $\theta : gal(G') \rightarrow {\mathcal G}$ an ${\mathbb Z_2}$-homomorphism $\theta^{\star} L: G \rightarrow G'$ in a FUNCTORIAL way (because diagrams above comute)

%Lema S,D quase obtemos psg -morphiosms o que faslta e $G\cong G_(k$

%FALTA PROVAR AS VOLTAS DOS LEMAS S,D: VER HOECHS. ,FROLICH JOURNAL CRELLE 1968, 1985

%ESTABILIZAR EPIMORPHISMOS por fatoracoes

%Propositions similars to MS

%(1) $f injetora$ entre $k- staveis$ : $f** = f$ como grupos pre speciais

%(2) $aut({\mathcal G})/N \cong qsgaut(G)$ $N = \{\psi \in aut({\mathcal G} : psi(sigma).sigma^{-1} \in \Phi({\mathcal G})\}$

%Fiish the section with the remark Witt ring

%(3) the results in proposition ? e ? works (universal binary formas..)

\subsection{Towards  a galoisian cohomology for $SG$-theory}

Let $G$ be a standard pre-special group. Since $\mathcal G=\mbox{Gal}(G)$ is a profinite group, the Galois Cohomology is available for this subclass of pre-special groups. In particular, there is the graded cohomology ring $H^*(G):= H^*(\mathcal G, \{\pm 1\})$ where  $\mathcal G$ act trivially on $\mathbb F_2=\mathbb Z/2\mathbb Z$. Therefore, at least some parts of Milnor's scenario for containing 3 graded rings related to quadratic forms theory of fields (with $char \neq 2$) is available for (standard) special groups: $W_*(G), k_*(G),$ and $ H^*(G)$.

The result above just provides the initial step to establish cohomological methods in SG-theory.

%in this section we describe, for any pre sprecial group G, a Milnor like canonical arrow from the mod 2 k-theory graduated ring of G to the  graduated ring of cohomology of G: $h(G) : k_{\ast}(G) \rightarrow H^{\ast}(G)$

%Lemma: for each $a, b \in G$ are equivalent: $l(a)l(b)=0 \in k_{2}(G) \iff (a)\cup (b) = 0 \in H^{2}(G) = H^{2}({\mathcal G},{\mathbb Z_2})$

%These  generalizes some results in \cite{AKM} to the context of (pre)special groups where they prove that the cohomology ring $H^{\ast}(gal(F^{(3)}|F),{\mathbb Z_2})$ contains the cohomology ring $H^{\ast}(gal(F^{s}|F), {\mathbb Z_2})$ as its subring generated by cup products of level 1 elements

%OS LEMAS M,S,D SAO DECISIVOS PARA MOSTRAR QUE H* EH IGR~ E EXISTE SG-MORF  G to H1(G)
%(INJETIVIDADE : VER MINC GAO)
%UITILIZAR PROPRIEDADE UNIVERSAL de K* para obter flecha canonica

\begin{teo}
    As in the field case, consider $\mathbb{Z}_2 \cong \{\pm 1\}$ as a discrete $Gal(G)$-module endowed with the trivial action, i.e., $\sigma.a = a$, for all $\sigma \in Gal(G)$ and $a \in \mathbb{Z}_2 $. Then $H_*(G) := H_*(Gal(G), \mathbb Z_2)$,  is a graded ring endowed with the cup product.   Moreover, there are  canonical isomorphisms of pointed $2$-groups: \\(i) $(\mathbb{Z}_2, -1) \cong (H^0(G), (-1))$;\\
    (ii) $(G,-1) \cong (H^1(G), (-1))$.
\end{teo}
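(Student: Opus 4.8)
The plan is to reduce the statement to the standard computation of low-degree continuous cohomology of a profinite group with trivial coefficients, and then transport everything through the duality already established in Proposition \ref{fixhugo3}. The ring structure on $H^*(G)$ requires no special feature of $Gal(G)$: it is the usual graded cup-product ring on the continuous cochain cohomology of the profinite group $\mathcal G = Gal(G)$ with values in the trivial discrete module $\mathbb{Z}_2 \cong \{\pm 1\}$, which is graded-commutative and hence, the coefficients being $\mathbb{F}_2$, commutative. I would simply invoke this standard construction (for instance from \cite{ribes2000profinite}) rather than reprove it.

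For part (i), I would recall that for any profinite group $\mathcal G$ and any discrete $\mathcal G$-module $M$ one has $H^0(\mathcal G, M) = M^{\mathcal G}$, the submodule of invariants. Since the action on $\mathbb{Z}_2$ is trivial, $M^{\mathcal G} = \mathbb{Z}_2$, and under this canonical identity the nontrivial element $-1 \in \mathbb{Z}_2$ is carried to the class denoted $(-1) \in H^0(G)$. Thus $(\mathbb{Z}_2, -1) \cong (H^0(G), (-1))$ is essentially the identity map once the identification $H^0(G) = \mathbb{Z}_2$ is made, and there is nothing further to check.

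For part (ii), the key standard input is that for a trivial coefficient module a continuous $1$-cocycle is exactly a continuous homomorphism and every $1$-coboundary vanishes, giving a canonical identification $H^1(\mathcal G, \mathbb{Z}_2) = Homcont(\mathcal G, \mathbb{Z}_2)$. I would then apply Proposition \ref{fixhugo3}(ii), namely the canonical isomorphism of $\mathbb{Z}_2$-modules $\psi_G : G \overset{\cong}\to Homcont(Gal(G), \mathbb{Z}_2)$, $a \mapsto \mu_a$, where $\mu_a$ is the unique continuous homomorphism with $\ker(\mu_a) = M_a$. Composing the two isomorphisms yields $G \cong H^1(G)$. It then remains only to match the basepoints: the distinguished class $(-1) \in H^1(G)$ is, by the pointed structure $(Gal(G), M)$ recorded in the remark following Proposition \ref{fixhugo3}, the homomorphism whose kernel is $M = M_{-1}$, and $\psi_G(-1) = \mu_{-1}$ is precisely that homomorphism; hence $(G, -1) \cong (H^1(G), (-1))$ as pointed $2$-groups.

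Since every ingredient is already available, there is no deep mathematical obstacle here; the construction is an assembly of the cohomological dictionary for trivial coefficients together with Proposition \ref{fixhugo3}. The one point demanding genuine care is the bookkeeping of distinguished elements, i.e. verifying that the basepoint $-1 \in G$ is transported to the \emph{intrinsically defined} class $(-1)$ in each degree (the invariant $-1 \in \mathbb{Z}_2$ in degree $0$, and the homomorphism with kernel $M_{-1}$ in degree $1$), rather than merely to some abstractly isomorphic element.
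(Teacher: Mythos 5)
Your proposal is correct and follows essentially the same route as the paper's own proof: identify $H^0$ with the invariants $\mathbb{Z}_2$, identify $H^1(\mathcal G,\mathbb{Z}_2)$ with $Homcont(\mathcal G,\mathbb{Z}_2)$ via the triviality of the action (the paper spells out the crossed-homomorphism computation you cite as standard), and then compose with the duality isomorphism $\psi_G : G \to Homcont(Gal(G),\mathbb{Z}_2)$ of Proposition \ref{fixhugo3}, matching basepoints through the subgroup $M_{-1}$ of index $\leq 2$. Your citation of item (ii) of that proposition for $\psi_G$ is in fact the accurate one (the paper's proof writes "(iii)" but the map it uses is the one stated in (ii)), so no gap remains.
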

    \begin{proof} We write $\mathcal G :=Gal(G)$.
     Just recall that:

     $H^{0}(\mathcal G , \mathbb{Z}_2) =  (\mathbb{Z}_2)^{\mathcal G }= Fix(\mathbb{Z}_2) = {\mathbb Z_2}$, since $\mathcal G $ is acting trivially on $\mathbb Z_2$.

For $H^1(\mathcal G,\mathbb Z_2) := CrossedHom(\mathcal G, \mathbb{Z}_2)/principalCrossedHom(\mathcal G, \mathbb{Z}_2)$, since $\mathcal G$ is acting trivially on $\mathbb Z_2$, we get
\begin{align*}
   principalCrossedHom(\mathcal G, \mathbb{Z}_2) &:=\
  \mbox{Im}(\overline\partial_1) \\
  &:=\{x:\mathcal G\rightarrow \mathbb Z_2:x=\overline\partial_1a\mbox{ for some }a\in \mathbb F_2\} \\
  &=\{x:\mathcal G\rightarrow \mathbb Z_2:\mbox{ there exist }\mathbb F_2\in \mathbb F_2\mbox{ such that }x(\sigma)=\sigma a-a\mbox{ for all }\sigma\in \mathcal G\} \\
  &=\{x:\mathcal G\rightarrow \mathbb Z_2:\mbox{ there exist }a\in \mathbb F_2\mbox{ such that }x(\sigma)=0\mbox{ for all }\sigma\in \mathcal G\}&\\
  &=\{0\};
\end{align*}
and
\begin{align*}
     CrossedHom(\mathcal G, \mathbb{Z}_2) &:=\
    \mbox{Ker}(\overline\partial_2) \\
    &:=\{x:\mathcal G\rightarrow \mathbb Z_2:x\mbox{ is continuous and }x(\sigma\tau)=\sigma x(\tau)+x(\sigma)\mbox{ for all }\sigma,\tau\in \mathcal G\} \\
    &=\{x:\mathcal G\rightarrow \mathbb Z_2:x\mbox{ is continuous and }x(\sigma\tau)=x(\tau)+x(\sigma)\mbox{ for all }\sigma,\tau\in \mathcal G\} \\
    &={Homcont}(\mathcal G,\mathbb Z_2).
\end{align*}

Therefore $H^{1}(\mathcal G, \mathbb{Z}_2) = Homcont(\mathcal G,\mathbb{Z}_2)/\{0\} \cong    Homcont(\mathcal G, \mathbb{Z}_2) = Homcont(Gal(G), \mathbb{Z}_2)$.

On the other hand, by Proposition \ref{fixhugo3}(iii) $\psi_G : G \overset\cong\to Homcont(Gal(G), \mathbb{Z}_2)$ as $\mathbb{Z}_2$-modules and, $-1 \in G$ corresponds to a open subgroup of $Gal(G)$ with index $\leq 2$, that corresponds to $(-1)$ in $Homcont(Gal(G), \mathbb{Z}_2)$.
\end{proof}

It is natural ask if the Igr $H^*(Gal(G), \{\pm1\})$ is in the subcategory Igr$_h$: this depends of an analysis and more explicit description of $H^2(Gal(G), \{\pm1\})$. In particular, we will need to analyze the relationship between the equations $l(a)l(b)=0 \in k_{2}(G)$ and $ (a)\cup (b) = 0 \in  H^{2}({Gal(G)},{\mathbb Z_2})$, for $a, b \in G$, in a standard pre-special group $G$. Related to this question is the existence of a Milnor like canonical arrow from the mod 2 k-theory graduated ring of $G$ to the  graduated ring of cohomology of G: $h(G) : k_{\ast}(G) \rightarrow H^{\ast}(G)$.

These  generalizes some results in \cite{adem1999cohomology} to the context of (pre)special groups where they prove that the cohomology ring $H^{\ast}(Gal(F^{(3)}|F),{\mathbb Z_2})$ contains the cohomology ring $H^{\ast}(Gal(F^{s}|F), {\mathbb Z_2})$ as its subring generated by cup products of level 1 elements. Therefore, it could be interesting also analyze the properties of the  sub inductive graded generated in level 1 of the Igr $H^*(Gal(G), \{\pm1\})$, that is possibly a member of the subclass Igr$_+$.

\section{Final remarks and future works}

We list below the next steps towards the development of a Galois theory for the subclass of all standard pre-special hyperfields  (or, equivalently, standard pre-special groups), that contains the class of all special hyperfields (or special groups).

\begin{enumerate}

\item Analyse closure of the class of standard pSG under constructions like quotients, (finite) products, directed colimits, and extensions. Moreover, try to provide descriptions of the Galois under some of the main constructions of special groups.

    \item  Develop a theory of quadratic extensions for some subcategory of the category of pre-special hyperfields, envisaging to obtain, for each standard pre-special hyperfield $F$, a description of $gal(F)$ "from below" as a limit of groups of certain low degree extensions of the hyperfield $F$.

     \item Based on the former item, for each standard pre-special hyperfield $F$, analyze if $H^{*}(gal(F),\{\pm 1\})$ is an IGR and determine, a natural IGR-morphism ($h_{n}(F) : k_{n}(F) \rightarrow H^{n}(gal(F),\{\pm 1\}))_{n \in \mathbb{N}}$).

      \item Determine sufficient conditions for $h_n(F): k_{n}(F) \to H^{n}(gal(F),\{\pm 1\})$ be injective, $n \in \mathbb{N}$.

\item Describe a category of "pointed" pro-2-groups $({\mathcal G}, \mathfrak{m})$, where $\mathfrak{m} \subseteq {\mathcal G}$ is a maximal (cl)open subgroup, that contains the Galois groups of special hyperfields and analyze if the relationship described in \cite{efrat2017galoiscohomology}  $({\mathcal G}, \mathfrak{m}) \mapsto H^{*}(({\mathcal G}, \mathfrak{m}),\{\pm 1\})$ is part of a contravariant adjunction between this category and category of IGRs.

%[{\bf EM}] {EM} I. Efrat, J. Min\'a\v c, {\em Galois groups and cohomological functors}, Transactions of AMS {\bf 369} n.4 (2017), 2697-2720.

      \item Apply cohomological methods to describe possible obstructions for the validity of Marshall's signature conjecture for all reduced special groups (or, equivalently, for real reduced hyperfields).

\
\end{enumerate}

\bibliographystyle{plain}
\bibliography{one_for_all}
\end{document}